\numberwithin{equation}{section}
\theoremstyle{plain}
\newtheorem{Theorem}{Theorem}[section]
\newtheorem{Lemma}[Theorem]{Lemma}
\newtheorem{Prop}[Theorem]{Proposition}
\theoremstyle{remark}
\newtheorem{Rem}[Theorem]{Remark}
\theoremstyle{definition}
\newtheorem{Exa}[Theorem]{Example}
\DeclareMathOperator{\N}{\mathbb{N}}
\DeclareMathOperator{\Z}{\mathbb{Z}}
\DeclareMathOperator{\Q}{\mathbb{Q}}
\DeclareMathOperator{\R}{\mathbb{R}}
\renewcommand{\i}{\mathrm{i}}
\DeclareMathOperator{\Prob}{\mathbb{P}}
\DeclareMathOperator{\Var}{\mathrm{Var}}
\DeclareMathOperator{\E}{\mathbb{E}}
\DeclareMathOperator{\A}{\mathcal{A}}
\DeclareMathOperator{\B}{\mathcal{B}}
\DeclareMathOperator{\Exp}{\mathrm{Exp}}
\DeclareMathOperator{\1}{\mathbbm{1}}
\DeclareMathOperator{\dx}{\mathrm{d} \mathit{x}}
\DeclareMathOperator{\dy}{\mathrm{d} \mathit{y}}
\DeclareMathOperator{\ds}{\mathrm{d} \mathit{s}}
\DeclareMathOperator{\dt}{\mathrm{d} \mathit{t}}
\DeclareMathOperator{\du}{\mathrm{d} \mathit{u}}
\DeclareMathOperator{\dv}{\mathrm{d} \mathit{v}}
\DeclareMathOperator{\sign}{\mathrm{sign}}
\title{Limit theorems for renewal shot noise processes with decreasing response functions}
\date{\today}
\author{Alexander Iksanov\footnote{Faculty of Cybernetics, National T.\
Shevchenko University of Kyiv, 01033 Kyiv, Ukraine, e-mail:
iksan@univ.kiev.ua},\ Alexander Marynych\footnote{Faculty of
Cybernetics, National T.\
Shevchenko University of Kyiv, 01033 Kyiv, Ukraine, e-mail: marynych@unicyb.kiev.ua}   \\
and Matthias Meiners\footnote{ Institut f\"{u}r Mathematische Statistik,
Westf\"{a}lische Wilhelms-Universit\"{a}t M\"{u}nster, 48149
M\"{u}nster, Germany, e-mail: mmeiners@math.uni-muenster.de.}}
\begin{document}

\thispagestyle{empty}
\maketitle

\begin{abstract}
We consider shot noise processes $(X(t))_{t \geq 0}$ with
deterministic response function $h$ and the shots occurring at the
renewal epochs $0= S_0 < S_1 < S_2 \ldots$ of a zero-delayed
renewal process. We prove convergence of the finite-dimensional
distributions of $(X(ut))_{u \geq 0}$ as $t \to \infty$ in
different regimes. If the response function $h$ is directly
Riemann integrable, then the finite-dimensional distributions of
$(X(ut))_{u \geq 0}$ converge weakly as $t \to \infty$. Neither
scaling nor centering are needed in this case. If the response
function is eventually decreasing, non-integrable with an
integrable power, then, after suitable shifting, the
finite-dimensional distributions of the process converge. Again,
no scaling is needed. In both cases, the limit is identified. If
the distribution of $S_1$ is in the domain of attraction of an
$\alpha$-stable law and the response function is regularly varying
at $\infty$ with index $-\beta$ (with $0\leq \beta < 1/\alpha$ or
$0\leq \beta \leq \alpha$, depending on whether $\E S_1 < \infty$
or $\E S_1 = \infty$), then scaling is needed to obtain weak
convergence of the finite-dimensional distributions of $(X(ut))_{u
\geq 0}$. The limits are fractionally integrated stable L\'evy
motions if $\E S_1 < \infty$ and fractionally integrated inverse
stable subordinators if $\E S_1 = \infty$.
\end{abstract}

\noindent
\emph{2010 Mathematics Subject Classification}:                 Primary:        60F05, 60K05      \\        
\hphantom{\emph{2010 Mathematics Subject Classification:}}      Secondary:  60G55               

\noindent \emph{Keywords}:
finite-dimensional convergence; fractionally integrated stable L\'{e}vy motion;
fractionally integrated inverse stable subordinator; renewal shot noise process

\newpage
\tableofcontents

\section{Introduction}  \label{sec:intro}
Continuing the line of research initiated in \cite{Iksanov:2012}, in the present paper, we are
investigating the convergence of the finite-dimensional distributions of renewal shot noise processes.
Some work on convergence of renewal shot noise processes has already been done by other authors \cite{Iglehart:1973,Miller:1974,Rice:1977}.
However, their results do not intersect with those obtained here.
The special case of Poisson shot noise has received more attention, see \textit{e.g.}\
\cite{Heinrich+Schmidt:1985,Klueppelberg+Mikosch:1995a,Klueppelberg+Mikosch+Schaerf:2003,Lane:1984,Rosinski:2001}.

Initially, shot noise processes were introduced to model the current induced by a stream of electrons arriving at the anode of a vacuum tube \cite{Schottky:18}.
Since their first appearance in the literature,
shot noise processes have been used to model rainfall \cite{Rodriguez-Iturbe+Cox+Isham:1987,Waymire+Gupta:1981},
stream- and riverflows \cite{Lawrance+ Kottegoda:1977,Weiss:1977}, earthquake occurences \cite{Vere-Jones:1970},
computer failures \cite{Lewis:1964}, traffic noise \cite{Marcus:1975},
delay in claim settlement in insurance \cite{Klueppelberg+Mikosch:1995a,Klueppelberg+Mikosch:1995b},
and several processes in finance \cite{Samorodnitsky:1996},
to name but a few.
The recent paper \cite{Alsmeyer+Iksanov+Meiners:2012} offers a list of further references.

We now start with the mathematical setup.
Let $\xi_1, \xi_2, \ldots$ be a sequence of independent copies of a positive random variable $\xi$.
The distribution of $\xi$ is denoted by $F$.
By $(S_k)_{k \in \N_0}$
(where $\N_0 := \N \cup \{0\}$)
we denote the random walk with initial position $S_0 := 0$ and increments $S_k - S_{k-1} = \xi_k$, $k \in \N$.
The corresponding renewal counting measure is denoted by $N$, that is,
\begin{equation*}
N ~=~ \sum_{k \geq 0} \delta_{S_k},
\end{equation*}
where $\delta_x$ denotes the Dirac distribution concentrated at $x$.
We write $N(t)$ for $N([0,t])$, $t \geq 0$.
By $U$ we denote the intensity measure corresponding to $N$.
Hence, $U(B) := \E N(B)$ for Borel sets $B \subseteq \R$.
We write $U(t)$ for $U([0,t])$, $t \geq 0$.
Throughout the paper, we denote by $h$ a real-valued, measurable and locally bounded function on
the positive half-line $\R_+ = [0,\infty)$.
Further, let
\begin{equation}    \label{eq:X(t)}
X(t)        ~:=~    \sum_{k=0}^{N(t)-1} h(t-S_k)    ~=~ \int_{[0,t]} h(t\!-\!y) \, N(\dy),  \qquad  t \geq 0.
\end{equation}
The stochastic process $(X(t))_{t \geq 0}$ is called \emph{renewal shot noise process}, $h$ is called \emph{response function}.

In the recent paper \cite{Iksanov:2012}, functional limit theorems
for $(X(ut))_{u \geq 0}$ are derived in the case that the response
function is eventually increasing\footnote{We call a function $h$
increasing if for all $s < t$ we have $h(s) \leq h(t)$. We call
$h$ \emph{strictly} increasing if for $s < t$ we have $h(s) <
h(t)$. Analogously, $h$ is said to be decreasing if $s < t$
implies $h(s) \geq h(t)$ and it is said to be \emph{strictly}
decreasing if $s<t$ implies $h(s) > h(t)$. $h$ is said to be
\emph{eventually increasing/decreasing} if, for some $t_0 \geq 0$,
$h$ is decreasing/decreasing on $[t_0,\infty)$.}. The motivation
behind the present work in general and the use of the specific
time scaling in particular is the following. First, we intend to
obtain counterparts of the results derived in \cite{Iksanov:2012}
for functions $h$ that are eventually decreasing. Second, in
\cite{Iksanov+Marynych+Vatutin:2013} some results of this paper
are used to prove the finite-dimensional convergence of the number
of empty boxes in the Bernoulli sieve (see
\cite{Gnedin+Iksanov+Marynych:2010} for the definition and
properties of the Bernoulli sieve). Of course, transformations of
time other than $ut$ may also lead to useful limit theorems. For
instance, convergence of $(X(t+u))_{u \geq 0}$ may be worth
investigating. Yet another transformation of time has proved
important \cite{Iglehart:1973}, where one only rescales the time
of the underlying renewal process, whereas the deterministic
component runs in its original time scale.

Unlike in \cite{Iksanov:2012}, where functional limit theorems are
derived, in the paper at hand, we investigate convergence of
finite-dimensional distributions only. In the situations of part
(a) of Theorem \ref{Thm:dRi h}, Theorem \ref{Thm:decreasing h} and part (A3) of Theorem \ref{Thm:non-endogenous & mu<infty},   
any versions of the limiting processes do not take values in the
Skorokhod space of right-continuous functions with left limits
which excludes the possibility that a classical functional limit
theorem holds. In the other cases, where the limiting processes do
take values in the Skorokhod space, convergence in a functional
space remains open for future research.

\section{Main results}  \label{sec:main results}

As mentioned in the introduction,
we centre our attention on the case of eventually decreasing response functions.
Let us remark right away that the situations where $h$ is eventually decreasing and
either $\lim_{t \to \infty} h(t) = c \in (-\infty, 0)$ or $\lim_{t \to \infty} h(t)=-\infty$
and $-h(t)$ is regularly varying at $\infty$ with some index $\beta \geq 0$ are covered by Theorem 1.1 in \cite{Iksanov:2012}.
See Remark \ref{Rem:Iksanov:2012} for more details.
Keeping this in mind our main results mainly treat eventually decreasing functions with non-negative limit at infinity.

Our results fall into two fundamentally different categories. The
first type of results considers finite-dimensional convergence of
the process $(X(ut))_{u \geq 0}$ as $t \to \infty$ when no scaling
(normalization) is needed. In this case, all randomness in the
limiting process can be described in terms of copies of the
stationary renewal counting process $N^*$ to be introduced below.
In the second type of results scaling is needed.
Then some of the fine features of the process $(X(ut))_{u
\geq 0}$ vanish in the limit and robust limit theorems are
obtained in the sense that the limiting behavior only depends on
the asymptotic behavior of $h$ and the tails of $S_1$. The
limiting processes are stochastic integrals with the integrators being
$\alpha$-stable L\'evy motions or inverse stable subordinators.

For the formulation of our main results, we need to introduce
further notation. First, let $\mu := \E \xi$. Since $\xi > 0$
a.s., $\mu$ is well-defined but may equal $+\infty$. Whenever $\mu
< \infty$ and the law of $\xi$ is non-lattice, we denote by
$S_0^*$ a positive random variable which is independent of $(\xi_k)_{k \in \N}$ and has distribution function
\begin{equation}    \label{eq:S_0^*}
F^*(t)  ~:=~    \Prob\{S_0^* \leq t\}   ~:=~    \frac{1}{\mu} \int_0^t \Prob\{\xi > x\} \dx,    \qquad  t \geq 0.
\end{equation}
We set $S_k^* := S_0^* + S_k$, $k \in \N_0$.
The associated renewal counting process $N^* := \sum_{k \geq 0} \delta_{S_k^*}$ has stationary increments.
Equivalently, the corresponding intensity measure $U^*(\cdot) := \E N^*(\cdot)$ satisfies $U^*(\dx) = \mu^{-1} \dx$,
see Subsection \ref{subsec:stationary renewal processes and coupling}
and \cite[Section III.1.2]{Lindvall:1992} or \cite[Section II.9]{Thorisson:2000} for further information.

For stochastic processes $(Z_t(u))_{u \geq 0}$, $t \geq 0$ and
$(Z(u))_{u \geq 0}$, we write $Z_t(u)
\stackrel{\mathrm{f.d.}}{\Rightarrow} Z(u)$ as $t \to \infty$ to
denote weak convergence of finite-dimensional distributions,
\textit{i.e.}, for any $n \in \N$ and any selection $0 < u_1 <
\ldots < u_n < \infty$
\begin{equation*}
(Z_t(u_1), \ldots, Z_t(u_n)) ~\stackrel{\mathrm{d}}{\to}~ (Z(u_1),\ldots, Z(u_n))   \quad   \text{as } t \to \infty
\end{equation*}
where $\stackrel{\mathrm{d}}{\to}$ denotes convergence in distribution.

\subsection{Limit theorems without scaling} \label{subsec:endogenous thms}

If $\mu < \infty$ and $F$ is non-lattice, define
\begin{equation}    \label{eq:X*}
X^* ~:=~ \lim_{t \to \infty} \sum_{k \geq 0} h(S_k^*) \1_{\{S_k^* \leq t\}}
\end{equation}
whenever the limit exists as a limit in probability and is a.s.\ finite.
In this case, denote by $(X^*(u))_{u \geq 0}$ a family of i.i.d.\ copies of $X^*$.

Our first result states that if $h$ is
directly Riemann integrable\footnote{A definition of \emph{direct Riemann integrability} is provided in the proof of Proposition \ref{Prop:X* well-defined}} (d.R.i.),
then the finite-dimensional distributions of $(X(ut))_{u > 0}$ converge weakly to the finite-dimensional distributions of the process $(X^*(u))_{u > 0}$.

\begin{Theorem} \label{Thm:dRi h}
Let $h:\R_+ \to \R$ be directly Riemann integrable and $F$ be non-lattice.
\begin{itemize}
    \item[(a)]
        If $\mu < \infty$, then the random series $X^*$ converges a.s.\ and
        \begin{equation}    \label{eq:X(t) when h dRi}
        X(ut)   ~\stackrel{\mathrm{f.d.}}{\Rightarrow}~  X^*(u) \quad   \text{as } t \to \infty.
        \end{equation}
    \item[(b)]
        If $\mu = \infty$, then
        \begin{equation*}
        X(t)    ~\stackrel{\Prob}{\to}~ 0   \quad   \text{as } t \to \infty.
        \end{equation*}
\end{itemize}
\end{Theorem}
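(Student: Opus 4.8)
The plan is to prove part~(a) by showing that, near each of the well-separated times $u_1 t, \ldots, u_n t$, the renewal process seen \emph{backward} converges to the stationary renewal process $N^*$, and that the local pictures at these times decouple. Throughout write $A(t) := t - S_{N(t)-1}$ for the age at $t$. Since a d.R.i.\ function is Lebesgue integrable, $\int_0^\infty |h(s)| \ds < \infty$, and because $U^*(\ds) = \mu^{-1}\ds$ this gives $\E \sum_{k \ge 0} |h(S_k^*)| = \mu^{-1} \int_0^\infty |h(s)| \ds < \infty$; hence the series defining $X^*$ converges absolutely a.s.\ (Proposition~\ref{Prop:X* well-defined}).

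First I would truncate in a way that is uniform in $t$. For $M > 0$ set $g_M(s) := |h(s)| \1_{\{s > M\}}$; applying the key renewal theorem to $g_M$ (through a d.R.i.\ majorant of $|h|$ if necessary) gives
\[
\E \Big| \sum_{k :\, u_i t - S_k > M} h(u_i t - S_k) \Big| ~\le~ \int_{[0,\, u_i t]} g_M(u_i t - y)\, U(\dy) ~\xrightarrow[t \to \infty]{}~ \frac{1}{\mu} \int_M^\infty |h(s)|\, \ds,
\]
which tends to $0$ as $M \to \infty$. Thus it suffices to work with the truncated variables $X_i^M := \sum_{k :\, u_i t - S_k \le M} h(u_i t - S_k)$ and afterwards let $M \to \infty$ via a standard converging-together argument. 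Now $X_i^M$ is a functional of the finite backward configuration $(A(u_i t), \xi_{N(u_i t)-1}, \xi_{N(u_i t)-2}, \ldots)$, which by the non-lattice renewal theorem converges in distribution to $(S_0^*, \xi_1, \xi_2, \ldots)$, i.e.\ the renewal process seen from a remote point converges to its stationary version. Since the limiting stationary process a.s.\ places no point exactly at the level $M$ and a.s.\ avoids the (Lebesgue-null) discontinuity set of $h$, the summation functional is a.s.\ continuous at the limit; sandwiching $h$ between its upper and lower step functions and using direct Riemann integrability to close the gap, one obtains the marginal convergence $X_i^M \Rightarrow X^{*,M} := \sum_{k :\, S_k^* \le M} h(S_k^*)$.

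The decisive step is the asymptotic independence of the $X_i^M$ across $i$. Fix $M$ and order $0 < u_1 < \ldots < u_n$. For $i \ge 2$ let $\rho_i := S_{N(u_{i-1} t)}$ be the first renewal epoch after $u_{i-1} t$. For all large $t$ the windows $[u_j t - M, u_j t]$ with $j < i$ lie to the left of $\rho_i$, so $X_1^M, \ldots, X_{i-1}^M$ are $\F_{\rho_i}$-measurable, whereas $[u_i t - M, u_i t]$ lies to its right, so $X_i^M = \sum_{l \ge 0} h(d_i - S_l')\, \1_{\{0 \le d_i - S_l' \le M\}}$ is a functional of the post-$\rho_i$ renewal process $(S_l')_{l \ge 0}$ observed backward from distance $d_i := u_i t - \rho_i$. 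By the strong Markov property $(S_l')_{l \ge 0}$ is a fresh zero-delayed renewal process independent of $\F_{\rho_i}$, and $d_i = (u_i - u_{i-1}) t - (\rho_i - u_{i-1} t) \to \infty$ in probability because the overshoot $\rho_i - u_{i-1} t$ is tight. Applying the marginal convergence of the previous paragraph to $(S_l')$ with the diverging, $\F_{\rho_i}$-measurable backward distance $d_i$ shows that, conditionally on $\F_{\rho_i}$, $X_i^M \Rightarrow X^{*,M}$ irrespective of the conditioning; hence $X_i^M$ is asymptotically independent of $(X_1^M, \ldots, X_{i-1}^M)$. Iterating over $i$ gives $(X_1^M, \ldots, X_n^M) \Rightarrow$ independent copies of $X^{*,M}$, and letting $M \to \infty$ yields \eqref{eq:X(t) when h dRi}. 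I expect this decoupling step --- in particular justifying the conditional limit under a random, diverging backward distance, together with the uniform-in-$t$ truncation --- to be the main obstacle.

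For part~(b) the renewals become sparse and an $L^1$-bound suffices:
\[
\E |X(t)| ~\le~ \int_{[0,t]} |h(t - y)|\, U(\dy),
\]
and since $F$ is non-lattice with $\mu = \infty$, the infinite-mean form of the key renewal theorem gives $\int_{[0,t]} g(t - y)\, U(\dy) \to \mu^{-1} \int_0^\infty g = 0$ for d.R.i.\ $g$. Taking $g = |h|$ (or a d.R.i.\ majorant thereof) yields $\E |X(t)| \to 0$, so that $X(t) \to 0$ in $L^1$ and, a fortiori, in probability.
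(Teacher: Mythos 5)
Your argument is correct, and it shares the paper's basic mechanism --- kill the contribution of shots far from each observation time via the key renewal theorem, then decouple the observation times by regenerating at the first renewal epoch after the preceding time and applying one-dimensional convergence at a diverging distance --- but the two implementations differ in ways worth recording. For the marginal (one-dimensional) limit the paper is self-contained: it builds an explicit $\varepsilon$-coupling of $(S_k)_{k \in \N_0}$ with a stationary renewal process, approximates $h$ by the d.R.i.\ function $h^\varepsilon(x)=\sup_{|y-x|\le\varepsilon}h(y)$, and uses the time-reversal identity for $N^*$ (Proposition \ref{Prop:N^ backwards in time}) to identify the limit. You instead cite as known the convergence of the backward configuration $(A(t),\xi_{N(t)-1},\xi_{N(t)-2},\ldots)$ to $(S_0^*,\xi_1,\xi_2,\ldots)$; that single citation absorbs both the coupling and the time reversal (it is where all the renewal theory is hidden --- it can be derived from Blackwell's theorem), so your marginal step is shorter but not self-contained. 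For the finite-dimensional step the decompositions genuinely differ: the paper splits $X(u_it)$ at the midpoint $m_1t$ between consecutive observation times, so the discarded window grows linearly in $t$, whereas you truncate to windows of fixed width $M$ uniformly in $t$ and remove $M$ at the end by a converging-together argument; your choice makes the independence bookkeeping cleaner --- the $X_j^M$, $j<i$, are exactly $\F_{\rho_i}$-measurable and no supremum-over-the-conditioning bounds of the paper's type ($J_{11}$, $J_{12}$) are needed --- at the cost of one extra limit. Two of your steps need more detail than you give, though neither is a gap: continuity of the truncated summation functional fails in the product topology for general sequences and must exploit the monotonicity of the backward distances (once one coordinate strictly exceeds $M$, all later coordinates of any nearby increasing sequence do too); and the decoupling should be phrased as: on the event $\{\rho_i<u_it-M\}$, whose probability tends to one, $\Prob\{X_i^M\le x \mid \F_{\rho_i}\}=\phi_M(d_i)$ for the deterministic function $\phi_M(d):=\Prob\{\sum_{l\ge0}h(d-S_l)\1_{\{0\le d-S_l\le M\}}\le x\}$, whence $d_i\stackrel{\Prob}{\to}\infty$, the marginal limit $\phi_M(d)\to\Prob\{X^{*,M}\le x\}$ at continuity points $x$, and dominated convergence give the product structure.
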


\begin{Rem} \label{Rem:dRi h}
Since the focus of this paper is on eventually decreasing response functions
it is worth mentioning that Theorem \ref{Thm:dRi h} covers the case when $h$
is eventually decreasing and improperly Riemann integrable
since any such function is necessarily d.R.i.
\end{Rem}

\begin{Exa} \label{Exa:increments of renewal process}
Assume that $\mu < \infty$ and that $F$ is non-lattice.
For fixed $0 \leq a < b$, choose  $h(t) := \1_{[a,b)}(t)$, $t \geq 0$ as response function.
Then Theorem \ref{Thm:dRi h} implies that
\begin{equation*}
N(t-a)-N(t-b)   ~=~ \sum_{k \geq 0} h(t-S_k)    ~\stackrel{\mathrm{d}}{\to}~    N^*(b-a)    \quad   \text{as } t \to \infty.
\end{equation*}
\end{Exa}

Though the one-dimensional convergence in Theorem \ref{Thm:dRi h} is quite expected, a rigorous proof is necessary.
It is tempting to conclude this from Theorem 6.1 in \cite{Miller:1974}.
However, the cited theorem does not hold in the generality stated there.
Regularity assumptions on the function $h$ in the theorem above cannot be avoided.
This will be demonstrated in Example \ref{Exa:Miller's theorem not true}.

Our second result is an extension of Theorem \ref{Thm:dRi h} to the situation where $h$ is not integrable.
In this case, $X^*$ is not well-defined
and in order to still obtain non-trivial finite-dimensional convergence of the process $(X(ut))_{u \geq 0}$ as $t \to \infty$
centering is needed.
When $\mu < \infty$ and $F$ is non-lattice, define
\begin{equation}    \label{eq:X*_o}
X^*_{\circ} ~:=~ \lim_{t \to \infty} \, \bigg( \sum_{k \geq 0} h(S_k^*) \1_{\{S_k^* \leq t\}} - \frac{1}{\mu} \int_0^t h(y) \dy\bigg).
\end{equation}
Whenever $X^*_{\circ}$ exists as the limit in probability and is a.s.\ finite,
denote by $(X^*_{\circ}(u))_{u \geq 0}$ a family of i.i.d.\ copies of $X^*_{\circ}$.

\begin{Theorem} \label{Thm:decreasing h}
Assume that $F$ is non-lattice. Let $h:\R_+ \to \R$ be locally
bounded, a.e.\ continuous, eventually decreasing and
non-integrable.
\begin{itemize}
    \item[(C1)]
        Suppose $\sigma^2:= \Var \xi < \infty$ and
        \begin{equation}    \label{eq:int h^2 < infty}
        \int_0^{\infty} h(y)^2 \dy  ~<~ \infty.
        \end{equation}
        Then $X^*_{\circ}$ exists as the limit in $\mathcal{L}^2$ in \eqref{eq:X*_o} and
        \begin{equation}    \label{eq:X_o(ut)->}
        X(ut) - \mu^{-1} \int_{0}^{ut} h(y) \dy   ~\stackrel{\mathrm{f.d.}}{\Rightarrow}~ X^*_{\circ}(u)  \quad   \text{as } t \to \infty.
        \end{equation}
        \eqref{eq:X_o(ut)->} also holds with $\mu^{-1} \int_{0}^{ut} h(y) \dy$ replaced by $\E X(ut)$.
\end{itemize}
For the rest of the theorem, assume that $h$ is eventually twice differentiable\footnote{$h$ is called \emph{eventually twice differentiable} if there exists a $t_0 \geq 0$ such that $h$ is twice differentiable on $(t_0,\infty)$} and that $h''$ is eventually nonnegative.
\begin{itemize}
    \item[(C2)]
        Suppose $\E \xi^r < \infty$ for some $1 < r < 2$. If
        there exists an $a>0$ such that $h(y) > 0$ for $y\geq a$ and
        \begin{equation}    \label{eq:int h^r<infty}
        \int_a^\infty h(y)^r \dy < \infty,
        \end{equation}
        and\footnote{
        If $h''$ is eventually monotone, then
        \eqref{eq:h''=Ot^(-2-1/r)} and \eqref{eq:h''=O(t^(-2)c^(-1)(t)} are consequences
        of \eqref{eq:int h^r<infty} and \eqref{eq:int h^alphal(1/h)<infty}, respectively.}
        \begin{equation}    \label{eq:h''=Ot^(-2-1/r)}
        h''(t)=O(t^{-2-1/r})    \quad   \text{as } t \to \infty,
        \end{equation}
        then $X^*_{\circ}$ is well-defined as
        the a.s.\ limit in \eqref{eq:X*_o}. Further, \eqref{eq:X_o(ut)->} holds.
    \item[(C3)]
        Suppose $\Prob\{\xi>x\} \sim x^{-\alpha}\ell(x)$ as $x \to \infty$
        for some $1 < \alpha < 2$ and some $\ell$ slowly varying at $\infty$. If
        there exists an $a>0$ such that $h(y) > 0$ for $y\geq a$ and
        \begin{equation}    \label{eq:int h^alphal(1/h)<infty}
        \int_a^\infty h(y)^{\alpha} \ell(1/h(y)) \dy ~<~    \infty,
        \end{equation}
        and
        \begin{equation}    \label{eq:h''=O(t^(-2)c^(-1)(t)}
        h''(t)=O(t^{-2}c(t)^{-1})   \quad   \text{as } t \to \infty
        \end{equation}
        where $c(t)$ is any positive function such that
        \begin{equation}    \label{eq:t(l(c(t))/c(t)^alpha->1}
        \lim_{t \to \infty} \frac{t \ell(c(t))}{c(t)^{\alpha}} = 1,
        \end{equation}
        then $X^*_{\circ}$ exists as the limit in probability in \eqref{eq:X*_o} and \eqref{eq:X_o(ut)->} holds.
\end{itemize}
\end{Theorem}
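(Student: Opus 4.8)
The plan is to treat the existence of $X^*_\circ$ and the convergence in \eqref{eq:X_o(ut)->} together, both being governed by the centered renewal process $\nu(y) := N(y) - y/\mu$ and its stationary analogue. The backbone is a coupling between $N$, observed \emph{backwards} from the large time $ut$, and the stationary process $N^*$ of Subsection \ref{subsec:stationary renewal processes and coupling}. Writing $X(ut) = \sum_{k \geq 0} h(ut - S_k)\1_{\{S_k \leq ut\}}$ and recalling that $h$ is eventually decreasing with $h(t) \to 0$ (the limit is non-negative and, in each of (C1)--(C3), the finiteness of $\int h^2$, $\int h^r$, or $\int h^{\alpha}\ell(1/h)$ forces it to be $0$), I would read the dominant fluctuations as coming from the renewals \emph{near} $ut$, where $h$ is large, while the renewals near the origin produce the small but non-summable contribution $h(ut)$ that forces the centering. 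For a threshold $M > 0$ I split
\[
X(ut) - \frac{1}{\mu}\int_0^{ut} h(y)\, \dy ~=~ A_M(ut) + B_M(ut),
\]
where $A_M(ut) := \sum_{ut - M < S_k \leq ut} h(ut - S_k) - \mu^{-1}\int_0^M h(y)\,\dy$ is the near part and $B_M(ut) := \sum_{S_k \leq ut - M} h(ut-S_k) - \mu^{-1}\int_M^{ut} h(y)\,\dy$ is the centered far part.

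For the near part I would invoke the coupling: the reversed configuration of $N$ on $(ut - M, ut]$ converges in distribution to that of $N^*$ on $[0,M)$, whence $A_M(ut) \stackrel{\mathrm{d}}{\to} X^{*,(M)}_\circ := \int_{[0,M]} h(y)\, N^*(\dy) - \mu^{-1}\int_0^M h(y)\,\dy$. For the finite-dimensional statement I would run the coupling simultaneously at $u_1 t < \ldots < u_n t$ and exploit the regenerative structure of $N$: since the windows $(u_j t - M, u_j t]$ are eventually disjoint and separated by distances tending to $\infty$, the corresponding reversed configurations become asymptotically independent, so $(A_M(u_1 t), \ldots, A_M(u_n t))$ converges to a vector of independent copies of $X^{*,(M)}_\circ$. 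Letting $M \to \infty$, these converge to independent copies of $X^*_\circ$, which is precisely the content of the (separately established) well-definedness of $X^*_\circ$.

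Both the existence of $X^*_\circ$ and the negligibility of $B_M(ut)$ I would reduce to estimates on $\nu$ by Riemann--Stieltjes integration by parts. Writing $X(ut) - \mu^{-1}\int_0^{ut} h(y)\,\dy = \int_{[0,ut]} h(ut-y)\,\mathrm{d}\nu(y)$, integrating by parts, and rearranging using $h(y)\to 0$ and $h' \in \mathcal{L}^1$, I would bring the expression to the form
\[
\int_0^{ut}\bigl(\nu(ut-z)-\nu(ut)\bigr) h'(z)\,\mathrm{d}z + \bigl(\nu(ut)-1\bigr) h(ut),
\]
in which the slowly-growing $\nu$ has been replaced by its increments. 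The boundary term is negligible because the $h$-conditions force $h(t) = o(t^{-1/2})$ in (C1) and a correspondingly faster decay in (C2) and (C3). In (C1) the main integral is then controlled in $\mathcal{L}^2$: a direct computation using $\Var\nu(y) = O(y)$ and the covariance structure of $N$ bounds its variance, and that of $B_M$, by $C\int_M^\infty h(y)^2\,\dy \to 0$ (using \eqref{eq:int h^2 < infty}). This yields the $\mathcal{L}^2$-limit, and the admissibility of the centering $\E X(ut)$ in place of $\mu^{-1}\int_0^{ut} h$ follows from the refined renewal theorem $U(t)-t/\mu \to (\sigma^2+\mu^2)/(2\mu^2)$.

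Cases (C2) and (C3) require the eventual twice-differentiability and the sign of $h''$ assumed there, since no second moment is available. Here I would integrate by parts once more, moving a derivative onto the increment process so that $h'$ is replaced by the eventually sign-definite density $h''$, integrated against the integrated centered process. In (C2) I would feed in the almost-sure Marcinkiewicz--Zygmund rate $\nu(y) = o(y^{1/r})$ (valid under $\E\xi^r < \infty$) together with \eqref{eq:h''=Ot^(-2-1/r)} and \eqref{eq:int h^r<infty} to obtain absolute, almost-sure convergence; in (C3) the input is the weak $\alpha$-stable limit $\nu(y) = O_{\Prob}(c(y))$ with $c$ regularly varying of index $1/\alpha$, which together with \eqref{eq:h''=O(t^(-2)c^(-1)(t)} and \eqref{eq:int h^alphal(1/h)<infty} yields convergence in probability. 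I expect this tail control to be the main obstacle: the crude pairing of the growth rate of the integrated centered process against the decay of $h''$ is only borderline integrable, so the convergence must be extracted from the integral conditions \eqref{eq:int h^r<infty} and \eqref{eq:int h^alphal(1/h)<infty} themselves rather than from pointwise bounds. Once the near-part convergence above and $\lim_{M\to\infty}\limsup_{t\to\infty}\Prob\{|B_M(ut)| > \varepsilon\} = 0$ are in hand, a standard approximation theorem for weak convergence identifies the limit in \eqref{eq:X_o(ut)->} as a vector of independent copies of $X^*_\circ$.
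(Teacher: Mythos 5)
Your case (C1) is essentially sound and parallels the paper (the variance of the centered tail sum is bounded by a constant times $\int_M^\infty h^2$, using $\sup_x|U(x)-\mu^{-1}x|<\infty$; this is exactly the computation in Proposition \ref{Prop:X*_o well-defined}). The genuine gap is in cases (C2) and (C3), and it sits at the single point on which your whole architecture rests: both the existence of $X^*_\circ$ and the uniform negligibility $\lim_{M}\limsup_{t}\Prob\{|B_M(ut)|>\varepsilon\}=0$ concern the tail of a series that converges only \emph{conditionally} (the paper stresses this in Remark \ref{Rem:decreasing h}). Your proposed tool — integrate by parts twice and pair pointwise rates of the integrated centered process ($\nu(y)=o(y^{1/r})$ a.s.\ under $\E\xi^r<\infty$, or $O_{\Prob}(c(y))$ in (C3)) against the decay \eqref{eq:h''=Ot^(-2-1/r)} of $h''$ — produces integrands of size $o(y^{-1})$, which is not integrable, and no quantitative rate is available to improve this. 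You concede exactly this ("only borderline integrable") and then say the convergence "must be extracted from the integral conditions themselves", but that is a statement of the problem, not an argument. The missing idea (the paper's, borrowed from \cite{Kersting:2012}) is structural, not analytic: after passing to the stationary walk, write $h(S_k^*)-h(\mu k)=h'(\mu k)(S_k^*-\mu k)+\tfrac12 h''(\theta_k)(S_k^*-\mu k)^2$ by Taylor's formula. The $h''$-remainder \emph{is} pathwise summable (it comes with $(S_k^*-\mu k)^2=o(k^{2/r})$, giving $o(k^{-(2-1/r)})$ with $2-1/r>1$), while the first-order term — the one your pathwise bounds cannot handle — is rearranged by Abel summation into $\sum_{k}(\xi_k-\mu)\sum_{j\geq k}h'(\mu j)$, a series of \emph{independent centered} summands. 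Its convergence is then obtained from the Kolmogorov three-series theorem \cite[p.~117]{Chow+Teicher:1997}, and it is precisely here that \eqref{eq:int h^r<infty} and \eqref{eq:int h^alphal(1/h)<infty} enter, via $\sum_{j\geq k}(-h'(\mu j))\asymp h(\mu k)/\mu$. Independence, not pathwise size, is what makes a conditionally convergent series converge; without this (or an equivalent martingale argument) your proof of (C2)/(C3) does not close.

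A secondary structural remark: even granting the existence of $X^*_\circ$, your route still needs the uniform tail bound on $B_M(ut)$ for the \emph{zero-delayed} process, which in (C2)/(C3) is essentially as hard as one-dimensional convergence itself; transferring the Cauchy-in-probability property of the stationary tail sums to $N$ requires the one-sided coupling plus the monotone/d.R.i.\ splitting of $h$ that the paper develops in Proposition \ref{Prop:decreasing h} (and the time reversal of Proposition \ref{Prop:N^ backwards in time}), none of which appears explicitly in your outline. The paper sidesteps the need for any uniform-in-$t$ tail control by a different finite-dimensional scheme: it splits at the midpoint $m_1t=(1+u)t/2$, shows the far part vanishes by moment bounds (Proposition \ref{Prop:moment convergence}) or Marcinkiewicz--Zygmund rates, and obtains asymptotic independence by conditioning on the first renewal after $m_1t$ and invoking one-dimensional convergence at times tending to infinity — a regenerative argument that only ever uses convergence, never uniformity. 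Your fixed-window split with asymptotic independence of windows is a legitimate alternative skeleton, but as proposed it carries a heavier burden than the paper's, and the load-bearing steps are the ones left unproved.
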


\begin{Rem} \label{Rem:decreasing h}
The cases (C2) and (C3) of Theorem \ref{Thm:decreasing h}
impose, besides conditions on the law of $\xi$, smoothness and integrability conditions on $h$.
The smoothness conditions may seem rather restrictive
but are an essential ingredient of our proof
which is based on an idea we have learned in \cite{Kersting:2012}.
We believe that in each assertion (C1)--(C3), given the respective assumption on the law of $\xi$,
the corresponding integrability condition is close to optimal.
In a sense, the extra smoothness conditions in (C2) and (C3) are the price one has to pay for this precision
(we do not claim, however, that the smoothness conditions are indeed necessary).
For comparison, we mention the following.
Assuming nothing beyond the standing conditions of the theorem
(in particular, not requiring $h$ to be differentiable)
we can prove that \eqref{eq:X_o(ut)->} holds
under more restrictive integrability conditions:
\begin{equation*}
\E \xi^r ~<~ \infty \quad \text{and} \quad
\int_{[b,\,\infty)}y^{1/r} \, \mathrm{d}(-h(y)) ~<~ \infty
\end{equation*}
for some $1 < r <2$, and
\begin{equation*}
\Prob\{\xi>x\} ~\sim~ x^{-\alpha}\ell(x)    \quad   \text{as } x
\to \infty \quad \text{and} \quad \int_{[b,\,\infty)} c(y)
\mathrm{d}(-h(y)) ~<~ \infty
\end{equation*}
for some $1 < \alpha < 2$ and some $\ell$ slowly varying at
$\infty$, respectively, where $b\geq 0$ is such that $h$ is
decreasing on $[b,\infty)$. Without going into the details, we
mention that the conditions $\int_{[b,\,\infty)}y^{1/r}
\mathrm{d}(\!-h(y))<\infty$ and $\int_{[b,\,\infty)} c(y)
\mathrm{d}(\!-h(y)) < \infty$ are sufficient for the a.s.\ {\it
absolute} convergence of the improper integral
$\int_{[b,\,\infty)}(N^*(y)\!-\!y/\mu)\mathrm{d}(\!-h(y))$,
whereas the conditions \eqref{eq:int h^r<infty} and \eqref{eq:int
h^alphal(1/h)<infty} are sufficient for the a.s.\ {\it
conditional} convergence of that integral.
\end{Rem}

\begin{Exa} \label{Exa:Miller's theorem not true}
Let $h(t) := (1 \wedge 1/t^2)\1_{\Q}(t)$, $t \geq 0$, where $\Q$
denotes the set of rationals. Let the distribution of
$\xi$ be such that $\Prob\{\xi \in \Q \cap (0,1]\} = 1$ and
$\Prob\{\xi = r\} > 0$ for all $r \in \Q \cap (0,1]$. Then the
distribution of $\xi$ is non-lattice. From \eqref{eq:S_0^*} we
conclude that the distribution of $S^*_0$ is continuous w.r.t.\
Lebesgue measure and concentrated on $[0,1]$.
Therefore, $\Prob\{S^*_0 \in [0,1] \cap (\R \setminus \Q)\} = 1$.
Since the $\xi_k$ take rational values a.s., all $S^*_k$
take irrational values on a set of probability $1$.
Consequently, $X^* = \sum_{k \geq 0} h(S^*_k) = 0$ a.s.
On the other hand, in the given situation, $X(t)$ does not converge to $0$
in distribution when $t$ approaches $+\infty$ along a sequence of
rationals. In fact, for $t \in \Q$, $X(t) = Y(t)$ a.s.\ where
$Y(t) = \sum_{k \geq 0} f(t-S_k) \1_{\{S_k \leq t\}}$ with $f(t) = 1 \wedge 1/t^2$ for $t \geq 0$.
Therefore, from Theorem \ref{Thm:dRi h} we conclude that
\begin{equation*}
X(t)    ~=~ Y(t)    ~\stackrel{\mathrm{d}}{\to}~    \sum_{k \geq 0} f(S^*_k)    \quad   \text{as } t \to \infty,\ t \in \Q.
\end{equation*}
Plainly, the latter random variable is positive a.s.
\end{Exa}

Example \ref{Exa:Miller's theorem not true} does not only demonstrate that Theorem 6.1 in \cite{Miller:1974}
fails when assuming only that $\lim_{t \to \infty} h(t) = 0$. It moreover shows that also Lebesgue integrability
of $h$ is not enough to ensure \eqref{eq:X(t) when h dRi} to hold.
A stronger assumption such as the direct Riemann integrability of $h$ is needed.

\subsection{Limit theorems with scaling}    \label{subsec:non-endogenous thms}

In the case when scaling is needed our main assumption on the response function $h$
is regular variation at $\infty$:
\begin{equation}    \label{eq:h in R^-beta}
h(t)    ~\sim~  t^{-\beta} \ell_h(t)    \quad   \text{as }t \to \infty
\end{equation}
for some $\beta \geq 0$ and some $\ell_h$ slowly varying at $\infty$.
Recall that $\ell_h(t) > 0$ for all $t \geq 0$ by the definition of slow variation, see \textit{e.g.}~\cite{Bingham+Goldie+Teugels:1989}.
Note further that the functions $h$ with $\lim_{t \to \infty} h(t) = b \in (0,\infty)$ are
covered by condition \eqref{eq:h in R^-beta} with $\beta=0$ and
$\lim_{t \to \infty} \ell_h(t)=b$.

\begin{Theorem} \label{Thm:non-endogenous & mu<infty}
Let $h:\R_+ \to \R$ be locally bounded, measurable and eventually decreasing.
Further, let $(W_{2}(u))_{u\geq 0}$ denote a standard Brownian motion
and, for $1 < \alpha < 2$, let $(W_{\alpha}(u))_{u\geq 0}$ denote an $\alpha$-stable L\'{e}vy motion\footnote{
For the definition of $\alpha$-stable L\'evy motion see \textit{e.g.}~\cite[Example 3.1.3 or Definition 7.5.1]{Samorodnitsky+Taqqu:1994}.}
such that $W_{\alpha}(1)$ has the characteristic function
\begin{equation}    \label{eq:stable ch f}
z \mapsto \exp\big\{-|z|^\alpha \Gamma(1\!-\!\alpha)(\cos(\pi\alpha/2)+\i\sin(\pi\alpha/2)\, {\sign}(z))\big\}, \ z \in \R
\end{equation}
with $\Gamma(\cdot)$ denoting the gamma function.
\begin{itemize}
    \item[(A1)]
        Suppose $\sigma^2:=\Var \xi<\infty$.
        If \eqref{eq:h in R^-beta} holds for some $\beta\in (0,1/2)$, then
        \begin{equation*}
        \frac{X(ut)-\mu^{-1} \int_{0}^{ut} h(y)\dy}{\sqrt{\sigma^2\mu^{-3}t}h(t)}
        ~\stackrel{\mathrm{f.d.}}{\Rightarrow}~ \int_{[0,\,u]}(u-y)^{-\beta} \, \mathrm{d}W_2(y)
        \quad   \text{as } t\to\infty,
        \end{equation*}
        whereas if \eqref{eq:h in R^-beta} holds
        with $\beta=0$, the limiting process is $(W_2(u))_{u \geq 0}$.
    \item[(A2)]
        Suppose $\sigma^2=\infty$ and that, for some $\ell$ slowly varying at $\infty$,
        \begin{equation*}
        \E[\xi^2 \1_{\{\xi \leq t\}}]    ~\sim~  \ell(t) \quad   \text{as } t\to\infty.
        \end{equation*}
        Let $c(t)$ be any positive continuous function such that $\lim_{t \to \infty} \frac{t \ell(c(t))}{c(t)^2}=1$.
        If condition \eqref{eq:h in R^-beta} holds with $\beta \in (0,1/2)$, then
        \begin{equation*}
        \frac{X(ut)-\mu^{-1}\int_{0}^{ut} h(y) \dy}{\mu^{-3/2}c(t)h(t)}
        ~\stackrel{\mathrm{f.d.}}{\Rightarrow}~ \int_{[0,\,u]}(u-y)^{-\beta} \, \mathrm{d}W_2(y)  \quad   \text{as } t\to\infty,
        \end{equation*}
        whereas if \eqref{eq:h in R^-beta} holds with $\beta=0$, the limiting process is $(W_2(u))_{u \geq 0}$.
    \item[(A3)]
        Suppose that, for some $1 < \alpha < 2$ and some $\ell$ slowly varying at $\infty$,
        \begin{equation*}
        \Prob\{\xi>t\} ~\sim~ t^{-\alpha}\ell(t)    \quad   \text{as }  t \to \infty.
        \end{equation*}
        Let $c(t)$ be any positive continuous function such that $\lim_{t \to \infty} \frac{t\ell(c(t))}{c(t)^\alpha}=1$.
        If condition \eqref{eq:h in R^-beta} holds with $\beta\in (0,1/\alpha)$, then
        \begin{equation*}
        \frac{X(ut)-\mu^{-1}\int_0^{ut}h(y)\dy}{\mu^{-1-1/\alpha}c(t)h(t)}
        ~\stackrel{\mathrm{f.d.}}{\Rightarrow}~ \int_{[0,\,u]}(u-y)^{-\beta} \,\mathrm{d}W_{\alpha}(y)
        \quad   \text{as }  t\to\infty,
        \end{equation*}
        whereas if \eqref{eq:h in R^-beta} holds with $\beta=0$, the limiting process is $(W_{\alpha}(u))_{u \geq 0}$.
\end{itemize}
\end{Theorem}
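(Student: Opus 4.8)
The plan is to reduce the whole theorem to a single functional limit theorem for the centered renewal counting process and then to read off the limit by passing that convergence through a (suitably regularized) stochastic integral. Writing $\widetilde N(y) := N(y) - y/\mu$ and substituting $y \mapsto ut-y$ in the centering term, one has the exact identity
\[
G(ut) := X(ut) - \mu^{-1}\int_0^{ut} h(y)\,\mathrm{d}y = \int_{[0,ut]} h(ut-y)\,\mathrm{d}\widetilde N(y).
\]
After the change of variables $y = tv$ and division by $a_t h(t)$ (with $a_t = \sqrt{\sigma^2\mu^{-3}t}$ in (A1), $a_t=\mu^{-3/2}c(t)$ in (A2), $a_t = \mu^{-1-1/\alpha}c(t)$ in (A3)), this becomes
\[
\frac{G(ut)}{a_t h(t)} = \int_{[0,u]} g_t(v)\,\mathrm{d}W_t(v),\qquad g_t(v):=\frac{h(t(u-v))}{h(t)},\quad W_t(v):=\frac{\widetilde N(tv)}{a_t}.
\]
Thus everything reduces to showing that this stochastic integral, with \emph{deterministic} integrand, converges to $\int_{[0,u]}(u-v)^{-\beta}\,\mathrm{d}W(v)$, jointly over finitely many levels $u_1,\dots,u_n$ (the self-term at $y=0$, of order $h(ut)$, is negligible after normalization and is absorbed automatically).

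Two ingredients feed into this. First, the functional central limit theorem for the centered renewal process: $W_t \Rightarrow W$ in the Skorokhod space $D[0,\infty)$, where $W=W_2$ is Brownian motion in (A1)/(A2) and $W=W_\alpha$ is the $\alpha$-stable L\'evy motion of \eqref{eq:stable ch f} in (A3). In (A1) this is the classical renewal FCLT; in (A2) and (A3) it is the domain-of-attraction version obtained by inverting the stable functional limit theorem for the random walk $(S_n)$, which I would cite or derive from the increment convergence for the stationary coupling of Subsection~\ref{subsec:stationary renewal processes and coupling}. Second, the regular variation \eqref{eq:h in R^-beta} gives, by the uniform convergence theorem, $g_t(v)\to g(v):=(u-v)^{-\beta}$ locally uniformly on $[0,u)$, together with weak convergence of the finite measures $\mathrm{d}g_t\to\mathrm{d}g$ on each $[0,u-\delta]$; and by Karamata's theorem $\int_0^u g_t^p\,\mathrm{d}v\to\int_0^u g^p\,\mathrm{d}v<\infty$ with $p=2$ in (A1)/(A2) and $p=\alpha$ in (A3). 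The hypotheses $\beta<1/2$ and $\beta<1/\alpha$ are precisely what put $g\in \mathcal{L}^2$ respectively $g\in\mathcal{L}^\alpha$, so that $\int_{[0,u]} g\,\mathrm{d}W$ is well defined.

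The singularity of $g$ at $v=u$ obstructs a direct application of the continuous mapping theorem (a naive global integration by parts even produces a boundary term $h(0)\widetilde N(ut)$ that diverges after normalization, cancelling only against part of the integral), so I would argue by truncation. Fix $\delta\in(0,u)$ and split at $v=u-\delta$. On the bulk $[0,u-\delta]$ the integrand is bounded and $g_t\to g$ uniformly; integrating by parts (and using $W_t(0^-)=0$),
\[
\int_{[0,u-\delta]} g_t\,\mathrm{d}W_t = g_t(u-\delta)\,W_t(u-\delta) - \int_{[0,u-\delta]} W_t(v)\,\mathrm{d}g_t(v),
\]
whose two terms are continuous functionals of $W_t$ (here $\mathrm{d}g_t\to\mathrm{d}g$ weakly), so the continuous mapping theorem gives $\int_{[0,u-\delta]}g_t\,\mathrm{d}W_t\Rightarrow\int_{[0,u-\delta]}g\,\mathrm{d}W$; driving all levels $u_1,\dots,u_n$ through the \emph{same} $W_t$ yields this jointly. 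The truncation error on the limit side vanishes, $\int_{(u-\delta,u]}g\,\mathrm{d}W\to 0$ as $\delta\downarrow 0$, by the isometry $\E|\int_{(u-\delta,u]}g\,\mathrm{d}W_2|^2=\int_{u-\delta}^u g^2\,\mathrm{d}v\to 0$ in (A1)/(A2), and by $\|g\,\1_{(u-\delta,u]}\|_{\mathcal{L}^\alpha}\to 0$ for the stable integral in (A3).

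The heart of the proof, and the step I expect to be the main obstacle, is the uniform negligibility of the pre-limit contribution from the window just below $ut$,
\[
\lim_{\delta\downarrow 0}\ \limsup_{t\to\infty}\ \Prob\Big\{\Big|\int_{(u-\delta,u]} g_t\,\mathrm{d}W_t\Big|>\varepsilon\Big\}=0 .
\]
Unwound, the random variable inside is $(a_t h(t))^{-1}$ times the centered shot-noise mass $\int_{((u-\delta)t,\,ut]} h(ut-y)\,\mathrm{d}\widetilde N(y)$ collected over the last $\delta t$ units of time. In (A1)/(A2) I would bound its second moment, showing via the variance asymptotics for renewal integrals (and the near-stationarity of $N$ close to $ut$, accessed through the coupling with $N^*$) that it is asymptotically $\int_{u-\delta}^u(u-v)^{-2\beta}\,\mathrm{d}v=\delta^{1-2\beta}/(1-2\beta)$, uniformly in $t$; Chebyshev then gives the claim. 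In (A3) there is no second moment, so the estimate must be run at the level of $\mathcal{L}^\alpha$/tail bounds adapted to the stable domain, controlling the window contribution by $\int_{u-\delta}^u(u-v)^{-\alpha\beta}\,\mathrm{d}v$; this, together with the more delicate stochastic-integral convergence in the $J_1$ (or $M_1$) topology in the presence of a singular integrand, is the genuinely hard part. Granting these estimates, the standard approximation theorem (Billingsley) combines the bulk convergence, the limit-side truncation error, and the uniform negligibility to give the claimed $\frac{G(ut)}{a_t h(t)}\stackrel{\mathrm{f.d.}}{\Rightarrow}\int_{[0,u]}(u-v)^{-\beta}\,\mathrm{d}W(v)$. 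Finally, the case $\beta=0$ is immediate: then $g_t\to 1$ uniformly on all of $[0,u]$ with no singularity, and the continuous mapping theorem alone delivers the limit $\int_{[0,u]}\mathrm{d}W=W(u)$, i.e.\ the process $W$ itself.
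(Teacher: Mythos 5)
Your setup is essentially the paper's own: after integration by parts the problem reduces to the renewal FCLT \eqref{eq:FLT for N(t)} plus a truncation argument near the singular endpoint, glued together by Billingsley's Theorem 3.2. But the step you yourself flag as ``the genuinely hard part'' --- the uniform negligibility
\[
\lim_{\delta\downarrow 0}\limsup_{t\to\infty}\Prob\Big\{\Big|\int_{(u-\delta,u]}g_t\,\mathrm{d}W_t\Big|>\varepsilon\Big\}=0
\]
--- is left unproved, and the route you sketch for it does not work in case (A3). There the normalized window contribution has no useful second moment: when $\Prob\{\xi>t\}\sim t^{-\alpha}\ell(t)$, $1<\alpha<2$, one has $\Var N(t)\asymp t^{3-\alpha}$ up to slowly varying corrections, and $3-\alpha>2/\alpha$ for all $\alpha\in(1,2)$, so $\Var N(t)/c(t)^2\to\infty$; a Chebyshev bound is therefore unavailable, and you offer no substitute beyond the phrase ``tail bounds adapted to the stable domain.'' The paper's resolution is different and uniform across (A1)--(A3): it works with \emph{first} absolute moments. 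Proposition \ref{Prop:moment convergence} establishes $\E|N(t)-\mu^{-1}t|\sim \E|W_\alpha(1)|\,g(t)$ (and the same for $N^*$), proved via Wald's identity, overshoot asymptotics $\E(S_{N(t)}-t)$, and uniform integrability of $|S_n-\mu n|/c_n$. Combined with the stationary coupling bounds \eqref{eq:N leq N*+O(1)}--\eqref{eq:N geq N*+O(1)}, Markov's inequality, and Potter/Karamata estimates (Lemma \ref{Lem:g regularly varying} and Theorem 1.6.4 in \cite{Bingham+Goldie+Teugels:1989}), this yields exactly the limit \eqref{eq:at u_k=1} that your argument needs. Without this (or an equivalent) ingredient your proof is incomplete precisely where the theorem is hard; note also that the restriction $\beta<1/\alpha$ enters through the finiteness of $\int_{[0,t]}g(y)\,\mathrm{d}(-h^*(y))/(g(t)h^*(t))$, i.e.\ through these first-moment estimates, not only through membership of $(u-y)^{-\beta}$ in $\mathcal{L}^\alpha$.

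A second, smaller gap: you never regularize $h$. The hypotheses give only a locally bounded, measurable, eventually decreasing $h$, while your window term $\int_{((u-\delta)t,\,ut]}h(ut-y)\,\mathrm{d}\widetilde N(y)$ evaluates $h$ on all of $[0,\delta t]$, including the region near $0$ where $h$ has no structure, and your bulk integration by parts needs $g_t$ to be of bounded variation (and preferably continuous, to avoid common-jump corrections with the step function $W_t$). The paper spends a full subsection replacing $h$ first by a globally decreasing $\widehat h$ and then by the continuous decreasing smoothing $h^*(t)=\E\,\widehat h\big((t-\theta)^+\big)$, $\theta$ exponential, and verifying via \eqref{eq:aux4}, \eqref{eq:aux2}, \eqref{eq:int h*-h^=h^(0)} and Lemma \ref{Lem:reduction of integrals} that the substitution changes nothing after normalization (this uses $g(t)h(t)\to\infty$, i.e.\ again $\beta<1/\alpha$). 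Some such reduction must be built into your argument before the integration-by-parts and measure-convergence steps are legitimate.
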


\begin{Rem} \label{Rem:slowly varying scaling}
In Theorem \ref{Thm:non-endogenous & mu<infty},
we only consider limit theorems with regularly varying scaling.
However, there are cases in which the scaling function is slowly varying.
The treatment of these requires different techniques.
\end{Rem}

We do not claim that the next result, which is needed in the proof
of Theorem \ref{Thm:non-endogenous & mu<infty}, is new.
However, with the exception of assertion (A1), which is Theorem 3.8.4(i) in
\cite{Gut:2009}, we have been unable to locate it in the
literature. In the proposition, we retain the notation of Theorem \ref{Thm:non-endogenous & mu<infty}.

\begin{Prop}    \label{Prop:moment convergence}
The following assertions hold.
\begin{itemize}
    \item[(A1)]
        If $\sigma^2:= \Var \xi<\infty$, then
        \begin{equation*}
        \lim_{t \to \infty} \frac{\E |N(t)-\mu^{-1}t|}{\sqrt{t}}
        ~=~ \frac{\sigma}{\mu^{3/2}}\E |W_2(1)| ~=~ \sigma \sqrt{\frac{2}{\pi \mu^3}}.
        \end{equation*}
    \item[(A2)]
        Suppose $\sigma^2=\infty$ and that, for some $\ell$ slowly varying at $\infty$,
        \begin{equation*}
        \E[\xi^2 \1_{\{\xi \leq t\}}]  ~\sim~  \ell(t) \quad   \text{as } t \to \infty.
        \end{equation*}
        Let $c(t)$ be a positive function satisfying $\lim_{t \to \infty} t \ell(c(t))/c(t)^2=1$.
        Then
        \begin{equation*}
        \lim_{t \to \infty} \frac{\E |N(t)-\mu^{-1}t|}{c(t)} ~=~ \frac{1}{\mu^{3/2}} \E |W_2(1)| ~=~ \sqrt{\frac{2}{\pi\mu^3}}.
        \end{equation*}
    \item[(A3)]
        Suppose $\Prob\{\xi>t\} \sim t^{-\alpha} \ell(t)$ as $t \to \infty$ for some $\alpha \in (1,2)$ and some $\ell$ slowly varying at $\infty$.
        Then
        \begin{equation*}
        \lim_{t \to \infty} \frac{\E |N(t)-\mu^{-1}t|}{c(t)}
        ~=~ \frac{\E |W_{\alpha}(1)|}{\mu^{1+1/\alpha}}
        ~=~ \frac{2\Gamma(1-\frac{1}{\alpha})|\Gamma(1-\alpha)|^{1/\alpha} \sin(\frac{\pi}{\alpha})}{\pi\mu^{1+1/\alpha}}
        \end{equation*}
        where $c(t)$ is a positive function such that $\lim_{t \to \infty} t \ell(c(t)) c(t)^{-\alpha} = 1$.
\end{itemize}
In all cases (A1)-(A3), $\E |N^*(t)-\mu^{-1} t| \sim \E |N(t)-\mu^{-1}t|$ as $t \to \infty$.
\end{Prop}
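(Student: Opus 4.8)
The plan is to deduce the moment convergences in (A2) and (A3) --- assertion (A1) being \cite[Theorem~3.8.4(i)]{Gut:2009} --- from two ingredients: a one-dimensional distributional limit theorem for the centred renewal counting process, and a uniform integrability estimate. For the first ingredient I would exploit the duality $\{N(t) > n\} = \{S_n \leq t\}$ to transfer the classical limit theorem for the random walk to $N(t) - \mu^{-1}t$. Under the hypotheses of (A2) one has $(S_n - n\mu)/c(n) \stackrel{\mathrm{d}}{\to} W_2(1)$, and under those of (A3) one has $(S_n - n\mu)/c(n) \stackrel{\mathrm{d}}{\to} W_\alpha(1)$; indeed, the normalisation of $W_\alpha$ in \eqref{eq:stable ch f} is designed precisely so that this holds with norming $c(n)$, since $n\,\Prob\{\xi > c(n)\} \to 1$. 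Writing $N(t) = \mu^{-1}t + a(t)\,Z_t$, the event $\{Z_t \leq x\}$ coincides with $\{S_n > t\}$ for $n \approx \mu^{-1}t + a(t)x$, and since $t - n\mu \approx -\mu\,a(t)\,x$, while $c(n) \sim \mu^{-1/2}c(t)$ in (A2) and $c(n) \sim \mu^{-1/\alpha}c(t)$ in (A3) by regular variation of $c$, the choices $a(t) = \mu^{-3/2}c(t)$, respectively $a(t) = \mu^{-1-1/\alpha}c(t)$, drive the deviation level $(t - n\mu)/c(n)$ to $-x$. This yields $(N(t) - \mu^{-1}t)/a(t) \stackrel{\mathrm{d}}{\to} -W_*(1)$, with $W_*$ equal to $W_2$ or $W_\alpha$; the sign is immaterial for the first absolute moment.

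The main work, and the principal obstacle, is the second ingredient: uniform integrability of $\{\,|N(t) - \mu^{-1}t|/a(t) : t \geq 1\,\}$. In cases (A1) and (A2) the limit has a finite second moment, so it suffices to show $\sup_{t \geq 1} a(t)^{-2}\,\E[(N(t) - \mu^{-1}t)^2] < \infty$. Writing $\E[(N(t) - \mu^{-1}t)^2] = \Var N(t) + (U(t) - \mu^{-1}t)^2$ and using that $U(t) - \mu^{-1}t$ stays bounded (key renewal theorem) together with the variance asymptotics $\Var N(t) \sim \sigma^2\mu^{-3}t$ in the finite-variance regime and $\Var N(t) \sim \mu^{-3}c(t)^2$ in the infinite-variance regime of (A2), this boundedness follows. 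In case (A3) the second moment of $N(t)$ grows strictly faster than $a(t)^2$, so an $L^2$-bound is unavailable; instead I would establish $\sup_{t \geq 1} a(t)^{-p}\,\E|N(t) - \mu^{-1}t|^p < \infty$ for a fixed $p \in (1,\alpha)$, which forces uniform integrability of the first absolute moment via $\E[\,|Y|\,\1_{\{|Y| > K\}}] \leq K^{1-p}\E|Y|^p$. For this $L^p$-bound I would once more pass through the duality, bounding $\Prob\{|N(t) - \mu^{-1}t| > x\}$ by deviations of $S_n - n\mu$ with $n$ near $\mu^{-1}t \pm x$: in the moderate range $x = O(a(t))$ one uses the von~Bahr--Esseen inequality $\E|S_n - n\mu|^p \leq 2n\,\E|\xi - \mu|^p$, finite since $p < \alpha$, while the large-deviation range is governed by a single big jump and contributes at the same order $a(t)^p$, to be controlled by a Fuk--Nagaev type estimate. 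Integrating the two tail bounds against $x^{p-1}\,\dx$ and verifying that both contributions are $O(a(t)^p)$ uniformly in $t$ is the delicate point of the argument.

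Granted distributional convergence and uniform integrability, $\E|N(t) - \mu^{-1}t|/a(t) \to \E|W_*(1)|$ follows at once, and it remains only to evaluate the constants. Here $\E|W_2(1)| = \sqrt{2/\pi}$ gives the Gaussian constants in (A1) and (A2), while $\E|W_\alpha(1)|$ is computed from \eqref{eq:stable ch f} by the classical formula for the absolute moments of a strictly stable law, yielding $\E|W_\alpha(1)| = 2\,\Gamma(1 - 1/\alpha)\,|\Gamma(1 - \alpha)|^{1/\alpha}\sin(\pi/\alpha)/\pi$ as asserted in (A3).

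Finally, for the equivalence $\E|N^*(t) - \mu^{-1}t| \sim \E|N(t) - \mu^{-1}t|$ I would use the exact representation $N^*(t) = N((t - S_0^*)^+)$ arising from $S_k^* = S_0^* + S_k$, with $S_0^*$ independent of $(\xi_k)_{k \in \N}$ and a.s.\ finite. On $\{S_0^* \leq t\}$ one has $N^*(t) - \mu^{-1}t = (N(t - S_0^*) - \mu^{-1}(t - S_0^*)) - \mu^{-1}S_0^*$, so that for each fixed $s$ the regular variation of $a$, giving $a(t - s) \sim a(t)$ and $\mu^{-1}s = o(a(t))$, yields $\E|N(t - s) - \mu^{-1}t| \sim \E|N(t) - \mu^{-1}t|$. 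Since $S_0^*$ may have infinite mean in (A2) and (A3), passing to the limit under the expectation over $S_0^*$ is done by truncation: the uniform moment bounds of the second paragraph control the contribution of bounded $S_0^*$, while the remaining contribution, together with the boundary term on $\{S_0^* > t\}$, is shown to be $o(a(t))$ using $\mu^{-1}\E[S_0^*\,\1_{\{S_0^* \leq t\}}] = o(a(t))$ and $\mu^{-1}t\,\Prob\{S_0^* > t\} = o(a(t))$. This yields the claimed equivalence and completes the proof.
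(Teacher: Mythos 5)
Your overall strategy --- one-dimensional weak convergence plus uniform integrability of $|N(t)-\mu^{-1}t|/a(t)$ --- is precisely the route the authors consider and reject: they remark that such an argument seems to work only in case (A1), and the obstruction sits exactly where your proposal breaks. In case (A2) you assert $\Var N(t) \sim \mu^{-3}c(t)^2$ and build an $L^2$ bound on it; this is not a known result, and it is false in general. Under (A2) the second moment of $N(t)-\mu^{-1}t$ is dominated by the single-big-jump event: by duality, $\Prob\{N(t) \leq \mu^{-1}t - x\} = \Prob\{S_n > t\}$ with $n = \lfloor \mu^{-1}t - x\rfloor$, and for $c(t) \lesssim x \leq t/(2\mu)$ one has the lower bound $\Prob\{S_n > t\} \gtrsim n \Prob\{\xi > (\mu+2)x\}$ (one huge increment, the remaining $n-1$ typical). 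Integrating $2x\,\dx$ over this range contributes a term of order $t\,(\ell(t)-\ell(c(t)))/\mu^3$ to $\E[(N(t)-\mu^{-1}t)^2]$. Since $\ell$ is only slowly varying, $\ell(t)/\ell(c(t))$ need not stay bounded: for $\ell(t) = e^{\sqrt{\log t}}$ one has $c(t) \approx \sqrt{t\,\ell(c(t))}$, hence $\ell(c(t)) \approx e^{\sqrt{(\log t)/2}}$ and $\ell(t)/\ell(c(t)) \to \infty$. Then $\E[(N(t)-\mu^{-1}t)^2]/c(t)^2 \to \infty$, and your $L^2$-based uniform integrability argument collapses (uniform integrability of the \emph{first} power still holds, but it cannot be reached this way). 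A secondary error in the same step: "$U(t)-\mu^{-1}t$ stays bounded (key renewal theorem)" is false in (A2) and (A3) --- Lorden-type bounds require $\E \xi^2<\infty$, and under (A2) in fact $U(t)-\mu^{-1}t \sim \mathrm{const}\cdot\ell(t)$, cf.\ \eqref{eq:S_N(t)-t asymptotics}. In case (A3) you correctly see that only an $L^p$ bound with $1<p<\alpha$ can work, but the Fuk--Nagaev estimate you invoke is exactly the hard part and is left unproved; after repairing (A2) the same unproved step would be needed there too, with $1<p<2$.

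For contrast, the paper's proof sidesteps uniform integrability for $N$ entirely. Wald's identity gives $\mu \E|N(\mu n)-n| = \E|S_{N(\mu n)}-S_n|$, which is sandwiched between $\E|S_n-\mu n| \mp \E(S_{N(\mu n)}-\mu n)$. Moment convergence for the \emph{random walk}, $\E|S_n - \mu n|/c(n) \to \E|W_\alpha(1)|$, follows from weak convergence plus the classical bound $\sup_n \E\big(|S_n-\mu n|/c(n)\big)^{1+\delta} < \infty$ (Ibragimov--Linnik), and the overshoot term satisfies $\E(S_{N(t)}-t) = o(c(t))$ by Mohan (non-lattice) and Feller/Sgibnev (lattice). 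This treats (A1)--(A3) uniformly, passes from $t = \mu n$ to general $t$ by subadditivity of $N$, and delivers the final assertion in one line: $\E|N(t)-N^*(t)| = U(t)-\mu^{-1}t = \mu^{-1}\E(S_{N(t)}-t) = o(c(t))$. Your treatment of $N^*$ via $N^*(t) = N(t-S_0^*)\1_{\{S_0^* \leq t\}}$ and truncation is workable in outline, but as written it also leans on the uniform moment bounds you have not established.
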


While all the previous statements of this subsection deal with the
case of finite $\mu$, our next two results are concerned with the
case of infinite $\mu$. Here the assumptions on the response
function $h$ are less restrictive.

\begin{Theorem} \label{Thm:non-endogenous & mu=infty}
Let $h:\R_+ \to \R$ be locally bounded and measurable.
Suppose that $\Prob\{\xi>t\} \sim t^{-\alpha} \ell(t)$ as $t \to \infty$
for some $0 < \alpha < 1$ and some $\ell$ slowly varying at $\infty$,
and that $h$ satisfies \eqref{eq:h in R^-beta} for some $\beta\in [0,\alpha]$.
If $\alpha = \beta$, assume additionally that
\begin{equation*}
\lim_{t \to \infty} \frac{h(t)}{\Prob\{\xi>t\}} ~=~ \lim_{t \to \infty} \frac{\ell_h(t)}{\ell(t)} ~=~   c \in (0,\infty]
\end{equation*}
and if $c=\infty$ that there exists an increasing function $u(t)$ such that
\begin{equation*}
\lim_{t \to \infty} \frac{\ell_h(t)}{\ell(t)u(t)} =1.
\end{equation*}
Let $(W_{\alpha}(u))_{u\geq 0}$ denote an inverse $\alpha$-stable subordinator defined by
\begin{equation*}
W_{\alpha}(u)    ~:=~    \inf\{t \geq 0: D_{\alpha}(t)>u\},   \quad   u \geq 0
\end{equation*}
where $(D_{\alpha}(t))_{t \geq 0}$ is an $\alpha$-stable subordinator with
$-\log \E e^{-t D_{\alpha}(1)} = \Gamma(1-\alpha) t^\alpha$ for $t \geq 0$.
Then
\begin{equation*}
\frac{\Prob\{\xi>t\}}{h(t)}X(ut) ~\stackrel{\mathrm{f.d.}}{\Rightarrow}~ \int_{[0,\,u]}(u-y)^{-\beta} \mathrm{d}W_{\alpha}(y) \quad \text{as } t \to \infty.
\end{equation*}
Furthermore, there is convergence of moments:
\begin{align}
\lim_{t \to \infty} \bigg(&\frac{\Prob\{\xi>t\}}{h(t)}\bigg)^k\E X(ut)^k
~=~
\E\left(\int_{[0,\,u]}(u-y)^{-\beta}  \mathrm{d}W_{\alpha}(y)\right)^k   \nonumber   \\
& =~
u^{k(\alpha-\beta)}\frac{k!}{\Gamma(1-\alpha)^k}\prod_{j=1}^k \frac{\Gamma(1-\beta+(j-1)(\alpha-\beta))}{\Gamma(j(\alpha-\beta)+1)},
\quad k\in\N.    \label{eq:convergence of moments}
\end{align}
\end{Theorem}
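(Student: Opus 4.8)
The plan is to prove the theorem by the method of moments: I will show that all (joint) moments of the rescaled variables converge to the moments of the limit and that the limiting law is determined by its moments, so that weak convergence of finite-dimensional distributions follows and the asserted convergence of moments is obtained along the way. The starting point is the asymptotics of the renewal function. Since $\Prob\{\xi>t\}\sim t^{-\alpha}\ell(t)$ with $0<\alpha<1$, Karamata's Tauberian theorem applied to $\widehat U(\lambda)=(1-\E e^{-\lambda\xi})^{-1}$ yields
\[
U(t)~\sim~\frac{1}{\Gamma(1-\alpha)\Gamma(1+\alpha)\,\Prob\{\xi>t\}}\qquad\text{as }t\to\infty .
\]
As $U$ is nondecreasing and the limit continuous, the rescaled renewal measures converge vaguely on $(0,\infty)$,
\[
\Prob\{\xi>t\}\,U(t\,\mathrm{d}w)~\longrightarrow~g'(w)\,\mathrm{d}w,\qquad g'(w):=\frac{w^{\alpha-1}}{\Gamma(1-\alpha)\Gamma(\alpha)} ,
\]
and $g'$ is exactly the density of the potential measure of $W_\alpha$, consistent with $\E W_\alpha(w)=w^{\alpha}/(\Gamma(1-\alpha)\Gamma(1+\alpha))$ for the normalization fixed in the statement.

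\textbf{Computing the moments.} Fix $n$, points $0<u_1<\cdots<u_n$ and exponents $k_1,\dots,k_n$, and put $k:=\sum_jk_j$. Writing $X(u_jt)=\int_{[0,u_jt]}h(u_jt-y)\,N(\mathrm{d}y)$ and expanding the product, the joint moment $\E[\prod_jX(u_jt)^{k_j}]$ becomes the integral of $\prod_i h(u_{q(i)}t-y_i)$ against the $k$-th moment measure $\E[N(\mathrm{d}y_1)\cdots N(\mathrm{d}y_k)]$, where the index $q(i)\in\{1,\dots,n\}$ records the factor the $i$-th variable belongs to. Decomposing this moment measure over the set partitions of $\{1,\dots,k\}$ according to coincidences of the points, the diagonal-free term is the dominant one; by the renewal property its restriction to an ordered configuration $y_{\pi(1)}<\cdots<y_{\pi(k)}$ equals $U(\mathrm{d}y_{\pi(1)})\prod_{i\ge2}U(\mathrm{d}(y_{\pi(i)}-y_{\pi(i-1)}))$. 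Substituting $y_i=tw_i$, multiplication by $(\Prob\{\xi>t\}/h(t))^{k}$ turns each renewal factor into $\Prob\{\xi>t\}U(t\,\mathrm{d}\cdot)$ and each response factor into $h(t(u_{q(i)}-w_i))/h(t)$. By the uniform convergence theorem for regularly varying functions $h(t(u-w))/h(t)\to(u-w)^{-\beta}$ locally uniformly on $[0,u)$, and combined with the measure convergence above this gives
\[
\Big(\frac{\Prob\{\xi>t\}}{h(t)}\Big)^{k}\E\Big[\prod_{j=1}^{n}X(u_jt)^{k_j}\Big]~\longrightarrow~\E\Big[\prod_{j=1}^{n}\Big(\int_{[0,u_j]}(u_j-y)^{-\beta}\,\mathrm{d}W_\alpha(y)\Big)^{\!k_j}\Big].
\]
Here the right-hand side is the matching joint moment of the limiting vector because the moment measures of $W_\alpha$ factorize on ordered simplices through the same density $g'$; this factorization is the Riemann--Liouville semigroup identity $(g')^{*k}(y)=y^{k\alpha-1}/(\Gamma(1-\alpha)^k\Gamma(k\alpha))$, which one checks reproduces the known values $\E W_\alpha(s)^k=k!\,s^{k\alpha}/(\Gamma(1-\alpha)^k\Gamma(1+k\alpha))$. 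Specializing to $n=1$ and evaluating the resulting iterated integral $k!\int_{0<w_1<\cdots<w_k<u}\prod_i(u-w_i)^{-\beta}g'(w_i-w_{i-1})\,\mathrm{d}w$ (with $w_0:=0$) by successive Beta integrals produces the closed form \eqref{eq:convergence of moments}.

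\textbf{The main obstacle.} Justifying the passage to the limit inside the integral is where the bulk of the work lies, and it is the step I expect to be hardest. One must dominate the rescaled integrands uniformly in $t$ near the two singularities present in the limit: the boundary singularity $(u-w)^{-\beta}$ at $w=u$, integrable because $\beta\le\alpha<1$, and the coincidence singularities $(w_i-w_{i-1})^{\alpha-1}$, integrable because $\alpha>0$. This calls for Potter-type bounds on $h$ together with uniform (in $t$) bounds on $\Prob\{\xi>t\}U(tw)$ coming from the monotonicity and regular variation of $U$; in particular the contribution of the zone where some $w_i$ lies within $O(1/t)$ of $u_{q(i)}$, where $h$ is only locally bounded rather than comparable to $h(t)(u-w_i)^{-\beta}$, has to be shown asymptotically negligible. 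The same power counting disposes of the partitions with coincidences: each coincidence lowers the number of surviving renewal factors by at least one, hence, after multiplication by $\Prob\{\xi>t\}^{k}$, contributes $O(\Prob\{\xi>t\})\to0$; thus only the diagonal-free term survives. The borderline case $\alpha=\beta$ must be handled separately, and it is precisely here that the extra hypotheses $\ell_h/\ell\to c\in(0,\infty]$ and the regularizing increasing function $u(t)$ enter, to keep $h(t(u-w))/h(t)$ under uniform control.

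\textbf{From moments to distributions.} It remains to check that the limit is moment-determinate. From the product formula, $m_k:=\E(\int_{[0,u]}(u-y)^{-\beta}\mathrm{d}W_\alpha(y))^{k}\asymp C^{k}(k!)^{1-\alpha}$ as $k\to\infty$, so $\sum_k m_{2k}^{-1/(2k)}=\infty$ and Carleman's condition holds; since the same growth bound applies to every nonnegative linear combination $\sum_j a_j\int_{[0,u_j]}(u_j-y)^{-\beta}\mathrm{d}W_\alpha(y)$, the joint law is determined by its moments through the Cram\'er--Wold device. Convergence of all joint moments therefore upgrades to
\[
\frac{\Prob\{\xi>t\}}{h(t)}X(ut)~\stackrel{\mathrm{f.d.}}{\Rightarrow}~\int_{[0,u]}(u-y)^{-\beta}\,\mathrm{d}W_\alpha(y)\qquad\text{as }t\to\infty ,
\]
while the $n=1$ moment computation is exactly the asserted convergence of moments \eqref{eq:convergence of moments}.
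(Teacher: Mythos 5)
Your route is genuinely different from the paper's, so a brief comparison first. The paper never touches joint moments: finite-dimensional convergence is obtained by combining the functional limit theorem \eqref{eq:FLT for N(t) when mu=infty} for $N(t\,\cdot)/g(t)$ (Meerschaert--Scheffler) with the uniform convergence theorem for regularly varying functions and a continuous-mapping/integration lemma on $[0,\rho u_k]$, then letting $\rho\uparrow 1$ via Theorem 3.2 of \cite{Billingsley:1999}; the moment convergence \eqref{eq:convergence of moments} is proved separately and only for one-dimensional marginals (Lemma~\ref{Lem:convergence of moments}), by induction on $k$ using the first-renewal decomposition $X(t)=h(t)+X_*(t-\xi_1)\1_{\{\xi_1\leq t\}}$, which turns $\E X(t)^k$ into a renewal equation $\int_{[0,t]}r_k(t-y)\,U(\dy)$ and reduces everything to first-moment asymptotics. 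Your skeleton (factorial moment measures of $N$, vague convergence of $\Prob\{\xi>t\}U(t\,\mathrm{d}w)$, the Bingham-type factorization of the moment measures of $W_\alpha$, Carleman plus Cram\'er--Wold) is sound in outline, and, if completed, would have the attraction of delivering both assertions of the theorem in a single argument.

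There is, however, a genuine gap: the step you yourself call ``the main obstacle'' is precisely the mathematical content of the theorem, and it is not carried out; moreover, the heuristics you offer in its place fail on part of the parameter range. (i) \emph{Boundary zone.} One needs, in effect, $\lim_{\rho\uparrow 1}\limsup_{t\to\infty}\frac{\Prob\{\xi>t\}}{h(t)}\int_{[\rho t,\,t]}h(t-y)\,U(\dy)=0$, together with its block analogues. This is the paper's Lemma~\ref{Lem:convergence of 1st moment at t=1}, and its proof is not a domination argument: it goes through the undershoot $Z(t)=t-S_{N(t)-1}$ and Dynkin's limit theorem for $Z(t)/t$, and it is exactly here that the extra hypotheses for $\alpha=\beta$ are consumed --- in particular, for $c=\infty$, the increasing majorant $u(t)$ of $\ell_h/\ell$ is what allows the bound $\E\big[u(Z(t))\1_{\{Z(t)\leq(1-\rho)t\}}\big]\leq u(t)\Prob\{Z(t)\leq(1-\rho)t\}$, since $f=h/\Prob\{\xi>\cdot\}$ itself need be neither monotone nor bounded. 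Your sketch names the borderline case but supplies no mechanism for it. (ii) \emph{Coincidences.} The claim that each coincidence yields a spare factor $O(\Prob\{\xi>t\})$ is false in general: whenever a block has size $m\geq 1/\beta$ (which happens for every $\beta>0$ once $k$ is large), the limiting singularity $(u-w)^{-m\beta}$ is non-integrable, so the block integral $\int_{[0,ut]}h(ut-y)^m\,U(\dy)$ is no longer of order $U(t)h(t)^m$ times a constant; it is governed by renewal points at distance $O(1)$ from $ut$, where $h$ is merely locally bounded. Worse, when $\alpha=\beta$ and $c<\infty$ the normalization $g(t)h(t)$ stays bounded, so no surplus power of $\Prob\{\xi>t\}$ is available at all; negligibility of the diagonal terms must then be proved by a renewal argument ($Z(t)\stackrel{\Prob}{\to}\infty$ plus dominated convergence, as in Lemma~\ref{Lem:mu=infty EX(t)}), not by power counting. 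Until these estimates are established for all partitions and all parameter cases, the proposal is a plan rather than a proof; the paper's inductive renewal-equation route (Lemma~\ref{Lem:convergence of moments}), suitably extended to joint moments, is one concrete way you could fill it.
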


\begin{Rem} \label{Rem:Iksanov:2012}
Let the assumptions concerning $\xi$ in Theorem \ref{Thm:non-endogenous & mu<infty}
or Theorem \ref{Thm:non-endogenous & mu=infty}
be in force with eventually decreasing $h$ and with condition
\eqref{eq:h in R^-beta} replaced by
$-h(t) \sim t^\beta \ell_h(t)$ as $t \to \infty$
for some $\beta \geq 0$ and some $\ell_h$ slowly varying at
$\infty$. No further restrictions on $\beta$ like those appearing
in Theorem \ref{Thm:non-endogenous & mu<infty} are needed.
Then the limit relations of the theorems remain valid
when the limiting processes are replaced by
$\int_{[0,\,u]} (u-y)^\beta \mathrm{d} W_{\alpha}(y)$, \textit{cf.}\ Theorem 1.1 in \cite{Iksanov:2012}.
\end{Rem}

From Theorem \ref{Thm:non-endogenous & mu=infty} it follows that
if $\alpha=\beta$ and
\begin{equation}    \label{eq:h(t)/P(xi>t)->c}
\lim_{t \to \infty} \frac{h(t)}{\Prob\{\xi>t\}} = c \in  (0,\infty),
\end{equation}
then $X(t) \stackrel{\mathrm{d}}{\to} \Exp(c^{-1})$ as $t \to \infty$
where $\Exp(c^{-1})$ denotes an exponentially distributed random variable with mean $c$.
In fact, the one-dimensional convergence takes place under the sole assumption \eqref{eq:h(t)/P(xi>t)->c}.
In particular, the regular variation of neither $h(t)$, nor $\Prob\{\xi>t\}$ is needed.

\begin{Prop}    \label{Prop:exponential limit}
Assume that $\mu=\infty$ and let $h:\R_+\to\R_+$ be a measurable
and locally bounded function which satisfies condition \eqref{eq:h(t)/P(xi>t)->c}. Then
\begin{equation*}
\lim_{t \to \infty} \E X(t)^k   ~=~ c^k k!, \quad   k \in \N,
\end{equation*} which
entails $X(t) \stackrel{\mathrm{d}}{\to} \Exp(c^{-1})$ as $t \to \infty$.
\end{Prop}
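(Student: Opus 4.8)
The plan is to use the method of moments: the law $\Exp(c^{-1})$ has $k$-th moment $c^k k!$ and is determined by its moments (its moment generating function is finite near the origin, or Carleman's condition applies), so once $\E X(t)^k \to c^k k!$ is established for every $k \in \N$, the convergence $X(t) \stackrel{\mathrm{d}}{\to} \Exp(c^{-1})$ follows by the method of moments (Fr\'echet--Shohat). Throughout write $\overline{F}(t) := \Prob\{\xi > t\}$ and $U_0 := U - \delta_0 = \sum_{m \geq 1}\Prob\{S_m \in \cdot\}$. Two elementary reductions come first. Since $\mu = \infty$ we have $\overline{F}(t) > 0$ for all $t$, and condition \eqref{eq:h(t)/P(xi>t)->c} together with the local boundedness of $h$ yields a constant $C < \infty$ with $0 \leq h(s) \leq C\overline{F}(s)$ for all $s \geq 0$; in particular $h(s) \to 0$. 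The backbone of the whole computation is the renewal identity $\int_{[0,t]}\overline{F}(t-y)\,U(\mathrm{d}y) = 1$ for all $t \geq 0$ (equivalently $\int_{(0,s]}\overline{F}(s-v)\,U_0(\mathrm{d}v) = F(s)$), which is just the statement that a.s.\ there is a unique $k$ with $S_k \leq t < S_{k+1}$.

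First I would expand $\E X(t)^k = \sum_{(i_1,\dots,i_k)}\E\big[\prod_j h(t-S_{i_j})\1_{\{S_{i_j}\leq t\}}\big]$ and group the index tuples by their equality pattern. The tuples with pairwise distinct entries contribute exactly $k!\,L_k(t)$, where $L_k(t) := \sum_{i_1 < \dots < i_k}\E\big[\prod_{j=1}^k h(t-S_{i_j})\1_{\{S_{i_k}\leq t\}}\big]$, while every pattern in which an index is repeated carries a factor $h(t-S_i)^b$ with $b \geq 2$. Because $h \to 0$, such factors are negligible: using $h \leq C\overline{F}$ and $\overline{F}^b \leq \overline{F}$, the repeated-index contributions reduce to integrals of the type $\int_{(0,s]}\overline{F}(s-v)^2\,U_0(\mathrm{d}v)$, which tend to $0$ (split at $s - v = A$: on $\{s-v \geq A\}$ bound $\overline{F}^2 \leq \overline{F}(A)\overline{F}$ and invoke the identity, on $\{s-v < A\}$ use Blackwell's theorem $U_0((s-A,s]) \to 0$, which for $\mu = \infty$ holds in both the lattice and the non-lattice case). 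Hence $\E X(t)^k = k!\,L_k(t) + o(1)$, and it remains to prove $L_k(t) \to c^k$.

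For $L_k$ I would set up a recursion by peeling off the largest index. Writing $S_{i_k} = S_{i_{k-1}} + (S_{i_k}-S_{i_{k-1}})$ and using that the increment is independent of $S_{i_{k-1}}$ and distributed as a renewal point, summation over the number of steps gives $L_k(t) = \sum_{i_1<\dots<i_{k-1}}\E\big[\prod_{j=1}^{k-2}h(t-S_{i_j})\, h(t-S_{i_{k-1}})\,G(t-S_{i_{k-1}})\,\1_{\{S_{i_{k-1}}\le t\}}\big]$, where $G(s) := \int_{(0,s]}h(s-v)\,U_0(\mathrm{d}v) = \E X(s) - h(s)$. The first-moment case (itself the base of everything, proved by splitting $\E X(t) - c = \int_{[0,t]}(h(t-y)-c\overline{F}(t-y))\,U(\mathrm{d}y)$ at $t-y=A$ and using the identity plus $U((t-A,t])\to 0$) gives $\E X(s) \to c$, so $G$ is bounded and $G(s) \to c$. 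Thus $L_k$ is $L_{k-1}$ with its last $h$-factor multiplied by $G$, and the goal is $L_k(t) - c\,L_{k-1}(t) \to 0$; together with $L_1(t) = \E X(t) \to c$ this yields $L_k(t) \to c^k$ by induction. For the boundedness needed along the way note that, by the identity, $R_p(t) \leq R_{p-1}(t) \leq \dots \leq R_1(t) = 1$ for the analogous quantities $R_p$ built from $\overline{F}$, whence $L_p(t) \leq C^p R_p(t) \leq C^p$.

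The crux --- and the step I expect to be the main obstacle --- is justifying the replacement of $G$ by $c$ uniformly in $t$, i.e.\ controlling configurations in which the top renewal point piles up within a bounded distance of $t$. The clean way is to prove the boundary estimate that, for every fixed $A$ and $p \geq 1$, the truncated moment $R_p(t;[0,A)) := \int_{0\leq y_1<\dots<y_p\leq t,\ t-y_p<A}\prod_j \overline{F}(t-y_j)\,U(\mathrm{d}y_1)\prod_{j\geq 2}U_0(\mathrm{d}(y_j-y_{j-1}))$ tends to $0$. Granting this, split the error $L_k - cL_{k-1}$ according to $t - S_{i_{k-1}} \gtrless A$: on the bulk $|G-c| \leq \epsilon$ gives a bound $\epsilon\,C^{k-1}$, and on the boundary $h \leq C\overline{F}$ gives a bound $\mathrm{const}\cdot R_{k-1}(t;[0,A)) \to 0$, so $\limsup_t|L_k-cL_{k-1}| \leq \epsilon\,C^{k-1}$ for every $\epsilon$. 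The estimate $R_p(t;[0,A)) \to 0$ I would establish by a nested induction on $p$: integrating out the top point produces the factor $\Psi_A(s) := \int_{(\max(0,s-A),s]}\overline{F}(s-v)\,U_0(\mathrm{d}v) \leq U_0((s-A,s])$, which tends to $0$ by Blackwell and is dominated by the standard uniform bound $V(A) := \sup_s U_0((s-A,s]) < \infty$ (finite since $\xi > 0$). Splitting the remaining integral by whether the new top point is far from or close to $t$ then bounds $R_p(\cdot;[0,A))$ by $\sup_{s\geq B}\Psi_A(s)$ (small for large $B$) plus $V(A)\,R_{p-1}(\cdot;[0,B))$ (small by the inductive hypothesis), the base case $p=1$ being exactly $\int_{(t-A,t]}\overline{F}(t-y)\,U(\mathrm{d}y) \leq U((t-A,t]) \to 0$. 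Assembling these pieces completes the induction for $L_k$, hence the moment convergence and the proposition.
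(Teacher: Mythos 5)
Your proposal is correct, but it takes a genuinely different route from the paper's. The paper regenerates at the \emph{first} renewal epoch: the decomposition \eqref{eq:X(t)=h(t)+X_*(t-xi)} turns $\E X(t)^k$ into the solution of a renewal equation, which is then written as $\E f_k(Z(t))$ with $Z(t)=t-S_{N(t)-1}$ the undershoot and $f_k$ built from $h$, $\Prob\{\xi>\cdot\}$ and the lower-order moments; induction on $k$ then rests on the single renewal-theoretic fact that $Z(t)\to\infty$ in probability, plus dominated convergence (Lemma \ref{Lem:mu=infty EX(t)} is the case $k=1$, and the induction step identifies the leading term $k\,h(t)\E X(t)^{k-1}/\Prob\{\xi>t\}\to c\,k\cdot c^{k-1}(k-1)!$). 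You instead peel off the \emph{largest} epoch: a multinomial expansion over $k$-tuples, negligibility of the repeated-index patterns, and the recursion $L_k(t)-c\,L_{k-1}(t)\to0$, with Blackwell's theorem for $\mu=\infty$ and your nested boundary estimate $R_p(t;[0,A))\to0$ supplying the renewal-theoretic input — the same fact as the paper's undershoot statement, in different clothing. The paper's route buys brevity: the renewal equation absorbs all the combinatorics, so no equality-pattern bookkeeping and no separate boundary lemma are needed. Your route is more elementary and self-contained (no renewal-equation solution formula is invoked) and yields useful byproducts, namely the uniform moment bounds $L_p(t)\le C^pR_p(t)\le C^p$ (hence boundedness of all moments) and the identification of $k!\,L_k(t)$ as the dominant contribution. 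One caution: your parenthetical treatment of the repeated-index terms is lighter than what is actually required. When the repeated epoch is not the top one of its tuple, bounding $h\le C\,\Prob\{\xi>\cdot\}$ and integrating out the higher epochs (each such integral is at most $1$ by the renewal identity) leaves the factor $\int_{(0,s]}\Prob\{\xi>s-v\}^2\,U_0(\dv)$ evaluated at a random gap $s$, and to conclude that the whole multiple integral vanishes you must run exactly the same bulk/boundary split used in your main recursion, with this factor tending to $0$ in place of $G\to c$. Since that machinery is already in your proof this is routine, but as written the reduction is asserted rather than performed.
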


\subsection{Properties of the limiting processes in Theorems \ref{Thm:non-endogenous & mu<infty} and \ref{Thm:non-endogenous & mu=infty}}   \label{subsec:properties of limiting processes}

\subsubsection*{Limits in Theorem \ref{Thm:non-endogenous & mu<infty}}  \label{subsec:Properties of limits 1<alpha<=2}

Let $1<\alpha\leq2$.
We define the limiting stochastic integral
\begin{equation*}
Y_{\alpha,\beta}(u) ~:=~ \int_{[0,u]}(u-y)^{-\beta} \, \mathrm{d}W_\alpha(y), \quad u>0
\end{equation*}
via the formula
\begin{equation}    \label{eq:def of Y_alpha,beta}
\int_{[0,u]}\!\!(u\!-\!y)^{-\beta} \, \mathrm{d}W_\alpha(y)
:= u^{-\beta}W_\alpha(u)+\beta \int_{0}^{u} \!\!(W_\alpha(u)\!-\!W_\alpha(y))(u\!-\!y)^{-\beta-1} \dy\!.
\end{equation}
This definition is consistent with the usual definition of a stochastic integral
with a deterministic integrand and the integrator being a semimartingale.
However, since $\lim_{y \uparrow u}(u-y)^{-\beta-1} = \infty$,
it is necessary to check the existence of the Lebesgue integral $\int_{0}^{u} (W_\alpha(u)\!-\!W_\alpha(y))(u\!-\!y)^{-\beta-1} \dy$.
Since
\begin{eqnarray*}
\int_{0}^{u} \E |W_\alpha(u)\!-\!W_\alpha(y)|(u\!-\!y)^{-\beta-1} \dy
& = &
\int_{0}^{u} \E |W_\alpha(u-y)| (u\!-\!y)^{-\beta-1} \dy    \\
& = &
\E |W_{\alpha}(1)| \int_{0}^{u} (u\!-\!y)^{1/\alpha-\beta-1} \dy,
\end{eqnarray*}
the integral exists in the a.s.\ sense if $\beta < 1/\alpha$.
This explains the restrictions imposed on $\beta$ in the theorem.
The processes $(Y_{\alpha,\,\beta}(u))_{u>0}$ can be called {\it
fractionally integrated $\alpha$-stable L\'{e}vy motion}.

Further, let $M_{\alpha}$ denote an $\alpha$-stable random
measure\footnote{ For the definition and existence of
$\alpha$-stable random measures see \textit{e.g.}~\cite[Section
3.3]{Samorodnitsky+Taqqu:1994}.} on $([0,\infty),\B)$ where $\B$
denotes the Borel $\sigma$-algebra over $[0,\infty)$ with constant
times Lebesgue control measure and constant skewness intensity
$-1$. We choose the constant to be $1/2$ for $\alpha=2$ and
$\Gamma(1\!-\!\alpha)\cos(\pi\alpha/2)$ for $1<\alpha<2$.
According to Example 3.3.3 in \cite{Samorodnitsky+Taqqu:1994},
$(M_{\alpha}([0,t]))_{t \geq 0}$ has the same finite-dimensional
distributions as $(W_{\alpha}(t))_{t \geq 0}$. We can thus assume
w.l.o.g.~that $W_{\alpha}(t)=M_{\alpha}([0,t])$, $t \geq 0$. Now,
\begin{align*}
u^{-\beta} & W_\alpha(u) + \beta \int_{0}^{u} \!\!(W_\alpha(u)\!-\!W_\alpha(y))(u\!-\!y)^{-\beta-1} \dy \\
&=~ u^{-\beta} W_\alpha(u) + \int_{0}^{u} \!\!(W_\alpha(u)\!-\!W_\alpha(y)) {\rm d} \big((u\!-\!y)^{-\beta}\big)    \\
&=~ u^{-\beta} W_\alpha(u) + \lim \sum_{k=1}^n (W_\alpha(u)\!-\!W_\alpha(y_k)) \big((u\!-\!y_k)^{-\beta}-(u\!-\!y_{k-1})^{-\beta}\big)  \\
&=~ \lim \bigg[W_\alpha(y_1)u^{-\beta}+
(W_{\alpha}(u)\!-\!W_{\alpha}(y_n))(u\!-\!y_n)^{-\beta} \\
&\hphantom{=~}
+~ \sum_{k=1}^{n-1} M_{\alpha}((y_{k},y_{k+1}]) (u\!-\!y_k)^{-\beta}\bigg]
\end{align*}
where we have used summation by parts in the last step. Further,
the limit can be understood as follows. Fix a sequence of nested
partitions $\Delta_n = \{y_{n,0},\ldots,y_{n,n}\}$ of $[0,u]$ such
that $0 = y_{n,0} < \ldots < y_{n,n} < u$ and such that ${\rm
mesh} (\Delta_n) = \max\{u-y_{n,n}, y_{n,k}-y_{n,k-1}:
k=1,\ldots,n\} \to 0$ as $n \to \infty$. In the displayed
formulas, for notational convenience, we write $y_k$ for
$y_{n,k}$. Notice that the first two summands in the last
displayed formula tend to $0$ in probability as $n \to \infty$,
whereas the sum can be interpreted as an integral over a step
function w.r.t.~the $\alpha$-stable random measure $M_{\alpha}$.
These integrals tend to
\begin{equation*}
\int f_{\beta}(u,y) M_{\alpha}(\dy) ~=:~    \int_{[0,u]} (u-y)^{-\beta} M_{\alpha}(\dy)
\end{equation*}
as $n \to \infty$ in probability by the construction of stable integrals \cite[Section 3.4]{Samorodnitsky+Taqqu:1994}
where
\begin{equation*}
f_{\beta}(u,y)  ~=~ \begin{cases}
                (u-y)^{-\beta}  &   \text{for } 0 \leq y < u \text{ and}    \\
                0               &   \text{for } 0 \leq u \leq y.
                \end{cases}
\end{equation*}
In particular, $Y_{\alpha,\beta}(u) = \int_{[0,u]} (u-y)^{-\beta}
M_{\alpha}(\dy)$ a.s.~for all $u>0$. Notice that
$Y_{\alpha,\beta}$ is similar to, but not identical with the
integral representation of the fractional Brownian motion
($\alpha=2$) or linear fractional stable motion ($1<\alpha<2$),
see \cite[Chapter 7]{Samorodnitsky+Taqqu:1994}.

From known properties of stable integrals \cite[Section
3.5]{Samorodnitsky+Taqqu:1994}, we infer
\begin{equation}    \label{eq:distribution of Y_alpha,beta}
Y_{\alpha,\,\beta}(u)   ~\stackrel{\mathrm{d}}{=}~
\frac{u^{1/\alpha-\beta}}{(1-\alpha\beta)^{1/\alpha}}W_\alpha(1)
\end{equation}
which means that $Y_{\alpha,\,\beta}(u)$ has a normal law in the
cases (A1) and (A2) and a spectrally negative $\alpha$-stable law
in the case (A3). The increments of $(Y_{\alpha,\beta}(u))$ are
neither independent, nor stationary.

From Theorem \ref{Thm:non-endogenous & mu<infty} and the fact that
$c$ and $h$ are regularly varying of index $1/\alpha$
(\textit{cf.}~Lemma \ref{Lem:g regularly varying}) and $-\beta$,
respectively, it follows that $Y_{\alpha,\beta}$ is self-similar
with Hurst-index $1/\alpha-\beta$, \textit{i.e.}, for every $a>0$,
\begin{equation*}
(Y_{\alpha,\beta}(au))_{u> 0}   ~\stackrel{\mathrm{f.d.}}{=}~
(a^{1/\alpha-\beta} Y_{\alpha,\beta}(u))_{u> 0}.
\end{equation*}

Finally, we provide a result on sample path properties of
$(Y_{\alpha,\beta}(u))_{u>0}$.

\begin{Prop}    \label{Prop:sample path properties}
Consider the stochastic process $(Y_{\alpha,\beta}(u))_{u> 0}$
defined by \eqref{eq:def of Y_alpha,beta} for $1 < \alpha \leq 2$
and $0 < \beta < 1/\alpha$.
\begin{itemize}
    \item[(a)]  If $\alpha=2$, then a.s., $Y_{2,\beta}$ has continuous paths.
    \item[(b)]  If $1 < \alpha < 2$, then every version $Y$ of $Y_{\alpha,\beta}$ is unbounded on every interval of positive length,
                that is, there is an event $\Omega_0$ of probability $1$ such that $\sup_{a < t < b} |Y(t)| = \infty$
                for all $0 \leq a < b$ on $\Omega_0$.
\end{itemize}
\end{Prop}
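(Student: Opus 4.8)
The plan is to treat the two regimes separately.

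For part (a) I would establish pathwise continuity directly from the representation \eqref{eq:def of Y_alpha,beta}. Substituting $s = u-y$ rewrites the Lebesgue integral there as $\beta\int_0^u (W_2(u)-W_2(u-s))\,s^{-\beta-1}\,\mathrm{d}s$. Since $\beta < 1/\alpha = 1/2$, one may fix a H\"older exponent $\gamma \in (\beta,1/2)$; almost surely $W_2$ is locally $\gamma$-H\"older, so on any compact subinterval of $(0,\infty)$ the integrand is dominated by $C_\omega s^{\gamma-\beta-1}$, which is integrable at $0$. Pathwise dominated convergence then shows that $u \mapsto Y_{2,\beta}(u)$ is a.s.\ continuous on $(0,\infty)$. (Equivalently, the It\^o isometry gives $\E|Y_{2,\beta}(u)-Y_{2,\beta}(v)|^2 \le C|u-v|^{1-2\beta}$; Gaussianity boosts this to all even moments and the Kolmogorov--Chentsov criterion yields a continuous modification.)

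For part (b) the starting observation is that each $Y_{\alpha,\beta}(u)$ is stable with finite scale, so unboundedness is a pure sample-path effect produced by the jumps of $W_\alpha = M_\alpha([0,\cdot])$. I would isolate a single large jump. Fix $0 \le a < b$ and $\epsilon > 0$ and split $W_\alpha = L_\epsilon + S_\epsilon$, where $L_\epsilon$ collects the finitely many (hence isolated) jumps in $(a,b)$ of size $< -\epsilon$ and $S_\epsilon$ is the independent remainder; this induces a decomposition $Y_{\alpha,\beta} = Y^L + Y^S$. On the event $A_\epsilon$ that $L_\epsilon$ jumps in $(a,b)$, let $\tau$ be such a jump time, of size $\Delta < -\epsilon$. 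The remaining large jumps lie at a strictly positive distance from $\tau$, so $Y^L(u) = \Delta(u-\tau)^{-\beta} + B(u)$ with $B$ bounded and continuous near $\tau$, and $\Delta(u-\tau)^{-\beta} \to -\infty$ as $u \downarrow \tau$.

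The crux, and what I expect to be the main obstacle, is to rule out cancellation by the remainder $Y^S$. Because $\tau$ is a jump time of $L_\epsilon$ and independent of $S_\epsilon$, almost surely $\tau$ is not a jump time of $S_\epsilon$; conditioning on $L_\epsilon$ and using continuity in probability of integrals against the finite-variance L\'evy process $S_\epsilon$ should give $Y^S(u) \to Y^S(\tau)$ in probability as $u \downarrow \tau$. The delicate point is that for $\beta \ge 1/2$ the variable $Y^S(u)$ has infinite variance, so the naive $L^2$ estimate diverges; one must instead invoke the a.s.\ existence criterion for such integrals (cf.\ \cite{Samorodnitsky+Taqqu:1994}), noting that the truncated integrand $((u-y)^{-\beta}x)^2 \wedge 1$ stays bounded near the singularity, so the contribution of the shrinking interval $(\tau,u]$ is $O(u-\tau) \to 0$. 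Granting this, one extracts a random sequence of rationals $u_n \downarrow \tau$ along which $Y^S(u_n) \to Y^S(\tau)$ a.s., whence $Y_{\alpha,\beta}(u_n) \to -\infty$. Thus $\sup_{u \in \Q \cap (a,b)}|Y_{\alpha,\beta}(u)| = \infty$ on $A_\epsilon$, and letting $\epsilon \downarrow 0$, so that $\Prob(A_\epsilon) \to 1$, makes this hold almost surely.

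It remains to pass to every version and every interval. The event $\{\sup_{u \in \Q \cap (a,b)}|Y(u)| = \infty\}$ is an increasing limit, over finite subsets of the \emph{fixed} set $\Q \cap (a,b)$, of events whose probabilities depend only on the finite-dimensional distributions; these coincide for the canonical process and any version $Y$. Hence the supremum over $\Q \cap (a,b)$ is infinite a.s.\ for every version, so every version is unbounded on $(a,b)$. Intersecting the corresponding almost-sure events over all rational pairs $a < b$ produces the event $\Omega_0$, and for arbitrary $0 \le a < b$ one squeezes a rational subinterval inside.
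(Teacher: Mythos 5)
Your part (a) follows the same route as the paper: fix $\gamma\in(\beta,1/2)$, use the a.s.\ local $\gamma$-H\"older property of $W_2$ (L\'evy's modulus of continuity) to dominate the integrand by $C_\omega s^{\gamma-\beta-1}$, and conclude by pathwise dominated convergence; only the bookkeeping differs. One caveat: your parenthetical Kolmogorov--Chentsov alternative is \emph{not} equivalent, since it only yields a continuous modification, whereas the proposition asserts a.s.\ continuity of the version defined by \eqref{eq:def of Y_alpha,beta} itself; your primary pathwise argument is the one that proves the stated claim.

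Part (b) is where you genuinely diverge. The paper's proof is essentially a citation: writing $Y_{\alpha,\beta}(u)=\int f_\beta(u,y)\,M_\alpha(\dy)$, it notes that $f^*_\beta(y):=\sup_{t\in\Q}f_\beta(t,y)=\infty$ for every $y$, so condition (10.2.18) of \cite{Samorodnitsky+Taqqu:1994} is trivially fulfilled and Corollary 10.2.4 there at once gives that every version is unbounded on every interval. Your L\'evy--It\^o ``one big jump'' argument is a self-contained substitute that exposes the mechanism which that criterion encapsulates: a jump $\Delta<-\epsilon$ at $\tau\in(a,b)$ forces $\Delta(u-\tau)^{-\beta}\to-\infty$ as $u\downarrow\tau$, and independence of the small-jump part prevents cancellation. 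The skeleton is sound, including the final, entirely correct reduction via suprema over $\Q\cap(a,b)$ (an event determined by the finite-dimensional distributions, hence version-independent) and intersection over rational intervals. What your route buys is independence from the general boundedness dichotomy for stable processes; what it costs is exactly the work you flag but leave as a sketch: (i) the convergence $Y^S(u)\to Y^S(\tau)$ must be made rigorous at the \emph{random} time $\tau$, say by conditioning on $L_\epsilon$, proving continuity in probability of $Y^S$ at each fixed time, and then a Fubini/measurable-selection step to produce the random sequence $u_n\downarrow\tau$ along which convergence holds a.s.; (ii) your quantitative claim is imprecise: the functional controlling the contribution of $(\tau,u]$, namely $\int_\tau^u\!\int\big(((u-y)^{-\beta}x)^2\wedge 1\big)\,\nu(\dx)\dy$, is of order $(u-\tau)^{1-\alpha\beta}$ rather than $O(u-\tau)$, because the L\'evy measure is infinite and the inner integral behaves like $(u-y)^{-\alpha\beta}$ as $y\uparrow u$; since $\alpha\beta<1$ it still vanishes, so the conclusion survives, but the estimate as written is not justified.
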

\begin{proof}
We first observe that $u\mapsto u^{-\beta}W_2(u)$ is a.s.
continuous on $(0,\infty)$. Further, from Theorem 1.14 in
\cite{Moerters+Peres:2010} (L{\'e}vy's modulus of continuity), we
conclude that for every $T>0$, there exists some measurable set
$\Omega' = \Omega'(T) \subseteq \Omega$ with $\Prob\{\Omega'\}=1$
such that, for all $\gamma \in (0,1/2)$,
\begin{equation}    \label{eq:BM modulus of continuity}
\lim_{h \downarrow 0} \frac{\sup_{u\in[0,T]}|W_2(u+h,\omega)-W_2(u,\omega)|}{h^\gamma}=0,
\quad   \omega \in \Omega'.
\end{equation}
Fix $T>0$, $\gamma\in(\beta,1/2)$ and $\omega\in \Omega'(T)$ and
set
\begin{equation*}
\phi(y) := y^{\gamma-\beta-1}
\quad   \text{and} \quad    K(u,y):=y^{-\gamma}(W_2(u,\omega)-W_2(u-y,\omega))\1_{\{0<y\leq u\}}.
\end{equation*}
Then
\begin{equation*}
\int_0^u (W_2(u,\omega)-W_2(y,\omega))(u-y)^{-\beta-1} \dy = \int_0^u K(u,y) \phi(y) \dy.
\end{equation*}
Let $0<t<u<T$ and write
\begin{align}
\Big|\int_0^{u} & K(u,y) \phi(y) \dy - \int_0^{t} K(t,y)\phi(y) \dy\Big|    \notag  \\
& \leq
\int_0^{t} |K(u,y)-K(t,y)| \phi(y) \dy
+\sup_{y \in [0,T]} |K(u,y)| \int_{t}^{u} \phi(y) \dy.  \label{eq:K}
\end{align}
Since $\omega$ is such that \eqref{eq:BM modulus of continuity} holds, one can deduce that
\begin{equation*}
\sup_{0 \leq y \leq u \leq T} K(u,y) < \infty.
\end{equation*}
This implies that each of the two summands in \eqref{eq:K} tends
to $0$ as $t\uparrow u$, where for the first summand one
additionally needs the dominated convergence theorem. Starting
with $0<u<t<T$ and repeating the argument proves that a.s.,
$Y_{2,\beta}$ is continuous on $(0,T)$. 
Since $T>0$ was arbitrary, we infer that a.s., $Y_{2,\beta}$ is
continuous on $(0,\infty)$.

We now turn to the proof of assertion (b) and consider the integral representation
\begin{equation*}
Y_{\alpha,\beta}(u) ~=~ \int f_\beta(u,y) M_{\alpha}(\dy)
\end{equation*}
where $f_{\beta}$ is as above. Define $f^*_\beta(y) := \sup_{t \in
\Q} f_\beta(t,y)$, $y \geq 0$. Clearly, $f^*_\beta(y) \geq \lim_{t
\downarrow y, t \in \Q} (t-y)^{-\beta} = \infty$ for all $y \geq
0$. Hence, condition (10.2.18) in \cite{Samorodnitsky+Taqqu:1994}
is trivially fulfilled (recall that the control measure of
$M_{\alpha}$ is a constant times Lebesgue measure), and Corollary
10.2.4 in \cite{Samorodnitsky+Taqqu:1994} yields the assertion.
\end{proof}

Finally, it is worth mentioning that with an argument as in
Proposition \eqref{Prop:sample path properties}(a) and using
(non-uniform) estimates for stable L\'{e}vy motions instead of
\eqref{eq:BM modulus of continuity}, one can show that
$Y_{\alpha,\beta}$ is a.s.~continuous at any fixed point $u>0$.

\subsubsection*{Limits in Theorem \ref{Thm:non-endogenous & mu=infty}}

In this case, $0 < \alpha < 1$ and $(W_{\alpha}(u))_{u\geq 0}$
denotes an inverse $\alpha$-stable subordinator as defined in
Theorem \ref{Thm:non-endogenous & mu=infty}. The limiting process
is
\begin{equation*}
Y_{\alpha,\,\beta}(u) := \int_{[0,\,u]} (u-y)^{-\beta} \mathrm{d}
W_\alpha(y), \quad    u>0,
\end{equation*}
where the integral can be thought of as a pathwise
Lebesgue-Stieltjes integral since the integrator $W_{\alpha}$ has
increasing paths. However, the finiteness of the integral should
be verified. This is done in the following lemma:

\begin{Lemma}   \label{Lem:check}
Let $0 < \beta \leq \alpha < 1$ and $u>0$. Then $\E
Y_{\alpha,\,\beta}(u) < \infty$. In particular,
$Y_{\alpha,\,\beta}(u) < \infty$ a.s.\ and
\begin{equation*}
\int_{(\rho u, \,u]} (u-y)^{-\beta} \mathrm{d} W_\alpha(y)  ~\to~   0   \quad   \text{a.s.\ as } \rho \uparrow 1.
\end{equation*}
\end{Lemma}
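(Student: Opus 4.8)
The plan is to reduce the finiteness of $\E Y_{\alpha,\beta}(u)$ to an elementary deterministic (Beta) integral and then to deduce the tail statement from the a.s.\ path continuity of the inverse subordinator. Since $W_\alpha$ has nondecreasing paths, the integral defining $Y_{\alpha,\beta}(u)$ is, for each fixed $\omega$, an ordinary Lebesgue--Stieltjes integral of the \emph{nonnegative} function $y \mapsto (u-y)^{-\beta}$. Writing $U_\alpha(y) := \E W_\alpha(y)$ for the mean (renewal) function of $W_\alpha$, the first step is to justify
\[
\E Y_{\alpha,\beta}(u) ~=~ \int_{[0,u]} (u-y)^{-\beta} \, \mathrm{d}U_\alpha(y).
\]
I would prove this by approximating $(u-y)^{-\beta}$ from below by nonnegative step functions $\phi_n \uparrow (u-y)^{-\beta}$: for each $\phi_n$ the interchange is immediate from the identity $\E[W_\alpha(b)-W_\alpha(a)] = U_\alpha(b)-U_\alpha(a)$ and linearity, after which monotone convergence (applied pathwise to the random measure $\mathrm{d}W_\alpha$ and then to the expectation) yields the displayed formula. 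This is just Tonelli's theorem for the mean measure of $\mathrm{d}W_\alpha$, valid precisely because the integrand is nonnegative.

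The second step is the computation of $U_\alpha$. For the inverse $\alpha$-stable subordinator normalised as in Theorem \ref{Thm:non-endogenous & mu=infty}, a standard calculation (using the duality $\Prob\{W_\alpha(y) > t\} = \Prob\{D_\alpha(t) \leq y\}$ together with the self-similarity $D_\alpha(t) \stackrel{\mathrm{d}}{=} t^{1/\alpha} D_\alpha(1)$) gives $U_\alpha(y) = y^\alpha/(\Gamma(1-\alpha)\Gamma(1+\alpha))$, which is absolutely continuous with density $\alpha y^{\alpha-1}/(\Gamma(1-\alpha)\Gamma(1+\alpha))$. Substituting and evaluating the resulting Beta integral yields
\[
\E Y_{\alpha,\beta}(u) ~=~ \frac{\alpha}{\Gamma(1-\alpha)\Gamma(1+\alpha)} \int_0^u (u-y)^{-\beta} y^{\alpha-1} \dy ~=~ \frac{\Gamma(1-\beta)}{\Gamma(1-\alpha)\Gamma(1+\alpha-\beta)} \, u^{\alpha-\beta}.
\]
The integral converges exactly because $\alpha > 0$ and $\beta < 1$, both guaranteed by the hypothesis $0 < \beta \leq \alpha < 1$; this establishes $\E Y_{\alpha,\beta}(u) < \infty$, and a nonnegative random variable with finite mean is a.s.\ finite, so $Y_{\alpha,\beta}(u) < \infty$ a.s.

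For the final assertion I would use additivity of the pathwise integral to write
\[
\int_{(\rho u,\,u]} (u-y)^{-\beta} \, \mathrm{d}W_\alpha(y) ~=~ Y_{\alpha,\beta}(u) - \int_{[0,\rho u]} (u-y)^{-\beta} \, \mathrm{d}W_\alpha(y).
\]
As $\rho \uparrow 1$ the integrand is nonnegative and the domain increases to $[0,u)$, so by monotone convergence the subtracted term tends to $\int_{[0,u)} (u-y)^{-\beta} \, \mathrm{d}W_\alpha(y)$. Because the inverse $\alpha$-stable subordinator has a.s.\ continuous paths, $W_\alpha(\{u\}) = 0$ a.s., and hence this equals $\int_{[0,u]} (u-y)^{-\beta} \, \mathrm{d}W_\alpha(y) = Y_{\alpha,\beta}(u)$; the tail integral therefore tends to $0$ a.s.

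The computations in the first two steps are routine, and the genuine points requiring care are the two that the proof is organised around: the interchange of expectation and the random Stieltjes integral, which is made legitimate by the nonnegativity of $(u-y)^{-\beta}$ and the monotonicity of $W_\alpha$, and the removal of the boundary singularity at $y=u$ in the last step. There the factor $(u-y)^{-\beta}$ blows up, yet it is integrated against a (random) measure putting no mass on $\{u\}$; the a.s.\ continuity of $W_\alpha$ is exactly what neutralises this singularity, and I expect it to be the one substantive ingredient rather than a formal manipulation.
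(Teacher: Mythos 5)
Your proposal is correct, but it takes a genuinely different route from the paper's proof. The paper never integrates the singular kernel $(u-y)^{-\beta}$ directly against the random measure $\mathrm{d}W_\alpha$: it writes $(u-y)^{-\beta} = u^{-\beta} + \beta\int_0^y (u-x)^{-\beta-1}\,\mathrm{d}x$, swaps the two integrals, and computes $\E Y_{\alpha,\beta}(u)$ term by term through the representation \eqref{eq:def of Y_alpha,beta}, reducing finiteness to the elementary estimate $\int_0^1 (1-x^\alpha)(1-x)^{-\beta-1}\,\mathrm{d}x < \infty$; its probabilistic input is $\E W_\alpha(u) = u^{\alpha}/(\Gamma(1-\alpha)\Gamma(1+\alpha))$, obtained from the Mittag--Leffler law and self-similarity, which are the same two facts you use to identify $U_\alpha$. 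You instead apply Tonelli directly to the random Stieltjes measure (your step-function argument is precisely the paper's Lemma \ref{Lem:multi integral} in the case $k=1$, so it is fully legitimate in this setting) and then evaluate a Beta integral. Your route is more direct and yields more: it produces the closed-form mean $\frac{\Gamma(1-\beta)}{\Gamma(1-\alpha)\Gamma(1+\alpha-\beta)}\,u^{\alpha-\beta}$, consistent with \eqref{eq:moments} for $k=1$, rather than bare finiteness; and you prove the tail assertion explicitly (monotone convergence plus the absence of an atom of $\mathrm{d}W_\alpha$ at $u$, guaranteed by a.s.\ path continuity of the inverse subordinator), a point the paper's proof leaves implicit. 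What the paper's detour buys in exchange is a byproduct: by running the expectation through \eqref{eq:def of Y_alpha,beta}, it shows that this representation, originally introduced for the L\'evy-motion case, also holds pathwise for the inverse subordinator, and exactly this fact is invoked later in the proof of Proposition \ref{Prop:sample path propertiesII}. Your argument does not deliver that identity, but it is not needed for the lemma as stated.
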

\begin{proof}
It is well known that $W_\alpha(1)$ has a Mittag-Leffler law with
$\E W_\alpha(1) = (\Gamma(1-\alpha)\Gamma(1+\alpha))^{-1} =: c_\alpha$.
Since the $\alpha$-stable subordinator is self-similar with Hurst
index $1/\alpha$, $(W_\alpha(u))_{u \geq 0}$ is self-similar with
Hurst index $\alpha$. In particular, $\E W_\alpha(u) = c_\alpha u^\alpha$.
Therefore,
\begin{eqnarray*}
\E Y_{\alpha,\,\beta}(u) & = &
\E \bigg( \int_{[0, u]} \bigg(u^{-\beta} + \beta \int_0^y (u-x)^{-\beta-1} \dx \bigg) \, \mathrm{d}W_\alpha(y)\bigg)    \\
& = &
u^{-\beta} \E W_\alpha(u) + \beta \int_{0}^{u} \E (W_\alpha(u) - W_{\alpha}(x)) (u-x)^{-\beta-1} \dx    \\
& = &
c_\alpha u^{\alpha-\beta} + \beta c_{\alpha} u^{\alpha-\beta} \int_{0}^{1} (1 - x^{\alpha}) (1-x)^{-\beta-1} \dx
~<~ \infty.
\end{eqnarray*}
\end{proof}
The processes $(Y_{\alpha,\,\beta}(u))_{u>0}$ can be called {\it
fractionally integrated inverse $\alpha$-stable subordinators}.
From Theorem \ref{Thm:non-endogenous & mu=infty} and the fact that
$1/\Prob\{\xi>t\}$ and $h(t)$ are regularly varying of index
$\alpha$
and $-\beta$, respectively, it follows that $Y_{\alpha,\beta}$ is self-similar with Hurst index $\alpha-\beta$. 
The latter implies that its increments are not stationary.

Further we infer from \cite{Iksanov:2012BSI} that the law of
$Y_{\alpha,\,\beta}(u)$ is uniquely determined by its moments
\begin{equation}    \label{eq:moments}
\E Y_{\alpha,\,\beta}(u)^k
    ~=~u^{k(\alpha-\beta)} \frac{k!}{\Gamma(1\!-\!\alpha)^k}
    \prod_{j=1}^k \frac{\Gamma(1\!-\!\beta\!+\!(j\!-\!1)(\alpha\!-\!\beta))}{\Gamma(j(\alpha\!-\!\beta)\!+\!1)},\ k \in \N.
\end{equation}
In particular,
\begin{equation}    \label{eq:Y_alpha,beta distribution}
Y_{\alpha,\, \beta}(u) ~\stackrel{\mathrm{d}}{=}~ u^{\alpha-\beta} \int_0^R e^{-cZ_\alpha(t)}\dt,
\end{equation}
where $c:=(\alpha-\beta)/\alpha$, $R$ is a random variable with
the standard exponential law which is independent of
$(Z_\alpha(u))_{u \geq 0}$, a drift-free subordinator with no
killing and L\'{e}vy measure
\begin{equation*}
\nu_\alpha(\dt) ~=~ \frac{e^{-t/\alpha}}{(1-e^{-t/\alpha})^{\alpha+1}} \1_{(0,\,\infty)}(t) \dt.
\end{equation*}

Now we want to investigate the covariance structure of
$(Y_{\alpha,\,\beta}(u))_{u>0}$. One can check that
\eqref{eq:moments} with $k=1$ remains valid whenever $\alpha \in
(0,1)$ and $\beta\in (-\infty, 1)$. Hence the process
$(Y_{\alpha,\,\beta}(u))_{u>0}$ is well-defined for such $\alpha$
and $\beta$.
\begin{Lemma}   \label{Lem:covariance of Y}
For any $\alpha\in (0,1)$, $\beta\in (-\infty,1)$ and $0 < t_1\leq t_2$,
\begin{align}
\E & Y_{\alpha,\,\beta}(t_1) Y_{\alpha,\,\beta}(t_2)    \label{eq:covariance of Y}
~=~ \frac{\Gamma(1\!-\!\beta)}{\Gamma(\alpha)\Gamma(1\!-\!\alpha)^2 \Gamma(1\!+\!\alpha\!-\!\beta)} \\
& \hphantom{\E Y_{\alpha,\,\beta}(t_1) Y_{\alpha,\,\beta}(t_2) ~=}
\times \int_{0}^{t_1} (t_1\!-\!y)^{-\beta}(t_2\!-\!y)^{-\beta}y^{\alpha-1}((t_1\!-\!y)^\alpha+(t_2\!-\!y)^\alpha) \dy.  \nonumber
\end{align}
\end{Lemma}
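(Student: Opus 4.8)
The plan is to reduce the covariance to the second moment measure of the random measure $\mathrm{d}W_\alpha$ and then to compute that measure explicitly. Since $0<\alpha<1$, the integrator $W_\alpha$ has a.s.\ increasing, continuous paths, so for $0<t_1\leq t_2$ I would write both factors as pathwise Lebesgue--Stieltjes integrals $Y_{\alpha,\beta}(t_i)=\int_{[0,t_i]}(t_i-y)^{-\beta}\,\mathrm{d}W_\alpha(y)$, as in \eqref{eq:def of Y_alpha,beta} and Lemma \ref{Lem:check}, and form the product as a double integral. The integrand $(t_i-y)^{-\beta}$ is nonnegative on $[0,t_i)$ for every $\beta$, and $\mathrm{d}W_\alpha\geq0$, so Tonelli's theorem applies without any a priori integrability check and yields
$$\E[Y_{\alpha,\beta}(t_1)Y_{\alpha,\beta}(t_2)]=\iint_{[0,t_1)\times[0,t_2)}(t_1-y_1)^{-\beta}(t_2-y_2)^{-\beta}\,\E[W_\alpha(\mathrm{d}y_1)W_\alpha(\mathrm{d}y_2)].$$
Finiteness of the right-hand side will emerge from the computation, simultaneously confirming that $Y_{\alpha,\beta}(t_i)\in\mathcal{L}^2$ for all $\alpha\in(0,1)$ and $\beta<1$.

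The crux, and the main obstacle, is the identification of the second moment measure $\E[W_\alpha(\mathrm{d}y_1)W_\alpha(\mathrm{d}y_2)]$. Here I would exploit that $W_\alpha$ is the inverse of the $\alpha$-stable subordinator $D_\alpha$ of Theorem \ref{Thm:non-endogenous & mu=infty}, whose Laplace exponent is $\Phi(\lambda)=\Gamma(1-\alpha)\lambda^\alpha$. Since $\mathrm{d}W_\alpha$ is the image of Lebesgue measure on $[0,\infty)$ under $r\mapsto D_\alpha(r)$, one has $\int f\,\mathrm{d}W_\alpha=\int_0^\infty f(D_\alpha(r))\,\mathrm{d}r$ for nonnegative $f$, whence
$$\E[W_\alpha(\mathrm{d}y_1)W_\alpha(\mathrm{d}y_2)]=\int_0^\infty\!\!\int_0^\infty \Prob\{D_\alpha(r_1)\in\mathrm{d}y_1,\,D_\alpha(r_2)\in\mathrm{d}y_2\}\,\mathrm{d}r_1\,\mathrm{d}r_2.$$
Splitting into $\{r_1<r_2\}$ and $\{r_1>r_2\}$ and using the independence and stationarity of the increments of $D_\alpha$, the contribution of $\{r_1<r_2\}$ factorizes: integrating out the $r$'s produces the density $u$ of the potential measure $\int_0^\infty\Prob\{D_\alpha(r)\in\cdot\}\,\mathrm{d}r$, whose Laplace transform is $1/\Phi(\lambda)=\Gamma(1-\alpha)^{-1}\lambda^{-\alpha}$, so that $u(x)=x^{\alpha-1}/(\Gamma(\alpha)\Gamma(1-\alpha))$. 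The symmetric region gives the mirror contribution, and the diagonal carries no mass because $W_\alpha$ is continuous, yielding the clean renewal-type formula
$$\E[W_\alpha(\mathrm{d}y_1)W_\alpha(\mathrm{d}y_2)]=u(y_1\wedge y_2)\,u(|y_1-y_2|)\,\mathrm{d}y_1\,\mathrm{d}y_2.$$

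With this density in hand the rest is routine. I would substitute it and split the domain according to $y_1<y_2$ and $y_2<y_1$. In the region $y_1<y_2$ the inner integration in $y_2$ over $(y_1,t_2)$, after the substitution $y_2=y_1+(t_2-y_1)s$, is a Beta integral equal to $(t_2-y_1)^{\alpha-\beta}B(\alpha,1-\beta)$; symmetrically, the region $y_2<y_1$ gives, after integrating $y_1$ over $(y_2,t_1)$, the factor $(t_1-y_2)^{\alpha-\beta}B(\alpha,1-\beta)$. Renaming the surviving variable $y$ and adding the two contributions combines $(t_2-y)^{\alpha-\beta}$ and $(t_1-y)^{\alpha-\beta}$ into $(t_1-y)^{-\beta}(t_2-y)^{-\beta}\big[(t_1-y)^\alpha+(t_2-y)^\alpha\big]$ over $y\in(0,t_1)$, which is exactly the integrand of \eqref{eq:covariance of Y}; the prefactor comes out as
$$\frac{B(\alpha,1-\beta)}{\big(\Gamma(\alpha)\Gamma(1-\alpha)\big)^2}=\frac{\Gamma(1-\beta)}{\Gamma(\alpha)\Gamma(1-\alpha)^2\Gamma(1+\alpha-\beta)},$$
as claimed. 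The conditions $\alpha>0$ and $\beta<1$ are precisely what ensures convergence of $B(\alpha,1-\beta)$ together with integrability of $y^{\alpha-1}$ near $0$ and of $(t_1-y)^{-\beta}$ near $t_1$, so the identity holds on the full range $\alpha\in(0,1)$, $\beta\in(-\infty,1)$ asserted in the statement.
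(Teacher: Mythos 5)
Your proof is correct, and it reaches \eqref{eq:covariance of Y} by a genuinely different route. The paper first decomposes the product by Stieltjes integration by parts, $Y_{\alpha,\beta}(t_1)Y_{\alpha,\beta}(t_2)=H_1(t_1)H_2(t_1)=\int_{[0,t_1]}H_1\,\mathrm{d}H_2+\int_{[0,t_1]}H_2\,\mathrm{d}H_1$ for suitably shifted processes $H_1,H_2$, which yields three double integrals $I_1,I_2,I_3$; it then quotes the second-moment density \eqref{eq:Bingham:1971} from Proposition 1(a) of \cite{Bingham:1971} and justifies pulling the expectation inside the random double integrals by Lemma \ref{Lem:multi integral}, computing $\E I_1,\E I_2,\E I_3$ separately (with a cancellation, since $\E I_1$ is evaluated as a Beta-type integral minus $\E I_3$). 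You instead expand the product directly as one double integral over the rectangle and split according to $y_1<y_2$ versus $y_2<y_1$, which gives the two-region decomposition with no cancellation; and, more substantially, you do not cite \eqref{eq:Bingham:1971} but derive it: the occupation identity $\int f\,\mathrm{d}W_\alpha=\int_0^\infty f(D_\alpha(r))\,\mathrm{d}r$ turns the random Stieltjes double integral into a deterministic Lebesgue double integral of a nonnegative measurable function of $(r_1,r_2,\omega)$, so ordinary Tonelli replaces Lemma \ref{Lem:multi integral}, and the independence and stationarity of the increments of $D_\alpha$ together with Laplace inversion of $1/\Phi(\lambda)=\lambda^{-\alpha}/\Gamma(1-\alpha)$ identify the potential density $u(x)=x^{\alpha-1}/(\Gamma(\alpha)\Gamma(1-\alpha))$, recovering exactly Bingham's formula. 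What your route buys is self-containedness (no external citation, no auxiliary Fubini lemma) and the fact that the identity holds in $[0,\infty]$ with no a priori integrability input; what the paper's route buys is brevity on the page. The closing Beta-integral computations and the constant $B(\alpha,1-\beta)/(\Gamma(\alpha)\Gamma(1-\alpha))^2=\Gamma(1-\beta)/(\Gamma(\alpha)\Gamma(1-\alpha)^2\Gamma(1+\alpha-\beta))$ coincide in both arguments. One small caveat: your side remark that the computation confirms $Y_{\alpha,\beta}(t_i)\in\mathcal{L}^2$ for every $\beta<1$ overstates things --- when $t_1=t_2$ the integrand in \eqref{eq:covariance of Y} behaves like $(t_1-y)^{\alpha-2\beta}$ near $y=t_1$, so both sides are $+\infty$ for $\beta\geq(1+\alpha)/2$; since everything is nonnegative, the identity itself (as an equality in $[0,\infty]$) is unaffected.
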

\begin{proof}
If $t_1=t_2$, \eqref{eq:covariance of Y} coincides with \eqref{eq:moments} in the case $k=2$ as it must be.
Now fix $t_1 < t_2$ and set
\begin{eqnarray*}
H_1(y)
& := &
\int_{[0,\,y]}(t_1-x)^{-\beta}\mathrm{d}W_\alpha(x), \quad      y \in [0,\,t_1],    \\
H_2(y)
& := &
\int_{[0,\,t_2-t_1+y]}(t_2-x)^{-\beta}\mathrm{d}W_\alpha(x), \quad  y \in [0,\,t_1].
\end{eqnarray*}
Integrating by parts we obtain
\begin{eqnarray*}
Y_{\alpha,\,\beta}(t_1)Y_{\alpha,\,\beta}(t_2)
& = &
H_1(t_1)H_2(t_1)    \\
& = &
\int_{[0,\,t_1]} H_1(x) \, \mathrm{d}H_2(x) + \int_{[0,\,t_1]} H_2(x) \, \mathrm{d}H_1(x)   \\
& = &
\int_{[0,\,t_1]}\int_{[0,\,x]}(t_1-y)^{-\beta}\mathrm{d}W_\alpha(y)(t_1-x)^{-\beta}\mathrm{d}W_\alpha(t_2-t_1+x)    \\
& & +
\int_{[0,\,t_1]}\int_{[0,\,x]}(t_2-y)^{-\beta}\mathrm{d}W_\alpha(y)(t_1-x)^{-\beta}\mathrm{d}W_\alpha(x)    \\
& & +
\int_{[0,\,t_1]}\int_{[x,\,t_2-t_1+x]}(t_2-y)^{-\beta}\mathrm{d}W_\alpha(y)(t_1-x)^{-\beta}\mathrm{d}W_\alpha(x) \\
& =: &
I_1+I_2+I_3.
\end{eqnarray*}
According to Proposition 1(a) in \cite{Bingham:1971}\footnote{Keep in mind that Bingham uses a different scaling.}, 
\begin{equation}    \label{eq:Bingham:1971}
\E \big(\mathrm{d}W_\alpha(x)\mathrm{d}W_\alpha(y)\big)
~=~ \frac{x^{\alpha-1}(y-x)^{\alpha-1}}{\Gamma^2(\alpha)\Gamma^2(1-\alpha)} \dx \dy,    \quad   0 < x < y < \infty.
\end{equation}
Below we make a repeated use of the formula (see Lemma \ref{Lem:multi integral})
\begin{equation*}
\E \int_{A} f(x)g(y) \, \mathrm{d}W_\alpha(x)\mathrm{d}W_\alpha(y)
~=~ \int_{A} f(x)g(y) \E \big(\mathrm{d}W_\alpha(x)\mathrm{d}W_\alpha(y)\big),
\end{equation*}
where $f,g$ are arbitrary non-negative measurable functions and $A \subset \R_+^2$ Borel.
Using \eqref{eq:Bingham:1971} and a change of variable, we arrive at
\begin{equation*}
\E I_3 ~=~ b_\alpha \int_{0}^{t_1} (t_1-x)^{-\beta} x^{\alpha-1} \int_{0}^{t_2-t_1} (t_2-x-y)^{-\beta} y^{\alpha-1} \dy \dx
\end{equation*}
where $b_\alpha := \Gamma(\alpha)^{-2} \Gamma(1-\alpha)^{-2}$.
\eqref{eq:Bingham:1971} and changing the order of integration followed by a change of variable ($z=t_2-t_1+x-y$) give
\begin{eqnarray*}
\E I_1
& = &
b_\alpha \int_{0}^{t_1} (t_1-x)^{-\beta}\int_{0}^{x} (t_1-y)^{-\beta}(t_2-t_1+x-y)^{\alpha-1}y^{\alpha-1} \dy \dx   \\
& = &
b_\alpha \int_{0}^{t_1} (t_1-y)^{-\beta} y^{\alpha-1} \int_{y,}^{t_1} (t_1-x)^{-\beta}(t_2-t_1+x-y)^{\alpha-1}\dx \dy   \\
& = &
b_\alpha \int_{0}^{t_1} (t_1-y)^{-\beta} y^{\alpha-1} \int_{0}^{t_2-y} (t_2-y-x)^{-\beta} x^{\alpha-1} \dx \dy  \\
& & - b_\alpha \int_{0}^{t_1} (t_1-y)^{-\beta} y^{\alpha-1} \int_{0}^{t_2-t_1} (t_2-y-x)^{-\beta} x^{\alpha-1} \dx \dy  \\
& = & B_{\alpha,\,\beta} \int_{0}^{t_1}
(t_1-y)^{-\beta}(t_2-y)^{\alpha-\beta}y^{\alpha-1} \dy - \E I_3,
\end{eqnarray*}
where $B_{\alpha,\,\beta} :=
\frac{\Gamma(1-\beta)}{\Gamma(\alpha)\Gamma^2(1-\alpha)\Gamma(1+\alpha-\beta)}$.
Analogously
\begin{eqnarray*}
\E I_2
& = &
b_\alpha \int_{0}^{t_1} (t_1-x)^{-\beta} \int_{0}^{x} (t_2-y)^{-\beta} (x-y)^{\alpha-1} y^{\alpha-1} \dy \dx    \\
& = &
b_\alpha \int_{0}^{t_1} (t_2-y)^{-\beta}y^{\alpha-1} \int_{y}^{t_1} (t_1-x)^{-\beta} (x-y)^{\alpha-1} \dx \dy   \\
& = &
b_\alpha \int_{0}^{t_1} (t_2-y)^{-\beta}y^{\alpha-1}\int_{0}^{t_1-y} (t_1-y-x)^{-\beta} x^{\alpha-1} \dx \dy        \\
& = & B_{\alpha,\,\beta} \int_{0}^{t_1}
(t_1-y)^{\alpha-\beta}(t_2-y)^{-\beta}y^{\alpha-1} \dy.
\end{eqnarray*}
It remains to sum up these expectations.
\end{proof}

Similar to the preceding lemma our next result treats both positive and negative $\beta$
thereby solving a problem which has remained open in \cite{Iksanov:2012}.

\begin{Prop}    \label{Prop:no independent increments}
For $\alpha\in (0,1)$ and $\beta\in (-\infty, 1)$, the process
$(Y_{\alpha,\,\beta}(u))_{u>0}$ does not have independent
increments.
\end{Prop}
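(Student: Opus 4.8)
The plan is to argue by contradiction: I assume that $(Y_{\alpha,\beta}(u))_{u>0}$ has independent increments and extract a rigidity property of its covariance that then turns out to fail. The natural obstruction is the classical fact that a process with independent increments has $\mathrm{Cov}(Y(s),Y(t))=\Var Y(s)$ for $s\le t$, i.e.\ its covariance does not depend on the larger argument. The hard part, and the reason one cannot invoke this formula verbatim, is that the second moment $\E Y_{\alpha,\beta}(s)^2$ is \emph{infinite} once $\beta\ge (1+\alpha)/2$ (the $k=2$ instance of \eqref{eq:moments} then breaks down), so $\Var Y(s)$ is unavailable for $\beta$ close to $1$. To bypass this I would only ever evaluate mixed moments at \emph{distinct} times. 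Concretely, if the increments are independent then for $0<s<t<t'$ the increment $Y_{\alpha,\beta}(t')-Y_{\alpha,\beta}(t)$ is independent of $Y_{\alpha,\beta}(s)$; since $Y_{\alpha,\beta}\ge 0$ is integrable by \eqref{eq:moments} with $k=1$, the expectation factorises and yields $R(s,t')=R(s,t)$, where
\[
R(s,\tau) ~:=~ \E[Y_{\alpha,\beta}(s)Y_{\alpha,\beta}(\tau)] - \E Y_{\alpha,\beta}(s)\,\E Y_{\alpha,\beta}(\tau), \qquad s<\tau.
\]
The quantity $R(s,\tau)$ involves the mixed moment at the distinct times $s<\tau$, which is finite by Lemma \ref{Lem:covariance of Y}, so the whole argument stays in $L^1$ and never needs finite variances. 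Thus independence of increments forces $R(s,\cdot)$ to be \emph{constant} on $(s,\infty)$ for each fixed $s$.

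Next I would compute $R(s,t)$ explicitly for $0<s<t$. Writing $C:=\Gamma(1-\beta)/(\Gamma(1-\alpha)\Gamma(1+\alpha-\beta))$, so that $\E Y_{\alpha,\beta}(u)=Cu^{\alpha-\beta}$ by \eqref{eq:moments}, and inserting the formula of Lemma \ref{Lem:covariance of Y}, one obtains
\[
R(s,t)=B_{\alpha,\beta}\Big(\int_0^s (s\!-\!y)^{\alpha-\beta}(t\!-\!y)^{-\beta}y^{\alpha-1}\dy+\int_0^s (s\!-\!y)^{-\beta}y^{\alpha-1}\big[(t\!-\!y)^{\alpha-\beta}-t^{\alpha-\beta}\big]\dy\Big),
\]
with $B_{\alpha,\beta}=\Gamma(1-\beta)/(\Gamma(\alpha)\Gamma(1-\alpha)^2\Gamma(1+\alpha-\beta))>0$. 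Here the Beta-integral identity $B_{\alpha,\beta}\int_0^s(s-y)^{-\beta}y^{\alpha-1}\dy=C^2 s^{\alpha-\beta}$ is exactly what makes the two leading $t^{\alpha-\beta}$ contributions cancel; this cancellation is the crux, and it is why the desired contradiction cannot be read off the leading order but must be dug out of the subleading asymptotics. Denote the two integrals by $I_A(t)$ and $\tilde I_B(t)$.

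Finally I would prove that $R(s,\cdot)$ is in fact not constant by letting $t\to\infty$ with $s$ fixed. Dominated convergence (all integrals run over the fixed interval $[0,s]$, away from the singularity at $t$) gives $I_A(t)\sim K_0\,t^{-\beta}$ with $K_0=\int_0^s(s-y)^{\alpha-\beta}y^{\alpha-1}\dy\in(0,\infty)$, and $\tilde I_B(t)\sim -(\alpha-\beta)J_1\,t^{\alpha-\beta-1}$ with $J_1=\int_0^s(s-y)^{-\beta}y^\alpha\dy\in(0,\infty)$. Since $\alpha<1$ we have $\alpha-\beta-1<-\beta$, so for $\beta\neq 0$ the term $I_A$ dominates and $R(s,t)\sim B_{\alpha,\beta}K_0\,t^{-\beta}$; as $B_{\alpha,\beta}K_0>0$ and $-\beta\neq 0$, a constant function cannot be asymptotically equivalent to this nonzero, non-constant power of $t$, so $R(s,\cdot)$ is non-constant. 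For the borderline $\beta=0$ one has $I_A(t)=K_0$ for every $t$, while $\tilde I_B(t)=\int_0^s y^{\alpha-1}\big[(t-y)^\alpha-t^\alpha\big]\dy$ is strictly increasing in $t$, its derivative $\alpha\int_0^s y^{\alpha-1}\big[(t-y)^{\alpha-1}-t^{\alpha-1}\big]\dy$ being positive because $z\mapsto z^{\alpha-1}$ is decreasing; hence $R(s,\cdot)$ is again non-constant. In every case this contradicts the constancy of $R(s,\cdot)$ forced by independence of increments, which proves the assertion. The main difficulties, as flagged above, are the exact cancellation of leading terms (handled by pushing to subleading order) and the possible infiniteness of second moments (handled by the three-point identity that keeps everything at distinct times).
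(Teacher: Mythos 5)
Your argument hinges entirely on the first step: that independence of increments forces $\E\big[Y_{\alpha,\beta}(s)\big(Y_{\alpha,\beta}(t')-Y_{\alpha,\beta}(t)\big)\big]$ to factorise for $0<s<t<t'$. But $Y_{\alpha,\beta}(s)$ is \emph{not} an increment of the process $(Y_{\alpha,\beta}(u))_{u>0}$: the process is indexed by $u>0$, and independence of increments (the property the proposition denies, and the only one the paper's proof ever tests, via the two adjacent increments $Y_{\alpha,\beta}(t_2)-Y_{\alpha,\beta}(t_1)$ and $Y_{\alpha,\beta}(t_3)-Y_{\alpha,\beta}(t_2)$) concerns increments over disjoint intervals and does \emph{not} imply independence of $Y_{\alpha,\beta}(s)$ from later increments. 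Concretely, if $(B(u))_{u \geq 0}$ is a Brownian motion and $Z:=B(t')-B(t)$, then the process $(Z+B(u))_{u>0}$ has independent increments, yet its value at time $s<t$ has covariance $\Var Z \neq 0$ with the increment over $(t,t']$. You could try to realise $Y_{\alpha,\beta}(s)$ as the limit of increments $Y_{\alpha,\beta}(s)-Y_{\alpha,\beta}(s_0)$ as $s_0\downarrow 0$; this can be made to work (with an $\mathcal{L}^1$ argument you would still need to supply) when $\beta<\alpha$, since then $\E Y_{\alpha,\beta}(s_0)=Cs_0^{\alpha-\beta}\to 0$ and $\E[Y_{\alpha,\beta}(s_0)Y_{\alpha,\beta}(\tau)]\to 0$. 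But for $\alpha\leq\beta<1$ — a nonempty part of the stated range — the process has no limit at $0+$ (its mean is constant for $\beta=\alpha$ and diverges for $\beta>\alpha$), so this reduction fails outright. As written, your proof establishes only that $Y_{\alpha,\beta}(s)$ is not independent of later increments, which is strictly weaker than the proposition.

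The computational core of your proposal is correct, genuinely different from the paper's (large-$t$ asymptotics of the covariance, versus the paper's mixed partials $\partial^2/\partial t_1\partial t_3$ and expansion at $t_3=t_1^{-1}$, $z=t_1^2$), and has the real merit of never touching $\E Y_{\alpha,\beta}(t_2)^2$ — which is indeed infinite for $\beta\geq(1+\alpha)/2$, a point the paper's own expansion glosses over. It can be repaired as follows. For $0<s_0<s\leq t<t'$, independence of the increments over $(s_0,s]$ and $(t,t']$ gives, after expanding (all mixed moments at \emph{distinct} times are finite, as you note), $R(s,t')-R(s_0,t')=R(s,t)-R(s_0,t)$; i.e.\ $G(\tau):=R(s,\tau)-R(s_0,\tau)$ must be constant on $[s,\infty)$. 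Your asymptotics give $G(\tau)=B_{\alpha,\beta}\big(K_0(s)-K_0(s_0)\big)\tau^{-\beta}-B_{\alpha,\beta}(\alpha-\beta)\big(J_1(s)-J_1(s_0)\big)\tau^{\alpha-\beta-1}+o\big(\tau^{\alpha-\beta-1}\big)$, where now $K_0(a)=c_1a^{2\alpha-\beta}$ and $J_1(a)=c_2a^{1+\alpha-\beta}$ with $c_1,c_2>0$ Beta integrals. The leading term is a nonzero multiple of a non-constant power unless $\beta=0$ or $\beta=2\alpha$; in those two degenerate cases the subleading term (exponent $\alpha-\beta-1\neq0$, coefficient proportional to $J_1(s)-J_1(s_0)\neq0$ since $1+\alpha-\beta>0$) yields the contradiction. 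Reassuringly, these are exactly the exceptional cases $\beta(2\alpha-\beta)=0$ that the paper's proof also has to treat separately.
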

\begin{proof}
We use the idea of the proof of Theorem 3.1 in \cite{Meerschaert+Scheffler:2004}.
Assume that the increments are independent. Then, with $0 < t_1 < t_2 < t_3 < \infty$,
\begin{align*}
\E & \big(Y_{\alpha,\,\beta}(t_2)-Y_{\alpha,\,\beta}(t_1)\big)\big(Y_{\alpha,\,\beta}(t_3)-Y_{\alpha,\,\beta}(t_2)\big) \\
& =~
\E \big(Y_{\alpha,\,\beta}(t_2)-Y_{\alpha,\,\beta}(t_1)\big)\E \big(Y_{\alpha,\,\beta}(t_3)-Y_{\alpha,\,\beta}(t_2)\big)    \\
& =~
\frac{\Gamma(1-\beta)^2}{\Gamma(1-\alpha)^2 \Gamma(1+\alpha-\beta)^2}
\big(t_2^{\alpha-\beta}-t_1^{\alpha-\beta}\big) \big(t_3^{\alpha-\beta}-t_2^{\alpha-\beta}\big)
~=:~    A(t_1,t_2,t_3).
\end{align*}
On the other hand,
\begin{align*}
\E & \big(Y_{\alpha,\,\beta}(t_2)-Y_{\alpha,\,\beta}(t_1)\big)\big(Y_{\alpha,\,\beta}(t_3)-Y_{\alpha,\,\beta}(t_2)\big) \\
&=~
\E Y_{\alpha,\,\beta}(t_2)Y_{\alpha,\,\beta}(t_3) - \E Y_{\alpha,\,\beta}(t_2)^2 - \E Y_{\alpha,\,\beta}(t_1)Y_{\alpha,\,\beta}(t_3)
+ \E Y_{\alpha,\,\beta}(t_1)Y_{\alpha,\,\beta}(t_2) \\
&=~
\frac{\Gamma(1-\beta)}{\Gamma(\alpha)\Gamma^2(1-\alpha)\Gamma(1+\alpha-\beta)}  \\
&\hphantom{=}~ \times \bigg(\int_{0}^{t_2} (t_2-y)^{-\beta}(t_3-y)^{-\beta}y^{\alpha-1}\big((t_2-y)^\alpha+(t_3-y)^\alpha\big) \dy  \\
&\hphantom{=\times \bigg(}~ - \int_{0}^{t_1} (t_1-y)^{-\beta}(t_3-y)^{-\beta}y^{\alpha-1}\big((t_1-y)^\alpha+(t_3-y)^\alpha\big) \dy    \\
&\hphantom{=\times \bigg(}~ + \int_{0}^{t_1} (t_1-y)^{-\beta}(t_2-y)^{-\beta}y^{\alpha-1}\big((t_1-y)^\alpha+(t_2-y)^\alpha\big) \dy \bigg) \\
&\hphantom{=}~ - \frac{2\Gamma(1-\beta)\Gamma(1+\alpha-2\beta)}{\Gamma^2(1-\alpha)\Gamma(1+\alpha-\beta)\Gamma(1+2\alpha-2\beta)} t_2^{2\alpha-2\beta}
~=:~ B(t_1,t_2,t_3).
\end{align*}
By the assumption $A(t_1,t_2,t_3)=B(t_1,t_2,t_3)$ for all $t_1<t_2<t_3$.
On the other hand,
\begin{equation*}
\frac{\partial^2 A(t_1,t_2,t_3)}{\partial t_1 \partial t_3}
~=~ -\frac{\Gamma^2(1-\beta)}{\Gamma^2(1-\alpha)\Gamma^2(1+\alpha-\beta)}(\alpha-\beta)^2 (t_1t_3)^{\alpha-\beta-1},
\end{equation*}
and
\begin{align*}
& \frac{\partial^2 B(t_1,t_2,t_3)}{\partial t_1 \partial t_3}   \\
& ~=~   -\frac{\Gamma(1-\beta)}{\Gamma(\alpha)\Gamma^2(1-\alpha)\Gamma(1+\alpha-\beta)} \\
& ~\hphantom{=}~
\frac{\partial^2}{\partial t_1 \partial t_3}\bigg(\int_{0}^{t_1}(t_1\!-\!y)^{-\beta}(t_3\!-\!y)^{-\beta}y^{\alpha-1}\big((t_1\!-\!y)^\alpha+(t_3\!-\!y)^\alpha\big) \dy \bigg)  \\
& ~=~
-\frac{\Gamma(1-\beta)}{\Gamma(\alpha)\Gamma^2(1-\alpha)\Gamma(1+\alpha-\beta)} \\
&~\hphantom{=}~ \times \frac{\partial^2}{\partial t_1 \partial t_3}
\bigg(t_1^{\alpha-\beta} \!\! \int_0^1 \! (1\!-\!y)^{-\beta}(t_3\!-\!t_1y)^{-\beta}y^{\alpha-1}\big(t_1^{\alpha}(1-y)^\alpha\!+\!(t_3\!-\!t_1y)^\alpha\big) \dy \bigg)  \\
&~=~
-\frac{\Gamma(1-\beta)}{\Gamma(\alpha)\Gamma^2(1-\alpha)\Gamma(1+\alpha-\beta)} \\
&~\hphantom{=}~
\times \bigg[\frac{\partial}{\partial t_1}\Big(-\beta t_1^{2\alpha-\beta}\int_{0}^{1}(1-y)^{\alpha-\beta}y^{\alpha-1}(t_3-t_1y)^{-\beta-1} \dy \Big)    \\
&~\hphantom{= \times \bigg[}~ + \frac{\partial}{\partial t_1}\Big((\alpha-\beta) t_1^{\alpha-\beta}\int_{0}^{1}(1-y)^{-\beta}y^{\alpha-1}(t_3-t_1 y)^{\alpha-\beta-1} \dy \Big)\bigg].
\end{align*}
Therefore,
\begin{align*}
& \frac{\partial^2 B(t_1,t_2,t_3)}{\partial t_1 \partial t_3}   \\
&~=~
-\frac{\Gamma(1-\beta)}{\Gamma(\alpha)\Gamma^2(1-\alpha)\Gamma(1+\alpha-\beta)} \\
&~\hphantom{=}~ \times \bigg[-\beta(2\alpha-\beta) t_1^{2\alpha-\beta-1}\int_{0}^{1}(1-y)^{\alpha-\beta}y^{\alpha-1}(t_3-t_1y)^{-\beta-1}\dy    \\
&~\hphantom{=\times \bigg[}~ - \beta(\beta+1) t_1^{2\alpha-\beta}\int_{0}^{1}(1-y)^{\alpha-\beta}y^{\alpha}(t_3-t_1y)^{-\beta-2}\dy\\
&~\hphantom{=\times \bigg[}~ + (\alpha-\beta)^2t_1^{\alpha-\beta-1}\int_{0}^{1}(1-y)^{-\beta}y^{\alpha-1}(t_3-t_1y)^{\alpha-\beta-1}\dy\\
&~\hphantom{=\times \bigg[}~ -(\alpha-\beta)(\alpha-\beta-1)t_1^{\alpha-\beta}\int_{0}^{1}(1-y)^{-\beta}y^{\alpha}(t_3-t_1y)^{\alpha-\beta-2}\dy\bigg].
\end{align*}
To show that these expressions are not equal, assume that $0<t_1<1$ and set $t_3=t_1^{-1}$, $z:=t_1^2$.
Then the first one does not depend on $z$. The second, after some manipulations, becomes
\begin{eqnarray*}
D(z)
& := &
-\frac{\Gamma(1-\beta)}{\Gamma(\alpha)\Gamma^2(1-\alpha)\Gamma(1+\alpha-\beta)}     \\
& &
\times \bigg[-\beta(2\alpha-\beta) z^{\alpha}\int_{0}^{1}(1-y)^{\alpha-\beta}y^{\alpha-1}(1-zy)^{-\beta-1}\dy\\
& &
\hphantom{\times \bigg[}
-\beta(\beta+1) z^{\alpha+1}\int_{0}^{1}(1-y)^{\alpha-\beta}y^{\alpha}(1-zy)^{-\beta-2}\dy\\
& &
\hphantom{\times \bigg[}
+ (\alpha-\beta)^2\int_{0}^{1}(1-y)^{-\beta}y^{\alpha-1}(1-zy)^{\alpha-\beta-1}\dy\\
& &
\hphantom{\times \bigg[}
-(\alpha-\beta)(\alpha-\beta-1)z \int_{0}^{1}(1-y)^{-\beta}y^{\alpha}(1-zy)^{\alpha-\beta-2} \dy \bigg].\\
\end{eqnarray*}
Using the asymptotic expansion $(1-z)^{\alpha}=1-\alpha z +O(z^2)$ as $z\to 0$ ($\alpha\in\R$) yields
\begin{eqnarray*}
D(z)
& = &
-\frac{\Gamma(1-\beta)}{\Gamma(\alpha)\Gamma^2(1-\alpha)\Gamma(1+\alpha-\beta)} \times      \\
& &
\times \Big[(\alpha\!-\!\beta)^2 \int_{0}^{1}(1\!-\!y)^{-\beta}y^{\alpha-1}\dy - \beta(2\alpha\!-\!\beta) z^{\alpha}\int_{0}^{1}(1\!-\!y)^{\alpha-\beta}y^{\alpha-1}\dy \\
& &
\hphantom{\times \Big[} +O(z) \Big],
\end{eqnarray*}
as $z\to 0$. From this expansion it is clear that $D(z)$ depends on $z$ if $\beta(2\alpha-\beta)\neq 0$ since $\alpha<1$. If $\beta=0$ then $Y_{\alpha,\beta}(u)=W_{\alpha}(u)$ and this process does not have independent increments as was shown in Theorem 3.1 in \cite{Meerschaert+Scheffler:2004}. If $2\alpha=\beta$ using the same idea one can show
\begin{equation*}
D(z)=c_1+c_2z+O(z^{\alpha+1})
\end{equation*}
where $c_1 c_2 \neq 0$, we omit the details.
\end{proof}

Formula \eqref{eq:Y_alpha,beta distribution} entails that
$Y_{\alpha,\alpha}(u) \stackrel{\mathrm{d}}{=} R$,
\textit{i.e.}, all one-dimensional distributions of
$(Y_{\alpha,\alpha}(u))_{u>0}$ are standard exponential. This
leads to the conjecture that the process
$(Y_{\alpha,\alpha}(u))_{u>0}$ may bear some kind of
stationarity.

\begin{Lemma}
The process $(Y_{\alpha,\,\alpha}(e^u))_{u \in \R}$ is strictly stationary with covariance function
$R(s) := \E (Y_{\alpha,\,\alpha}(e^u)-1)(Y_{\alpha,\,\alpha}(e^{u+s})-1)$, $s \in \R$ given by
\begin{equation}    \label{eq:R(s)}
R(s) ~=~ \frac{1}{\Gamma(\alpha)\Gamma(1-\alpha)}\int_{|s|}^{\infty} (1-e^{-y})^{-\alpha}e^{-\alpha y} \dy, \quad   s \in \R.
\end{equation}
\end{Lemma}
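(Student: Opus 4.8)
The plan is to read off both assertions from facts already established in this subsection, so that essentially no new analysis is needed. For the strict stationarity I would invoke the self-similarity of $Y_{\alpha,\beta}$ with Hurst index $\alpha-\beta$ recorded above. Specializing to $\beta=\alpha$ makes the Hurst index equal to $0$, so that for every $a>0$ one has $(Y_{\alpha,\alpha}(au))_{u>0} \stackrel{\mathrm{f.d.}}{=} (Y_{\alpha,\alpha}(u))_{u>0}$. Since this is an identity of all finite-dimensional distributions, substituting $u=e^v$ and $a=e^s$ turns multiplicative scaling of the argument into an additive shift and yields $(Y_{\alpha,\alpha}(e^{v+s}))_{v\in\R} \stackrel{\mathrm{f.d.}}{=} (Y_{\alpha,\alpha}(e^v))_{v\in\R}$ for every $s\in\R$, which is exactly strict stationarity of $(Y_{\alpha,\alpha}(e^u))_{u\in\R}$.

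For the covariance I would first recall from \eqref{eq:Y_alpha,beta distribution} with $\beta=\alpha$ (so $c=0$) that each $Y_{\alpha,\alpha}(u)$ is standard exponential; hence $\E Y_{\alpha,\alpha}(u)=1$, the centering in $R(s)$ is legitimate, and by stationarity $R(s)$ depends on $s$ alone. I then start from Lemma \ref{Lem:covariance of Y} with $\beta=\alpha$. The prefactor collapses to $(\Gamma(\alpha)\Gamma(1-\alpha))^{-1}$, using $\Gamma(1-\beta)=\Gamma(1-\alpha)$ and $\Gamma(1+\alpha-\beta)=\Gamma(1)=1$, while the factor $(t_1-y)^{-\alpha}(t_2-y)^{-\alpha}\big((t_1-y)^\alpha+(t_2-y)^\alpha\big)$ in the integrand simplifies to $(t_2-y)^{-\alpha}+(t_1-y)^{-\alpha}$. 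This gives, for $0<t_1\leq t_2$,
\[
\E Y_{\alpha,\alpha}(t_1)Y_{\alpha,\alpha}(t_2) ~=~ \frac{1}{\Gamma(\alpha)\Gamma(1-\alpha)} \int_0^{t_1} y^{\alpha-1}\big((t_1-y)^{-\alpha}+(t_2-y)^{-\alpha}\big)\dy.
\]
The summand with $(t_1-y)^{-\alpha}$ is a Beta integral equal to $B(\alpha,1-\alpha)=\Gamma(\alpha)\Gamma(1-\alpha)$ and thus contributes exactly $1$; since $\E Y_{\alpha,\alpha}(t_i)=1$, it follows that $\E\big(Y_{\alpha,\alpha}(t_1)-1\big)\big(Y_{\alpha,\alpha}(t_2)-1\big)$ equals $(\Gamma(\alpha)\Gamma(1-\alpha))^{-1}\int_0^{t_1} y^{\alpha-1}(t_2-y)^{-\alpha}\dy$.

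Finally I would set $t_1=e^u$, $t_2=e^{u+s}$ with $s\geq 0$ and change variables. The substitution $y=e^{u+s}w$ removes the dependence on $u$ (the accumulated power of $e^u$ cancels, as it must by the stationarity established in the first step) and leaves $(\Gamma(\alpha)\Gamma(1-\alpha))^{-1}\int_0^{e^{-s}} w^{\alpha-1}(1-w)^{-\alpha}\,\mathrm{d}w$; a further substitution $w=e^{-\tau}$ rewrites this as $(\Gamma(\alpha)\Gamma(1-\alpha))^{-1}\int_s^\infty (1-e^{-\tau})^{-\alpha}e^{-\alpha\tau}\,\mathrm{d}\tau$, which is the right-hand side of \eqref{eq:R(s)} for $s\geq 0$. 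Because $R$ is the autocovariance of a stationary process it is even, so $R(s)=R(|s|)$, producing the $|s|$ in \eqref{eq:R(s)}. I do not expect any genuine obstacle: the Beta-integral evaluation and the two substitutions are routine, and the only point requiring real care is verifying that the powers of $e^u$ cancel, which simultaneously serves as a consistency check against the stationarity proved at the outset.
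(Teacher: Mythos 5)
Your proposal is correct and follows essentially the same route as the paper's proof: strict stationarity is deduced from scale invariance of the finite-dimensional distributions (you invoke the recorded self-similarity with Hurst index $\alpha-\beta=0$, which the paper itself obtains from Theorem \ref{Thm:non-endogenous & mu=infty}, the very fact used in its proof of the lemma), and the covariance is read off from Lemma \ref{Lem:covariance of Y} with $\beta=\alpha$. Your Beta-integral evaluation and the substitutions $y=t_2w$, $w=e^{-\tau}$ merely make explicit the routine steps the paper leaves implicit after stating the intermediate identity $\E Y_{\alpha,\,\alpha}(t_1)Y_{\alpha,\,\alpha}(t_2)=1+\frac{1}{\Gamma(\alpha)\Gamma(1-\alpha)}\int_0^{t_1/t_2}(1-y)^{-\alpha}y^{\alpha-1}\dy$.
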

\begin{proof}
The strict stationarity follows from the case $\alpha=\beta$ and $c<\infty$ of Theorem \ref{Thm:non-endogenous & mu=infty}.
Indeed, by that theorem, as $t \to \infty$,
$(X(u_1t),\ldots, X(u_n t)) \stackrel{\mathrm{d}}{\to}
c (Y_{\alpha,\,\alpha}(u_1),\ldots, Y_{\alpha,\,\alpha}(u_n))$
for any $n \in \N$ and any $0<u_1<\ldots<u_n$, and, for any $h>0$, the
weak limit of $(X(u_1ht),\ldots, X(u_n h t))$ is the same.
To prove \eqref{eq:R(s)}, it suffices to show that, for $0<t_1<t_2<\infty$,
\begin{equation*}
\E Y_{\alpha,\,\alpha}(t_1)Y_{\alpha,\,\alpha}(t_2)
~=~ 1+\frac{1}{\Gamma(\alpha)\Gamma(1-\alpha)}\int_{0}^{t_1/t_2}(1-y)^{-\alpha}y^{\alpha-1} \dy.
\end{equation*}
The last equality follows from \eqref{eq:covariance of Y} with $\alpha=\beta$.
\end{proof}

\begin{Prop}    \label{Prop:sample path propertiesII}
For $0 < \alpha < 1$ and $0 < \beta <\alpha$, the process
$(Y_{\alpha,\beta}(u))_{u>0}$ has a.s.~continuous sample paths.
\end{Prop}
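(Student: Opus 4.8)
The plan is to follow the blueprint of the proof of Proposition~\ref{Prop:sample path properties}(a), replacing the role played there by L\'evy's modulus of continuity for Brownian motion with a H\"older modulus of continuity for the inverse stable subordinator $W_\alpha$. Recall from \eqref{eq:def of Y_alpha,beta} that
\begin{equation*}
Y_{\alpha,\beta}(u) = u^{-\beta}W_\alpha(u) + \beta \int_0^u (W_\alpha(u)-W_\alpha(y))(u-y)^{-\beta-1}\dy.
\end{equation*}
Since $D_\alpha$ is a strictly increasing subordinator, its first-passage inverse $W_\alpha$ has continuous, nondecreasing paths; in particular $u \mapsto u^{-\beta}W_\alpha(u)$ is a.s.\ continuous on $(0,\infty)$, and it remains only to prove continuity of the integral term.

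The key input is the following H\"older estimate: for every $T>0$ and every $\gamma \in (0,\alpha)$ there is an a.s.\ finite random constant $C=C(T,\gamma)$ with $W_\alpha(u)-W_\alpha(v) \le C(u-v)^\gamma$ for all $0 \le v \le u \le T$. I would derive this from a moment bound on the increments together with Kolmogorov's continuity criterion. To bound the increments, fix $0 \le v < u$ and apply the strong Markov property of $D_\alpha$ at the stopping time $W_\alpha(v)$: the post-$W_\alpha(v)$ process is an independent copy $D'$ of $D_\alpha$, and since the overshoot $D_\alpha(W_\alpha(v))-v$ is nonnegative, the level still to be crossed satisfies $u - D_\alpha(W_\alpha(v)) \le u-v$. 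As $a \mapsto \inf\{s: D'(s)>a\}$ is nondecreasing, this yields $W_\alpha(u)-W_\alpha(v) \le \inf\{s:D'(s)>u-v\}$, whose conditional law is that of $W_\alpha(u-v)$. Using the Mittag-Leffler moments $\E W_\alpha(t)^k = k!\,t^{\alpha k}/(\Gamma(1-\alpha)^k\Gamma(1+\alpha k))$, which follow from the self-similarity of $W_\alpha$ with Hurst index $\alpha$ noted in the proof of Lemma~\ref{Lem:check}, we obtain $\E (W_\alpha(u)-W_\alpha(v))^k \le C_k (u-v)^{\alpha k}$ for every $k \in \N$. Choosing $k$ with $k(\alpha-\gamma)>1$, Kolmogorov's criterion gives the asserted local H\"older continuity of order $\gamma$; since $W_\alpha$ is already continuous, $W_\alpha$ itself is a.s.\ locally H\"older of every order $\gamma<\alpha$. (Alternatively, one may cite a known modulus-of-continuity result for inverse stable subordinators.)

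With this in hand, I would transcribe the argument from Proposition~\ref{Prop:sample path properties}(a) using $\gamma \in (\beta,\alpha)$, which is possible precisely because $\beta<\alpha$. Fix $T>0$, such a $\gamma$, and $\omega$ in the full-probability set on which the H\"older bound holds, and set $\phi(y):=y^{\gamma-\beta-1}$ and $K(u,y):=y^{-\gamma}(W_\alpha(u,\omega)-W_\alpha(u-y,\omega))\1_{\{0<y\le u\}}$, so that the integral term equals $\beta\int_0^u K(u,y)\phi(y)\dy$. The H\"older estimate gives $\sup_{0\le y\le u\le T}K(u,y)<\infty$, while $\gamma>\beta$ ensures $\phi$ is integrable near $0$. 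For $0<t<u<T$ one then splits
\begin{equation*}
\Big|\int_0^u K(u,y)\phi(y)\dy - \int_0^t K(t,y)\phi(y)\dy\Big| \le \int_0^t |K(u,y)-K(t,y)|\phi(y)\dy + \sup_{y\in[0,T]}|K(u,y)|\int_t^u \phi(y)\dy,
\end{equation*}
exactly as in \eqref{eq:K}: the second summand vanishes as $t\uparrow u$ by integrability of $\phi$, and the first vanishes by dominated convergence, the integrand tending to $0$ pointwise by continuity of $W_\alpha$ and being dominated by $2(\sup K)\phi$. Treating $0<u<t<T$ symmetrically proves a.s.\ continuity of $Y_{\alpha,\beta}$ on $(0,T)$, and letting $T\to\infty$ gives continuity on $(0,\infty)$.

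The main obstacle is the H\"older modulus of continuity of $W_\alpha$; everything downstream is a routine transcription of the Gaussian case. The only subtlety there is the overshoot domination, which is what lets us compare the non-stationary increment $W_\alpha(u)-W_\alpha(v)$ with $W_\alpha(u-v)$ and thereby feed clean moment bounds into Kolmogorov's criterion.
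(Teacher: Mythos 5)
Your proof is correct, and its overall architecture is exactly that of the paper: reduce to the representation \eqref{eq:def of Y_alpha,beta}, note that $u \mapsto u^{-\beta}W_\alpha(u)$ is continuous, and then rerun the argument of Proposition \ref{Prop:sample path properties}(a) with a H\"older exponent $\gamma \in (\beta,\alpha)$ in place of $\gamma \in (\beta,1/2)$. The one genuine difference is how the key input is obtained: the paper simply cites formula (6) of Hawkes \cite{Hawkes:1971} for the modulus of continuity of the inverse stable subordinator (and has to remark that, although Hawkes states it for a particular $a>0$, his proof works for arbitrary fixed $a>0$), whereas you derive a local H\"older estimate self-containedly. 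Your derivation is sound: $W_\alpha(v)$ is a finite stopping time, the strong Markov property plus the nonnegativity of the overshoot $D_\alpha(W_\alpha(v))-v$ gives the stochastic domination $W_\alpha(u)-W_\alpha(v) \leq \inf\{s: D'(s) > u-v\} \stackrel{\mathrm{d}}{=} W_\alpha(u-v)$, the Mittag--Leffler moments (which indeed follow from \eqref{eq:moments} with $\beta=0$) yield $\E(W_\alpha(u)-W_\alpha(v))^k \leq C_k(u-v)^{\alpha k}$, and Kolmogorov's criterion with $k(\alpha-\gamma)>1$ gives H\"older continuity of order $\gamma$; since $W_\alpha$ is already continuous (as the inverse of a strictly increasing subordinator), the H\"older modification is indistinguishable from $W_\alpha$ itself. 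Note that your bound is of the form $O(h^\gamma)$ rather than the $o(h^\gamma)$ statement used in the paper, but only the finiteness of $\sup_{0\leq y\leq u\leq T}K(u,y)$ is needed downstream, and your estimate gives $K(u,y)\leq C$ directly. What each approach buys: the paper's citation is shorter but rests on inspecting Hawkes's proof to remove the restriction on $a$; your argument is longer but self-contained, and the overshoot-domination trick cleanly replaces the stationary-increments structure that makes the Gaussian case easy.
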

\begin{proof}
Formula (6) in \cite{Hawkes:1971} says that, for some particular choice of $a>0$ and any $\gamma<\alpha$, a.s.,
\begin{equation}    \label{eq:Hoelder continuity inverse subordinator}
\lim_{h \downarrow 0} \frac{\sup_{0 \leq u \leq a} |W_{\alpha}(u+h)-W_{\alpha}(u)|}{h^\gamma} ~=~ 0.
\end{equation}
However, the perusal of its proof reveals that \eqref{eq:Hoelder continuity inverse subordinator} holds for arbitrary fixed $a>0$.  
It is implicit in the proof of Lemma \ref{Lem:check} that
representation \eqref{eq:def of Y_alpha,beta} holds also for the
present $Y_{\alpha,\beta}$. With this at hand, the rest of the
proof literally repeats the proof of Proposition \ref{Prop:sample
path properties}(a) and is thus omitted.
\end{proof}

\section{Preliminaries} \label{sec:preliminaries}

\subsection{Stationary renewal processes and coupling}  \label{subsec:stationary renewal processes and coupling}

Our first result in this section shows that the finite-dimensional distributions of the increments of the stationary renewal counting process
are invariant under time reversal.

\begin{Prop}    \label{Prop:N^ backwards in time}
Let $\mu < \infty$ and $F$ be non-lattice. Then, for every $t > 0$,
\begin{equation*}   
(N^*(t)-N^*((t-s)-): 0 \leq s \leq t)
~\stackrel{\mathrm{d}}{=}~  (N^*(s): 0 \leq s \leq t).
\end{equation*}
\end{Prop}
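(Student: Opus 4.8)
The plan is to realise $N^*$ as the restriction to $[0,\infty)$ of a \emph{two-sided} stationary renewal process on $\R$ and to exploit the reflection symmetry of the latter. Concretely, I would first introduce a point process $\widehat{N} = \sum_{k \in \Z} \delta_{T_k}$ on $\R$ whose consecutive distances $T_{k+1}-T_k$ are i.i.d.\ with law $F$ away from the origin, and whose interval straddling $0$ is length-biased with the origin placed uniformly inside it; that is, with $T_0 \le 0 < T_1$ the two points adjacent to $0$, the length $T_1-T_0$ has the size-biased law $\mu^{-1} x\, F(\mathrm{d}x)$ and, conditionally on $T_1-T_0=\ell$, the residual life $T_1$ is uniform on $[0,\ell]$. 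This is the standard stationary version of a renewal process; its existence, the shift-invariance of its law, and the fact that the forward recurrence time $T_1$ has exactly the equilibrium distribution $F^*$ of \eqref{eq:S_0^*} are classical (see \cite[Section III.1.2]{Lindvall:1992} or \cite[Section II.9]{Thorisson:2000}). Since $T_1 \sim F^*$ is independent of the i.i.d.\ gaps to its right, $\widehat{N}$ restricted to $(0,\infty)$ has the same law as $N^*$; and because $F^*$ is absolutely continuous, $\Prob\{0 \in \widehat{N}\} = \Prob\{0 \in N^*\} = 0$, so $\widehat{N}$ and $N^*$ agree in law on all of $[0,\infty)$, hence on $[0,t]$.

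The key structural input I would then establish is the reflection invariance $-\widehat{N} \stackrel{\mathrm{d}}{=} \widehat{N}$. This is immediate from the construction: reversing the order of the i.i.d.\ gaps leaves their joint law unchanged, while the straddling interval is treated symmetrically, since given its (reflection-symmetric, size-biased) length $\ell$ the residual $T_1$ is uniform on $[0,\ell]$, so the pair $(-T_0,T_1)$ of age and residual life at the origin is exchangeable. Combining reflection invariance with shift-invariance, the reflected-and-shifted process $t-\widehat{N}:=\{t-x:x\in\widehat{N}\}$ satisfies $t-\widehat{N}\stackrel{\mathrm{d}}{=}\widehat{N}$ as point processes on $\R$. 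Restricting to $[0,t]$ and using that $x\mapsto t-x$ maps $[0,t]$ onto itself, I get $t-(\widehat{N}\cap[0,t])\stackrel{\mathrm{d}}{=}\widehat{N}\cap[0,t]$, i.e.\ the configuration $\widehat{N}\cap[0,t]$ is invariant in law under the reflection $x\mapsto t-x$. By the distributional identity of the first paragraph, the same reflection invariance holds for $N^*\cap[0,t]$.

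It then remains to translate this configuration-level symmetry into the asserted identity of counting processes. Writing $r(x):=t-x$, I would note that $N^*(t)-N^*((t-s)-)=N^*([t-s,t])$ counts the points of $N^*$ in the \emph{closed} interval $[t-s,t]$ (the left limit is precisely what supplies the closed left endpoint), and that $r$ sends a point $p\in[t-s,t]$ to $t-p\in[0,s]$. Hence, for every $s\in[0,t]$, the quantity $N^*(t)-N^*((t-s)-)$ equals the number of points of $r(N^*\cap[0,t])$ in $[0,s]$; as a process in $s$ this is the renewal counting function of the reflected configuration. Since $N^*\cap[0,t]\stackrel{\mathrm{d}}{=}r(N^*\cap[0,t])$, that counting function has the same finite-dimensional distributions as $s\mapsto N^*([0,s])=N^*(s)$, which is exactly the claim.

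The main obstacle I anticipate is one of rigour rather than of ideas: one must check carefully that the two-sided construction reproduces $F^*$ as the forward recurrence law and is reflection-invariant, and one must keep the endpoint bookkeeping straight so that the closed interval $[t-s,t]$ on the left matches the closed interval $[0,s]$ on the right. The absolute continuity of $F^*$ removes all ambiguity about points landing exactly at $0$, $t-s$, or $t$, so these boundary events are null and cause no loss.
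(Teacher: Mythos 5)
Your proposal is correct and follows essentially the same route as the paper: the paper likewise embeds $N^*$ into a two-sided stationary renewal process (with the straddling interval built from a size-biased $\xi_0$ and a uniform $U$, exactly your length-biased construction), invokes shift-invariance (via Palm point-at-zero duality in Thorisson) together with the reflection symmetry of the two-sided walk, and then restricts to $[0,t]$, with the closed-interval count $N^*([t-s,t])$ matching $N^*(t)-N^*((t-s)-)$ just as in your endpoint bookkeeping.
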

\begin{proof}
For the proof of this proposition, it is convenient to embed the zero-delayed random walk $(S_k)_{k \in \N_0}$
into a two-sided random walk $(S_k)_{k \in \Z}$.
To this end, assume that on the basic probability space, there is an independent copy $(\xi_{-k})_{k \in \N}$ of the
sequence $(\xi_k)_{k \in \N}$.
Let $S_{-k} := -(\xi_{-1}+\ldots+\xi_{-k})$ for $k \in \N$.
Further, assume there is a random variable $\xi_0$ which is independent of the $\xi_k$, $k \in \Z \setminus \{0\}$ with size-biased distribution
\begin{equation*}
\Prob\{\xi_0 \in B\} ~=~ \mu^{-1} \E [\xi \1_{\{\xi \in B\}}],    \quad   B \subseteq \R_+ \text{ Borel}.
\end{equation*}
Finally, let $U$ have the uniform distribution on $(0,1)$ and assume that $U$ is independent of the sequence $(\xi_k)_{k \in \Z}$.
Define $S^*_0 := U \xi_0$ and $S^*_{-1} := -(1-U) \xi_0$.
Then $S^*_0$ and $-S^*_{-1}$ have distribution function $F^*$ (see \textit{e.g.}\ \cite[p.\;261]{Thorisson:2000} for a quick proof).
Now recall that $S^*_k = S^*_0+S_k$ for $k \in \N$ and define analogously $S^*_{k} := S^*_{-1} + S_{k+1}$ for $k < -1$.
By the first Palm duality (point-at-zero duality, Theorem 4.1 in Chapter 8 of \cite{Thorisson:2000}),
the process $N_{\Z}^* := \sum_{k \in \Z} \delta_{S^*_k}$ is distributionally invariant under shifts.
Using this and the fact that $(-S^*_{-k})_{k \in \N}$ has the same distribution as $(S^*_{k-1})_{k \in \N}$,
we infer for given $t>0$
\begin{equation*}
(N_{\Z}^*[0,s]\!: 0 \! \leq \! s \! \leq \! t)
\stackrel{\mathrm{d}}{=}
(N_{\Z}^*[-t,-(t-s)]\!: 0 \! \leq \! s \! \leq \! t)
\stackrel{\mathrm{d}}{=}
(N_{\Z}^*[t-s,t]\!: 0 \! \leq \! s \! \leq \! t).
\end{equation*}
This implies the assertion in view of the fact that $N^*(\cdot) = N_{\Z}^*(\cdot \cap [0,\infty))$.
\end{proof}

Next, we briefly introduce a classical coupling that will be useful in several proofs in this paper and
which works in the case when $\mu < \infty$ and $F$ is non-lattice.
Let $(\hat{\xi}_k)_{k \in \N}$ be an independent copy of the sequence $(\xi_k)_{k \in \N}$.
Let $\hat{S}_0^*$ denote a random variable that is independent of all previously introduced random variables
and has distribution function $F^*$ (recall the definition of $F^*$ from \eqref{eq:S_0^*}).
Put
\begin{equation*}
\hat{S}_0 := 0  \quad   \text{and}  \quad   \hat{S}_k := \hat{\xi}_1+ \ldots + \hat{\xi}_k,     \quad k \in \N.
\end{equation*}
Let $\hat{N}$ be the renewal counting process associated with the process $(\hat{S}_k)_{k \in \N_0}$.
In particular, $\hat{N}(t) := \hat{N}[0,t] = \#\{k\in\N_0: \hat{S}_k \leq t\}$, $t \geq 0$.
Further, define $\hat{S}_k^* := \hat{S}_0^* + \hat{S}_k$, $k \in \N_0$
and let $\hat{N}^* := \sum_{k \geq 0} \delta_{\hat{S}_k^*}$ denote the associated renewal counting process.
As usual, put $\hat{N}^*(t) := \hat{N}^*([0,t])$, $t \geq 0$.
By construction, $(\hat{N}^*(t))_{t\geq 0}$ is a stationary renewal process.

It is known (\textit{cf.}\ p.~210 in \cite{Durrett:2010}) that,
for any fixed $\varepsilon>0$, there exist almost surely finite
$\tau_1$ and $\tau_2$ such that
\begin{equation*}
|S_{\tau_2}-\hat{S}^{\ast}_{\tau_1}|\leq \varepsilon
\end{equation*}
almost surely. Define the coupled random walk
\begin{equation*}
\tilde{S}_k^*   ~:=~    \begin{cases}
                \hat{S}_k^*,                                                                    &   \text{for } k \leq \tau_1,   \\
                \hat{S}^{\ast}_{\tau_1} + \sum_{j=\tau_2+1}^{\tau_2+k-\tau_1} \xi_j,     &   \text{for } k \geq \tau_1+1,
\end{cases}
\end{equation*}
$k \in \N_0$. Then $(\tilde{S}_k^*)_{k \in \N_0}
\stackrel{\mathrm{d}}{=} (\hat{S}_k^*)_{k \in \N_0}
\stackrel{\mathrm{d}}{=} (S_k^*)_{k \in \N_0}$. In particular, the
random process $(\tilde{N}^*(t))_{t\geq 0}$ defined by
\begin{equation*}   
\tilde{N}^*(t) := \#\{k \in \N_0: \tilde{S}_k^* \leq t\},
\end{equation*}
is a stationary renewal process. Further, the construction of the
process $(\tilde{S}_k^*)_{k \in \N_0}$ guarantees that
\begin{equation}    \label{eq:epsilon close}
\tilde{S}_{\tau_1+k}^* - \varepsilon \leq S_{\tau_2+k} \leq
\tilde{S}_{\tau_1+k}^*+\varepsilon
\end{equation}
for $k \in \N_0$. Further, one can check that, for any fixed
$\varepsilon>0$ and arbitrary $0 \leq y \leq t$, on $\{\tau_1\vee
\tau_2 < \infty\}$ (hence with probability one),
\begin{eqnarray}
\sum_{k \geq \tau_2} \1_{\{t-y<S_k \leq t\}} & \leq &
\sum_{k \geq \tau_1} \1_{\{t-y-\varepsilon<\tilde{S}_k^* \leq t+\varepsilon\}}    \nonumber   \\
& \leq &
\tilde{N}^*(t\!+\!\varepsilon)-\tilde{N}^*(t\!-\!y\!-\!\varepsilon)
\label{eq:N leq N*+O(1)}
\end{eqnarray}
where $\tilde{N}^*(x) := 0$ for $x < 0$ is stipulated. Similarly,
for any fixed $\varepsilon>0$ and $0 \leq y \leq t$, again on
$\{\tau_1\vee\tau_2 < \infty\}$ (and thus with probability one),
\begin{eqnarray}
\sum_{k\geq \tau_2} \1_{\{t-y<S_k\leq t\}} & \geq &
\sum_{k \geq \tau_1} \1_{\{t-y+\varepsilon<\tilde{S}_k^* \leq t-\varepsilon\}}    \notag  \\
& = &
\tilde{N}^*(t\!-\!\varepsilon)-\tilde{N}^*(t\!-\!y\!+\!\varepsilon)
- \sum_{k=0}^{\tau_1-1} \1_{\{t-y+\varepsilon<\tilde{S}_k^*\leq
t-\varepsilon\}}. \qquad  \label{eq:N geq N*+O(1)}
\end{eqnarray}

\subsection{Stable distributions and domains of attraction} \label{subsec:domains of attraction}

Naturally, the asymptotics of the shot noise process $(X(ut))_{u
\geq 0}$ as $t \to \infty$ is connected to the limiting behavior
of $S_k$ as $k \to \infty$. Under the assumptions of the limit
theorems with scaling, $F$ (the distribution of $\xi$) is in the
domain of attraction of a stable law. To be more precise, for
appropriate constants $c_k > 0$ and $b_k \in \R$,
\begin{equation}    \label{eq:domain of attraction}
\frac{S_k-b_k}{c_k} ~\stackrel{\mathrm{d}}{\to}~    W   \quad
\text{as } k \to \infty
\end{equation}
where $W$ has a stable law $S_{\alpha}(\sigma',\beta',\mu')$ that
is characterized by four parameters, the \emph{index of stability}
$\alpha$ and the \emph{scale}, \emph{skewness}, and \emph{shift}
parameters, $\sigma'$, $\beta'$, and $\mu'$, respectively. We
refer to \cite{Samorodnitsky+Taqqu:1994} for the precise
definition of the law $S_{\alpha}(\sigma',\beta',\mu')$ as well as
for a general introduction to stable random variables. Due to the
assumption that $\xi > 0$ a.s., $S_{\alpha}(\sigma',\beta',\mu')$
will automatically be totally skewed to the right, \textit{i.e.},
it will have skewness parameter $\beta'=1$.

By changing $b_k$ and $c_k$ if necessary, it can be arranged that
the shift parameter $\mu'$ equals $0$ and the scale parameter
$\sigma'$ is as we wish. Particularly, the limit can be arranged
to be standard normal in the case $\alpha=2$ and to have
characteristic function
\begin{equation}    \label{eq:stable ch f spectrally positive}
z \mapsto \exp\big\{-|z|^\alpha
\Gamma(1\!-\!\alpha)(\cos(\pi\alpha/2)-\i\sin(\pi\alpha/2)\,
{\sign}(z))\big\}, \ z \in \R
\end{equation}
in case $0 < \alpha < 2$, $\alpha \not = 1$. The constants $b_k$
and $c_k$ that produce this limit in \eqref{eq:domain of
attraction} can be described in terms of the distribution of
$\xi$.
\begin{itemize}
    \item[(D1)]
        The case $\sigma^2 < \infty$:   \\
        If $\sigma^2    :=  \Var \xi < \infty$,
        then \eqref{eq:domain of attraction} holds with $b_k = k\mu$, $c_k = \sigma \sqrt{k}$.
        $W$ then has the standard normal law.
    \item[(D2)]
        The case when $\sigma^2 = \infty$, yet there is attraction to a normal law: \\
        If $\int_{[0,t]} y^2 \Prob\{\xi\in\dy\} \sim \ell(t)$ for some function $\ell$ that is slowly varying at $+\infty$,
        then \eqref{eq:domain of attraction} holds with $b_k = k\mu$ and the $c_k$, $k\in \N$ being such that
        $\lim_{k \to \infty} \frac{k \ell(c_k)}{c_k^2}=1$.
        Again, $W$ has the standard normal law.
    \item[(D3)]
        The case $1 < \alpha < 2$:  \\
        If $\Prob\{\xi>t\} \sim t^{-\alpha}\ell(t)$ for some $1 < \alpha < 2$ and some $\ell$ slowly varying at $\infty$,
        then \eqref{eq:domain of attraction} holds with $b_k = k\mu$ and the $c_k$, $k\in \N$ being such that
        $k \Prob\{\xi > c_k\} \sim k \ell(c_k)/c_k^\alpha \to 1$.
        $W$ then has a spectrally positive stable law with characteristic function given by \eqref{eq:stable ch f spectrally positive}.
    \item[(D4)]
        The case $0 < \alpha < 1$:  \\
        If $\Prob\{\xi>t\} \sim t^{-\alpha} \ell(t)$ as $t \to \infty$ for some $0 < \alpha < 1$ and some $\ell$ slowly varying at $\infty$,
        \eqref{eq:domain of attraction} holds with $b_k=0$ and $c_k$, $k\in \N$ being such that $k \Prob\{\xi > c_k\} \sim k \ell(c_k)/c_k^\alpha \to 1$.
        Again, $W$ has a spectrally positive stable law with characteristic function given by \eqref{eq:stable ch f spectrally positive}.
        Further, $W > 0$ a.s.\ in this case and its Laplace exponent is given by
        \begin{equation*}   
        -\log \varphi(s)    ~=~ \Gamma(1-\alpha) s^{\alpha},    \quad   s \geq 0.
        \end{equation*}
\end{itemize}
Clearly, (D1) is the classical central limit theorem. (D2) follows
from \cite[Theorem IX.8.1 and Eq.\ (IX.8.12)]{Feller:1971}. (D3)
and (D4) follow from the lemma on p.\;107 of \cite{Feller:1949}.
(D4) is also Theorem XIII.7.2 in \cite{Feller:1971} (where Laplace
transforms are used rather than characteristic functions).

\subsection{Convergence in distribution of the renewal counting process}   \label{subsec:convergence of N(t)}

Denote by $D := D[0,\infty)$ the space of right-continuous
real-valued functions on $[0,\infty)$ with finite limits from the
left. It is well known (see, for instance, Theorem 5.3.1 and Theorem
5.3.2 in \cite{Gut:2009} or Section 7.3.1 in \cite{Whitt:2002})
that the following functional limit theorems hold:
\begin{equation}    \label{eq:FLT for N(t)}
W_t(u)  ~:=~    \frac{N(ut)-\mu^{-1}ut}{g(t)}   ~\Rightarrow~   W_{\alpha}(u)    \quad   \text{as } t \to \infty
\end{equation}
where $W_{\alpha}$ is as in Theorem \ref{Thm:non-endogenous & mu<infty}
and where in the case
\begin{itemize}
    \item[(D1)]
        $\alpha=2$, $g(t)=\sqrt{\sigma^2\mu^{-3}t}$ and the convergence takes place in the $J_1$ topology on $D$;
    \vspace{-0,25cm}
    \item[(D2)]
        $\alpha=2$, $g(t)=\mu^{-3/2}c(t)$ with $c(t)$ being any positive continuous function satisfying $\lim_{t \to \infty} t \ell(c(t))c(t)^{-2} = 1$
        and the convergence takes place in the $J_1$ topology on $D$;
    \vspace{-0,25cm}
    \item[(D3)]
        $1<\alpha<2$, $g(t)=\mu^{-1-1/\alpha}c(t)$ where $c(t)$ is any positive continuous function with $\lim_{t \to \infty} t\ell(c(t))c(t)^{-\alpha}=1$
        and the convergence takes place in the $M_1$ topology on $D$.
\end{itemize}
We refer to \cite{Whitt:2002} for extensive information concerning
both the $J_1$ and $M_1$ convergence in $D$.

For later use we note the following lemma.
\begin{Lemma}   \label{Lem:g regularly varying}
$c(t)$ and $g(t)$ are regularly varying with index $1/2$ in the cases (D1) and (D2), and with index $1/\alpha$ in the case (D3).
As a consequence, given $A > 1$ and $\delta\in (0,1/\alpha)$
(here, $\alpha=2$ in the cases (D1) and (D2))
there exists $t_0>0$ such that
\begin{equation}    \label{eq:Potter's bound for g}
\frac{g(tv)}{g(t)} ~\leq~ A v^{1/\alpha-\delta},
\end{equation}
for all $0 < v \leq 1$ and $t > 0$ such that $t v \geq t_0$.
\end{Lemma}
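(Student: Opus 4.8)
The plan is to first establish the regular variation of $c$ and $g$, and then to read off the bound \eqref{eq:Potter's bound for g} as a routine consequence of Potter's bounds.

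In case (D1) the function $g(t) = \sqrt{\sigma^2\mu^{-3}}\,t^{1/2}$ is manifestly regularly varying with index $1/2$, and the same holds for the natural choice $c(t) = \sigma t^{1/2}$, so nothing is to be done there. The substance lies in cases (D2) and (D3), where $c$ is only given implicitly; writing $\alpha = 2$ in (D2), in both cases $c$ is characterized by $t\,\ell(c(t))/c(t)^\alpha \to 1$ as $t \to \infty$. I would introduce $\phi(x) := x^\alpha/\ell(x)$, which is regularly varying with index $\alpha > 0$ since $x^\alpha$ has index $\alpha$ and $1/\ell$ is slowly varying. By the theory of asymptotic inverses of regularly varying functions \cite[Theorem 1.5.12]{Bingham+Goldie+Teugels:1989}, $\phi$ possesses an asymptotic inverse $\phi^{\leftarrow}$, regularly varying with index $1/\alpha$, such that $\phi(\phi^{\leftarrow}(t)) \sim t$, and this asymptotic inverse is unique up to asymptotic equivalence. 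Since the defining relation of $c$ reads exactly $\phi(c(t)) \sim t$, I would conclude $c(t) \sim \phi^{\leftarrow}(t)$, so that $c$ --- and hence its constant multiple $g$ --- is regularly varying with index $1/\alpha$.

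With regular variation established, I would obtain \eqref{eq:Potter's bound for g} directly from Potter's bounds \cite[Theorem 1.5.6]{Bingham+Goldie+Teugels:1989}: because $g$ is regularly varying with index $1/\alpha$ (with $\alpha = 2$ in (D1) and (D2)), for the prescribed $A > 1$ and $\delta \in (0,1/\alpha)$ there is a $t_0 > 0$ with
\begin{equation*}
\frac{g(y)}{g(x)} ~\leq~ A \max\bigl\{(y/x)^{1/\alpha+\delta},\, (y/x)^{1/\alpha-\delta}\bigr\}
\quad \text{for all } x,y \geq t_0.
\end{equation*}
Specializing to $y = tv$, $x = t$ with $0 < v \leq 1$, the assumption $tv \geq t_0$ already forces $t \geq tv \geq t_0$, so both arguments lie above $t_0$; and for $0 < v \leq 1$ the map $a \mapsto v^a$ is nonincreasing, whence $\max\bigl\{v^{1/\alpha+\delta}, v^{1/\alpha-\delta}\bigr\} = v^{1/\alpha-\delta}$. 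This gives $g(tv)/g(t) \leq A v^{1/\alpha-\delta}$, which is the asserted bound.

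The only step that is not a mechanical appeal to standard results is the passage from $\phi(c(t)) \sim t$ to $c(t) \sim \phi^{\leftarrow}(t)$. Here I would use that $c(t) \to \infty$ (otherwise $\phi(c(t))$ could not tend to $\infty$) together with the fact that, for $\phi$ regularly varying of positive index, $\phi(a_t) \sim \phi(b_t)$ with $a_t, b_t \to \infty$ forces $a_t \sim b_t$; the latter follows from the uniform convergence theorem and Potter's bounds applied to $\phi$ (ruling out ratios tending to $0$, to $\infty$, or to a constant $\neq 1$). This is precisely the uniqueness part of the asymptotic-inverse theorem, so in effect the entire argument reduces to citing \cite{Bingham+Goldie+Teugels:1989}.
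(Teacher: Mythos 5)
Your proof is correct and takes essentially the same route as the paper: both identify $c$ as an asymptotic inverse of the regularly varying function $t \mapsto t^\alpha/\ell(t)$, deduce regular variation with index $1/\alpha$ from the inversion theory in \cite{Bingham+Goldie+Teugels:1989} (the paper cites Proposition 1.5.15 via the de Bruijn conjugate, while you cite Theorem 1.5.12), and then obtain \eqref{eq:Potter's bound for g} from Potter's bounds. Your unified treatment of (D2) and (D3) and your explicit justification of the one-sided uniqueness step, namely that $\phi(c(t))\sim t$ forces $c(t)\sim\phi^{\leftarrow}(t)$, are somewhat more detailed than the paper's terse argument, but the substance is identical.
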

\begin{proof}
We only check this for the case (D2), the case (D3) being similar,
and the case (D1) being trivial.

The function $c(t)$ is an asymptotic inverse of
\begin{equation*}
t^2 \big(\E[\xi^2 \1_{\{\xi \leq t\}}]\big)^{-1} ~\sim~ t^2/\ell(t).
\end{equation*}
Hence, by Proposition 1.5.15 in \cite{Bingham+Goldie+Teugels:1989}, $c(t) \sim t^{1/2}(L^\#(t))^{1/2}$
where $L^\#(t)$ is the de Bruijn conjugate of $L(t)=1/\ell(t^{1/2})$.
The de Bruijn conjugate is slowly varying and hence $c$ regularly varying of index $1/2$.
\eqref{eq:Potter's bound for g} is Potter's bound for $g$
(see Theorem 1.5.6 in \cite{Bingham+Goldie+Teugels:1989}).
\end{proof}

There is also an analogue of \eqref{eq:FLT for N(t)} in the case (D4).
The functional convergence
\begin{equation}    \label{eq:FLT for N(t) when mu=infty}
W_t(u):=\frac{N(ut)}{g(t)}      ~\Rightarrow~   W_{\alpha}(u)    \quad \text{as } t \to \infty
\end{equation}
under the $J_1$ topology in $D$ where $(W_{\alpha}(u))_{u \geq 0}$ is an inverse $\alpha$-stable subordinator and $g(t):=1/\Prob\{\xi>t\}$
was proved in Corollary 3.4 in \cite{Meerschaert+Scheffler:2004}.

\section{Proofs of the limit theorems without scaling}  \label{sec:endogenous case}

\subsection{Preparatory results}

We first show that under the assumptions of Theorems \ref{Thm:dRi h} and \ref{Thm:decreasing h},
the limiting variables $X^*$ and $X^*_{\circ}$, respectively, are well-defined.
It turns out that in the case of $X^*_{\circ}$, this is more than halfway to proving one-dimensional convergence in Theorem \ref{Thm:decreasing h}.

\begin{Prop}    \label{Prop:X* well-defined}
Assume that $\mu < \infty$ and that $F$ is non-lattice.
If $h$ is Lebesgue integrable, then $X^*$ exists as the a.s.\ limit and the limit in $\mathcal{L}^1$ in \eqref{eq:X*}.
In particular, $X^*$ is integrable, \textit{a fortiori} a.s.\ finite.
\end{Prop}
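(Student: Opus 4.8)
The plan is to exploit the fact, recorded in the setup, that the stationary renewal counting process $N^*$ has intensity measure $U^*(\dy) = \mu^{-1} \dy$; this is exactly where the hypotheses $\mu < \infty$ and $F$ non-lattice enter, and it converts the expected \emph{absolute} sum into an ordinary Lebesgue integral of $|h|$. First I would rewrite the partial sums as integrals against the renewal counting measure,
\[
\sum_{k \geq 0} h(S_k^*) \1_{\{S_k^* \leq t\}} ~=~ \int_{[0,t]} h(y) \, N^*(\dy),
\]
and note that, since $S_k^* = S_0^* + S_k$ is strictly increasing in $k$, passing to the limit $t \to \infty$ simply amounts to summing over all indices $k$.

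The key step is a Campbell-type computation applied to the \emph{nonnegative} integrand $|h|$. By the defining property of the intensity measure $U^*$, together with Tonelli's theorem (which justifies the interchange of expectation and summation for the nonnegative integrand),
\[
\E \sum_{k \geq 0} |h(S_k^*)| ~=~ \E \int_{[0,\infty)} |h(y)| \, N^*(\dy) ~=~ \int_{[0,\infty)} |h(y)| \, U^*(\dy) ~=~ \frac{1}{\mu} \int_0^\infty |h(y)| \, \dy,
\]
which is finite precisely because $h$ is Lebesgue integrable. Hence the nonnegative random variable $\sum_{k \geq 0} |h(S_k^*)|$ is integrable, in particular a.s.\ finite. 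This gives the a.s.\ \emph{absolute} convergence of the series $\sum_{k \geq 0} h(S_k^*)$, so that $X^*$ is well-defined as an a.s.\ limit; its integrability (and \textit{a fortiori} a.s.\ finiteness) follows at once from the bound $|X^*| \leq \sum_{k \geq 0} |h(S_k^*)|$.

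For the $\mathcal{L}^1$ convergence I would invoke dominated convergence. The tail difference satisfies
\[
\left| X^* - \sum_{k \geq 0} h(S_k^*) \1_{\{S_k^* \leq t\}} \right| ~=~ \left| \sum_{k \geq 0} h(S_k^*) \1_{\{S_k^* > t\}} \right| ~\leq~ \sum_{k \geq 0} |h(S_k^*)| \1_{\{S_k^* > t\}},
\]
which tends to $0$ a.s.\ as $t \to \infty$ (by the absolute convergence just established) and is dominated by the integrable variable $\sum_{k \geq 0} |h(S_k^*)|$. Dominated convergence then yields convergence in $\mathcal{L}^1$, completing the proof.

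I do not expect a serious obstacle here: the whole argument rests on the identity $U^*(\dy) = \mu^{-1}\dy$, and the remaining steps are standard consequences of absolute integrability. The only point deserving mild care is the interchange of expectation and sum in the Campbell computation, but this is unproblematic since the integrand $|h|$ is nonnegative.
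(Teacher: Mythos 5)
Your proof is correct and follows essentially the same route as the paper: the key identity $U^*(\dy) = \mu^{-1}\dy$ turns $\E \sum_{k \geq 0} |h(S_k^*)|$ into $\mu^{-1}\int_0^\infty |h(y)|\dy < \infty$, from which a.s.\ absolute convergence and, via dominated convergence, the $\mathcal{L}^1$ convergence follow. The paper compresses these last steps into ``from which the assertion easily follows,'' so your write-up simply makes explicit what the paper leaves implicit.
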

\begin{proof}
Since $\mu < \infty$, the stationary walk $(S^*_k)_{k \in \N_0}$ exists.
Recall that $U^*(\dx) = \sum_{k \geq 0} \Prob\{S^*_k \in \dx\} = \mu^{-1} \dx$.
Hence, by the Lebesgue integrability of $h$,
\begin{equation*}
\E \sum_{k \geq 0} |h(S_k^*)|   ~=~ \int_0^{\infty} |h(x)| \dx  ~<~ \infty,
\end{equation*}
from which the assertion easily follows.
\end{proof}

Before we proceed with sufficient conditions for the well-definedness of $X^*_{\circ}$ we prove two auxiliary lemmas that will be needed later.

\begin{Lemma}   \label{Lem:approximating X*}
Assume that $\mu < \infty$ and that $F$ is non-lattice.
Further let $h, h_1, h_2, \ldots$ be Lebesgue integrable and $h_n \downarrow h$ a.e.\ or $h_n \uparrow h$ a.e.
Define $X^*$ as usual and $X^*_n := \sum_{k \geq 0} h_n(S^*_k)$, $n \in \N$.
Then $X_n^* \to X^*$ a.s.\ and in $\mathcal{L}^1$.
\end{Lemma}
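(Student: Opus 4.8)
The plan is to reduce both monotone cases simultaneously to a single dominated-convergence argument by controlling the error through the nonnegative sequence $g_n := |h_n - h|$. First I would record the preliminary facts. Since $\mu < \infty$ and $F$ is non-lattice, the stationary walk $(S_k^*)_{k \in \N_0}$ exists and $U^*(\dx) = \mu^{-1}\dx$; moreover $S_0^*$ has the law $F^*$ from \eqref{eq:S_0^*}, which is absolutely continuous with respect to Lebesgue measure, and is independent of $(S_k)_{k\in\N_0}$, so each $S_k^* = S_0^* + S_k$ has an absolutely continuous law as well. Because $h$, $h_n$, and hence $g_n$ are Lebesgue integrable, Proposition \ref{Prop:X* well-defined} guarantees that $X^* = \sum_{k\ge0} h(S_k^*)$, $X_n^* = \sum_{k\ge0} h_n(S_k^*)$, and $\sum_{k\ge0} g_n(S_k^*)$ all converge absolutely a.s.\ and in $\mathcal{L}^1$. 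In particular, the series of differences may be combined termwise, giving $X_n^* - X^* = \sum_{k\ge0}(h_n(S_k^*) - h(S_k^*))$ a.s., whence the pathwise bound $|X_n^* - X^*| \leq \sum_{k\ge0} g_n(S_k^*)$.

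Next I would transfer the a.e.\ hypotheses on $h_n$ into almost sure statements about $g_n(S_k^*)$. In either case $h_n\downarrow h$ or $h_n\uparrow h$ a.e., the function $g_n = |h_n - h|$ decreases a.e.\ to $0$ and is dominated by the integrable $g_1$. By the absolute continuity of the law of $S_k^*$, the Lebesgue-null set off which $g_n\downarrow0$ is charged with probability zero, so for each fixed $k$ we have $g_n(S_k^*)\downarrow 0$ a.s.; a countable intersection over $k$ then yields an almost sure event on which $g_n(S_k^*)\downarrow 0$ for every $k$ simultaneously and on which $\sum_{k\ge0} g_1(S_k^*) < \infty$.

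For the $\mathcal{L}^1$ convergence I would estimate directly, using Tonelli's theorem together with $U^*(\dx) = \mu^{-1}\dx$:
\begin{equation*}
\E|X_n^* - X^*| ~\leq~ \E\sum_{k\ge0} g_n(S_k^*) ~=~ \int_0^\infty g_n(x)\,U^*(\dx) ~=~ \mu^{-1}\int_0^\infty g_n(x)\,\dx ~\to~ 0,
\end{equation*}
the last limit being dominated convergence on $\R_+$ with dominating function $g_1$. For the a.s.\ convergence I would work on the almost sure event constructed above: there the summable bound $\sum_{k\ge0} g_1(S_k^*)<\infty$ dominates the terms $g_n(S_k^*)$, which decrease to $0$ in $n$, so dominated convergence over the counting measure on $\N_0$ gives $\sum_{k\ge0} g_n(S_k^*)\to 0$; combined with $|X_n^* - X^*|\le\sum_{k\ge0} g_n(S_k^*)$ this yields $X_n^*\to X^*$ a.s.

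I do not expect a serious obstacle here; the only point requiring genuine care is the step transferring the Lebesgue-a.e.\ convergence $h_n\to h$ into the almost sure convergence $g_n(S_k^*)\to 0$, which is exactly where the absolute continuity of the law of $S_k^*$ (inherited from $S_0^*$, and hence where the hypotheses $\mu<\infty$ and $F$ non-lattice enter) is essential. The two interchanges---expectation with the sum, justified by nonnegativity via Tonelli, and the limit with the sum, justified by domination---are then routine once the summable dominating bound $\sum_{k\ge0} g_1(S_k^*)$ is in hand.
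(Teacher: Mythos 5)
Your proof is correct. The $\mathcal{L}^1$ half coincides with the paper's argument: both reduce $\E|X_n^*-X^*|$ to $\mu^{-1}\int_0^\infty |h_n(x)-h(x)|\,\mathrm{d}x$ using $U^*(\mathrm{d}x)=\mu^{-1}\mathrm{d}x$ (Campbell/Tonelli) and let monotone/dominated convergence on $\R_+$ finish. Where you genuinely diverge is the almost sure convergence. The paper gets it softly: $\mathcal{L}^1$ convergence gives a.s.\ convergence along a subsequence, and since $h_n$ is a.e.\ monotone in $n$, the sequence $X_n^*$ is a.s.\ monotone in $n$, so subsequential a.s.\ convergence upgrades to convergence of the whole sequence. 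You instead argue pathwise: you transfer the Lebesgue-a.e.\ statements $g_n:=|h_n-h|\downarrow 0$ into a.s.\ statements about $g_n(S_k^*)$ via the absolute continuity of the law of each $S_k^*$, build a single full-probability event on which $\sum_{k\geq 0} g_1(S_k^*)<\infty$ and $g_n(S_k^*)\downarrow 0$ for every $k$, and apply dominated convergence with respect to counting measure. Both routes are valid. Yours is more explicit about the one delicate point of the lemma, namely the a.e.-to-a.s.\ transfer: the paper needs exactly the same null-set argument to justify that $X_n^*$ is a.s.\ monotone in $n$ (since $h_n\downarrow h$ only a.e.), but leaves it implicit. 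The paper's route is shorter and avoids constructing the dominating series, at the cost of the subsequence trick; yours is self-contained, treats both monotone cases at once through $g_n$, and would survive verbatim even without monotonicity of $X_n^*$, requiring only $g_n\downarrow 0$ a.e.\ with an integrable majorant.
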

\begin{proof}
Assume without loss of generality that $h_n \downarrow h$ a.e.\ as $n \to \infty$.
Then
\begin{equation*}
\lim_{n \to \infty} \E |X_n^* - X^*|
~=~ \lim_{n \to \infty} \frac{1}{\mu} \int_{0}^{\infty} (h_n(x)-h(x)) \, \dx
~=~ 0
\end{equation*}
by the monotone convergence theorem.
The asserted a.s.\ convergence follows from the convergence in $\mathcal{L}^1$ together with the monotonicity of $X_n^*$ in $n$.
Indeed, $\mathcal{L}^1$-convergence implies the existence of a subsequence $(n_k)_{k \geq 1}$ along which a.s.\ convergence holds.
The monotonicity of $X_n^*$ in $n$ then implies that a.s.\ convergence must hold as $n \to \infty$.
\end{proof}

\begin{Lemma}   \label{Lem:h^eps is dRi}
If $h: \R \to \R$ is d.R.i., so is $h^{\varepsilon}(x) :=
\sup_{|y-x| \leq \varepsilon} h(y)$, $x \in \R$, for each fixed
$\varepsilon>0$.
\end{Lemma}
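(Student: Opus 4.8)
The plan is to reduce the statement to the standard characterization of direct Riemann integrability: a locally bounded function $g:\R\to\R$ is d.R.i.\ if and only if $g$ is continuous Lebesgue-a.e.\ and, for some (equivalently, every) $\delta>0$, the Darboux sum $\delta\sum_{n\in\Z}\sup_{I_n}|g|$ is finite, where $I_n:=((n-1)\delta,n\delta]$. Recall also that a d.R.i.\ function is bounded, so $h^{\varepsilon}$ is everywhere finite and bounded. Since $h$ is d.R.i., both properties hold for $h$, and I would verify each of them for $h^{\varepsilon}$ and then invoke the characterization.

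For the Darboux-sum condition, the key observation is that taking the supremum of $h^{\varepsilon}$ over a cell equals taking the supremum of $h$ over the $\varepsilon$-enlarged cell:
\[
\sup_{x\in I_n}|h^{\varepsilon}(x)| ~\leq~ \sup_{y:\,\mathrm{dist}(y,I_n)\leq\varepsilon}|h(y)| ~=~ \sup_{y\in[(n-1)\delta-\varepsilon,\,n\delta+\varepsilon]}|h(y)|.
\]
Setting $k:=\lceil\varepsilon/\delta\rceil$, the enlarged interval is covered by the $2k+1$ cells $I_{n-k},\ldots,I_{n+k}$, so $\sup_{I_n}|h^{\varepsilon}|\leq\max_{|j|\leq k}\sup_{I_{n+j}}|h|\leq\sum_{|j|\leq k}\sup_{I_{n+j}}|h|$. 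Summing over $n$ gives $\delta\sum_n\sup_{I_n}|h^{\varepsilon}|\leq(2k+1)\,\delta\sum_n\sup_{I_n}|h|<\infty$, so the integrability condition transfers to $h^{\varepsilon}$.

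The main obstacle is the almost-everywhere continuity of $h^{\varepsilon}$. Let $D$ be the (Lebesgue-null) set of discontinuities of $h$. I would show $\mathrm{disc}(h^{\varepsilon})\subseteq(D+\varepsilon)\cup(D-\varepsilon)$, which is again null, by proving the contrapositive: if $h$ is continuous at both $a:=x_0-\varepsilon$ and $b:=x_0+\varepsilon$, then $h^{\varepsilon}$ is continuous at $x_0$. As $x\to x_0$ the window $[x-\varepsilon,x+\varepsilon]$ slides over $[a,b]$, gaining and losing only small slivers at the two endpoints. For the $\limsup$ bound one controls the gained sliver outside $[a,b]$ via $\sup_{[b,b+t]}h\to h(b)$ and $\sup_{[a-|t|,a]}h\to h(a)$ (continuity of $h$ at $a,b$), both being $\leq\sup_{[a,b]}h=h^{\varepsilon}(x_0)$; for the $\liminf$ bound one notes that restricting to $[a,b]\cap[x-\varepsilon,x+\varepsilon]$ only removes a sliver adjacent to one endpoint, whose supremum tends to that endpoint value $\leq\sup_{[a,b]}h$, so the supremum over the remaining part still tends to $\sup_{[a,b]}h$. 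Hence $h^{\varepsilon}(x)\to h^{\varepsilon}(x_0)$.

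Combining the two verified properties with the characterization then yields that $h^{\varepsilon}$ is d.R.i. The delicate point is the endpoint/sliver analysis for continuity; the boundedness of $h$ (guaranteeing finite suprema) and the fact that the sliding window's endpoints are precisely the translates $x_0\pm\varepsilon$ make this bookkeeping routine once it is set up carefully.
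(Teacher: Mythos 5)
Your proof is correct and follows essentially the same route as the paper's: both arguments rest on (i) finiteness of the absolute Darboux sums for $h^{\varepsilon}$, obtained by covering each $\varepsilon$-enlarged cell by finitely many grid cells, and (ii) a.e.\ continuity of $h^{\varepsilon}$, obtained from the inclusion of its discontinuity set in $(D+\varepsilon)\cup(D-\varepsilon)$ with $D$ the (null) discontinuity set of $h$. The only differences are cosmetic: you outsource the final implication (a.e.\ continuity plus finite absolute upper sum implies d.R.i.) to a standard characterization, where the paper derives it inline via the monotone convergence theorem, while conversely you spell out the sliding-window proof of the discontinuity-set inclusion that the paper only asserts.
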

\begin{proof}
For $\delta > 0$, let $I_n^{\delta}:=[n\delta,(n+1)\delta)$, $n \in \Z$.
Further, define
\begin{equation*}
\overline{\sigma}(\delta) := \delta \sum_{n \in \Z} \sup_{x \in I_n^{\delta}} h(x)
\quad \text{and} \quad \underline{\sigma}(\delta) :=\delta \sum_{n \in \Z} \inf_{x \in I_n^{\delta}} h(x).
\end{equation*}
$h$ being d.R.i.\ means that $\overline{\sigma}(\delta)$ and $\underline{\sigma}(\delta)$
converge absolutely for every $\delta > 0$ and that $\lim_{\delta \downarrow 0}(\overline{\sigma}(\delta)-\underline{\sigma}(\delta))=0$.
Now define $\overline{\sigma}^{\varepsilon}(\delta)$ and $\underline{\sigma}^{\varepsilon}(\delta)$ analogously with $h$
replaced by $h^{\varepsilon}$.
There exists $m \in \N$ such that  $m \delta > \varepsilon$, hence
$[n\delta-\varepsilon,(n+1)\delta+\varepsilon)\subset
\cup_{k=n-m}^{n+m}I_k^{\delta}$. Therefore, for $n \in \Z$,
\begin{equation*}
\bigg|\sup_{x \in I_n^{\delta}} h^{\varepsilon}(x)\bigg|
~\leq~ \sum_{k=n-m}^{n+m}\bigg|\sup_{x \in I_{k}^{\delta}} h(x)\bigg|.
\end{equation*}
This implies the absolute convergence of
$\overline{\sigma}^{\varepsilon}(\delta)$. The corresponding
assertion for  $\underline{\sigma}^{\varepsilon}(\delta)$ follows
similarly. Further, if $x$ is a discontinuity of
$h^{\varepsilon}$, then $x-\varepsilon$ or $x+\varepsilon$ is a
discontinuity of $h$. Consequently, since $h$ is continuous a.e., so is $h^{\varepsilon}$.
Combining this with the absolute convergence of $\overline{\sigma}^{\varepsilon}(\delta)$
and $\underline{\sigma}^{\varepsilon}(\delta)$, we conclude that
$h^{\varepsilon}$ is Lebesgue integrable.
The monotone convergence theorem yields
\begin{equation*}
\lim_{\delta \downarrow 0}
\underline{\sigma}^{\varepsilon}(\delta) ~=~ \int_{\R}
h^{\varepsilon}(x) \dx ~=~ \lim_{\delta \downarrow 0}
\overline{\sigma}^{\varepsilon}(\delta).
\end{equation*}
\end{proof}

\begin{Prop}    \label{Prop:X*_o well-defined}
Assume that $\mu < \infty$ and that $F$ is non-lattice.
Let $h:\R_+ \to \R$ be locally bounded, eventually decreasing and non-integrable and recall that
\begin{equation*}
X^*_{\circ} ~:=~ \lim_{t \to \infty} \,\bigg( \sum_{k \geq 0} h(S_k^*) \1_{\{S_k^* \leq t\}} - \frac{1}{\mu} \int_0^t h(y) \dy\bigg).
\end{equation*}
Under the assumptions of Theorem \ref{Thm:decreasing h},
$X^*_{\circ}$ exists as the limit in $\mathcal{L}^2$ in the case (C1),
as the a.s.\ limit in the case (C2) and as the limit in probability in the case (C3).
In all three cases, it is a.s.\ finite.
\end{Prop}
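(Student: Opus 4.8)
The plan is to rewrite the centred sum as a Lebesgue--Stieltjes integral of $h$ against the centred stationary renewal process and then shift all analytic work onto $h$ by integration by parts. Put $G(y) := N^*(y) - y/\mu$ and recall $U^*(\dy) = \mu^{-1}\dy$, so that for $t \ge 0$
\[
\sum_{k\ge 0} h(S_k^*)\1_{\{S_k^*\le t\}} - \frac{1}{\mu}\int_0^t h(y)\,\dy ~=~ \int_{[0,t]} h(y)\,\mathrm{d}G(y).
\]
Fix $b \ge 0$ with $h$ decreasing on $[b,\infty)$; since $\int_{[0,b]} h\,\mathrm{d}G$ is a fixed a.s.\ finite random variable for $t \ge b$, it suffices to study $\int_{(b,t]} h\,\mathrm{d}G$. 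As $h$ is monotone (hence of bounded variation) on $[b,\infty)$, a.e.\ continuous, and a.s.\ continuous at each $S_k^*$ (the $S_k^*$ have densities), integration by parts gives, a.s.,
\[
\int_{(b,t]} h(y)\,\mathrm{d}G(y) ~=~ h(t)G(t) - h(b)G(b) + \int_{(b,t]} G(y)\,\mathrm{d}(-h)(y),
\]
with $\mathrm{d}(-h)$ a positive measure. Thus everything reduces to the vanishing of $h(t)G(t)$ and the convergence of $\int_{(b,t]} G\,\mathrm{d}(-h)$, which is exactly the conditionally convergent improper integral flagged in Remark \ref{Rem:decreasing h}.

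In case (C1) I would argue in $\mathcal{L}^2$. Since $h$ is decreasing with $\int_b^\infty h^2 < \infty$ we get $t\,h(t)^2 \le 2\int_{t/2}^t h^2 \to 0$, so $h(t) = o(t^{-1/2})$; with $\Var G(t) = \Var N^*(t) = O(t)$ (classical renewal theory, $\sigma^2 < \infty$) this yields $\E[(h(t)G(t))^2] = h(t)^2 \Var G(t) = o(1)$. For the integral I would use a Cauchy criterion. As $N^*$ has stationary increments and $G(0)=0$ a.s., $G(y)-G(x) \stackrel{\mathrm{d}}{=} G(y-x)$, whence, with $v(t) := \Var G(t)$,
\[
2\,\E[G(x)G(y)] ~=~ v(x) + v(y) - v(y-x), \qquad 0 \le x \le y,
\]
and $v(t) = O(t)$ gives $|\E[G(x)G(y)]| \le C\,(x\wedge y)$. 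Hence, for $b < s < t$,
\[
\E\Big(\int_{(s,t]} G\,\mathrm{d}(-h)\Big)^{\!2} ~\le~ C\int_{(s,\infty)}\!\int_{(s,\infty)} (x\wedge y)\,\mathrm{d}(-h)(x)\,\mathrm{d}(-h)(y).
\]
Writing $x\wedge y = \int_0^\infty \1_{\{z<x\}}\1_{\{z<y\}}\,\mathrm{d}z$ and using Tonelli together with $\int_{(s,\infty)}\1_{\{z<x\}}\,\mathrm{d}(-h)(x) = h(\max(z,s))$ (recall $h(\infty)=0$), the right-hand side equals $C\,(s\,h(s)^2 + \int_s^\infty h^2) \to 0$ as $s \to \infty$. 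This proves the $\mathcal{L}^2$-Cauchy property, hence $\mathcal{L}^2$-convergence to an a.s.\ finite limit; the same computation over $(b,\infty)$ shows that \eqref{eq:int h^2 < infty} is exactly the integrability needed.

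In cases (C2) and (C3) the variance of $G$ is infinite, so the $\mathcal{L}^2$ method fails and the extra smoothness enters. Assuming $b \ge t_0$, I would integrate by parts a second time: with $\Phi(y) := \int_0^y G(x)\,\dx$ and $\mathrm{d}(-h)(y) = -h'(y)\,\dy$,
\[
\int_{(b,t]} G\,\mathrm{d}(-h) ~=~ -\Phi(t)h'(t) + \Phi(b)h'(b) + \int_b^t \Phi(y)\,h''(y)\,\dy,
\]
where, since $h'' \ge 0$ and $h' \uparrow 0$, the measure $h''(y)\,\dy$ is finite with total mass $-h'(b)$. In case (C2) the Marcinkiewicz--Zygmund strong law gives $G(t) = o(t^{1/r})$ a.s., so $\Phi(t) = o(t^{1+1/r})$ a.s.; combined with $|h'(t)| = \int_t^\infty h'' = O(t^{-1-1/r})$ (from \eqref{eq:h''=Ot^(-2-1/r)}) the boundary term vanishes a.s., leaving the convergence of $\int_b^\infty \Phi\,h''$, whose integrand is only $o(t^{-1})$ and hence \emph{not} absolutely integrable. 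This conditional convergence is the heart of the matter, and is where I would invoke the martingale decomposition of the integrated renewal fluctuation learned from \cite{Kersting:2012}: one represents $\Phi$ (equivalently, the block increments $\int_{I_n} G\,\mathrm{d}(-h)$) as a convergent martingale plus a remainder controlled by the decay of $h''$ and the moment estimates, obtaining a.s.\ convergence. In case (C3) the tails are heavier ($\Var\xi = \infty$), $\E|G(t)| \sim C\,c(t)$ by Proposition \ref{Prop:moment convergence}(A3), and the analogous bounds hold only in probability; here, using \eqref{eq:h''=O(t^(-2)c^(-1)(t)} and \eqref{eq:t(l(c(t))/c(t)^alpha->1}, the boundary and integral terms are merely $O_{\Prob}(1)$, so the decomposition must be treated jointly and a truncation/characteristic-function argument replaces the second moment, yielding convergence in probability. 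The main obstacle throughout (C2)--(C3) is precisely this conditional convergence of the improper integral of the renewal fluctuation against $\mathrm{d}(-h)$, which the integrability conditions \eqref{eq:int h^r<infty} and \eqref{eq:int h^alphal(1/h)<infty} are tailored to guarantee; in all three cases the resulting limit is a.s.\ finite.
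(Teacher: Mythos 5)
Your case (C1) is essentially correct and takes a genuinely different route from the paper: you work directly with the covariance structure of $G(y)=N^*(y)-y/\mu$ and a Cauchy criterion for $\int G\,\mathrm{d}(-h)$, whereas the paper computes $\E(X^*_t-X^*_s)^2$ via time reversal (Proposition \ref{Prop:N^ backwards in time}) and bounds the resulting cross term using $\sup_{x\geq 0}|U(x)-\mu^{-1}x|<\infty$. One caveat: your key bound $|\E[G(x)G(y)]|\leq C(x\wedge y)$ does \emph{not} follow from $v(t):=\Var G(t)=O(t)$ alone. From your identity $2\E[G(x)G(y)]=v(x)+v(y)-v(y-x)$ you need the increments of $v$ to grow at most linearly, $|v(y)-v(y-x)|\leq Cx$, and this requires the renewal-theoretic input
\begin{equation*}
v(t)~=~\frac{t}{\mu}+\frac{2}{\mu}\int_0^t\Big(U(s)-\frac{s}{\mu}-1\Big)\ds,
\qquad \sup_{s\geq 0}\Big|U(s)-\frac{s}{\mu}\Big|<\infty,
\end{equation*}
which is exactly the ingredient the paper invokes (Theorem XI.3.1 in Feller). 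With that fact supplied, your $\mathcal{L}^2$ computation closes case (C1), and it is arguably cleaner than the paper's, since it avoids the time-reversal step.

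In cases (C2) and (C3), however, there is a genuine gap. After the second integration by parts you correctly identify the problem as the \emph{conditional} convergence of $\int_b^\infty \Phi(y)h''(y)\dy$ with $\Phi(y)=\int_0^y G(x)\dx$ (the integrand being only $o(y^{-1})$), but at precisely this point you write that you ``would invoke the martingale decomposition \ldots obtaining a.s.\ convergence'' without supplying any argument; for (C3) the proposed ``truncation/characteristic-function argument'' is likewise a placeholder. This step is the entire content of the proposition in these cases, and nothing in your text shows how the integrability hypotheses \eqref{eq:int h^r<infty} and \eqref{eq:int h^alphal(1/h)<infty} enter. The paper's proof executes it on the discrete skeleton: Taylor's formula $h(S_k^*)-h(\mu k)=h'(\mu k)(S_k^*-\mu k)+\tfrac{1}{2}h''(\theta_k)(S_k^*-\mu k)^2$, then Abel summation turning the first-order part into
\begin{equation*}
S_0^*\sum_{k=1}^n h'(\mu k)+\sum_{k=1}^n(\xi_k-\mu)\sum_{j\geq k}h'(\mu j)-(S_n-\mu n)\sum_{k\geq n+1}h'(\mu k),
\end{equation*}
with the quadratic remainder and the boundary term controlled by Marcinkiewicz--Zygmund laws together with \eqref{eq:h''=Ot^(-2-1/r)} resp.\ \eqref{eq:h''=O(t^(-2)c^(-1)(t)}, and finally a.s.\ convergence of the principal series $\sum_k(\xi_k-\mu)\sum_{j\geq k}h'(\mu j)$ proved by the three-series theorem and Corollary 3 on p.~117 of \cite{Chow+Teicher:1997}, where the elementary bound $\sum_{j\geq k}(-h'(\mu j))\leq \mu^{-1}h(\mu(k-1))$ is what converts \eqref{eq:int h^r<infty} and \eqref{eq:int h^alphal(1/h)<infty} into summability of the relevant series. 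Some argument of this type (reducing the fluctuation integral to a weighted sum of the i.i.d.\ centred increments $\xi_k-\mu$ and applying Kolmogorov-type convergence theorems) must be carried out; without it your proposal does not prove the proposition in cases (C2) and (C3).
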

\begin{proof}
Define
\begin{equation*}
X^*_t := \sum_{k \geq 0} h(S_k^*)\1_{\{S_k^* \leq t\}} - \frac{1}{\mu} \int_0^t h(y) \dy,   \quad   t \geq 0.
\end{equation*}
Our aim is to show that
$X^*_t$ converges as $t \to \infty$ in the asserted sense.

We start with the case (C1) and first prove the result assuming that $h$ is decreasing on $\R_+$.
We then have to show that $X^*_t$ converges in $\mathcal{L}^2$ as $t \to \infty$, equivalently,
\begin{equation*}
\lim_{s \to \infty} \sup_{t>s} \, \E (X^*_t-X^*_s)^2 ~=~    0.
\end{equation*}
Since
\begin{equation*}
X^*_t - X^*_s
~=~ \sum_{k \geq 0} h(S_k^*) \1_{\{s < S_k^* \leq t\}} - \E \sum_{k\geq 0} h(S_k^*) \1_{\{s < S_k^* \leq t\}}
\end{equation*}
for $t>s$, we conclude that
\begin{eqnarray*}
\E(X^*_t-X^*_s)^2
& = &
\E \bigg(\sum_{k \geq 0} h(S_k^*) \1_{\{s<S_k^*\leq t\}}\bigg)^2
-\bigg(\E \sum_{k \geq 0}h(S_k^*) \1_{\{s<S_k^*\leq t\}}\bigg)^2    \\
& = &
\E \bigg(\sum_{k\geq 0}h(S_k^*) \1_{\{s<S_k^* \leq t\}}\bigg)^2-\bigg(\frac{1}{\mu} \int_s^t h(y) \dy \bigg)^2  \\
& = &
\E \bigg(\sum_{k \geq 0}h(t-S_k^*) \1_{\{S_k^* < t-s\}}\bigg)^2 - \bigg(\frac{1}{\mu} \int_s^t h(y) \dy \bigg)^2,
\end{eqnarray*}
where the last equality follows from Proposition \ref{Prop:N^ backwards in time}.
The first term on the right-hand side equals
\begin{align*}
\E & \sum_{k\geq 0} h(t\!-\!S_k^*)^2 \! \1_{\{S_k^*< t-s\}}
+ 2 \E \! \sum_{0 \leq i<j} h(t\!-\!S_i^*) \! \1_{\{S_i^*<t-s\}} h(t\!-\!S_j^*) \! \1_{\{S_j^*< t-s\}}  \\
& =~    \frac{1}{\mu}\int_{0}^{t-s} \!\!\! h(t\!-\!y)^2 \dy
+ \frac{2}{\mu} \int_{0}^{t-s} \!\!\! h(t\!-\!y) \int_{(0,\,t-s-y)} \!\! h(t\!-\!y\!-\!x) \, U(\dx) \dy \\
&=~ \frac{1}{\mu}\int_s^th(y)^2 \dy + \frac{2}{\mu}\int_s^th(y)\int_{(0,\,y-s)}h(y-x)\, U(\dx)\dy.
\end{align*}
Hence
\begin{align*}
\E (X^*_t&-X^*_s)^2 \\
&=~ \frac{1}{\mu} \int_s^t h(y)^2 \dy
+ \frac{2}{\mu} \int_s^t h(y) \int_{(0,\,y-s)} h(y-x) \, \mathrm{d}(U(x)\!-\!\mu^{-1}x) \dy.
\end{align*}
Since $h^2$ is assumed to be integrable,
$\lim_{s \to \infty} \sup_{t>s} \int_s^t h(y)^2 \dy = 0$ and it
remains to check that
\begin{equation} \label{eq:remainder}
\lim_{t \to \infty} \, \sup_{t>s} \, \int_s^t h(y) \int_{(0,\,y-s)} h(y-x) \, \mathrm{d}(U(x)\!-\!\mu^{-1}x)\dy ~=~ 0.
\end{equation}
Put $H_{s,t}(x) := \int_{s}^{t-x} h(x+y) h(y) \dy$ for $x \in [0,t-s)$ and $H_{s,t}(x):=0$ for all other $x$.
Note that $H_{s,t}(x)$ is right-continuous and decreasing on $[0,\infty)$.
Changing the order of integration followed by integration by parts gives
\begin{align*}
\int_s^t & h(y) \int_{(0,\,y-s)} h(y-x) \, \mathrm{d}\big(U(x)-\mu^{-1}x\big) \dy   \\
&=~ \int_{(0,t-s)} \int_{s}^{t-x} h(x+y) h(y) \dy \mathrm{d} \big(U(x)-\mu^{-1}x\big)   \\
&\leq~  \int_{(0,\,t-s)}(U(x)-\mu^{-1}x) \, \mathrm{d}(\!-H_{s,t}(x))   \\
&\leq~  \sup_{x\geq 0}\,\big|U(x)-\mu^{-1} x \big| \, H_{s,t}(0)    \\
&=~ \sup_{x\geq 0}\,\big|U(x)-\mu^{-1} x \big| \, \int_s^t h(y)^2 \dy.
\end{align*}
It is known (see Theorem XI.3.1 in \cite{Feller:1971})
that $\lim_{t \to \infty} (U(t)-\mu^{-1}t) = \mu^{-2}(\sigma^2 + \mu^2) < \infty$,
hence $\sup_{x\geq 0} |U(x)-\mu^{-1}x|<\infty$, and \eqref{eq:remainder} follows.

Next we assume that $h$ is only eventually decreasing (rather than decreasing everywhere).
Then we can pick some $t_0 > 0$ such that $h$ is decreasing on $[t_0,\infty)$.
Define $\overline{h}(t) := h(t_0+t)$, $t \geq 0$. Then $\overline{h}$ is decreasing on $\R_+$.
Further, the post-$t_0$ walk $(\overline{S}_k^*)_{k \in \N_0} := (S_{N^*(t_0)+k}^*-t_0)_{k \in \N_0}$ is a distributional copy of $(S_k^*)_{k \in \N_0}$.
Therefore, by what we have already shown,
\begin{equation*}
\overline{X}^*_{\circ}
~:=~ \lim_{t \to \infty} \, \bigg(\sum_{k \geq 0} \overline{h}(\overline{S}_k^*) \1_{\{\overline{S}_k^* \leq t\}} - \frac{1}{\mu} \int_0^t \overline{h}(y) \dy \bigg)
\end{equation*}
exists in the $\mathcal{L}^2$-sense.
Therefore, also
\begin{eqnarray*}
X^*_{\circ}
& = &
\lim_{t \to \infty} \, \bigg( \sum_{k \geq 0} h(S_k^*) \1_{\{S_k^* \leq t_0+t\}} - \frac{1}{\mu} \int_0^{t_0+t} h(y) \dy \bigg) \\
& = &
X_{t_0}^* +  \lim_{t \to \infty} \, \bigg( \sum_{k \geq 0} \overline{h}(\overline{S}_k^*) \1_{\{\overline{S}_k^* \leq t\}} - \frac{1}{\mu} \int_0^t \overline{h}(y) \dy \bigg)
\end{eqnarray*}
exists in the $\mathcal{L}^2$-sense.

Now we turn to the cases (C2) and (C3).
We begin by assuming that $h$ satisfies the assumptions of the theorem and is decreasing and twice differentiable on $\R_+$ with $h'' \geq 0$.
The proof is divided into three steps.
\begin{itemize}
    \item[{\sc Step 1:}]
            Prove that if, as $n \to \infty$,
        $U_n:=\sum_{k=0}^n (h(S^*_k)-h(\mu k))$ converges a.s.\ in the case (C2) or converges in probability in the case (C3),
        then, as $t \to \infty$, $\sum_{k\geq 0} h(S_k^*) \1_{\{S_k^* \leq t\}}-\mu^{-1} \int_0^t h(y) \dy$ converges in the same sense.
    \vspace{-0,25cm}
    \item[{\sc Step 2:}]
        Prove that if the series $\sum_{j \geq 0} (\xi^*_j-\mu) \sum_{k\geq j} h'(\mu k)$ converges a.s.,
        then, as $n\to\infty$, $U_n$ converges a.s.\ in the case (C2) and converges in probability in the case (C3).
    \vspace{-0,25cm}
    \item[{\sc Step 3.}]
        Use the three series theorem to check that, under the conditions stated,
        the series $\sum_{j \geq 0}(\xi^*_j-\mu)\sum_{k\geq j} h'(\mu k)$ converges a.s.
\end{itemize}

{\sc Step 1}.

\noindent {\sc Case (C2)}.
Assume that $U_n$ converges a.s.
Then, by Lemma \ref{Lem:int<->sum},
the sequence $\sum_{k=0}^n h(S^*_k)-\mu^{-1} \int_{0}^{\mu n} h(y) \dy$ converges a.s., too.
Since $\lim_{t \to \infty} N^*(t)=\infty$ a.s., we further have that
$\sum_{k=0}^{N^*(t)-1} h(S^*_k) - \mu^{-1} \int_{0}^{\mu(N^*(t)-1)} h(y) \dy$ converges a.s.\ as $t \to \infty$.
To complete this step, it remains to prove that
\begin{equation}    \label{eq:integral difference 1st}
\lim_{t \to \infty} \bigg|\int_{0}^{\mu(N^*(t)-1)} h(y) \dy - \int_{0}^t h(y) \dy \bigg| = 0    \quad   \text{a.s.}
\end{equation}
To this end, write
\begin{align}
\bigg|\int_{0}^{\mu(N^*(t)-1)} & h(y) \dy - \int_0^t h(y) \dy \bigg|    \nonumber   \\
&=~
\int_{\mu(N^*(t)-1) \wedge t}^{\mu(N^*(t)-1) \vee t} h(y) \dy           \nonumber   \\
&\leq~
|\mu(N^*(t)-1)- t| \, h(\mu(N^*(t)-1) \wedge t),    \label{eq:integral difference}
\end{align}
where the inequality follows from the monotonicity of $h$.
By Theorem 3.4.4 in \cite{Gut:2009}, $\E \xi^r<\infty$ implies that
\begin{equation}    \label{eq:Marcinkiewicz-Zygmund SLLN of N(t)}
N(t)-\mu^{-1}t ~=~ o(t^{1/r}) \quad \text{a.s.\ as } t \to \infty,
\end{equation}
where it should be recalled that
\begin{equation*}
N(t)    ~:=~    \inf\{k\in\N: S_k > t\} ~=~ \inf\{k \in \N: S^*_k-S^*_0>t\}.
\end{equation*}
Since
\begin{equation*}
N^*(t) ~=~ \1_{\{S_0^*\leq t\}} + N(t-S^*_0) \1_{\{S_0^* \leq t\}}  \quad   \text{a.s.}
\end{equation*}
and $S_0^*$ is a.s.\ finite, we infer
\begin{equation*}   
N^*(t)-\mu^{-1}t ~=~ o(t^{1/r}) \quad   \text{a.s.\ as } t \to \infty.
\end{equation*}
This relation implies that the first factor in \eqref{eq:integral difference} is
$o(t^{1/r})$, whereas the second factor is $o(t^{-1/r})$ as $t \to \infty$.
The latter relation can be derived as follows.
First, in view of \eqref{eq:int h^r<infty}
and the monotonicity of $h$,
we have
\begin{equation}    \label{eq:h(t)=o(t^(-1/r))}
h(t) ~=~ o(t^{-1/r}) \quad  \text{as } t \to \infty.
\end{equation}
Second, by the strong law of large numbers for $N^*(t)$, we have
\begin{equation*}   
[\mu (N^*(t)-1)] \wedge t ~\sim~ t \quad    \text{a.s.\ as } t \to \infty.
\end{equation*}
Altogether, \eqref{eq:integral difference 1st} has been proved.

\noindent {\sc Case (C3)}.
Assume that $U_n$ converges in probability.
In view of Lemma \ref{Lem:int<->sum} we conclude that, as $t\to\infty$,
$\sum_{k=0}^{\lfloor t/\mu \rfloor} h(S_k^*)-\mu^{-1}\int_0^ {\mu \lfloor t/\mu \rfloor} h(y) \dy$ converges in probability, too.

From \eqref{eq:int h^alphal(1/h)<infty} it follows that
$h(t)^{\alpha}\ell(h(t)^{-1})=o(t^{-1})$. This and \eqref{eq:t(l(c(t))/c(t)^alpha->1}
imply that
\begin{equation*}
0 = \lim_{t \to \infty} t h(t)^{\alpha}\ell(h(t)^{-1})
=   \lim_{t \to \infty} c(t)^{\alpha} h(t)^{\alpha} \frac{\ell(h(t)^{-1})}{\ell(c(t))}
=   \lim_{t \to \infty} \frac{\Prob\{\xi>h(t)^{-1}\}}{\Prob\{\xi > c(t)\}}.
\end{equation*}
From this, using the monotonicity and regular variation of $x \mapsto \Prob\{\xi > x\}$, we conclude that
\begin{equation}    \label{eq:c(t) h(t) -> 0}
\lim_{t \to \infty} c(t) h(t) = 0
\end{equation}
The latter relation implies that $\lim_{t \to \infty} h(t)=0$. Hence
\begin{equation*}
\lim_{t \to \infty} \bigg(\int_0^{\mu \lfloor t/\mu \rfloor} h(y) \dy - \int_0^t h(y) \dy \bigg) ~=~ 0.
\end{equation*}
Further,
\begin{equation*}
\lim_{t \to \infty} \frac{N^*(t)\wedge (\lfloor t/\mu \rfloor +1)}{t} = \mu^{-1} \text{ a.s.}
\text{ \ and \ }
\lim_{t \to \infty} \frac{S_{N^*(t)\wedge (\lfloor t/\mu \rfloor +1)}}{N^*(t) \wedge (\lfloor t/\mu \rfloor +1)} = \mu  \text{ a.s.}
\end{equation*}
by the strong laws of large numbers for renewal processes and random walks, respectively.
Hence
\begin{equation*}
\frac{S_{N^*(t)\wedge (\lfloor t/\mu \rfloor+1)}}{t} ~=~ 1  \quad   \text{a.s.}
\end{equation*}
Using this and \eqref{eq:h(t)=o(t^(-1/r))} we obtain that
\begin{eqnarray*}
\bigg|\sum_{k=0}^{\lfloor t/\mu \rfloor } h(S_k^*)-\sum_{k=0}^{N^*(t)-1} h(S_k^*)\bigg|
& = &
\sum_{k=N^*(t)\wedge (\lfloor t/\mu \rfloor +1)}^{(N^*(t)-1)\vee \lfloor t/\mu \rfloor } h(S_k^*)   \\
& \leq &
\big|N^*(t)-1-\lfloor t/\mu \rfloor \big|h\big(S_{{N^*(t)\wedge(\lfloor t/\mu \rfloor +1)}}\big)    \\
& = &
\big|N^*(t)-1-\lfloor t/\mu \rfloor \big| o(1/c(t)).
\end{eqnarray*}
From \eqref{eq:FLT for N(t)} for $u=1$
we get that $\frac{|\mu(N^*(t)-1)- t|}{c(t)}$ converges in distribution to an $\alpha$-stable law with characteristic function given by \eqref{eq:stable ch f}.
This entails
\begin{equation*}
\sum_{k=0}^{\lfloor t/\mu \rfloor } h(S_k^*)-\sum_{k=0}^{N^*(t)-1} h(S_k^*) ~\stackrel{\Prob}{\to}~ 0
\quad   \text{as } t \to \infty.
\end{equation*}
Combining pieces together gives the needed conclusion for this step.

{\sc Step 2.}
For each $k \in \N$, by Taylor's formula, there exists a
$\theta_k$ between $S^*_k$ and $\mu k$ such that
\begin{equation*}
h(S_k^*)-h(\mu k)   ~=~ h'(\mu k)(S_k^*-\mu k) + \frac{1}{2} h''(\theta_k)(S_k^*-\mu k)^2.
\end{equation*}
Set
\begin{equation*}
I_n ~:=~ \frac{1}{2} \sum_{k=1}^n h''(\theta_k)(S_k^*-\mu k)^2
\end{equation*}
and write
\begin{eqnarray}
U_n-h\big(S_0^*\big)+h(0)
& = &
\sum_{k=1}^nh'(\mu k)(S^*_k-\mu k)+I_n  \nonumber   \\
& = &
S^*_0 \sum_{k=1}^n h'(\mu k)+\sum_{k=1}^n(\xi_k-\mu) \sum_{j=k}^n h'(\mu j)+I_n     \nonumber   \\
& = &
S^*_0 \sum_{k=1}^n h'(\mu k)+\sum_{k=1}^n(\xi_k-\mu) \sum_{j\geq k}h'(\mu j)        \nonumber   \\
& &
-(S_n-\mu n) \sum_{k \geq n+1} h'(\mu k) + I_n. \label{eq:4 terms}
\end{eqnarray}
Since $-h'$ is decreasing and nonnegative we have
\begin{equation}    \label{eq:sum -h'(muk) leq 1/mu h(mun)}
\sum_{k \geq n+1} -h'(\mu k)
~\leq~ \int_n^{\infty} -h'(\mu y) \dy
~=~ \mu^{-1} h(\mu n)
~\leq~ \sum_{k \geq n} -h'(\mu k).
\end{equation}
for all $n$. Using the first inequality in \eqref{eq:sum -h'(muk) leq 1/mu h(mun)} and the fact that $\lim_{y \to \infty} h(y) = 0$,
one immediately infers that the first summand in the penultimate line of \eqref{eq:4 terms} converges as $n \to \infty$.
The a.s.\ convergence of the second (principal) term is assumed to hold here.
As to the third and fourth terms, we have to consider the cases (C2) and (C3) separately.

\noindent {\sc Case (C2)}.
By the Marcinkiewicz-Zygmund law of large numbers \cite[Theorem 2 on p.~125]{Chow+Teicher:1997},
\begin{equation}    \label{eq:MZ SLLN for S_n-mun}
S_n-\mu n ~=~ o(n^{1/r}) \quad  \text{as } n \to \infty \text{ a.s.}
\end{equation}
Therefore, in view of \eqref{eq:h(t)=o(t^(-1/r))} and \eqref{eq:sum -h'(muk) leq 1/mu h(mun)},
the third term converges to zero a.s.
Further,
$\lim_{k \to \infty} k^{-1}\theta_k=\mu$ a.s.\ by the strong law of large numbers.
Hence, in view of \eqref{eq:h''=Ot^(-2-1/r)},
\begin{equation*}
h''(\theta_k) ~=~ O(\theta_k^{-2-1/r}) ~=~ O(k^{-2-1/r}) \quad \text{as } k \to \infty.
\end{equation*}
From \eqref{eq:MZ SLLN for S_n-mun} we infer
\begin{equation*}
h''(\theta_k) (S^*_k-\mu k)^2 ~=~ o(k^{-(2-1/r)}) \quad \text{a.s.\ as } k \to \infty,
\end{equation*}
which implies that $I_n$ converges a.s.\ as $n \to \infty$, for $2-1/r>1$.
Hence the a.s.\ convergence of
$\sum_{k\geq 1} (\xi^*_k-\mu) \sum_{j\geq k} h^\prime (\mu j)$ entails that of $U_n$.

\noindent {\sc Case (C3)}.
In the given situation,
$\frac{S_n-\mu n}{c(n)}$ converges in distribution to an $\alpha$-stable law.
Hence, in view of \eqref{eq:c(t) h(t) -> 0} and \eqref{eq:sum -h'(muk) leq 1/mu h(mun)},
the third term converges to zero in probability.

\noindent
Now pick some $0 < \varepsilon < \alpha - 1$.
Since $\E \xi^{\alpha-\varepsilon}<\infty$, we conclude
(again from the Marcinkiewicz-Zygmund law of large numbers, \cite[Theorem 2 on p.~125]{Chow+Teicher:1997})
that \eqref{eq:MZ SLLN for S_n-mun} holds with $r=\alpha-\varepsilon$.
Using \eqref{eq:h''=O(t^(-2)c^(-1)(t)} and the facts that $\theta_k \sim \mu k$ a.s.\
and that $c(t) \sim t^{1/\alpha}L(t)$ for some slowly varying $L$ (see Lemma \ref{Lem:g regularly varying}),
we conclude:
\begin{equation*}
h''(\theta_k)
~=~ O(\theta_k^{-2}c(\theta_k)^{-1})
~=~ O(k^{-2-1/\alpha}L(k)^{-1}) \quad   \text{a.s.\ as } k \to \infty.
\end{equation*}
Therefore,
\begin{equation*}
h''(\theta_k)(S^*_k-\mu k)^2    ~=~ o(k^{-(2-\frac{\alpha+\varepsilon}{\alpha(\alpha-\varepsilon)})}L(k)^{-1})
\quad \text{a.s.,} \quad k\to\infty,
\end{equation*}
which implies that the fourth term $I_n$
converges a.s., as for sufficiently small $\varepsilon$,
$2-\frac{\alpha+\varepsilon}{\alpha(\alpha-\varepsilon)} > 1$.
Hence we arrive at the conclusion that the a.s.\ convergence of
$\sum_{k\geq 1} (\xi_k-\mu) \sum_{j \geq k} h' (\mu j)$
entails convergence in probability of $U_n$.

{\sc Step 3.}
Set
\begin{equation*}
c_k := \sum_{j \geq k} -h'(\mu j)   \quad   \text{and} \quad    \zeta_k := -c_k(\xi_k-\mu), \quad   k \in \N.
\end{equation*}

\noindent {\sc Case (C2)}.
Condition \eqref{eq:int h^r<infty} ensures that
$\sum_{k\geq 1} h(\mu k)^r < \infty$.
In view of \eqref{eq:sum -h'(muk) leq 1/mu h(mun)},
\begin{eqnarray*}
\sum_{k \geq 1}\E |\zeta_k|^r
& = &
\E |\xi-\mu|^r\sum_{k \geq 1} \left(\sum_{j\geq k} (-h'(\mu j))\right)^r    \\
& \leq &
\mu^{-r} \E |\xi-\mu|^r \sum_{k \geq 1} h(\mu (k-1))^r
~<~ \infty.
\end{eqnarray*}
Hence the series $\sum_{k \geq 1} \zeta_k$ converges a.s.\ by Corollary 3 on p.\;117 in \cite{Chow+Teicher:1997}.

\noindent {\sc Case (C3)}.
By the three series theorem \cite[Theorem 2 on p.\;117]{Chow+Teicher:1997},
it suffices to show that the following series converge
\begin{equation*}
\sum_{k \geq 1} \Prob\{|\zeta_k|>1\},
\quad   \sum_{k\geq 1} \E [\zeta_k \1_{\{|\zeta_k|\leq 1\}}]
\quad   \text{and}  \quad   \sum_{k \geq 1} \Var [\zeta_k \1_{\{|\zeta_k|\leq 1\}}].
\end{equation*}
By Markov's inequality, the first series converges if $\sum_{k \geq 1} \E [|\zeta_k| \1_{\{|\zeta_k| > 1\}}]$ converges.
Since $\E \zeta_j=0$ for all $j \geq 1$, the second series converges if and only if the
series $\sum_{k \geq 1} \E [\zeta_k \1_{\{|\zeta_k| > 1\}}]$ converges.
By Theorem 1.6.5 in \cite{Bingham+Goldie+Teugels:1989},
\begin{equation*}
\E [|\zeta_k| \1_{\{|\zeta_k| > 1\}}] ~=~ c_k \int_{[c_k^{-1},\,\infty)} \!\!\! x \Prob\{|\xi-\mu|\in \dx\}
~\sim~ \frac{\alpha}{\alpha-1}c^\alpha_k \ell(c_k^{-1})
\quad   \text{as } k \to \infty.
\end{equation*}
Hence, recalling
\eqref{eq:sum -h'(muk) leq 1/mu h(mun)} and \eqref{eq:int h^alphal(1/h)<infty},
\begin{equation*}
\sum_{k\geq 1} \E [|\zeta_k| \1_{\{|\zeta_k| > 1\}}] ~<~ \infty.
\end{equation*}
Further, by Theorem 1.6.4 in \cite{Bingham+Goldie+Teugels:1989},
\begin{equation*}
\E [\zeta_k^2 \1_{\{|\zeta_k|\leq 1\}}]
~=~ c^2_k\int_{[0,c_k^{-1}]} x^2\Prob\{|\xi-\mu| \in \dx\}
~\sim~  \frac{\alpha}{2-\alpha} c^\alpha_k\ell(c_k^{-1})
\quad \text{as } k \to \infty.
\end{equation*}
Again using \eqref{eq:sum -h'(muk) leq 1/mu h(mun)} and \eqref{eq:int h^alphal(1/h)<infty}, this entails
\begin{equation*}
\sum_{k\geq 1}\Var [\zeta_k \1_{\{|\zeta_k|\leq 1\}}]
~\leq~
\sum_{k\geq 1}\E [\zeta_k^2 \1_{\{|\zeta_k|\leq 1\}}]
~<~\infty.
\end{equation*}

Finally, we need to prove that the assertion also holds for $h$ that are only eventually decreasing and eventually twice differentiable with $h'' \geq 0$ eventually.
Indeed, for any such $h$, there is some $t_0 > 0$ such that $h$ is decreasing and twice differentiable on $[t_0,\infty)$ with $h'' \geq 0$ on $[t_0,\infty)$.
Using this $t_0$, define $\overline{h}$ and $(\overline{S}_k^*)_{k \in \N_0}$ as in the proof in the case (C1).
Notice that with $h$, also $\overline{h}$ satisfies the assumptions of the theorem, for instance, in case (C3),
$\overline{h}''(t) = h''(t_0+t) = O((t_0+t)^{-2} c(t_0+t)^{-1}) = O(t^{-2} c(t)^{-1})$ as $t \to \infty$.
Now one can argue as in the corresponding part of the proof in the case (C1) to conclude that $X^*_{\circ}$ exists
as the a.s.\ limit or the limit in probability, respectively.
\end{proof}

\subsection{One-dimensional convergence}

The proofs of Theorems \ref{Thm:dRi h} and \ref{Thm:decreasing h}
are preceded by the corresponding statements on one-dimensional convergence and their proofs.

\begin{Prop}    \label{Prop:dRi h}
Assume that $F$ is non-lattice and let $h$ be d.R.i.
\begin{itemize}
    \item[(a)]
        If $\mu < \infty$, then the random series $X^*$ converges a.s.\ and
        \begin{equation*}   
        X(t)        ~\stackrel{\mathrm{d}}{\to}~     X^*    \quad   \text{as } t \to \infty.
        \end{equation*}
    \item[(b)]
        If $\mu = \infty$, then
        \begin{equation*}   
        X(t)    ~\stackrel{\mathcal{L}^1}{\to}~ 0   \quad   \text{as } t \to \infty.
        \end{equation*}
\end{itemize}
\end{Prop}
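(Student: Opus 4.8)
The plan is to handle part (a) by passing through the stationary renewal process: first identify the limit of the stationary analogue $\int_{[0,t]} h(t-y)\,N^*(\dy)$ by time reversal, then transfer the conclusion to $X(t)$ by coupling, controlling the discontinuities of $h$ through the sup/inf envelopes. Part (b) will follow at once from the key renewal theorem. As a preliminary remark, a directly Riemann integrable $h$ is Lebesgue integrable, so Proposition \ref{Prop:X* well-defined} already yields that $X^*$ converges a.s.\ and in $\mathcal{L}^1$; moreover the absolute convergence of the defining Riemann sums forces the block suprema and infima of $h$ to vanish at infinity, whence $\lim_{x \to \infty} h(x) = 0$.

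The conceptual core is that for the stationary process $N^*$,
\[
\int_{[0,t]} h(t-y)\,N^*(\dy) ~=~ \sum_{k \geq 0} h(t-S_k^*)\1_{\{S_k^* \leq t\}} ~\stackrel{\mathrm{d}}{=}~ \sum_{k \geq 0} h(S_k^*)\1_{\{S_k^* \leq t\}},
\]
the distributional identity being the time-reversal invariance of the increments of $N^*$ from Proposition \ref{Prop:N^ backwards in time}. Letting $t \to \infty$, the right-hand side converges a.s.\ and in $\mathcal{L}^1$ to $X^*$, so that $\int_{[0,t]} h(t-y)\,N^*(\dy) \stackrel{\mathrm{d}}{\to} X^*$.

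To pass from $N^*$ to the zero-delayed $N$, I would invoke the coupling of Subsection \ref{subsec:stationary renewal processes and coupling}. Fix $\varepsilon > 0$ and put $h^\varepsilon(x) := \sup_{|y-x| \leq \varepsilon} h(y)$ and $h_\varepsilon(x) := \inf_{|y-x| \leq \varepsilon} h(y)$. On the a.s.\ event $\{\tau_1 \vee \tau_2 < \infty\}$, \eqref{eq:epsilon close} gives $|(t-S_{\tau_2+k}) - (t-\tilde{S}_{\tau_1+k}^*)| \leq \varepsilon$, and hence
\[
h_\varepsilon(t-\tilde{S}_{\tau_1+k}^*) ~\leq~ h(t-S_{\tau_2+k}) ~\leq~ h^\varepsilon(t-\tilde{S}_{\tau_1+k}^*), \qquad k \in \N_0.
\]
Summing over $k$ and peeling off the finitely many pre-coupling terms $\sum_{k < \tau_2} h(t-S_k)\1_{\{S_k \leq t\}}$ together with the $O(1)$ boundary corrections produced by the $\varepsilon$-shifts in the counting bounds \eqref{eq:N leq N*+O(1)} and \eqref{eq:N geq N*+O(1)}, one squeezes $X(t)$ between $\int_{[0,t]} h_\varepsilon(t-y)\,\tilde{N}^*(\dy)$ and $\int_{[0,t]} h^\varepsilon(t-y)\,\tilde{N}^*(\dy)$, with the pre-coupling remainders tending to $0$ since $h(x) \to 0$ and $\tau_2 < \infty$ a.s. By Lemma \ref{Lem:h^eps is dRi} the envelopes are again d.R.i., so the previous paragraph, applied to $h^\varepsilon$ and $h_\varepsilon$, gives $\int_{[0,t]} h^\varepsilon(t-y)\,\tilde{N}^*(\dy) \stackrel{\mathrm{d}}{\to} X^{*,\varepsilon} := \sum_{k \geq 0} h^\varepsilon(S_k^*)$ and likewise $\int_{[0,t]} h_\varepsilon(t-y)\,\tilde{N}^*(\dy) \stackrel{\mathrm{d}}{\to} X^*_\varepsilon := \sum_{k \geq 0} h_\varepsilon(S_k^*)$.

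Finally I would let $\varepsilon \downarrow 0$ along a sequence. Since $h$ is continuous a.e., $h^\varepsilon \downarrow h$ and $h_\varepsilon \uparrow h$ a.e., so Lemma \ref{Lem:approximating X*} yields $X^{*,\varepsilon} \to X^*$ and $X^*_\varepsilon \to X^*$ in $\mathcal{L}^1$, hence in distribution. As $X(t)$ itself does not depend on $\varepsilon$ yet is pathwise squeezed between quantities whose laws converge (as $t \to \infty$) to those of $X^*_\varepsilon$ and $X^{*,\varepsilon}$, a standard sandwich argument at the level of distribution functions, followed by $\varepsilon \downarrow 0$, gives $X(t) \stackrel{\mathrm{d}}{\to} X^*$, proving (a). For (b), note that $|h|$ is d.R.i.\ whenever $h$ is, so the key renewal theorem in the case $\mu = \infty$ gives $\E|X(t)| \leq \int_{[0,t]} |h(t-y)|\,U(\dy) \to 0$, i.e.\ $X(t) \to 0$ in $\mathcal{L}^1$. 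The main obstacle is the a.e.\ discontinuity of $h$, which blocks any direct passage to the limit through $h$ and forces the envelope-and-squeeze machinery; the most delicate bookkeeping is the control of the $O(1)$ boundary terms in the coupling bounds \eqref{eq:N leq N*+O(1)}--\eqref{eq:N geq N*+O(1)}, whose contribution must be shown to wash out in the iterated limit $t \to \infty$, $\varepsilon \downarrow 0$.
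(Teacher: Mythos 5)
Your proposal is correct and takes essentially the same route as the paper's own proof: the coupling of Subsection \ref{subsec:stationary renewal processes and coupling}, the d.R.i.\ envelopes via Lemma \ref{Lem:h^eps is dRi}, time reversal (Proposition \ref{Prop:N^ backwards in time}) combined with Proposition \ref{Prop:X* well-defined}, the approximation Lemma \ref{Lem:approximating X*}, and a sandwich at the level of distribution functions with the iterated limit $t \to \infty$, $\varepsilon \downarrow 0$. The only cosmetic difference is that you spell out both the sup- and inf-envelope bounds symmetrically (note $h_\varepsilon = -\,(-h)^\varepsilon$, so the stated lemma does cover the inf-envelope), whereas the paper writes out the upper bound in detail and remarks that the lower bound is analogous.
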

\begin{proof}
Assertion (b) is a consequence of the key renewal theorem. Turning
to assertion (a) we assume that $\mu < \infty$. For $\varepsilon
> 0$ let $\tau_1 = \tau_1(\varepsilon)$ and $\tau_2 = \tau_2(\varepsilon)$ be defined as in Section
\ref{subsec:stationary renewal processes and coupling}. Recall
that $\tau_1\vee\tau_2 < \infty$ a.s.

Further, let $h^{\varepsilon}(t) := \sup_{|s-t| \leq \varepsilon}
h(s)$, $t \geq 0$. Since $h$ is d.R.i., we have
$$C~:=~\underset{t\geq 0}{\sup}\,|h(t)|\in [0,\infty).$$ Then,
using \eqref{eq:epsilon close} for $t > \varepsilon$ and its
consequence
\begin{equation*}
|\1_{\{S_{\tau_2+k} \leq t\}}-\1_{\{\tilde{S}_{\tau_1+k}^* \leq
t\}}| ~\leq~ \1_{\{t - \varepsilon < \tilde{S}_{\tau_1+k}^* \leq
t+\varepsilon\}} \quad   \text{for } k \in \N_0,
\end{equation*}
we infer
\begin{align*}
X(t) & \leq~
\sum_{k=0}^{\tau_2-1} h(t\!-\!S_k) \1_{\{S_k \leq t\}} + \sum_{k \geq 0} h^{\varepsilon}(t\!-\!\tilde{S}^*_{\tau_1+k}) \1_{\{S_{\tau_2+k} \leq t\}}  \\
& =~
\sum_{k=0}^{\tau_2-1} h(t\!-\!S_k) \1_{\{S_k \leq t\}} + \sum_{k \geq 0} h^{\varepsilon}(t\!-\!\tilde{S}_{\tau_1+k}^*) \1_{\{\tilde{S}^*_{\tau_1+k} \leq t\}}    \\
& \hphantom{\leq~}
+ \sum_{k \geq 0} h^{\varepsilon}(t\!-\!\tilde{S}_{\tau_1+k}^*) (\1_{\{S_{\tau_2+k} \leq t\}}-\1_{\{\tilde{S}_{\tau_1+k}^* \leq t\}})   \\
& \leq~
\sum_{k=0}^{\tau_2-1} h(t\!-\!S_k) \1_{\{S_k \leq t\}} + \sum_{k \geq \tau_1} h^{\varepsilon}(t\!-\!\tilde{S}_k^*) \1_{\{\tilde{S}^*_k \leq t\}} + C \sum_{k \geq \tau_1} \1_{\{t-\varepsilon< \tilde{S}_k^* \leq t+\varepsilon\}}    \\
& \leq~ \sum_{k=0}^{\tau_2-1} h(t\!-\!S_k)\1_{\{S_k \leq
t\}}\!-\sum_{k=0}^{\tau_1-1}h^{\varepsilon}(t\!-\!\tilde{S}_k^*)
\1_{\{\tilde{S}^*_k \leq t\}}
+ \sum_{k \geq 0} h^{\varepsilon}(t\!-\!\tilde{S}^*_k) \1_{\{\tilde{S}^*_k \leq t\}}    \\
& \hphantom{\leq}~ +
C(\tilde{N}^*(t\!+\!\varepsilon)\!-\!\tilde{N}^*(t\!-\!\varepsilon)),
\end{align*}
where in the fourth line the inequality $h^\varepsilon(t)\leq C$,
$t\geq 0$, has been utilized. Regarding the last term, we have
$\tilde{N}^*(t\!+\!\varepsilon)\!-\!\tilde{N}^*(t\!-\!\varepsilon)
\stackrel{\mathrm{d}}{=} N^*(2\varepsilon) \to 0$ a.s.\ as
$\varepsilon \downarrow 0$. Since $h$ is d.R.i., so is
$h^\varepsilon$, by Lemma \ref{Lem:h^eps is dRi}. Hence $\lim_{t
\to \infty} h(t) = \lim_{t \to \infty} h^{\varepsilon}(t) = 0$,
and the first two summands in the penultimate line of the
displayed equation tend to $0$ a.s.\ as $t \to \infty$. Regarding
the third term of the displayed equation, Proposition \ref{Prop:N^
backwards in time} and Proposition \ref{Prop:X* well-defined} give
\begin{equation*}
\sum_{k \geq 0} h^{\varepsilon}(t\!-\!\tilde{S}^*_k)
\1_{\{\tilde{S}^*_k \leq t\}} ~\stackrel{\mathrm{d}}{=}~ \sum_{k
\geq 0} h^{\varepsilon}(S^*_k) \1_{\{S^*_k \leq t\}} ~\underset{t
\to \infty}{\to}~  \sum_{k \geq 0} h^{\varepsilon}(S^*_k) ~=:~
X^{\varepsilon,*} \ \text{a.s.}
\end{equation*}
Further, the d.R.i.\ of $h$ implies that $h$ is a.e.\ continuous,
which in turn implies that $h^{\varepsilon} \downarrow h$ a.e.\ as
$\varepsilon \downarrow 0$. Lemma \ref{Lem:approximating X*} thus
gives that $\lim_{\varepsilon \to 0} X^{\varepsilon,*} = X^*$ a.s.
We conclude that
\begin{equation*}
\limsup_{t\to\infty} \, \Prob \{{X}(t) > x\}        ~\leq~
\Prob\{X^*>x\}
\end{equation*}
for every continuity point $x$ of the law of $X^*$.

More precisely, let $(\varepsilon_n)_{n \in \N}$ be a sequence
with $\varepsilon_n \downarrow 0$ as $n \to \infty$. Let $x$ be a
continuity point of the law of $X^*$ and $x-\delta$ ($\delta>0$)
be a continuity point of the law $X^*$ and of the laws of
$X^{\varepsilon_n,*}$. (The set of these $\delta$ is dense in
$\R$.) Then,
\begin{align*}
\limsup_{t \to \infty} & \, \Prob\{X(t) > x\}   \\
& \leq~ \limsup_{t \to \infty} \Prob\bigg\{\sum_{k=0}^{\tau_2-1}
h(t\!-\!S_k)\1_{\{S_k \leq t\}}-
\sum_{k=0}^{\tau_1-1} h^{\varepsilon_n}(t\!-\!\tilde{S}_k^*) \1_{\{\tilde{S}^*_k \leq t\}}\! > \delta/2\bigg\}   \\
& \hphantom{\leq~} + \limsup_{t \to \infty} \Prob\{C(\tilde{N}^*(t\!+\!\varepsilon_n)\!-\!\tilde{N}^*(t\!-\!\varepsilon_n))  > \delta/2\}   \\
& \hphantom{\leq~} + \limsup_{t \to \infty} \Prob\bigg\{\sum_{k \geq 0} h^{\varepsilon_n}(t\!-\!\tilde{S}^*_k) \1_{\{\tilde{S}^*_k \leq t\}} > x-\delta \bigg\} \\
&=~ \Prob\{X^{\varepsilon_n,*}>x-\delta\} + \Prob\{C
N^*(2\varepsilon_n)>\delta/3\}.
\end{align*}
As $n \to \infty$, the second probability goes to zero, whereas
the first tends to $\Prob\{X^*>x-\delta\}$. Sending now $\delta
\downarrow 0$ along an appropriate sequence, we arrive at the
desired conclusion. Corresponding lower bounds can be obtained
similarly.
\end{proof}

\begin{Prop}    \label{Prop:decreasing h}
Assume that $\mu < \infty$ and that $F$ is non-lattice.
Let $h:\R_+ \to \R$ be locally bounded, a.e.\ continuous, eventually decreasing and non-integrable with $\lim_{t \to \infty} h(t) = 0$.
If $X^*_{\circ}$ exists as a limit in probability, then
\begin{equation}    \label{eq:X(t) when h^2 integrable}
        X_{\circ}(t) ~\stackrel{\mathrm{d}}{\to}~ X^*_{\circ}   \quad   \text{as } t \to \infty.
        \end{equation}
In particular, \eqref{eq:X(t) when h^2 integrable} holds under the assumptions of Theorem \ref{Thm:decreasing h}.
Further, $X(t) - \E X(t) \stackrel{\mathrm{d}}{\to} X^*_{\circ}$ as $t \to \infty$
in the case (C1) of Theorem \ref{Thm:decreasing h}.
\end{Prop}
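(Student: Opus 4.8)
The plan is to prove one-dimensional convergence by the same coupling-plus-time-reversal strategy used for Proposition \ref{Prop:dRi h}, now carrying the centering $\mu^{-1}\int_0^t h(y)\dy$ through the estimates. Write $X_\circ(t) := X(t) - \mu^{-1}\int_0^t h(y)\dy$. Since $h$ is locally bounded, eventually decreasing and $h(t)\to 0$, it is globally bounded: $C := \sup_{s\geq 0}|h(s)| < \infty$. For $\varepsilon>0$ put $h^\varepsilon(s) := \sup_{|u-s|\leq\varepsilon}h(u)$ and $h_\varepsilon(s) := \inf_{|u-s|\leq\varepsilon}h(u)$, so that $h_\varepsilon\leq h\leq h^\varepsilon$ pointwise, $|h^\varepsilon|,|h_\varepsilon|\leq C$, and (as $h$ is eventually decreasing with limit $0$) $h^\varepsilon(s),h_\varepsilon(s)\to 0$ as $s\to\infty$. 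I will sandwich $X(t)$ between the corresponding sums built from the coupled stationary walk and then let first $t\to\infty$ and afterwards $\varepsilon\downarrow 0$.

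First I would record two facts. \emph{(i)} The nonnegative functions $h^\varepsilon-h$ and $h-h_\varepsilon$ are Lebesgue integrable with $\int_0^\infty(h^\varepsilon-h)\dy\to 0$ and $\int_0^\infty(h-h_\varepsilon)\dy\to 0$ as $\varepsilon\downarrow 0$. Indeed, on the range where $h$ is decreasing one has $h^\varepsilon(s)=h(s-\varepsilon)$ and $h_\varepsilon(s)=h(s+\varepsilon)$, so the tail integrals telescope to the integral of $h$ over an interval of length $\varepsilon$ and vanish as $\varepsilon\downarrow 0$, while on the remaining bounded set dominated convergence (using a.e.\ continuity of $h$ and the bound $2C$) applies. \emph{(ii)} Because $h^\varepsilon-h$ is integrable, Proposition \ref{Prop:X* well-defined} shows $\sum_{k\geq 0}(h^\varepsilon-h)(S^*_k)$ converges a.s.\ and $\mu^{-1}\int_0^t(h^\varepsilon-h)\dy$ converges; combined with the hypothesis that $X^*_\circ$ exists as a limit in probability, this gives that
\[
X^{\varepsilon,*}_\circ := \lim_{t\to\infty}\Big(\sum_{k\geq 0}h^\varepsilon(S^*_k)\1_{\{S^*_k\leq t\}}-\mu^{-1}\int_0^t h^\varepsilon(y)\dy\Big) = X^*_\circ + \sum_{k\geq 0}(h^\varepsilon-h)(S^*_k) - \mu^{-1}\int_0^\infty(h^\varepsilon-h)(y)\dy
\]
exists as a limit in probability. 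Applying Lemma \ref{Lem:approximating X*} to $g_\varepsilon:=h^\varepsilon-h\downarrow 0$ a.e.\ together with (i) yields $X^{\varepsilon,*}_\circ\to X^*_\circ$ a.s.\ as $\varepsilon\downarrow 0$; the analogous limit $X^{*}_{\circ,\varepsilon}$ built from $h_\varepsilon$ likewise satisfies $X^{*}_{\circ,\varepsilon}\to X^*_\circ$.

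The heart of the argument is the sandwich. Using the coupling of Section \ref{subsec:stationary renewal processes and coupling} exactly as in the proof of Proposition \ref{Prop:dRi h} --- splitting off the a.s.\ finite initial blocks $k<\tau_2$ and $k<\tau_1$ (which tend to $0$ a.s.\ since $h(t),h^\varepsilon(t)\to 0$), replacing $h$ by $h^\varepsilon$ via $|S_{\tau_2+k}-\tilde S^*_{\tau_1+k}|\leq\varepsilon$ from \eqref{eq:epsilon close}, converting the indicator mismatch as in that proof, and bounding the resulting boundary terms by $|h^\varepsilon|\leq C$ through \eqref{eq:N leq N*+O(1)} --- I obtain, with $\tilde X^\varepsilon(t):=\sum_{k\geq 0}h^\varepsilon(t-\tilde S^*_k)\1_{\{\tilde S^*_k\leq t\}}$,
\[
X(t) \leq o(1) + \tilde X^\varepsilon(t) + C\big(\tilde N^*(t+\varepsilon)-\tilde N^*(t-\varepsilon)\big) \quad \text{a.s.\ as } t\to\infty.
\]
Subtracting $\mu^{-1}\int_0^t h$ and rewriting the centering in the $h^\varepsilon$-scale gives
\[
X_\circ(t) \leq o(1) + \Big(\tilde X^\varepsilon(t)-\mu^{-1}\int_0^t h^\varepsilon(y)\dy\Big) + \mu^{-1}\int_0^t(h^\varepsilon-h)(y)\dy + C\big(\tilde N^*(t+\varepsilon)-\tilde N^*(t-\varepsilon)\big).
\]
By time reversal (Proposition \ref{Prop:N^ backwards in time}, with $\tilde N^*\stackrel{\mathrm{d}}{=}N^*$) the bracketed term equals in law, for each fixed $t$, the centred forward sum $\sum_k h^\varepsilon(S^*_k)\1_{\{S^*_k\leq t\}}-\mu^{-1}\int_0^t h^\varepsilon$ and hence converges in distribution to $X^{\varepsilon,*}_\circ$; moreover $\mu^{-1}\int_0^t(h^\varepsilon-h)\to\mu^{-1}\int_0^\infty(h^\varepsilon-h)$ and $\tilde N^*(t+\varepsilon)-\tilde N^*(t-\varepsilon)\stackrel{\mathrm{d}}{=}N^*(2\varepsilon)$. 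Splitting the probability with an auxiliary $\delta>0$ as at the end of the proof of Proposition \ref{Prop:dRi h}, letting $t\to\infty$ along continuity points and then $\varepsilon\downarrow 0$ (using (ii), $\int_0^\infty(h^\varepsilon-h)\to 0$ and $N^*(2\varepsilon)\to 0$ a.s.), yields $\limsup_{t\to\infty}\Prob\{X_\circ(t)>x\}\leq\Prob\{X^*_\circ\geq x\}$. The matching lower bound is obtained symmetrically from $h_\varepsilon$, the reverse estimate \eqref{eq:N geq N*+O(1)} and $X^{*}_{\circ,\varepsilon}\to X^*_\circ$, and together they give \eqref{eq:X(t) when h^2 integrable}. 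The ``in particular'' assertion is then immediate, since Proposition \ref{Prop:X*_o well-defined} guarantees the existence of $X^*_\circ$ under the hypotheses of Theorem \ref{Thm:decreasing h}.

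For the final claim in case (C1), write $X(t)-\E X(t) = X_\circ(t) - \E X_\circ(t)$, which differs from $X_\circ(t)$ only by the deterministic quantity $\E X_\circ(t)=\int_{[0,t]}h(t-y)\,\mathrm{d}(U(y)-\mu^{-1}y)$. Since $X^*_t$ has mean zero for every $t$ and converges to $X^*_\circ$ in $\mathcal{L}^2$ (Proposition \ref{Prop:X*_o well-defined}), we have $\E X^*_\circ=0$; moreover a second-moment estimate parallel to the $\mathcal{L}^2$-computation in the proof of Proposition \ref{Prop:X*_o well-defined} (using $\int_0^\infty h^2<\infty$ and $\sup_y|U(y)-\mu^{-1}y|<\infty$) gives $\sup_t\E X_\circ(t)^2<\infty$. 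Hence $\{X_\circ(t)\}_t$ is uniformly integrable, and the already established convergence $X_\circ(t)\stackrel{\mathrm{d}}{\to}X^*_\circ$ upgrades to convergence of means, so $\E X_\circ(t)\to\E X^*_\circ=0$; Slutsky's lemma then yields $X(t)-\E X(t)\stackrel{\mathrm{d}}{\to}X^*_\circ$. The main obstacle throughout is the sandwich step: one must match the two centerings $\mu^{-1}\int_0^t h$ and $\mu^{-1}\int_0^t h^\varepsilon$ and ensure that every error term --- the vanishing initial blocks, the indicator-mismatch boundary terms, and the centering gap $\mu^{-1}\int_0^t(h^\varepsilon-h)$ --- is controlled uniformly and disappears in the correct order (first $t\to\infty$, then $\varepsilon\downarrow0$); this is precisely where the integrability of $h^\varepsilon-h$ from (i) and the a.s.\ finiteness of $N^*(2\varepsilon)$ enter.
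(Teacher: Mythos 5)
For the main assertion \eqref{eq:X(t) when h^2 integrable} your argument is correct, but it takes a genuinely different route from the paper's. The paper splits $h=h_1+h_2$ with $h_1$ decreasing on all of $\R_+$ and $h_2$ d.R.i.\ with compact support, replaces the symmetric coupling by a one-sided one ($\hat{S}^{*}_{\sigma_1}-S_{\sigma_2}\in[0,\varepsilon]$) so that the monotonicity of $h_1$ yields $h_1(t-S_{\sigma_2+k})\leq h_1(t-\tilde{S}^*_{\sigma_1+k})$ with no smearing, and then treats the two parts simultaneously (precisely because distributional convergence of the two summands separately would not give convergence of their sum). You instead keep the symmetric coupling of Subsection \ref{subsec:stationary renewal processes and coupling} and smear the whole non-integrable $h$; what makes this legitimate is your observation (i): since $h$ is bounded and eventually decreasing with limit $0$, the smearing errors $h^\varepsilon-h$ and $h-h_\varepsilon$ are Lebesgue integrable with integrals tending to $0$, which is exactly what is needed both to construct $X^{\varepsilon,*}_\circ$ (via Proposition \ref{Prop:X* well-defined} and Lemma \ref{Lem:approximating X*}) and to reconcile the two centerings $\mu^{-1}\int_0^t h$ and $\mu^{-1}\int_0^t h^\varepsilon$ by a uniformly small deterministic correction. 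Both proofs use the eventual monotonicity of $h$, just in different places: the paper feeds it into the coupling inequality, you feed it into the integrability of the smearing error. Your variant is arguably more unified (no $h_1/h_2$ split, no special coupling, no simultaneous-treatment subtlety); the paper's avoids having to center the smeared, non-integrable auxiliary functions at all.

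The final assertion (case (C1)) is where your proof has a genuine gap. You reduce it to $\E X_\circ(t)\to 0$ via uniform integrability, which you base on the claim $\sup_t\E X_\circ(t)^2<\infty$, justified only as a ``second-moment estimate parallel to'' the computation in Proposition \ref{Prop:X*_o well-defined}. That computation does not transfer to the zero-delayed process: it relies on the facts that $\E\sum_{k\geq0}h(S^*_k)\1_{\{S^*_k\leq t\}}$ equals the centering $\mu^{-1}\int_0^t h(y)\,\mathrm{d}y$ exactly (i.e.\ $U^*(\mathrm{d}x)=\mu^{-1}\mathrm{d}x$) and on the time reversal of Proposition \ref{Prop:N^ backwards in time} --- both properties of $N^*$, not of $N$. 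If you redo the computation with $N$, the discrepancy $e(t):=\E X(t)-\mu^{-1}\int_0^t h(y)\,\mathrm{d}y$ enters the square, and after the cancellations one is left with a term of the form $2\int_{[0,t]}h(t-y)\,e(t-y)\,U(\mathrm{d}y)$ up to bounded quantities. Bounding $|e|$ by a constant --- which is all that $\sup_y|U(y)-\mu^{-1}y|<\infty$ delivers --- bounds this term only by $\mathrm{const}\cdot\E X(t)\to\infty$, because $h$ is non-integrable; to get a finite bound one would need $\int^\infty h(v)|e(v)|\,\mathrm{d}v<\infty$, i.e.\ a \emph{rate} of decay of $e$, which neither your argument nor any cited result provides. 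So this step fails as written (whether the second-moment bound is even true requires finer renewal-theoretic input). The repair is much simpler and is the paper's route: Corollary 3.1 of \cite{Kaplan:1975} gives $e(t)\to 0$ directly under (C1), and then $X(t)-\E X(t)=X_\circ(t)-e(t)\stackrel{\mathrm{d}}{\to}X^*_\circ$ follows from the part of the proposition you have already proved together with Slutsky's lemma --- no second moments and no uniform integrability are needed.
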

\begin{proof}
Since $h$ is assumed to be eventually decreasing, there exists a
$t_0 > 0$ such that $h(t)$ is decreasing on $[t_0,\infty)$. Define
$h_1(t) := h(t_0) \1_{[0,\,t_0]}(t) + h(t) \1_{(t_0,\,\infty)}(t)$
and $h_2(t) := h(t)-h_1(t)$. Consequently, $X_{\circ}(t) =
X_{1,\circ}(t)+X_{2,\circ}(t)$, $t \geq 0$ where
\begin{equation*}
X_{j,\circ}(t) ~:=~ \sum_{k \geq 0} h_{j}(t-S_k) \1_{\{S_k \leq
t\}} - \frac{1}{\mu} \int_0^t h_j(y) \dy,   \quad   j=1,2,
\end{equation*}
and $h_1$ is nonnegative and decreasing, and $h_2$ is d.R.i. The
idea is to conclude convergence of $X_{2,\circ}(t)$ as in the
proof of Proposition \ref{Prop:dRi h} and to use the monotonicity
of $h_1$ to infer convergence of $X_{1,\circ}(t)$. However,
convergence in distribution of $X_{1,\circ}(t)$ and
$X_{2,\circ}(t)$ does not imply convergence in distribution of
their sum. This is why we will treat the two terms simultaneously.

We will only prove that
\begin{equation}\label{upp}
\limsup_{t \to \infty} \Prob\{X_{\circ}(t)
> x\} \leq \Prob\{X_{\circ}^* > x\}
\end{equation}
at any continuity point $x$ of the law of $X_{\circ}^*$. The
converse inequality for the lower limit can be obtained similarly.
As in the proof of Proposition \ref{Prop:dRi h}, we use a coupling
argument but for technical reasons, the coupling differs slightly
from that introduced in Subsection \ref{subsec:stationary renewal
processes and coupling}. For any fixed $\varepsilon>0$, there
exist almost surely finite $\sigma_1$ and $\sigma_2$ such that
\begin{equation*}
\hat{S}^{*}_{\sigma_1}-S_{\sigma_2} \in [0,\varepsilon].
\end{equation*}
almost surely. Now we can define the coupled random walk
$(\tilde{S}^*_k)_{k \in \N_0}$ as in Subsection
\ref{subsec:stationary renewal processes and coupling} but with
$\tau_1$ and $\tau_2$ replaced by $\sigma_1$ and $\sigma_2$. This
has the advantage that the estimate \eqref{eq:epsilon close} can
be replaced by $S_{\sigma_2+k} \leq \tilde{S}_{\sigma_1+k}^* \leq
S_{\sigma_2+k} + \varepsilon  $ for all $k \in \N_0$. This choice
of coupling is convenient for the derivation of upper bounds since
together with the monotonicity of $h_1$ it implies that
\begin{equation}    \label{eq:h_1 coupling bound}
h_1(t-S_{\sigma_2+k}) ~\leq~ h_1(t-\tilde{S}_{\sigma_1+k}^*) \quad
\text{ for all } k \in \N_0.
\end{equation}
Our starting point is the following representation for $X(t)$:
\begin{equation}    \label{eq:upper bound for X(t)}
X(t) ~=~ \sum_{k=0}^{\sigma_2-1} h(t\!-\!S_k) \1_{\{S_k \leq t\}}
+ \sum_{k \geq \sigma_2} h_1(t\!-\!S_k) \1_{\{S_k \leq t\}} +
\sum_{k \geq \sigma_2} h_2(t\!-\!S_k) \1_{\{S_k \leq t\}}.
\end{equation}
Using \eqref{eq:h_1 coupling bound} and the monotonicity of $h_1$,
we infer:
\begin{align}
\sum_{k \geq \sigma_2} & h_1(t\!-\!S_k) \1_{\{S_k \leq t\}}   \notag  \\
& \leq~ \sum_{k \geq \sigma_1} h_1(t\!-\!\tilde{S}_k^*)
\1_{\{\tilde{S}_k^* \leq t\}}
+ h_1(0) \sum_{k \geq \sigma_1} \1_{\{t< \tilde{S}_k^* \leq t + \varepsilon\}}    \notag  \\
& \leq~ \sum_{k \geq 0} h_1(t\!-\!\tilde{S}_k^*)
\1_{\{\tilde{S}_k^* \leq t\}} +
h_1(0)(\tilde{N}^*(t\!+\!\varepsilon)\!-\!\tilde{N}^*(t)).
\label{eq:upper bound for X_1(t)}
\end{align}
With $h_2^{\varepsilon}(t) := \sup_{|s-t| \leq \varepsilon}
h_2(s)$ and $C := \sup_{0 \leq s \leq t_0} |h_2(s)|$, we infer as
in the proof of Proposition \ref{Prop:dRi h}:
\begin{align}
\sum_{k \geq \sigma_2} & h_2(t\!-\!S_k) \1_{\{S_k \leq t\}}   \notag  \\
&\leq~ \sum_{k \geq 0} h_2^{\varepsilon}(t\!-\!\tilde{S}^*_k)
\1_{\{\tilde{S}^*_k \leq t\}} - \sum_{k=0}^{\sigma_1-1}
h_2^{\varepsilon}(t\!-\!\tilde{S}^*_k) \1_{\{\tilde{S}_k^* \leq
t\}} + C(\tilde{N}^*(t\!+\!\varepsilon)\!-\!\tilde{N}^*(t)).
\label{eq:upper bound for X_2(t)}
\end{align}
Now fix $x \in \R$ and $\delta > 0$. Combining \eqref{eq:upper
bound for X(t)}, \eqref{eq:upper bound for X_1(t)} and
\eqref{eq:upper bound for X_2(t)}, we conclude
\begin{align}
\limsup_{t \to \infty} & \, \Prob\{X_{\circ}(t) > x\}   \notag  \\
&\leq~ \limsup_{t \to \infty} \Prob\bigg\{\sum_{k \geq 0}
h_1(t\!-\!\tilde{S}_k^*) \1_{\{\tilde{S}_k^* \leq t\}}
- \frac{1}{\mu} \int_{0}^{t} \!\! h_1(y) \dy    \notag  \\
&\hphantom{\leq~ \limsup_{t \to \infty} \Prob\bigg\{}
+\sum_{k \geq 0} h_2^{\varepsilon}(t\!-\!\tilde{S}^*_k) \1_{\{\tilde{S}^*_k \leq t\}} - \frac{1}{\mu} \int_0^t h_2^{\varepsilon}(y) \dy >x-\delta\bigg\}    \notag   \\
&\hphantom{\leq~} +\limsup_{t \to \infty}
\Prob \bigg\{\sum_{k=0}^{\sigma_2-1} h(t\!-\!S_k) \1_{\{S_k\leq t\}}-\sum_{k=0}^{\sigma_1-1}h_2^{\varepsilon}(t\!-\!\tilde{S}^*_k)\1_{\{\tilde{S}_k^* \leq t\}} > \delta/3 \bigg\}     \notag \\
&\hphantom{\leq~} +\limsup_{t \to \infty} \Prob\big\{(h_1(0)+C)(\tilde{N}^*(t\!+\!\varepsilon)\!-\!\tilde{N}^*(t)) > \delta/3\big\} \notag  \\
&\hphantom{\leq~} + \Prob\bigg\{\frac{1}{\mu}\int_0^{t_0} \!\!
(h_2^{\varepsilon}(y)\!-\!h_2(y)) \dy > \delta/3\bigg\}
\label{eq:4 terms'}
\end{align}
The last probability equals $0$ when $\varepsilon$ is small
enough, for $h_2^{\varepsilon} \downarrow h_2$ a.e.\ as
$\varepsilon \downarrow 0$. Since $h(t) \to 0$ as $t \to \infty$,
we further have
\begin{equation*}
\lim_{t \to \infty} \sum_{k=0}^{\sigma_2-1} h(t\!-\!S_k)
\1_{\{S_k\leq
t\}}=\lim_{t\to\infty}\sum_{k=0}^{\sigma_1-1}h_2^{\varepsilon}(t\!-\!\tilde{S}^*_k)\1_{\{\tilde{S}_k^*
\leq t\}}= 0 \quad   \text{a.s.}
\end{equation*}
This implies that the term in the fourth line of \eqref{eq:4
terms'} equals $0$. Regarding the term in the penultimate line of
\eqref{eq:4 terms'}, we have
\begin{equation*}
\limsup_{t \to \infty
}\Prob\big\{(h_1(0)+C)(\tilde{N}^*(t\!+\!\varepsilon)\!-\!\tilde{N}^*(t))
> \delta/3\big\} = \Prob\big\{(h_1(0)+C)N^*(\varepsilon) >
\delta/3\big\}.
\end{equation*}
This tends to $0$ as $\varepsilon \to 0$. It remains to deal with
the principal term, the first term on the right-hand side of
\eqref{eq:4 terms'}. By Proposition \ref{Prop:N^ backwards in
time},
\begin{align*}
\sum_{k \geq 0} h_1(t\!-\!\tilde{S}_k^*) & \1_{\{\tilde{S}_k^*
\leq t\}} - \frac{1}{\mu} \int_{0}^{t} \!\! h_1(y) \dy
+\sum_{k \geq 0} h_2^{\varepsilon}(t\!-\!\tilde{S}^*_k) \1_{\{\tilde{S}^*_k \leq t\}} - \frac{1}{\mu} \int_0^t h_2^{\varepsilon}(y) \dy \notag  \\
& \stackrel{\mathrm{d}}{=}~
\sum_{k \geq 0} h_1(S_k^*) \1_{\{S_k^* \leq t\}} - \frac{1}{\mu} \int_{0}^{t} \!\! h_1(y) \dy      \\
& \hphantom{\stackrel{\mathrm{d}}{=}~}
+ \sum_{k \geq 0} h_2^{\varepsilon}(S^*_k) \1_{\{S^*_k \leq t\}} - \frac{1}{\mu} \int_0^t h_2^{\varepsilon}(y) \dy    \\
& \stackrel{\Prob}{\to}~ X_{1,\circ}^* +
X_{2,\circ}^{\varepsilon,*} \quad   \text{as } t \to \infty.
\end{align*}
The existence of $X_{1,\circ}^*$ follows from the existence of
$X^*_{\circ}$ as a limit in probability and the existence of the
a.s.\ limit $\lim_{t\to\infty}\sum_{k \geq 0} h_2(S_k^*)
\1_{\{S_k^* \leq t\}} - \frac{1}{\mu} \int_0^t \!\! h_2(y) \dy$
which is secured by Proposition \ref{Prop:X* well-defined} (using
that $h_2$ is d.R.i.). The existence of $X_{2,\circ}^*$ follows
from Proposition \ref{Prop:X* well-defined} (using that
$h_2^\varepsilon$ is d.R.i.). From Lemma \ref{Lem:approximating
X*} and the fact that $h_2^{\varepsilon} \downarrow h_2$ a.e.\ as
$\varepsilon \downarrow 0$, it follows that
\begin{equation*}
X_{2,\circ}^{\varepsilon,*} ~\underset{\varepsilon \downarrow
0}{\to}~  \sum_{k \geq 0} h_2(S_k^*) - \frac{1}{\mu} \int_0^{t_0}
h_2(y) \dy \quad   \text{a.s.}
\end{equation*}
Hence $\lim_{\varepsilon \downarrow 0} (X_{1,\circ}^* +
X_{2,\circ}^{\varepsilon,*}) = X_{\circ}^*$ a.s. Now one can argue
as in the end of the proof of Proposition \ref{Prop:dRi h} to
infer \eqref{upp}.

It follows from Proposition \ref{Prop:X*_o well-defined} that the
assumptions of Theorem \ref{Thm:decreasing h} are sufficient for
\eqref{eq:X(t) when h^2 integrable} to hold. In the case (C1) of
Theorem \ref{Thm:decreasing h}, $\lim_{t \to \infty} |\E
X(t)-\mu^{-1}\int_{0}^{t} h(y) \dy|=0$ by Corollary 3.1 in
\cite{Kaplan:1975}. Hence, the limiting distribution of $X(t)-\E
X(t)$ as $t \to \infty$ is the same as that of $X^*_{\circ}(t)$.
\end{proof}

\subsection{Finite-dimensional convergence}

It remains to extend one-dimensional convergence to finite-dimensional convergence.
This is done in this subsection.

\begin{proof}[Proof of Theorem \ref{Thm:dRi h}]
We have to show that for all $0 < u_1 < \ldots < u_n < \infty$,
the random vector $(X(u_1t),\ldots,X(u_n t))$ converges to $(X^*(u_1),\ldots,X^*(u_n))$ in distribution as $t \to \infty$.

Assume that $n=2$. Without loss of generality we may take $u_1=1$.
We further write $u$ for $u_2>1$, set $m_1 = (1+u)/2$ and let
$m_2\in (m_1,u)$. For $t > 0$, set
\begin{equation*}
X_1(ut) := \!\! \sum_{k=0}^{N(m_1t)-1} \! h(ut-S_k) \1_{\{S_k \leq
ut\}} \text{ and } X_2(ut) := \!\! \sum_{k\geq N(m_1 t)} \!\!\!
h(ut-S_k) \1_{\{S_k \leq ut\}}.
\end{equation*}
Clearly, $X(ut)=X_1(ut)+X_2(ut)$ for all $t > 0$.

We first prove that
\begin{equation}\label{eq:X_1(ut)->0}
X_1(ut) \stackrel{\Prob}{\to} 0 \quad   \text{as } t \to \infty.
\end{equation}
For every $\varepsilon>0$ there exists an $c=c(\varepsilon)>0$
such that $\int_c^\infty|h(y)| \dy<\varepsilon$. Setting
$h_c(t):=h(t)\1_{[c,\,\infty)}(t)$, we have, for $t$ large enough,
\begin{eqnarray*}
\E |X_1(ut)|&\leq&\E \sum_{k=0}^{N(m_1t)-1}|h(ut-S_k)|
~=~ \int _{[0,\,m_1t]}|h(ut-y)| \, U(\dy)   \\
& \leq &
\int_{[0,\,ut]}|h_c(ut-y)| \, U(\dy)    \\
& \underset{t\to\infty}{\to}& \int_0^\infty |h_c(y)|\dy
~=~ \int_c^\infty |h(y)|\dy ~\leq~ \varepsilon.\\
\end{eqnarray*}
Sending $\varepsilon\downarrow 0$ finishes the proof of
\eqref{eq:X_1(ut)->0}.

Our next purpose is to show that
\begin{equation}    \label{eq:X,X_2}
\Prob\{X(t)\leq a, X_2(ut) \leq b\} ~\to~ \Prob\{X^* \leq a\} \Prob\{X^* \leq b\}
\end{equation}
as $t \to \infty$ for continuity points $a,b \in \R$ of the law of
$X^*$. Write the probability on the left-hand side of
\eqref{eq:X,X_2} as follows:
\begin{align*}
\Prob\{X(t)&\leq a,X_2(ut)\leq b\}  \\
&=~
\int_{(m_1t,\,\infty)} \Prob\{X(t)\leq a,X_2(ut)\leq b, S_{N(m_1t)} \in \dy\}   \\
& =~
\Big(\int_{(m_1t,\,m_2t]} \ldots +\int_{(m_2t,\,\infty)} \ldots \Big)
~=:~    J_1(t)+J_2(t).
\end{align*}
Clearly, $0\leq J_2(t)\leq \Prob\{S_{N(m_1t)} > m_2t\} = \Prob\{S_{N(m_1t)}-m_1t > (m_2-m_1) t \} \to 0$ as $t \to \infty$
since $S_{N(m_1t)}-m_1t \stackrel{\mathrm{d}}{\to} S^*_0$ and $(m_2-m_1)t \to +\infty$ as $t \to \infty$.
For $J_1(t)$ we may write:
\begin{equation*}
J_1(t)
~=~ \int_{(m_1t,\,m_2t]}\Prob\{X(ut-y)\leq b\}\Prob\{X(t)\leq a,S_{N(m_1t)}\in\dy \}
\end{equation*}
where we have used that $(S_{k+N(m_1t)}-S_{N(m_1t)})_{k \in \N_0}$ has the same distribution as $(S_k)_{k \in \N_0}$
and is independent of $(S_k)_{0 \leq k \leq N(m_1t)}$. Further,
\begin{eqnarray*}
J_1(t)
& = &
\Prob\{X^*\leq b\}\int_{(m_1t,\,m_2t]}\Prob\{X(t)\leq a,S_{N(m_1t)}\in\dy \}    \\
& &
+ \!\!\! \underset{(m_1t,\,m_2t]}{\int}\!\!\!\!(\Prob\{X(ut\!-\!y)\leq b\}-\Prob\{X^*\leq b\}) \Prob\{X(t)\leq a,S_{N(m_1t)}\in\dy \}   \\
& =: &
J_{11}(t)+J_{12}(t).
\end{eqnarray*}
The integral in the first summand converges to $\Prob\{X^*\leq  a\}$ by Proposition \ref{Prop:dRi h}
since $a$ is a continuity point of the law of $X^*$ and $\Prob\{m_1t \leq S_{N(m_1t)} \leq m_2t\} \to 1$ as $t\to\infty$.
To show that $J_{12}(t)$ converges to zero, write
\begin{align*}
|J_{12}&(t)|    \\
& =~
\sup_{y\in [m_1t,\,m_2t]}\Big|\Prob\{X(ut-y)\leq b\}-\Prob\{X^*\leq b\}\Big|
\int_{(m_1t,\,m_2t]} \!\!\! \Prob\{S_{N(m_1t)}\in\dy\}  \\
&\leq~
\sup_{y\geq (u-m_2)t}\Big|\Prob\{X(y)\leq b\}-\Prob\{X^*\leq b\}\Big|,
\end{align*}
which goes to zero since $(u-m_2)t \to \infty$, as $t\to\infty$,
in view of Proposition \ref{Prop:dRi h}. The proof of
\eqref{eq:X,X_2} is complete. Now the desired result for the case
$n=2$ follows from \eqref{eq:X_1(ut)->0}, \eqref{eq:X,X_2} and
Slutsky's lemma.

The case of general $n\in\N$ can be treated similarly
by conditioning the probability $\Prob\{X(u_1t)\leq a_1,\ldots,X(u_nt)\leq a_n\}$ on $(S_{N(m_1 t)},\ldots,S_{N(m_{n-1}t)})$
at appropriately chosen middle points $u_{i} < m_i < u_{i+1}$.
\end{proof}

The scheme of the proof of Theorem \ref{Thm:decreasing h} is the same as that of the proof of Theorem \ref{Thm:dRi h} above.
On the other hand, it differs in many details which is why we decided to include it here.

\begin{proof}[Proof of Theorem \ref{Thm:decreasing h}]
As in the case of Theorem \ref{Thm:dRi h}, we will prove this theorem only for $n=2$
and assume that $u_1=1$ and $u:=u_2>1$.
Let $p(t) := \mu^{-1} \int_{0}^{t} h(y) \dy$ and set $m_1:=(1+u)/2$, $z_2(t):=m_1t+r(t)$
where $r(t)$ is some function to be specified below.
Decompose $X_{\circ}(ut):=X(ut)-p(ut)$ as follows
\begin{eqnarray*}
X_{\circ}(ut)
& = &
\bigg(\sum_{k=0}^{N(m_1t)-1} \!\! h(ut-S_k) - \frac{1}{\mu}\int_{(u-m_1)t}^{ut} h(y) \dy \bigg) \\
& &
+ \bigg(\sum_{k\geq N(m_1t)} \!\! h(ut-S_k)\1_{\{S_k\leq ut\}} - p((u-m_1)t)\bigg) \\
& =: & Y_1(t)+Y_2(t).
\end{eqnarray*}
Similar to the proof of Theorem \ref{Thm:dRi h},
we conclude that it is enough to show that
\begin{equation}\label{eq:Y_1(t)->0}
Y_1(t) ~\stackrel{\Prob}{\to}~ 0,
\end{equation}
and
\begin{equation}\label{eq:2-dim convergence decr h}
\Prob\{X(t)\leq a+p(t),Y_2(t)\leq b\}   ~\to~   \Prob\{X^*_{\circ}\leq a\}\Prob\{X^*_{\circ}\leq b\},
\end{equation}
as $t \to \infty$ for all $a, b \in \R$ that are continuity points of the law of $X^*_{\circ}$.

We begin by proving \eqref{eq:Y_1(t)->0}.
Using integration by parts, we infer
\begin{eqnarray}
Y_1(t)
& = &
\Big(\sum_{k=0}^{N(m_1t)-1}h(ut-S_k) \1_{\{S_k\leq ut\}} - \frac{1}{\mu} \int_{(u-m_1)t}^{ut} h(y) \dy \Big)    \notag  \\
& = &
\int_{[0,\,m_1t]} h(ut-y) \, \mathrm{d} \Big(N(y)-\frac{y}{\mu}\Big)    \notag  \\
& = &
h(ut) + h((u-m_1)t-) \Big((N(m_1t)-\frac{m_1t}{\mu}\Big) - h(ut-)   \notag  \\
& &
- \int_{(0,\,m_1t]} \Big(N(y)-\frac{y}{\mu}\Big) \, \mathrm{d} h(ut-y)  \notag  \\
& = &
h(ut)-h(ut-) + h((u-m_1)t-)\Big(N(m_1t)-\frac{m_1t}{\mu}\Big)   \notag  \\
& & + \int_{[(u-m_1)t,\,ut)}\Big(N(ut-y)-\frac{ut-y}{\mu}\Big) \,
\mathrm{d}(\!-h(y))  \label{eq:Y_1(t)}
\end{eqnarray}
for large enough $t$. We now consider two situations:

\noindent {\sc Cases (C1) and (C3):} We invoke the estimate $\E
|N(t)-\frac{t}{\mu}| \leq K_1+K_2c(t)$ which holds for all $t \geq
0$ and some fixed $K_1,K_2>0$, see Proposition \ref{Prop:moment
convergence}. Here $c(t)$ is chosen as $\sqrt{t}$ in the case (C1)
and in the case (C3) it is as stated there. Then
\begin{eqnarray*}
\E|Y_1(t)|
& \leq & h((u-m_1)t) \E \Big|N(m_1t)-\frac{m_1t}{\mu}\Big| + o(1)       \\
& & + \int_{((u-m_1)t,\,ut]}\E\Big|N(ut-y)-\frac{ut-y}{\mu}\Big| \, \mathrm{d}(\!-h(y))         \\
& \leq &
h((u-m_1)t) \Big(K_1+K_2c(m_1t)\Big)+o(1)       \\
& &
+\int_{((u-m_1)t,\,ut]}(K_1+K_2c(ut-y)) \, \mathrm{d}(\!-h(y)).
\end{eqnarray*}
Note that since $c$ is regularly varying and in view of \eqref{eq:c(t) h(t) -> 0} we have, for arbitrary $\kappa, \lambda >0$,
\begin{equation}    \label{eq:c(t)h(t)->0}
\lim_{t\to\infty}{c(\kappa t)h(\lambda t)}  ~=~ \lim_{t \to \infty} c(t) h(t)   ~=~ 0,
\end{equation}
in the case (C3). The same relation holds in the case (C1) since
$h(t)=o(t^{-1/2})$ in view of \eqref{eq:int h^2 < infty} and the
monotonicity of $h$. Recalling that $m_1 = (1+u)/2 < u$ and using
\eqref{eq:c(t)h(t)->0} we infer that the first summand in the
estimate for $\E|Y_1(t)|$ above converges to zero as $t \to
\infty$. Further, again using \eqref{eq:c(t)h(t)->0},
\begin{align*}
\int_{((u-m_1)t,\,ut]} & (K_1+K_2c(ut-y)) \, \mathrm{d}(\!-h(y))        \\
&=~ K_2 \int_{((u-m_1)t,\,ut]}c(ut-y) \, \mathrm{d}(\!-h(y)) + o(1) \\
&\leq~  c(m_1t) (h((u-m_1)t)-h(ut)) + o(1)
~\to~ 0
\end{align*}
as $t \to \infty$.

\noindent {\sc Case (C2):} In this case $\E \xi^r < \infty$. Thus,
as a consequence of the Marcin\-kie\-wicz-Zygmund law of large numbers
for $N(t)$, see \eqref{eq:Marcinkiewicz-Zygmund SLLN of N(t)},
\begin{equation*}
\lim_{t \to \infty} \frac{\sup_{0 \leq s \leq
t}|N(s)-\mu^{-1}s|}{t^{1/r}}  ~=~ 0   \quad \text{a.s.}
\end{equation*}
From the monotonicity of $h$ and \eqref{eq:int h^r<infty} it
follows that $h(t) = o(t^{-1/r})$ as $t \to \infty$. Hence
\begin{equation*}   
\lim_{t \to \infty} h(\kappa t) \sup_{0 \leq s \leq \lambda t}
\Big|N(s)-\mu^{-1}s \Big| ~=~ 0   \quad   \text{a.s.}
\end{equation*}
for arbitrary $\kappa, \lambda > 0$.
Using this in \eqref{eq:Y_1(t)} implies that $Y_1(t) \to 0$ a.s., in particular also in probability.

We now turn to the proof of \eqref{eq:2-dim convergence decr h}.
Write the probability on the left-hand side of \eqref{eq:2-dim
convergence decr h} as follows:
\begin{align*}
\Prob\{X(t) & \leq a+p(t),Y_2(t) \leq b\}   \\
&=~ \int_{(m_1t,\,\infty)}\Prob\{X(t)\leq a+p(t),Y_2(t)\leq b ,S_{N(m_1t)}\in \dy\}     \\
&=~ \Big(\int_{(m_1t,\,z_2(t)]}\cdots
+\int_{(z_2(t),\,\infty)}\cdots\Big)=:J_1(t)+J_2(t).
\end{align*}
Clearly, $0\leq J_2(t)\leq \Prob\{S_{N(m_1t)}\geq z_2(t)\} = \Prob\{S_{N(m_1t)}-m_1t\geq r(t)\}$.
The latter probability tends to $0$ as $t \to \infty$ whenever
\begin{equation}    \label{eq:r(t)->0}
r(t) \to \infty \quad   \text{as } t \to \infty
\end{equation}
since $S_{N(m_1t)}-m_1t\stackrel{\mathrm{d}}{\to} S_0^*$.

For $J_1(t)$ we may write:
\begin{align*}
J_1&(t)     \\
& =
\int_{(m_1t,\,z_2(t)]} \!\!\! \Prob\{X(t)\leq a+p(t),S_{N(m_1t)} \in \dy,   \\
&
\hphantom{=} \sum_{k=N(m_1t)}^{\infty} \!\!\!  h(ut\!-\!y\!-\!(S_k\!-\!S_{N(m_1t)})) \1_{\{S_k-S_{N(m_1t)}\leq ut-y\}}\leq b + p((u\!-\!m_1)t) \}       \\
& =
\int_{(m_1t,\,z_2(t)]} \!\!\!\!\! \Prob\{X(ut\!-\!y)\leq b\!+\!p((u\!-\!m_1)t)\} \Prob\{X(t) \leq a\!+\!p(t),S_{N(m_1t)} \in \dy \},
\end{align*}
where the last equality follows from the inequality $t < m_1t$ and
the fact that $(S_{k+N(m_1t)}-S_{N(m_1t)})_{k \in \N_0}$ has the
same distribution as $(S_k)_{k \in \N_0}$ and is independent of
$(S_k)_{0 \leq k \leq N(m_1t)}$. Further,
\begin{align*}
J_1 & (t)
~=~ \Prob\{X^*_{\circ}\leq b\} \int_{(m_1t,\,z_2(t)]}\Prob\{X(t)\leq a+p(t),S_{N(m_1t)}\in\dy \}        \\
&\hphantom{=\}}~ + \int_{(m_1t,\,z_2(t)]} \!\!\! \big(\Prob\{X(ut-y)\leq b+p((u-m_1)t)\}-\Prob\{X^*_{\circ}\leq b\}\big)    \\
&\hphantom{=\}~ + \int_{(m_1t,\,z_2(t)]} \big(} \times \Prob\{X(t)\leq a+p(t),S_{N(m_1t)}\in\dy \}  \\
&=:~    J_{11}(t)+J_{12}(t).
\end{align*}
When \eqref{eq:r(t)->0} holds, then $\Prob\{m_1t < S_{N(m_1t)}\leq z_2(t)\} \to 1$ as $t\to\infty$.
Consequently, an application of Proposition \ref{Prop:decreasing h} shows that
the integral in the first summand converges to $\Prob\{X^*_{\circ} \leq  a\}$.
To show that $J_{12}(t)$ converges to zero, write
\begin{align*}
|J_{12} & (t)|
\leq
\sup_{m_1t < y \leq z_2(t)} \!\! \big|\Prob\{X(ut\!-\!y)\leq b+p(ut\!-\!m_1t)\}-\Prob\{X^*_{\circ}\leq b\}\big| \\
& \hphantom{(t)| \leq }~ \times \int_{(m_1t,\,z_2(t)]}\Prob\{S_{N(m_1t)}\in\dy\}    \\
&\leq~
\sup_{m_1t < y \leq z_2(t)} \!\! \big|\Prob\{X_{\circ}(ut\!-\!y)\leq b\!+\!p(ut\!-\!m_1t)\!-\!p(ut\!-\!y)\}-\Prob\{X^*_{\circ}\leq b\}\big| \\
&=~
\sup_{m_1t < y \leq z_2(t)} \!\! \big|\Prob\{X_{\circ}(ut\!-\!y)\leq b+\mu^{-1} \!\!\! \int_{ut\!-\!y}^{(u\!-\!m_1)t} \!\!\! h(x)\dx - \Prob\{X^*_{\circ}\leq b\}\big|.
\end{align*}
Using the fact that $h$ is eventually nonnegative, we proceed as
follows:
\begin{eqnarray*}
|J_{12}(t)|
& \leq &
\sup_{m_1t < y \leq z_2(t)}\Big|\Prob\{X_{\circ}(ut\!-\!y)\leq b\}-\Prob\{X^*_{\circ}\leq b\}\Big|  \\
& &
+\sup_{m_1t < y \leq z_2(t)} \Prob\left\{b < X_{\circ}(ut\!-\!y) \leq b+\mu^{-1}\int_{ut-y}^{ut-m_1t} h(y) \dy \right\} \\
& \leq &
\sup_{y\geq ut-z_2(t)}\Big|\Prob\{X_{\circ}(y)\leq b\}-\Prob\{X^*_{\circ}\leq b\}\Big|  \\
& &
+\sup_{m_1t < y \leq z_2(t)} \Prob\left\{b < X_{\circ}(ut\!-\!y) \leq b+\mu^{-1}\int_{ut-y}^{ut-m_1t} h(y) \dy \right\}.
\end{eqnarray*}
Due to Proposition \ref{Prop:decreasing h}, the first summand converges to zero whenever
\begin{equation}    \label{eq:u(t)>>z_2(t)}
ut - z_2(t) ~=~ (u-1)t/2 - r(t) ~\to~ \infty    \quad   \text{as } t \to \infty.
\end{equation}
Assume that $r$ also satisfies
\begin{equation}    \label{eq:3rd condition for r}
\lim_{t \to \infty} h((u-1)t/2-r(t)) r(t) ~=~ 0.
\end{equation}
Then, for arbitrary $\varepsilon>0$, there exists $t_0$ such that for all $t>t_0$
\begin{equation*}
0 ~\leq~ \mu^{-1}\int_{ut-z_2(t)}^{ut-m_1t} h(x) \dx ~\leq~ \mu^{-1} h((u-1)t/2-r(t)) r(t) ~<~ \varepsilon.
\end{equation*}
Thus, when \eqref{eq:u(t)>>z_2(t)} and \eqref{eq:3rd condition for r} hold
and $\varepsilon > 0$ is chosen such that $b+\varepsilon$ is a continuity point of the law of $X^*_{\circ}$
\begin{align*}
\sup_{m_1t < y \leq z_2(t)} & \Prob \left\{b < X_{\circ}(ut\!-\!y) \leq b+\mu^{-1}\int_{ut-z_2(t)}^{(u-m_1)t} h(x) \dx \right\} \\
& \leq~ \sup_{m_1t < y \leq z_2(t)}\Prob\{X_{\circ}(ut\!-\!y)\in (b,b+\varepsilon]\}    \\
& \underset{t\to\infty}{\to}~ \Prob\{X^*_{\circ}\in (b,b+\varepsilon]\}\underset{\varepsilon\to 0}{\to} 0,
\end{align*}
since $b$ and $b+\varepsilon$ are continuity points of the law of $X^*_{\circ}$.
We thus have proved that $\lim_{t \to \infty} J_{12}(t)= 0$
if $r$ satisfies the conditions \eqref{eq:r(t)->0}, \eqref{eq:u(t)>>z_2(t)} and \eqref{eq:3rd condition for r}.
A possible choice of $r$ such that the above conditions are satisfied is the following.
Let $\delta = (u-1)/4$. Then choose $r$ as
$r(t) = h(\delta t)^{-1/2} \wedge \delta t$, $t \geq 0$.
Then \eqref{eq:r(t)->0} holds since $h(t) \to 0$ as $t \to \infty$ and \eqref{eq:u(t)>>z_2(t)} holds since $r(t) \leq \delta t$.
Finally, \eqref{eq:3rd condition for r} holds since $h(t(u-1)/2-r(t)) r(t) \leq h(\delta t)^{1/2} \to 0$ as $t \to \infty$.
The proof of \eqref{eq:2-dim convergence decr h} is complete.
\end{proof}

\section{Proofs of the limit theorems with scaling} \label{sec:non-endogenous case}

In this section, we prove the results presented in Section \ref{subsec:non-endogenous thms}.

\subsection{Proof of Theorem \ref{Thm:non-endogenous & mu<infty}}

Whenever possible we treat all cases simultaneously.
To this end, put
\begin{equation*}
X_t(u)  ~:=~    \frac{X(ut)-\int_{0}^{ut} h(y) \dy}{g(t)h(t)},
\quad   t>0,\ u \geq 0.
\end{equation*}

\subsubsection*{Reduction to continuous and decreasing response function}   \label{subsec:reduction}

We simplify the proof of Theorem \ref{Thm:non-endogenous & mu<infty}
by showing that without loss of generality we can replace $h$ by a decreasing and continuous function $h^*$ on $\R_+$
satisfying $h^*(t) \sim h(t)$ as $t\to\infty$.
This follows an idea in \cite{Iksanov:2012}.
We thus need to construct a function $h^*$ as above and prove that for this function
\begin{equation}    \label{eq:f.d. convergence X*}
X^*_t(u) ~:=~ \frac{\int_{[0,\,ut]} h^*(ut-y) \, \mathrm{d}N(y)-\mu^{-1}\int_{0}^{ut} h^*(y) \, \dy}{g(t)h^*(t)}
~\underset{t \to \infty}{\stackrel{\mathrm{f.d.}}{\Rightarrow}}~ Y(u)
\end{equation}
where $Y(u) := W_{\alpha}(u)$, $u\geq 0$ if $\beta=0$, and
\begin{equation*}
Y(u) ~:=~ W_{\alpha}(u)u^{-\beta}+\beta \int_0^u(W_{\alpha}(u)-W_{\alpha}(y))(u-y)^{-\beta-1}\dy, \quad u\geq 0
\end{equation*}
if $\beta>0$.
Then, to ensure the convergence $X_t(u) \stackrel{\mathrm{f.d.}}{\Rightarrow} Y(u)$ as $t \to \infty$,
it suffices to check that, for any $u>0$,
\begin{equation}    \label{eq:|X_t*-X_t|->0 1st}
\frac{\int_{[0,\,ut]} (h(ut-y)-h^*(ut-y)) \, \mathrm{d}N(y)}{g(t)h(t)}
~\stackrel{\Prob}{\to}~ 0 \quad \text{as } t \to \infty,
\end{equation}
and
\begin{equation}    \label{eq:|X_t*-X_t|->0 2nd}
\frac{\int_0^{ut} (h(y)-h^*(y)) \, \dy}{g(t)h(t)} ~\to~ 0   \quad \text{as } t \to \infty.
\end{equation}

We begin with the construction of $h^*$.
By assumption, $h$ is eventually decreasing.
Hence, there exists an $a>0$ such that $h$ is decreasing on $[a,\infty)$.
Let $\widehat{h}$ be a bounded, right-continuous and
decreasing function such that $\widehat{h}(t) = h(t)$ for $t \geq a$.
Note that the so defined $\widehat{h}$ is non-negative.
The first observation is that replacing $h$ by $\widehat{h}$ in the
definition of $X(t)$ does not change the asymptotics.
Indeed, if $\widehat{X}$ denotes the shot noise process with the shots occurring at times $S_0, S_1, \ldots$
and response function $\widehat{h}$ instead of $h$,
then for any $u > 0$ and large enough $t$,
\begin{eqnarray*}
|X(ut)-\widehat{X}(ut)|
& = &
\Bigg| \sum_{k=0}^{N(ut)} h(ut-S_k) - \widehat{h}(ut-S_k)\Bigg| \\
& \leq &
\underset{y \in [0,\,a]}{\sup}\, |h(y)-\widehat{h}(y)| (N(ut)-N(ut-a))  \\
& \stackrel{\mathrm{d}}{\leq} &
\underset{y \in [0,\,a]}{\sup}\, |h(y)-\widehat{h}(y)| N(a)
\end{eqnarray*}
by the well-known distributional subadditivity of $N$.
The local boundedness of $h$ and $\widehat{h}$ ensures the finiteness of the
last supremum.
Since $\beta < 1/\alpha$, in all cases
we have $\lim_{t \to \infty} g(t) h(t) = \infty$. Consequently,
\begin{equation}    \label{eq:aux4}
\frac{X(ut) - \widehat{X}(ut)}{g(t)h(t)}
~\stackrel{\Prob}{\to}~ 0   \quad   \text{as } t \to \infty.
\end{equation}
Further, for $ut \geq a$,
\begin{eqnarray}
\left|\frac{\int_{0}^{ut} (h(y)-\widehat{h}(y)) \dy}{g(t)h(t)}\right|
& \leq &
\frac{\int_0^{ut} |h(y)-\widehat{h}(y)| \dy}{g(t)h(t)}  \notag  \\
& = &
\frac{\int_{[0,\,a]}\big|h(y)-\widehat{h}(y)\big|\dy}{g(t)h(t)} ~\to~   0
\quad \text{as } t\to\infty.        \label{eq:aux2}
\end{eqnarray}
Thus, in what follows, we can replace $h$ by $\widehat{h}$.
We will now construct $h^*$ from $\widehat{h}$. To this end,
let $\theta$ be a random variable with the standard exponential distribution.
Set
\begin{equation}    \label{eq:definition of h^*}
h^*(t) ~:=~ \E \widehat{h}\big((t-\theta)^+\big)
~=~ e^{-t}\bigg(\widehat{h}(0)+\int_0^t \widehat{h}(y)e^y\dy\bigg), \quad t \geq 0.
\end{equation}
It is clear that $\widehat{h}(t) \leq h^*(t)$, $t\geq 0$
and that $h^*$ is continuous and decreasing on $\R_+$ with
$h^*(0)=\widehat{h}(0)<\infty$.
Furthermore, $h^*(t) \sim \widehat{h}(t)\sim h(t)$, $t\to\infty$.
While this is trivial if $\lim_{t \to \infty} h(t)\neq 0$,
in the opposite case ($\lim_{t \to \infty} h(t)=0$) the first
equivalence does require a proof.
We use the second equality in \eqref{eq:definition of h^*}.
Being a regularly varying function $1/\widehat{h}$ grows subexponentially fast.
Using this and the regular variation of $\widehat{h}$ at infinity, we infer for any $\varepsilon \in (0,1)$:
\begin{eqnarray*}
\frac{h^*(t)}{\widehat{h}(t)}
& = &
\E \left[\frac{\widehat{h}\big((t-\theta)^+\big)}{\widehat{h}(t)}  \1_{\{\theta > \varepsilon t\}}\right]
+ \E \left[\frac{\widehat{h}\big((t-\theta)^+\big)}{\widehat{h}(t)} \1_{\{\theta \leq \varepsilon t\}}\right]   \\
& \leq &
\frac{\widehat{h}(0)}{\widehat{h}(t)} e^{-\varepsilon t}
+ \frac{\widehat{h}((1-\varepsilon)t)}{\widehat{h}(t)} (1-e^{-\varepsilon t})
~\underset{t \to \infty}{\to}~  (1-\varepsilon)^{-\beta}    ~\underset{\varepsilon \to 0}{\to}~ 1.
\end{eqnarray*}
Since $\widehat{h}(t) \leq h^*(t)$ for all $t \geq 0$, this implies $h^*(t) \sim \widehat{h}(t)$ as $t \to \infty$.
Let us now prove that
\begin{equation}    \label{eq:int h*-h^=h^(0)}
\lim_{t \to \infty} \int_0^t  (h^*(y)-\widehat{h}(y)) \, \dy ~=~ \widehat{h}(0).
\end{equation}
We use the representation
\begin{align*}
\int_0^t (h^*(y) & -\widehat{h}(y)) \, \dy  \\
& =~
\widehat{h}(0) (1 - e^{-t}) - \E \int_{t-\theta}^{t} \widehat{h}(y)\dy \1_{\{\theta\leq t\}}- \int_0^t \widehat{h}(y)\dy e^{-t}.
\end{align*}
Since $\widehat{h}$ grows subexponentially, the last term vanishes as $t \to \infty$,
and we are left with investigating the second term.
By the monotonicity of $\widehat{h}$ and the dominated convergence theorem,
\begin{equation*}
\E \bigg(\int_{t-\theta}^{t} \widehat{h}(y)\dy \1_{\{\theta \leq t\}}\bigg)
~\leq~ \E ( \theta\widehat{h}(t-\theta) \1_{\{\theta\leq t\}})
~\to~   0   \quad \text{as } t \to \infty,
\end{equation*}
which proves \eqref{eq:int h*-h^=h^(0)}.
In particular,
\begin{equation*}
\left|\frac{\int_{0}^{ut}(\widehat{h}(y)-h^*(y)) \dy}{g(t)h(t)}\right|
~=~
\frac{\int_{0}^{ut}(h^*(y)-\widehat{h}(y)) \dy}{g(t)h(t)}
~\to~ 0     \quad \text{as } t \to \infty
\end{equation*}
because $\lim_{t \to \infty} g(t)h(t)=\infty$.
In combination with \eqref{eq:aux2} the latter proves \eqref{eq:|X_t*-X_t|->0 2nd}.
Recalling \eqref{eq:int h*-h^=h^(0)} and the fact that in all cases $\lim_{t \to \infty} g(t)h(t) = \infty$,
we conclude from Lemma \ref{Lem:reduction of integrals} (with $f_1=h^*$ and $f_2=\widehat{h}$)
\begin{align*}
\Bigg|&\frac{\int_{[0,\,ut]} (\widehat{h}(ut-y)-h^*(ut-y)) \, \mathrm{d}N(y)}{g(t)h(t)}\Bigg|   \\
&\hspace{1cm} =~ \frac{\int_{[0,\,ut]} (h^*(ut-y)-\widehat{h}(ut-y)) \, \mathrm{d}N(y)}{g(t)h(t)}
~\stackrel{\mathcal{L}^1}{\to}~ 0   \quad   \text{as } t \to \infty.
\end{align*}
This together with \eqref{eq:aux4} leads to \eqref{eq:|X_t*-X_t|->0 1st}.
It remains to prove \eqref{eq:f.d. convergence X*}.

\subsubsection*{Proof of \eqref{eq:f.d. convergence X*}}

By the Cram\'er-Wold device and the discussion in Subsection
\ref{subsec:reduction}, in order to show finite-dimensional
convergence of $X_t(u)$, it suffices to prove that for any
$n\in\N$, $\gamma_1,\ldots, \gamma_n\in\R$ and $0<u_1<\ldots<u_n$
we have that
\begin{equation*}   
\sum_{k=1}^n \gamma_k X^*_t(u_k)
~\stackrel{\mathrm{d}}{\to}~
\sum_{k=1}^n \gamma_k Y(u_k)    \quad \text{as } t \to \infty.
\end{equation*}
Integrating by parts we obtain that
\begin{align}
\sum_{k=1}^n & \gamma_k X^*_t(u_k)  \notag  \\
&=~
\sum_{k=1}^n \frac{\gamma_k}{g(t) h^*(t)} \left( h^*(u_k t) + \int_{(0,u_kt]} h^*(u_kt-y) \, \mathrm{d}\Big(N(y)-\frac{y}{\mu} \Big) \right)    \notag  \\
&=~
\sum_{k=1}^n \frac{\gamma_k}{g(t) h^*(t)} \bigg( h^*(0) \Big(N(u_kt)-\frac{u_k t}{\mu}\Big) \notag  \\
&\hphantom{=~ \sum_{k=1}^n \frac{\gamma_k}{g(t) h^*(t)} \bigg(}
- \int_{(0,u_kt]} \!\! (N(y)- \mu^{-1}y) \, \mathrm{d}(h^*(u_kt-y))\bigg)   \notag  \\
&=~
\sum_{k=1}^n \gamma_k W_t(u_k) \frac{h^*(u_k t)}{h^*(t)}    \notag  \\
&\hphantom{=~}
+ \sum_{k=1}^n \frac{\gamma_k}{g(t) h^*(t)} \bigg(
(h^*(0)-h^*(u_kt)) \Big(N(u_kt)-\frac{u_k t}{\mu}\Big)  \notag  \\
&\hphantom{=~ + \sum_{k=1}^n \frac{\gamma_k}{g(t) h^*(t)} \bigg(}
 - \int_{[0,u_kt)} \!\! \Big(N(u_kt-y)- \frac{u_kt-y}{\mu}\Big) \, \mathrm{d}(-h^*(y))\bigg)    \notag  \\
&=~ \sum_{k=1}^n \gamma_k W_t(u_k) \frac{h^*(u_k t)}{h^*(t)}    \notag  \\
&\hphantom{=~}
+ \sum_{k=1}^n\gamma_k \frac{\int_{[0,\,u_k t)} (N(u_k t)-N(u_k t\!-\!y)-\mu^{-1}y)
\, \mathrm{d}(-h^*(y))}{g(t)h^*(t)}     \label{eq:q}
\end{align}
where the definition of $W_t(u)$ should be recalled from \eqref{eq:FLT for N(t)}.

\noindent {\sc Case $\beta=0$}.
Our aim is to show that each summand of the second term in \eqref{eq:q} converges to zero in
probability, for the convergence
\begin{equation*}
\sum_{k=1}^n \gamma_k X^*_t(u_k)
~\stackrel{\mathrm{d}}{\to}~    \sum_{k=1}^n \gamma_k Y(u_k)
~=~ \sum_{k=1}^n \gamma_k W_{\alpha}(u_k)
\end{equation*}
is then an immediate consequence of $\lim_{t \to \infty} h^*(u_kt)/h^*(t)=1$, \eqref{eq:FLT for N(t)} and Slutsky's theorem.

For the $k$th summand in \eqref{eq:q}, we have
\begin{align*}
\Bigg| & \frac{\int_{[0,\,u_k t)} (N(u_k t)-N(u_k t\!-\!y)-\mu^{-1}y)
\, \mathrm{d}(\!-h^*(y))}{g(t)h^*(t)} \Bigg|        \\
&   \leq~
\Bigg| \frac{\int_{[0,\,u_k t)} (\tilde{N}^*(u_k t)-\tilde{N}^*(u_k t\!-\!y)-\mu^{-1}y) \, \mathrm{d}(\!-h^*(y))}{g(t)h^*(t)} \Bigg|            \\
&    \hphantom{\leq~} +
\Bigg| \frac{\int_{[0,\,u_k t)} \big(N(u_kt)-\tilde{N}^*(u_k t)-(N(u_k t\!-\!y)-\tilde{N}^*(u_k t\!-\!y))\big) \mathrm{d}(\!-h^*(y))}{g(t)h^*(t)} \Bigg|.            \\
\end{align*}
Both terms tend to $0$ in probability.
For the second, this follows from \eqref{eq:N leq N*+O(1)} and \eqref{eq:N geq N*+O(1)} with $t$ replaced by $u_k t$, Markov's inequality
and the fact that $g(t) h^*(t) \to \infty$ as $t \to \infty$.
The latter fact together with Markov's inequality also shows
that for the first term to converge to $0$ in probability it is sufficient to check that
\begin{equation*}
\lim_{t \to \infty} \frac{\int_{[0,\,ut]}
\E |N^*(y)-\mu^{-1}y| \, \mathrm{d}(-h^*(y))}{g(t)h^*(t)}   ~=~ 0.
\end{equation*}
By Proposition \ref{Prop:moment convergence}, $\E |N^*(y)-\mu^{-1}y| = O(g(y))$ as $y \to \infty$.
Consequently, it is enough to show that
\begin{equation*}
\lim_{t \to \infty} \frac{\int_{[0,\,ut]} g(y) \, \mathrm{d}(-h^*(y))}{g(t)h^*(t)}  ~=~ 0.
\end{equation*}
Since the function $g(t)h^*(t)$ is regularly varying, the latter is equivalent to
\begin{equation}    \label{eq:sufficient}
\lim_{t \to \infty} \frac{\int_{[0,\,t]}g(y) \, \mathrm{d}(-h^*(y))}{g(t)h^*(t)}    ~=~ 0.
\end{equation}
Using \eqref{eq:Potter's bound for g} gives
\begin{equation*}
\frac{\int_{[t_0,\,t]}g(y) \, \mathrm{d}(-h^*(y))}{g(t)h^*(t)}
~\leq~
A \frac{\int_{[t_0,\, t]} y^{1/\alpha-\delta} \, \mathrm{d}(-h^*(y))}{t^{1/\alpha-\delta}h^*(t)}
\end{equation*}
for $t\geq t_0$, and, as $t \to \infty$, the last ratio tends to zero
by Theorem 1.6.4 in \cite{Bingham+Goldie+Teugels:1989}.
Further, since $g(t) h^*(t) \to 0$, also
\begin{equation*}
\lim_{t \to \infty} \frac{\int_{[0,\,t_0]} g(y) \, \mathrm{d}(-h^*(y))}{g(t)h^*(t)} ~=~ 0.
\end{equation*}
Thus, \eqref{eq:sufficient} follows.

\noindent {\sc Case $\beta>0$}. For any $\rho \in (0,1)$, one can write
\begin{eqnarray*}
X_t^*(u_k)
& = &
W_t(u_k)\frac{h^*(u_kt)}{h^*(t)}
+ \int_{(0,\,u_k]}(W_t(u_k)-W_t(v)) \, \nu^*_{t,\,k}(\dv)   \\
& = &
W_t(u_k)\frac{h^*(u_kt)}{h^*(t)}+\int_{(0,\,\rho u_k]} \ldots + \int_{(\rho u_k,\,u_k]} \ldots,
\end{eqnarray*}
where
$\nu^*_{t,\,k}$ is the finite measure on $[0,u_k]$ defined by
\begin{equation*}
\nu^*_{t,\,k}(a,b]  ~:=~    \frac{h^*(t(u_k-b))-h^*(t(u_k-a))}{h^*(t)},
\quad 0 \leq a < b \leq u_k.
\end{equation*}
In view of \eqref{eq:FLT for N(t)} and the continuous mapping theorem,
\begin{equation*}
W_t(u_k)-W_t(v) ~\Rightarrow~   W_{\alpha}(u_k)-W_{\alpha}(v) \quad   \text{as } t \to \infty.
\end{equation*}
Further, by the regular variation of $h^*$, the finite measures $\nu^*_{t,\,k}$ converge weakly on $[0, \rho u_k]$
to a finite measure $\nu^*_k$ on $[0,\rho u_k]$ which is defined by
$\nu^*_k(a,b] = (u_k-b)^{-\beta}-(u_k-a)^{-\beta}$.
Clearly, the limiting measure is absolutely continuous with density
$x \mapsto \beta(u_k-x)^{-\beta-1}$, $x\in [0,\rho u_k]$.
Hence, by Lemma \ref{Lem:int X_t dnu_t->int X dnu},
\begin{multline*}
W_t(u_k) \frac{h(u_kt)}{h(t)}
+ \int_{(0,\,\rho u_k]} (W_t(u_k)-W_t(v)) \, \nu^*_{t,\,k}(\dv) \\
~\stackrel{\mathrm{d}}{\to}~
W_{\alpha}(u_k)u_k^{-\beta}+\beta \int_{0}^{\rho u_k}(W_{\alpha}(u_k)-W_{\alpha}(v))(u_k-v)^{-\beta-1} \, \dv.
\end{multline*}
Likewise, again by the continuous mapping theorem,
\begin{multline*}
\sum_{k=1}^n\gamma_k W_t(u_k)\frac{h(u_kt)}{h(t)}
+ \sum_{k=1}^n\gamma_k \int_{(0,\,\rho u_k]}(W_t(u_k)-W_t(v)) \, \nu^*_{t,\,k}(\dv) \\
~\stackrel{\mathrm{d}}{\to}~
\sum_{k=1}^n\gamma_k W_{\alpha}(u_k)u_k^{-\beta}
+ \sum_{k=1}^n\gamma_k \beta \int_{0}^{\rho u_k} (W_{\alpha}(u_k)-W_{\alpha}(v))(u_k-v)^{-\beta-1} \, \dv.
\end{multline*}
According to Theorem 3.2 in \cite{Billingsley:1999},
it remains to check that, as $\rho \uparrow 1$,
\begin{multline*}
\sum_{k=1}^n\gamma_k W_{\alpha}(u_k)u_k^{-\beta}
+ \sum_{k=1}^n\gamma_k\beta \int_0^{\rho u_k} (W_{\alpha}(u_k)-W_{\alpha}(v))(u_k-v)^{-\beta-1} \, \dv    \\
~\stackrel{\mathrm{d}}{\to}~
\sum_{k=1}^n \gamma_k W_{\alpha}(u_k)u_k^{-\beta}
+ \sum_{k=1}^n \gamma_k \beta \int_0^{u_k} (W_{\alpha}(u_k)-W_{\alpha}(v))(u_k-v)^{-\beta-1} \, \dv
\end{multline*}
and that, for any $c>0$,
\begin{equation}    \label{eq:Billingsley 2}
\lim_{\rho\uparrow 1} \limsup_{t\to\infty}
\Prob\bigg\{\bigg|\sum_{k=1}^n\gamma_k \int_{[\rho u_k,\,u_k]} (W_t(u_k)-W_t(v)) \, \nu^*_{t,\,k}(\dv)\bigg|>c\bigg\} ~=~ 0.
\end{equation}
The first relation is equivalent to
\begin{equation}    \label{eq:int W_alpha(y) y^-beta-1 dy}
\sum_{k=1}^n \gamma_k \beta \int_{\rho u_k}^{u_k} (W_{\alpha}(u_k)-W_{\alpha}(v))(u_k-v)^{-\beta-1} \dv
~\stackrel{\Prob}{\to}~ 0   \quad \text{as } \rho \uparrow 1.
\end{equation}
To prove \eqref{eq:int W_alpha(y) y^-beta-1 dy} it suffices to verify that each summand
converges to zero in probability.
But each summand actually tends to zero a.s.\ by the discussion in Subsection \ref{subsec:Properties of limits 1<alpha<=2}.

The sum in \eqref{eq:Billingsley 2} equals
\begin{equation*}
\sum_{k=1}^n\gamma_k \frac{\int_{[0,\,(1-\rho)u_kt]}(N(u_k t)-N(u_kt-y)-\mu^{-1}y)\mathrm{d}(-h^*(y))}{g(t)h^*(t)}.
\end{equation*}
In view of \eqref{eq:N leq N*+O(1)}, \eqref{eq:N geq N*+O(1)},
Proposition \ref{Prop:moment convergence} and Markov's inequality it suffices to check that, for $k=1,\ldots,n$,
\begin{equation*}
\lim_{\rho \uparrow 1} \limsup_{t \to \infty} \frac{\int_{[0,\,(1-\rho) u_k t]} g(y) \mathrm{d}(-h^*(y))}{g(t)h^*(t)} ~=~ 0
\end{equation*}
or just
\begin{equation}    \label{eq:at u_k=1}
\lim_{\rho \uparrow 1} \limsup_{t \to \infty} \frac{\int_{[0,\,(1-\rho)t]}g(y) \mathrm{d}(-h^*(y))}{g(t)h^*(t)} ~=~ 0,
\end{equation}
for $g(t)h^*(t)$ is regularly varying.
Since $g(t) h^*(t) \to \infty$ as $t \to \infty$ by the assumptions of the theorem, we have
\begin{equation*}
\lim_{t \to \infty} \frac{\int_{[0,\,t_0]} g(y) \, \mathrm{d}(-h^*(y))}{g(t)h^*(t)} ~=~ 0
\end{equation*}
and, further,
\begin{eqnarray*}
\frac{\int_{[t_0,\,(1-\rho)t]}g(y) \, \mathrm{d}(-h^*(y))}{g(t)h^*(t)}
& \stackrel{\eqref{eq:Potter's bound for g}}{\leq} &
A \frac{\int_{[t_0,\,(1-\rho)t]}y^{1/\alpha-\delta} \, \mathrm{d}(-h^*(y))}{t^{1/\alpha-\delta}h^*(t)}  \\
& \sim &
\frac{\beta}{1/\alpha-\beta-\delta}(1-\rho)^{1/\alpha-\beta-\delta},
\end{eqnarray*}
where the last relation is justified by Theorem 1.6.4 in \cite{Bingham+Goldie+Teugels:1989},
\eqref{eq:at u_k=1} follows.
To complete the argument, we need to prove Proposition \ref{Prop:moment convergence}.

\subsection{Moment convergence when $\mu < \infty$} \label{subsec:moment convergence}

In this subsection we prove Proposition \ref{Prop:moment
convergence}. Keeping in mind the weak convergence relation
\eqref{eq:FLT for N(t)} with $u=1$, uniform integrability of the
family $|N(t)-t/\mu|/c(t)$, $t \geq 1$ would suffice to conclude
convergence of the first absolute moments. However, it seems such
an argument only works in the case (A1), see \cite[Theorem
3.8.4(i)]{Gut:2009}. This is why we follow a different approach.
In particular, we offer a new proof for the case (A1), for it
requires no extra work in the given framework.

\begin{proof}[Proof of Proposition \ref{Prop:moment convergence}]
Our purpose is to show that
\begin{equation}    \label{eq:moment convergence}
\lim_{t \to \infty} \frac{\E |N(t)-\mu^{-1}t|}{g(t)}    ~=~ \E |W_{\alpha}(1)|.
\end{equation}
where $g$ is defined as in the context of \eqref{eq:FLT for N(t)}.
We start with the representation
\begin{eqnarray*}
\E |S_{N(\mu n)}-S_n|
& = &
\E(S_{N(\mu n)\vee n}-S_{N(\mu n) \wedge n})    \\
& = &
\mu\E\big((N(\mu n)\vee n) - (N(\mu n)\wedge n) \big) ~=~ \mu \E \big|N(\mu n)-n|,
\end{eqnarray*}
where the second equality follows from Wald's identity.
We thus infer
\begin{eqnarray}
\E \big( |S_n-\mu n| - (S_{N(\mu n)} - \mu n) \big)
& \leq &
\mu \E |N(\mu n)-n| \notag  \\
& \leq &
\E \big(|S_n-\mu n| +(S_{N(\mu n)}-\mu n)\big). \label{eq:bounds on E|N(mun)-n|}
\end{eqnarray}
It is known that $c(n)^{-1}(S_n-n\mu) \stackrel{\mathrm{d}}{\to} -W_{\alpha}(1)$
and that (\textit{cf.}~\cite[Lemma 5.2.2]{Ibragimov+Linnik:1971})
\begin{equation*}
\sup_{n \in \N} \E \bigg(\frac{\big|S_n-\mu n\big|}{c(n)}\bigg)^{1+\delta}  ~<~ \infty
\end{equation*}
for some $\delta>0$.
Consequently,
\begin{equation}    \label{eq:E|S_n-mun|/c(n)->E|stable|}
\lim_{n \to \infty} \frac{\E \big|S_n-\mu n\big|}{c(n)} ~=~ \E |W_{\alpha}(1)|.
\end{equation}
If $F$ is non-lattice, then from \cite{Mohan:1976} it is known that
\begin{equation}    \label{eq:S_N(t)-t asymptotics}
\E(S_{N(t)}-t)  ~\sim~  \begin{cases}
                        \mathrm{const}                              &   \text{in the case (A1),}        \\
                        \mathrm{const} \cdot \ell(t)                    &   \text{in the case (A2),}        \\
                        \mathrm{const} \cdot t^{2-\alpha} \ell(t)       &   \text{in the case (A3),}
                        \end{cases}
\end{equation}
as $t \to \infty$.
Similar asymptotics hold in the lattice case.
In fact, when $F$ is lattice with span $d>0$, then,
in the case (A1),
\begin{equation*}
\lim_{n \to \infty} \E(S_{N(nd)}-nd)    ~\to~   \mathrm{const}
\end{equation*}
by Theorem 9 in \cite{Feller:1949}.
Hence
\begin{equation*}
\E (S_{N(t)}-t) ~=~ O(1) \quad \text{as } t \to \infty.
\end{equation*}
In the cases (A2) and (A3), according to Theorem 6 in \cite{Sgibnev:1981},
$\E(S_{N(t)}-t)$ exhibits the same asymptotics as in the non-lattice case.

Recalling that $c(t)$ is regularly varying at $\infty$ with index
$1/\alpha$ (where $\alpha=2$ in the Cases (A1) and (A2)), we conclude that
\begin{equation*}
\lim_{n \to \infty} \frac{\E (S_{N(\mu n)}-\mu n)}{c(n)} ~=~ 0.
\end{equation*}
Applying this and \eqref{eq:E|S_n-mun|/c(n)->E|stable|} to \eqref{eq:bounds on E|N(mun)-n|}
we infer
\begin{equation*}
\lim_{n \to \infty} \mu \frac{\E |N(\mu n)-n|}{c(n)}    ~=~ \E |W_{\alpha}(1)|.
\end{equation*}

Now we have to check that this relation implies \eqref{eq:moment convergence}.
For any $t>0$ there exists $n=n(t) \in \N_0$ such that $t \in (\mu n,\mu(n+1)]$.
Hence, by subadditivity,
\begin{equation*}
\E \big(N(t)-N(\mu n)\big)  ~\leq~  \E \big(N(\mu(n+1))-N(\mu n)\big)   ~\leq~  \E N(\mu).
\end{equation*}
It remains to observe that, as a consequence of the regular variation of $c(t)$,
we have $\lim_{t \to \infty} c(\mu n(t)\mu^{-1})/c(t) =\mu^{-1/\alpha}$,
hence \eqref{eq:moment convergence}.

The formula for $\E |W_{\alpha}(1)|$ in the case (A3) is proved in Lemma \ref{Lem:mean}.

It remains to check that $\E |N^*(t)-\mu^{-1}t|$ exhibits the same asymptotics as $\E|N(t)-\mu^{-1}t|$.
This is a consequence of the chain of equalities
\begin{equation*}
\E |N^*(t)-N(t)| ~=~ \E(N(t)-N^*(t)) ~=~ \mu^{-1} (\E S_{N(t)} - t) ~=~ o(c(t)) \text{ as } t \to \infty
\end{equation*}
where the second equality follows from Wald's equation and the third is a consequence of \eqref{eq:S_N(t)-t asymptotics}.
\end{proof}

\subsection{Proof of Theorem \ref{Thm:non-endogenous & mu=infty}}
For $t>0$, put
\begin{equation*}
X_t(u)  ~:=~    \frac{X(ut)}{g(t)h(t)}  ~=~ \frac{\int_{[0,\,ut]} h(ut-x) \, \mathrm{d}N(x)}{g(t)h(t)},
\quad   u \geq 0.
\end{equation*}
For any $n\in\N$, fix $\gamma_1,\ldots,
\gamma_n\in\R$ and $0<u_1<\ldots<u_n$. We have to show that
\begin{equation*}
\sum_{k=1}^n \gamma_k X_t(u_k)
~\stackrel{\mathrm{d}}{\to}~    \sum_{k=1}^n \gamma_k Y(u_k)    \quad \text{as } t \to \infty,
\end{equation*}
where $Y(u):=\int_{[0,\,u]}(u-y)^{-\beta}\mathrm{d}W_{\alpha}(y)$, $u>0$.

Since the convergence $\lim_{t \to \infty} h(ut)/h(t) =
u^{-\beta}$ is uniform on compact subsets of $(0,\infty)$
\cite[Theorem 1.2.1]{Bingham+Goldie+Teugels:1989}, and $(W_{\alpha}(u))_{u
\geq 0}$ has continuous paths a.s., the relation \eqref{eq:FLT for
N(t) when mu=infty} and Lemma \ref{Lem:int X_t dnu_t->int X dnu}
entail
\begin{equation*}
\int_{[0,\,\rho u_k]}\frac{h(t(u_k-y))}{h(t)} \, \mathrm{d} \frac{N(ty)}{g(t)}
~\stackrel{\mathrm{d}}{\to}~ \int_{[0,\,\rho u_k]}(u_k-y)^{-\beta} \, \mathrm{d}W_{\alpha}(y)
\quad   \text{as } t \to \infty
\end{equation*}
for any $\rho\in (0,1)$ where here and hereafter integration
w.r.t.\ $\mathrm{d} \frac{N(ty)}{g(t)}$ means integration w.r.t.\
$\nu_t(\dy)$ where the measure $\nu_t$ is defined by $\nu_t(A) =
g(t)^{-1} N(tA)$, $A \subset \R_+$ Borel. By the continuous
mapping theorem,
\begin{equation*}
\sum_{k=1}^n\gamma_k \int_{[0,\,\rho u_k]} \frac{h\big(t(u_k-y)\big)}{h(t)} \, \mathrm{d} \frac{N(ty)}{g(t)}
~\stackrel{\mathrm{d}}{\to}~    \sum_{k=1}^n\gamma_k \int_{[0,\,\rho u_k]}(u_k-y)^{-\beta} \mathrm{d}W_{\alpha}(y)
\end{equation*}
as $t \to \infty$.
According to Theorem 3.2 in \cite{Billingsley:1999},
it remains to check that, as $\rho \uparrow 1$,
\begin{equation*}
\sum_{k=1}^n \gamma_k \int_{[0,\,\rho u_k]}(u_k-y)^{-\beta} \, \mathrm{d}W_{\alpha}(y)
~\stackrel{\mathrm{d}}{\to}~ \sum_{k=1}^n \gamma_k \int_{[0,\,u_k]} (u_k-y)^{-\beta} \mathrm{d}W_{\alpha}(y)
\end{equation*}
and that, for any $c>0$,
\begin{equation}    \label{eq:Billingsley's 2nd cond}
\lim_{\rho \uparrow 1} \limsup_{t \to \infty}
\Prob\bigg\{\bigg|\sum_{k=1}^n\gamma_k\int_{[\rho u_k,\,u_k]} \frac{h\big(t(u_k-y)\big)}{h(t)} \mathrm{d} \frac{N(ty)}{g(t)}\bigg|>c\bigg\} ~=~ 0.
\end{equation}
The first relation is equivalent to
\begin{equation}    \label{eq:equivalent to Billingsley's 1st cond}
\sum_{k=1}^n\gamma_k \int_{[\rho u_k,\, u_k]}(u_k-y)^{-\beta} \, \mathrm{d} W_{\alpha}(y)
~\stackrel{\Prob}{\to}~ 0   \quad \text{as } \rho \uparrow 1.
\end{equation}
To prove \eqref{eq:equivalent to Billingsley's 1st cond} it suffices to verify that each summand
converges to zero in probability. Hence \eqref{eq:equivalent to Billingsley's 1st cond} is a direct
consequence of Lemma \ref{Lem:check}.

For \eqref{eq:Billingsley's 2nd cond}, in view of Markov's inequality it suffices to check that
\begin{equation*}
\underset{\rho\uparrow 1}{\lim}\,\underset{t\to\infty}{\lim\sup}\,\frac{\int_{[\rho u_k t,\,u_k t]}h(u_kt-y) \, \mathrm{d}\E N(y)}{h(t)g(t)} ~=~ 0.
\end{equation*}
Recalling that $h(t)g(t)$ is regularly varying the latter holds true by Lemma \ref{Lem:convergence of 1st moment at t=1} below.
This completes the proof of finite-dimensional convergence.

Convergence of moments follows from Lemma \ref{Lem:convergence of moments} below.
The proof is complete.

\subsection{Moment convergence when $\mu=\infty$}

\begin{Lemma}   \label{Lem:mu=infty EX(t)}
Assume that $\mu=\infty$ and let $h:\R_+\to\R_+$ be a measurable
and locally bounded function such that
\begin{equation*}
\lim_{t \to \infty} \frac{h(t)}{\Prob\{\xi>t\}} ~=~ c \in [0,\infty].
\end{equation*}
Then
\begin{equation*}
\lim_{t \to \infty} \E X(t) = c.
\end{equation*}
In particular, $X(t) \stackrel{\Prob}{\to} 0$ as $t \to \infty$ if $c=0$.
\end{Lemma}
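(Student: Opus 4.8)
The plan is to evaluate the mean exactly and compare it against a universal renewal identity. Writing $\overline{F}(s) := \Prob\{\xi > s\}$ (note $\overline{F}(s) > 0$ for all $s$ since $\mu = \infty$ forces $\xi$ to be unbounded), Tonelli's theorem applies because $h \geq 0$ and gives
\begin{equation*}
\E X(t) ~=~ \int_{[0,\,t]} h(t-y) \, U(\dy), \qquad t \geq 0.
\end{equation*}
The engine of the argument is the identity
\begin{equation}   \label{eq:key renewal identity}
\int_{[0,\,t]} \overline{F}(t-y) \, U(\dy) ~=~ 1 \qquad \text{for all } t \geq 0 .
\end{equation}
To establish \eqref{eq:key renewal identity} I would expand $U(\dy) = \sum_{k \geq 0} \Prob\{S_k \in \dy\}$ and use that $\xi_{k+1}$ is independent of $S_k$, so that $\E[\overline{F}(t-S_k) \1_{\{S_k \leq t\}}] = \Prob\{S_k \leq t < S_{k+1}\}$. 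Summing over $k$ and observing that the events $\{S_k \leq t < S_{k+1}\}$, $k \in \N_0$, are disjoint and, since $S_0 = 0$ and $S_k \to \infty$, exhaust the probability space, yields \eqref{eq:key renewal identity}.

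Suppose first that $c \in (0,\infty)$. Fix $\varepsilon > 0$ and choose $A > 0$ so large that $|h(s) - c\,\overline{F}(s)| \leq \varepsilon\,\overline{F}(s)$ for all $s \geq A$, which is possible because $h(s)/\overline{F}(s) \to c$. Splitting $\E X(t)$ at $t-A$, on $[0,t-A]$ one has $t-y \geq A$, so $h$ may be replaced by $c\,\overline{F}$ up to relative error $\varepsilon$; by \eqref{eq:key renewal identity} the resulting main part equals $c\bigl(1 - \int_{(t-A,\,t]} \overline{F}(t-y)\,U(\dy)\bigr)$ and the error is at most $\varepsilon \int_{[0,\,t]}\overline{F}(t-y)\,U(\dy) = \varepsilon$. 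On $(t-A,\,t]$ one has $t-y \in [0,A)$, and local boundedness of $h$ gives $h(t-y) \leq M_A := \sup_{0 \leq s \leq A} h(s) < \infty$. Using $\overline{F} \leq 1$ to bound $\int_{(t-A,\,t]}\overline{F}(t-y)\,U(\dy) \leq U((t-A,\,t])$, these estimates combine to
\begin{equation*}
\bigl| \E X(t) - c \bigr| ~\leq~ \varepsilon + (c+M_A)\, U\bigl((t-A,\,t]\bigr).
\end{equation*}

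The remaining ingredient is that $U((t-A,\,t]) = U(t) - U(t-A) \to 0$ as $t \to \infty$ for each fixed $A$. This is Blackwell's renewal theorem, whose limit $A/\mu$ vanishes because $\mu = \infty$; in the lattice case with span $d$ one instead notes that the at most $\lfloor A/d\rfloor + 1$ atoms of $U$ in the window each carry mass tending to $d/\mu = 0$. Letting $t \to \infty$ and then $\varepsilon \downarrow 0$ gives $\E X(t) \to c$. The extreme cases are handled by one-sided versions of the same estimate: for $c = 0$, choosing $A$ with $h(s) \leq \varepsilon\,\overline{F}(s)$ for $s \geq A$ yields $\E X(t) \leq \varepsilon + M_A\,U((t-A,\,t])$, whence $\limsup_{t \to \infty} \E X(t) \leq \varepsilon$; for $c = \infty$, choosing $A$ with $h(s) \geq K\,\overline{F}(s)$ gives $\E X(t) \geq K\bigl(1 - U((t-A,\,t])\bigr)$, whence $\liminf_{t \to \infty} \E X(t) \geq K$ for every $K$.

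Finally, the in-particular claim follows at once: when $c = 0$ we have $X(t) \geq 0$ and $\E X(t) \to 0$, so $X(t) \to 0$ in $\mathcal{L}^1$, hence in probability by Markov's inequality. I expect the only genuinely delicate point to be the contribution of the boundary layer $(t-A,\,t]$, where $t-y$ is small and $h(t-y)$ is no longer comparable to $c\,\overline{F}(t-y)$; it is precisely here that local boundedness of $h$ together with the vanishing of the renewal increment $U((t-A,\,t])$ must be combined, and the latter is exactly where the hypothesis $\mu = \infty$ enters decisively.
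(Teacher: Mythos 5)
Your proof is correct, and while it rests on the same renewal-theoretic substance as the paper's argument, it executes it along a genuinely different, more hands-on route. The paper's proof is a short soft argument: it reads your key identity probabilistically, namely that $\Prob\{\xi>t-y\}\,U(\dy)$ restricted to $[0,t]$ is precisely the law of the last renewal epoch $S_{N(t)-1}$, so that $\E X(t)=\E f(Z(t))$ with $Z(t):=t-S_{N(t)-1}$ the undershoot and $f:=h/\Prob\{\xi>\cdot\}$; since the renewal theorem under $\mu=\infty$ gives $Z(t)\stackrel{\Prob}{\to}\infty$, one gets $f(Z(t))\stackrel{\Prob}{\to}c$, and the proof is finished by dominated convergence when $c<\infty$ (then $f$ is bounded, being locally bounded with a finite limit) and by Fatou's lemma when $c=\infty$. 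You never introduce the undershoot: instead you prove the unit-mass identity from scratch, run an explicit $\varepsilon$--$A$ splitting, and control the boundary window $(t-A,t]$ via Blackwell's theorem with limit $A/\mu=0$, treating the lattice case separately (both steps are valid; Blackwell's theorem does hold with $1/\mu$ interpreted as $0$). What your route buys is self-containedness and a quantitative bound $|\E X(t)-c|\le\varepsilon+(c+M_A)\,U((t-A,t])$; its price is the threefold case distinction $c=0$, $c\in(0,\infty)$, $c=\infty$, which the paper absorbs into dominated convergence/Fatou, and the loss of the representation $\E X(t)=\E f(Z(t))$, which the paper reuses afterwards in the proofs of Proposition \ref{Prop:exponential limit} and Lemma \ref{Lem:convergence of 1st moment at t=1}.
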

\begin{proof}
Denote by $Z(t):=t-S_{N(t)-1}$ the undershoot at time $t$ and put
\begin{equation*}
f(t) := \frac{h(t)}{\Prob\{\xi>t\}},    \quad   t \geq 0.
\end{equation*}
The proof is based on the representation
\begin{equation*}   
\E X(t) ~=~ \int_{[0,\,t]} h(t-x) \,U(\dx)  ~=~ \E f(Z(t)), \quad   t \geq 0.
\end{equation*}
Under the sole assumption $\mu=\infty$, the renewal theorem gives $Z(t) \to \infty$ in probability.
Hence $f(Z(t)) \to c$ in probability.
If $c<\infty$, the function $f$ is bounded, and $\lim_{t \to \infty} \E f(Z(t))=c$
by the dominated convergence theorem.
If $c=\infty$, we obtain $\lim_{t \to \infty} \E f(Z(t))=c=\infty$ by Fatou's lemma.

The last assertion of the lemma follows from Markov's inequality.
\end{proof}

Now we are ready to prove Proposition \ref{Prop:exponential limit}:

\begin{proof}[Proof of Proposition \ref{Prop:exponential limit}]
Since the exponential law is uniquely determined by its moments,
the second assertion of the proposition is an immediate consequence of the first.

To prove convergence of moments, we use induction on $k$, the order of the moments.
The case $k=0$ is trivial, the case $k=1$ follows from Lemma \ref{Lem:mu=infty EX(t)}.
Assuming that
\begin{equation*}
\lim_{t \to \infty} \E X^j(t) = c^j j!  \quad \text{for } j=0,\ldots, k-1,
\end{equation*}
we will prove that
\begin{equation}    \label{eq:EX(t)^k->c^kk!}
\lim_{t \to \infty} \E X^k(t)=c^k k!.
\end{equation}
To this end, we use the representation
\begin{equation}    \label{eq:X(t)=h(t)+X_*(t-xi)}
X(t)    ~=~ h(t)+X_*(t-\xi_1) \1_{\{\xi_1\leq t\}},
\end{equation}
where
\begin{equation*}
X_*(t)  ~:=~    \sum_{j\geq 1}h(t-S_j+S_1) \1_{\{S_j-S_1\leq t\}}   ~\stackrel{\mathrm{d}}{=}~ X(t).
\end{equation*}
The latter implies
\begin{equation*}
X(t)^k  ~=~ X_*(t-\xi_1)^k \1_{\{\xi_1\leq t\}} + \sum_{j=0}^{k-1}{k \choose j} h(t)^{k-j} X_*(t-\xi_1)^j \1_{\{\xi_1\leq t\}}.
\end{equation*}
We have $\E X(t)^k = \E f_k(Z(t))$ where
\begin{equation*}
f_k(t) := \frac{\sum_{j=0}^{k-1}{k \choose j} h(t)^{k-j} \E X_*(t-\xi_1)^j \1_{\{\xi_1\leq t\}}}{\Prob\{\xi>t\}}.
\end{equation*}
Arguing as in Lemma \ref{Lem:mu=infty EX(t)}, it suffices to show that
\begin{equation*}
\lim_{t \to \infty} f_k(t) = c^k k!,
\end{equation*}
as $f_k(t)$ is then bounded and relation \eqref{eq:EX(t)^k->c^kk!} follows by the dominated convergence theorem.

Since $\lim_{t \to \infty} h(t)=0$ by the assumption of the proposition,
and the expectations $\E X_*(t-\xi_1)^j \1_{\{\xi_1\leq t\}}$, $j =0 , \ldots, k-1$
are bounded by the induction hypothesis, we conclude
\begin{equation*}
\frac{\sum_{j=0}^{k-2}{k \choose j}h^{k-j}(t)\E X_*^j(t-\xi_1) \1_{\{\xi_1\leq t\}}}{\Prob\{\xi>t\}}    ~=~ 0.
\end{equation*}
Hence
\begin{eqnarray*}
\lim_{t \to \infty} f_k(t)
& = &
k \lim_{t \to \infty} \frac{h(t)\E X_*(t-\xi_1)^{k-1} \1_{\{\xi_1\leq t\}}}{\Prob\{\xi>t\}} \\
& = &
ck \lim_{t \to \infty} \E \big(X(t)-h(t)\big)^{k-1} \\
& = &
ck \lim_{t \to \infty}\bigg(\E X(t)^{k-1}+\sum_{j=0}^{k-2}{k-1\choose j} h(t)^{k-1-j} \E X(t)^j \bigg)  \\
& = &
ck \lim_{t \to \infty} \E X(t)^{k-1}
~=~ c^k k!,
\end{eqnarray*}
where the penultimate equality follows from the induction hypothesis and $\lim_{t \to \infty} h(t)=0$.
\end{proof}

\begin{Lemma}   \label{Lem:convergence of 1st moment at t=1}
Assume that the assumptions of Theorem \ref{Thm:non-endogenous & mu=infty} hold.
Then
\begin{equation}    \label{eq:double limit}
\lim_{\rho\uparrow 1} \limsup_{t\to\infty}
\frac{\Prob\{\xi>t\}}{h(t)} \int_{[\rho t,\,t]} h(t-y) \, U(\dy) ~=~ 0.
\end{equation}
In particular,
\begin{equation*}
\lim_{t \to \infty} \frac{\Prob\{\xi>t\}}{h(t)} \E X(t)
~=~ \E \int_{[0,\,1]}(1-y)^{-\beta}\mathrm{d}W_{\alpha}(y)
~=~ \frac{\Gamma(1-\beta)}{\Gamma(1-\alpha)\Gamma(1+\alpha-\beta)}.
\end{equation*}
\end{Lemma}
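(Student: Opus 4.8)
The plan is to derive both displays from the representation $\E X(t) = \int_{[0,t]} h(t-y)\,U(\dy)$ together with the infinite-mean renewal asymptotics
\begin{equation*}
U(t) ~\sim~ c_\alpha\,g(t) ~=~ \frac{c_\alpha}{\Prob\{\xi>t\}}, \qquad c_\alpha := \frac{1}{\Gamma(1-\alpha)\Gamma(1+\alpha)},
\end{equation*}
which follows from the Karamata Tauberian theorem \cite{Bingham+Goldie+Teugels:1989} applied to $\widehat U(s) = (1-\E e^{-s\xi})^{-1}$, using $1-\E e^{-s\xi} \sim \Gamma(1-\alpha)s^\alpha \ell(1/s)$ as $s\downarrow 0$. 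Since $g$ is regularly varying of index $\alpha$, this gives $g(ts)/g(t) \to s^\alpha$ and $U(t\,\ds)/g(t) \to c_\alpha\alpha\, s^{\alpha-1}\,\ds$ in the appropriate weak sense on $(0,\infty)$. Granting \eqref{eq:double limit}, the \emph{in particular} assertion follows by splitting $\int_{[0,t]} = \int_{[0,\rho t]} + \int_{[\rho t,t]}$: in the first integral the substitution $y = ts$ and the uniform convergence $h(t(1-s))/h(t) \to (1-s)^{-\beta}$ on the compact set $s\in[0,\rho]$ (where $1-s$ stays bounded away from $0$), combined with the measure convergence above (via Lemma \ref{Lem:int X_t dnu_t->int X dnu}), yield $\frac{1}{g(t)h(t)}\int_{[0,\rho t]}h(t-y)\,U(\dy) \to c_\alpha\alpha\int_0^\rho (1-s)^{-\beta} s^{\alpha-1}\,\ds$, while \eqref{eq:double limit} makes the second term negligible as $\rho\uparrow1$. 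Letting $\rho\uparrow 1$ and evaluating the Beta integral,
\begin{equation*}
c_\alpha\alpha\int_0^1 (1-s)^{-\beta}s^{\alpha-1}\,\ds ~=~ c_\alpha\alpha\,\frac{\Gamma(\alpha)\Gamma(1-\beta)}{\Gamma(1+\alpha-\beta)} ~=~ \frac{\Gamma(1-\beta)}{\Gamma(1-\alpha)\Gamma(1+\alpha-\beta)},
\end{equation*}
which is exactly $\E Y_{\alpha,\beta}(1)$ by \eqref{eq:moments}; a routine double-limit ($\varepsilon$-)argument then upgrades this to the claimed limit of $\frac{\Prob\{\xi>t\}}{h(t)}\E X(t)$.

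The core is thus \eqref{eq:double limit}, which I first treat for $\beta<\alpha$. Substituting $z=t-y$ and setting $\Phi_t(z):=U(t)-U((t-z)-)$ (the renewal mass in $(t-z,t]$), the integral becomes $\int_{[0,(1-\rho)t]} h(z)\,\Phi_t(\mathrm{d}z)$, and the distributional subadditivity of $N$ used in the excerpt gives the crucial pointwise bound $\Phi_t(z)\le U(z)$. I split the range at a fixed large $M$. On $[0,M]$ local boundedness of $h$ and $U(t)-U(t-M)\le U(M)$ bound the contribution by the constant $\sup_{[0,M]}|h|\,U(M)$, killed upon division by $g(t)h(t)$ (which tends to $\infty$ when $\beta<\alpha$). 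On $[M,(1-\rho)t]$, for small $\delta>0$ Potter's bound (Theorem 1.5.6 in \cite{Bingham+Goldie+Teugels:1989}) gives $h(z)\le A\,h(t)(z/t)^{-\beta-\delta}$ for $M\le z\le t$; since $z\mapsto z^{-\beta-\delta}$ is decreasing, integration by parts together with $\Phi_t\le U$ bounds $\int_{(M,(1-\rho)t]} z^{-\beta-\delta}\Phi_t(\mathrm{d}z)$ by $\int_{(M,(1-\rho)t]} z^{-\beta-\delta}\,U(\mathrm{d}z)$ plus a constant boundary term. Choosing $\delta$ with $\alpha-\beta-\delta>0$ (possible precisely because $\beta<\alpha$), Karamata's theorem (Theorem 1.6.4 in \cite{Bingham+Goldie+Teugels:1989}) makes this integral asymptotic to a constant times $((1-\rho)t)^{-\beta-\delta}U((1-\rho)t)\sim c_\alpha(1-\rho)^{\alpha-\beta-\delta}t^{-\beta-\delta}g(t)$; multiplying back by $h(t)t^{\beta+\delta}$ and dividing by $g(t)h(t)$, the powers of $t$ cancel and one is left with a bound of order $(1-\rho)^{\alpha-\beta-\delta}\to0$ as $\rho\uparrow1$, proving \eqref{eq:double limit} in this case.

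The boundary case $\beta=\alpha$ is the most delicate, since then $g(t)h(t)$ no longer tends to $\infty$ and the Potter exponent $\alpha-\beta-\delta$ is negative, so the crude estimate fails and one must recover a sharp constant. Here I will exploit the extra hypotheses of the theorem together with the exact renewal identity
\begin{equation*}
\int_{[0,t]} \Prob\{\xi>t-y\}\,U(\dy) ~=~ \sum_{k\ge0}\Prob\{S_k\le t<S_{k+1}\} ~=~ 1.
\end{equation*}
If $c<\infty$ one has $h(z)\le(c+\varepsilon)\Prob\{\xi>z\}$ for large $z$; if $c=\infty$ one uses the increasing majorant $u$ with $\ell_h\sim\ell\,u$ and $u(t-y)\le u(t)$ on the range $t-y\le(1-\rho)t$ to get $h(t-y)\le(1+\varepsilon)u(t)\Prob\{\xi>t-y\}$. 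In either case the $[\rho t,t]$ integral is dominated by (a constant or $u(t)$ times) $\int_{[\rho t,t]}\Prob\{\xi>t-y\}\,U(\dy)=1-\int_{[0,\rho t]}\Prob\{\xi>t-y\}\,U(\dy)$, and the rescaling computation of the first paragraph shows $\int_{[0,\rho t]}\Prob\{\xi>t-y\}\,U(\dy)\to c_\alpha\alpha\int_0^\rho(1-s)^{-\alpha}s^{\alpha-1}\,\ds$, which increases to $c_\alpha\alpha\,B(\alpha,1-\alpha)=1$ as $\rho\uparrow1$. Hence the dominating integral tends to $0$ after $\rho\uparrow1$, and dividing by $g(t)h(t)$ (which tends to $c$, respectively is $\sim u(t)$) yields \eqref{eq:double limit}.

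I expect the main obstacle to be exactly the interplay near the upper endpoint $y=t$: there the rescaled renewal measure $U(t\,\cdot)/g(t)$ converges only weakly on $(0,\infty)$ and the regular variation of $h$ degenerates, because the argument $t-y$ becomes small, where $h$ is merely locally bounded. The subadditivity bound $\Phi_t\le U$ is what controls the renewal mass uniformly in $t$, and the exact identity is what restores, in the critical case $\beta=\alpha$, the precise constant that the Potter estimate necessarily loses.
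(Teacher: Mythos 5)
Your proposal is correct, and for the core display \eqref{eq:double limit} it takes a genuinely different route from the paper's. Your treatment of the \emph{in particular} assertion is essentially the paper's own argument (the Tauberian asymptotics $U(t)\sim g(t)/(\Gamma(1-\alpha)\Gamma(1+\alpha))$, weak convergence of the rescaled renewal measures, the uniform convergence theorem on $[0,\rho]$, and the Beta integral). For \eqref{eq:double limit}, however, the paper argues uniformly in all cases through the undershoot $Z(t)=t-S_{N(t)-1}$: it rewrites the quantity as $\E\big[f(Z(t))\1_{\{Z(t)\le(1-\rho)t\}}\big]/f(t)$ with $f=h/\Prob\{\xi>\cdot\}$, dominates $f$ on the relevant range by a constant multiple of $f(t)$ --- via an increasing $u\sim f$ (Theorem 1.5.3 in \cite{Bingham+Goldie+Teugels:1989}) when $\beta<\alpha$, via the assumed $u$ when $\beta=\alpha$, $c=\infty$, and via boundedness of $f$ when $c<\infty$ --- and then quotes Dynkin's arcsine law (Theorem 8.6.3 in \cite{Bingham+Goldie+Teugels:1989}) for $\Prob\{Z(t)\le(1-\rho)t\}$. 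You instead split into two regimes: for $\beta<\alpha$ you estimate the renewal mass directly (subadditivity $U(t)-U((t-z)-)\le U(z)$, Potter's bound, Karamata's theorem), which works precisely because $g(t)h(t)\to\infty$ and needs no arcsine-type input; for $\beta=\alpha$ your dominating quantity $\int_{[\rho t,\,t]}\Prob\{\xi>t-y\}\,U(\dy)$ is exactly $\Prob\{Z(t)\le(1-\rho)t\}$, and your renewal identity plus the rescaling computation amounts to re-deriving Dynkin's limit from the Tauberian asymptotics rather than citing it. What the paper's approach buys is uniformity and brevity; what yours buys is self-containedness (no appeal to the arcsine law) at the cost of a case split. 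Two small points to tighten: (i) in the critical case the bounds $h(z)\le(c+\varepsilon)\Prob\{\xi>z\}$, resp.\ $h(z)\le(1+\varepsilon)u(t)\Prob\{\xi>z\}$, are asserted only for large $z$, but the integral over $[\rho t,t]$ involves $z=t-y$ arbitrarily close to $0$; extend the domination to all $z$ using local boundedness of $h$ together with $\Prob\{\xi>z\}\ge\Prob\{\xi>z_0\}>0$ for $z\le z_0$, which merely inflates the constant; (ii) since $h$ is only assumed real-valued (hence possibly negative near the origin), the dominations and the $[0,M]$-estimate should be phrased for $|h|$. Both fixes are routine.
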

\begin{proof}
We use the notation of Lemma \ref{Lem:mu=infty EX(t)}, that is,
$Z(t) := t-S_{N(t)-1}$ denotes the undershoot at time $t$ and
$f(t) := h(t)/\Prob\{\xi>t\}$, $t \geq 0$.
Then, the expression under the double limit in \eqref{eq:double limit}
equals $\E f(Z(t)) \1_{\{Z(t) \leq (1-\rho)t\}}/f(t)$.

{\sc Case 1:}
We first consider the case when $\alpha>\beta$ or $\alpha=\beta$ and $c=\infty$.

\noindent
If $\alpha>\beta$ then, by Theorem 1.5.3 in
\cite{Bingham+Goldie+Teugels:1989} there exists an {\it increasing} function $u$ such
that $u(t) \sim f(t)$ as $t \to \infty$.
If $\alpha=\beta$ and $c=\infty$ such a function $u$ exists by assumption.
Now fix $\varepsilon > 0$ and let $t_0 > 0$ be such that $(1-\varepsilon) u(t) \leq f(t) \leq (1+\varepsilon)u(t)$ for all $t \geq t_0$.
Then
\begin{equation*}
\frac{\E f(Z(t)) \1_{\{Z(t)\leq t_0\}}}{f(t)}
~\leq~ \frac{\sup_{0 \leq y \leq \in t_0} f(y)}{f(t)}
~\to~   0   \quad   \text{as } t \to \infty
\end{equation*}
by the local boundedness of $f$. Further, for $t$ such that $(1-\rho)t>t_0$,
\begin{eqnarray*}
\frac{\E f(Z(t))\1_{\{t_0<Z(t)\leq (1-\rho)t\}}}{f(t)}
& \leq &
\frac{1+\varepsilon}{1-\varepsilon} \frac{\E u(Z(t))\1_{\{Z(t) \leq (1-\rho)t\}}}{u(t)}     \\
& \leq &
\frac{1+\varepsilon}{1-\varepsilon}\Prob{\{Z(t)\leq (1-\rho)t\}}.
\end{eqnarray*}
By a well-known result due to Dynkin (see, for instance, Theorem 8.6.3 in \cite{Bingham+Goldie+Teugels:1989})
\begin{equation*}
\lim_{t \to \infty} \Prob{\{Z(t)\leq (1-\rho)t\}}   ~=~ \frac{1}{\Gamma(\alpha)\Gamma(1-\alpha)} \int_{0}^{1-\rho} y^{-\alpha}(1-y)^{\alpha-1} \dy
\end{equation*}
When $\rho \uparrow 1$ the last integral goes to zero, which
proves \eqref{eq:double limit}.

{\sc Case 2:}
Now consider the case when $\alpha=\beta$ and $c<\infty$.
Then $f$ is bounded. Hence $\E f(Z(t)) \1_{\{Z(t)\leq (1-\rho)t\}} \leq \mathrm{const} \cdot \Prob\{Z(t)\leq (1-\rho)t\}$, $t \geq 0$.
The rest of the proof is the same as in the previous case.

Turning to the second assertion of the lemma, we observe that
\begin{align*}
\frac{\Prob\{\xi>t\}}{h(t)} \int_{[0,\rho t]} h(t-y) \, U(\dy)
~=~ \Prob\{\xi > t\} U(t) \int_{[0,\rho]} \!\! \frac{h(t(1-y))}{h(t)} \, U_t(\dy)
\end{align*}
where $U_t([0,x]) = U(tx)/U(t)$, $0 \leq x \leq 1$. Formula
(8.6.4) on p.~361 in \cite{Bingham+Goldie+Teugels:1989} says that
$\lim_{t \to \infty} \Prob\{\xi>t\} U(t) =
\Gamma(1\!-\!\alpha)^{-1} \Gamma(1\!+\!\alpha)^{-1}$. Hence, the
measures $U_t(\dx)$ converge weakly to $\alpha x^{\alpha-1} \dx$
as $t \to \infty$. This in combination with the uniform
convergence theorem \cite[Theorem
1.2.1]{Bingham+Goldie+Teugels:1989} yields
\begin{align*}
\lim_{t \to \infty} & \Prob\{\xi > t\} U(t) \int_{[0,\rho]} \!\! \frac{h(t(1-y))}{h(t)} \, U_t(\dy) \\
& =~
\frac{\alpha}{\Gamma(1\!-\!\alpha)\Gamma(1\!+\!\alpha)} \int_0^{\rho} (1-y)^{-\beta} y^{\alpha-1} \dy   \\
&\underset{\rho \to 1}{\to}~
\frac{\Gamma(1-\beta)}{\Gamma(1+\alpha-\beta) \Gamma(1\!-\!\alpha)}.
\end{align*}
An appeal to \eqref{eq:double limit} proves the second assertion
of the lemma.
\end{proof}

\begin{Lemma}   \label{Lem:convergence of moments}
Under the assumptions of Theorem \ref{Thm:non-endogenous & mu=infty}, \eqref{eq:convergence of moments} holds.
\end{Lemma}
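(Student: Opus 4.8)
The plan is to prove \eqref{eq:convergence of moments} by induction on $k$, showing that
\[
\Big(\frac{\Prob\{\xi>t\}}{h(t)}\Big)^k \E X(ut)^k ~\to~ M_k(u) ~:=~ u^{k(\alpha-\beta)}\frac{k!}{\Gamma(1-\alpha)^k}\prod_{j=1}^k \frac{\Gamma(1-\beta+(j-1)(\alpha-\beta))}{\Gamma(j(\alpha-\beta)+1)}
\]
as $t\to\infty$, which by \eqref{eq:moments} equals $\E Y_{\alpha,\beta}(u)^k$. The case $k=0$ is trivial and $k=1$ is the content of Lemma \ref{Lem:convergence of 1st moment at t=1} (the general $u$ following from that lemma together with the regular variation of $h$ and $\Prob\{\xi>\cdot\}$). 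For the inductive step I would first turn \eqref{eq:X(t)=h(t)+X_*(t-xi)} into a renewal equation, as in the proof of Proposition \ref{Prop:exponential limit}. Writing $m_k(t):=\E X(t)^k$, expanding $(h(t)+X_*(t-\xi_1)\1_{\{\xi_1\le t\}})^k$ and taking expectations (using that $X_*\stackrel{\mathrm{d}}{=}X$ is independent of $\xi_1$) gives $m_k=m_k*F+g_k$ with
\[
g_k(t) ~=~ h(t)^k\Prob\{\xi>t\} + \sum_{j=0}^{k-1}\binom{k}{j}h(t)^{k-j}(m_j*F)(t),
\]
where $*$ denotes convolution and $F$ is the law of $\xi$. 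Consequently $m_k=g_k*U$, i.e.\ $m_k(ut)=\int_{[0,ut]}g_k(ut-x)\,U(\dx)$.

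Next I would identify the asymptotics of $g_k$. Since each $m_j$ is, by the induction hypothesis, asymptotically equivalent to the regularly varying function $M_j(1)(h(\cdot)/\Prob\{\xi>\cdot\})^j$ of index $j(\alpha-\beta)\ge 0$, a Potter-bound and dominated-convergence argument yields $(m_j*F)(s)\sim m_j(s)$ as $s\to\infty$. Hence the $j=k-1$ summand dominates and
\[
g_k(s) ~\sim~ k\,M_{k-1}(1)\,h(s)^k\,\Prob\{\xi>s\}^{-(k-1)} \quad\text{as } s\to\infty,
\]
a regularly varying function of index $(k-1)\alpha-k\beta>-1$. I would then split $m_k(ut)=\int_{[0,\rho ut]}+\int_{(\rho ut,\,ut]}$ and treat the two pieces exactly as in the proof of Lemma \ref{Lem:convergence of 1st moment at t=1}. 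For the main piece, substituting $x=ut\,z$ and using the weak convergence $U_{ut}(\mathrm{d}z):=U(ut\,z)/U(ut)\Rightarrow \alpha z^{\alpha-1}\,\mathrm{d}z$ on $[0,\rho]$, the uniform convergence theorem for regularly varying functions, and $\lim_{t\to\infty}\Prob\{\xi>t\}U(t)=\Gamma(1-\alpha)^{-1}\Gamma(1+\alpha)^{-1}$, I obtain
\[
\Big(\frac{\Prob\{\xi>t\}}{h(t)}\Big)^k\int_{[0,\rho ut]}g_k(ut-x)\,U(\dx) ~\to~ \frac{k\,M_{k-1}(1)}{\Gamma(1-\alpha)\Gamma(1+\alpha)}u^{k(\alpha-\beta)}\,\alpha\!\int_0^\rho (1-z)^{(k-1)\alpha-k\beta}z^{\alpha-1}\,\mathrm{d}z.
\]
Letting $\rho\uparrow 1$, the Beta integral evaluates to $\Gamma(1+\alpha)\Gamma(1-\beta+(k-1)(\alpha-\beta))/\Gamma(k(\alpha-\beta)+1)$, so the limit equals $\frac{k}{\Gamma(1-\alpha)}u^{k(\alpha-\beta)}\frac{\Gamma(1-\beta+(k-1)(\alpha-\beta))}{\Gamma(k(\alpha-\beta)+1)}M_{k-1}(1)$, which is precisely $M_k(u)$ by the recursive structure of the product.

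For the remaining (tail) piece I would use the undershoot $Z(t):=t-S_{N(t)-1}$ and the identity $\int_{(\rho ut,\,ut]}g_k(ut-x)\,U(\dx)=\E\big[f_k(Z(ut))\1_{\{Z(ut)\le(1-\rho)ut\}}\big]$ with $f_k:=g_k/\Prob\{\xi>\cdot\}$, a regularly varying function of index $k(\alpha-\beta)$ satisfying $f_k\sim kM_{k-1}(1)(h/\Prob\{\xi>\cdot\})^k$. Bounding $f_k$ on the event by an increasing majorant (for $\alpha>\beta$, or for $\alpha=\beta$ and $c=\infty$) respectively by its supremum (for $\alpha=\beta$ and $c<\infty$) exactly as in the two cases of Lemma \ref{Lem:convergence of 1st moment at t=1}, and invoking Dynkin's theorem $\lim_{t\to\infty}\Prob\{Z(t)\le(1-\rho)t\}=\frac{1}{\Gamma(\alpha)\Gamma(1-\alpha)}\int_0^{1-\rho}y^{-\alpha}(1-y)^{\alpha-1}\,\dy$, one gets
\[
\lim_{\rho\uparrow 1}\limsup_{t\to\infty}\Big(\frac{\Prob\{\xi>t\}}{h(t)}\Big)^k\int_{(\rho ut,\,ut]}g_k(ut-x)\,U(\dx) ~=~ 0.
\]
Combining the two pieces completes the induction and thereby the first equality in \eqref{eq:convergence of moments}; the second equality is the closed form $M_k(u)=\E Y_{\alpha,\beta}(u)^k$ recorded in \eqref{eq:moments}. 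The main obstacle I anticipate is not the bookkeeping of the Gamma factors but the analytic control of the renewal integral: justifying $(m_j*F)\sim m_j$ and the uniform replacement of $g_k$ by its regularly varying equivalent under the integral against $U$, together with the delicate boundary case $\alpha=\beta$, where $f_k$ is only slowly varying and one must reproduce the increasing-majorant device of Lemma \ref{Lem:convergence of 1st moment at t=1}.
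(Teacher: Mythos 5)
Your proposal is correct and follows essentially the same route as the paper's proof: induction on the moment order, the renewal representation $\E X(t)^k=\int_{[0,t]}r_k(t-y)\,U(\dy)$ whose driving term is dominated by its $j=k-1$ summand (regularly varying of index $(k-1)\alpha-k\beta$), and then the weak-convergence-of-$U_t$/Dynkin-undershoot argument of Lemma \ref{Lem:convergence of 1st moment at t=1} applied to that driving term. The remaining differences are cosmetic: the paper works only at $u=1$ (extending by regular variation), writes the driving term via $\E(X(t)-h(t))^j$ instead of $(m_j*F)(t)$ — thereby bypassing your auxiliary step $(m_j*F)\sim m_j$ — and settles the boundary case $\alpha=\beta$, $c<\infty$ by citing Proposition \ref{Prop:exponential limit} rather than running it through the induction as you do.
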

\begin{proof}
{\sc Case 1:} either $\alpha>\beta$ or $\alpha=\beta$ and
$c=\infty$.

\noindent
Set $g(t) = 1/\Prob\{\xi>t\}$ and observe that $\lim_{t \to \infty} g(t)h(t) = \infty$ in the present situation.
We prove the result only for $u=1$.
Define
\begin{equation*}
d_k ~:=~ \frac{k!}{\Gamma(1-\alpha)^k} \prod_{j=1}^k \frac{\Gamma(1-\beta+(j-1)(\alpha-\beta))}{\Gamma(j(\alpha-\beta)+1)},
\quad k \in \N.
\end{equation*}
We then have to show that
\begin{equation}    \label{eq:convergence of jth moment at t=1}
\lim_{t \to \infty} \frac{\E X(t)^j}{g(t)^j h(t)^j} ~=~ d_j
\end{equation}
for all $j \in \N$. We will use induction on $j$.
The case $j=1$ follows from Lemma \ref{Lem:convergence of 1st moment at t=1}.
Assuming that \eqref{eq:convergence of jth moment at t=1} holds for $j=1,\ldots,k-1$,
we will prove \eqref{eq:convergence of jth moment at t=1} for $j=k$.

From the decomposition \eqref{eq:X(t)=h(t)+X_*(t-xi)},
one can derive the following representation for $\E X(t)^k$:
\begin{equation}    \label{eq:EX(t)^k=int r_k dU}
\E X(t)^k   ~=~ \int_{[0,\,t]} r_k(t-y) \, U(\dy)
\end{equation}
where
\begin{equation*}
r_k(t) ~=~ \sum_{j=0}^{k-1}{k \choose j} h(t)^{k-j} \E X_*(t-\xi_1)^j \1_{\{\xi_1\leq t\}}
~=~ \sum_{j=0}^{k-1} v_j h(t)^{k-j} \E X(t)^j
\end{equation*}
for some real constants $v_j$, $0 \leq j \leq k-1$ with $v_{k-1} = k$.

If we can prove that
\begin{equation}    \label{eq:r_k/g^(k-1)h^k}
\lim_{t \to \infty} \frac{r_k(t)}{g(t)^{k-1} h(t)^k} = kd_{k-1},
\end{equation}
which, among other things, means that $r_k(t)$ is regularly
varying at $\infty$ with index $(k-1) \alpha - k \beta$,
then \eqref{eq:EX(t)^k=int r_k dU} in combination with the argument given in the proof of Lemma \ref{Lem:convergence of 1st moment at t=1} shows that
\begin{eqnarray*}
\lim_{t \to \infty} \frac{\E X(t)^k}{g(t)^k h(t)^k}
& = &
k d_{k-1}\lim_{t \to \infty} \frac{U(t)}{g(t)} \frac{\int_{[0,\,t]}r_k(t-y) \, U(\dy)}{r_k(t) U(t)} \\
& = &
\frac{\alpha k d_{k-1}}{\Gamma(1-\alpha)\Gamma(1+\alpha)} \int_0^1 (1-y)^{(k-1)\alpha-k\beta}y^{\alpha-1}\dy    \\
& = &
\frac{\Gamma(1-\beta+(k-1)(\alpha-\beta))}{\Gamma(1-\alpha)\Gamma(k(\alpha-\beta)+1)}kd_{k-1}=d_k.
\end{eqnarray*}
We now verify \eqref{eq:r_k/g^(k-1)h^k}. By the induction
hypothesis, for $j = 0, \ldots, k-1$, $\E X(t)^j$ is regularly
varying with index $j(\alpha-\beta)$. Hence, for such $j$'s,
$h(t)^{k-j} \E X(t)^j$ are regularly varying with indices
$j\alpha-k\beta$. Since $g(t)^{k-1} h(t)^k$ is regularly varying
with index $(k-1)\alpha-k\beta$, we conclude that
\begin{equation*}
\lim_{t \to \infty} \frac{\sum_{j=0}^{k-2} v_j h(t)^{k-j}\E X(t)^j}{g^{k-1}(t)h^k(t)}   ~=~ 0.
\end{equation*}
Hence
\begin{equation*}
\lim_{t \to \infty} \frac{\E X(t)^k}{g(t)^{k-1}h(t)^k}  ~=~ \lim_{t \to \infty} \frac{k \E X(t)^{k-1}}{g(t)^{k-1} h(t)^{k-1}}   ~=~ kd_{k-1},
\end{equation*}
which proves \eqref{eq:r_k/g^(k-1)h^k}.

{\sc Case 2:}
$\alpha=\beta$ and $c<\infty$.
\eqref{eq:convergence of moments} has been proved in Proposition \ref{Prop:exponential limit}
under weaker assumptions.
\end{proof}

\footnotesize
\noindent   {\bf Acknowledgements}  \quad
The research of A.\,M.~was financially supported by a Free Competition Grant of the Netherlands Organisation for Scientific Research (NWO).
The research of M.\,M.~was supported by DFG SFB 878 ``Geometry, Groups and Actions''.
A part of this work was carried out during visits of A.\,I.~and A.\,M.~to M\"{u}nster and a visit of M.\,M.~to Kiev.
Grateful acknowledgment is made for financial support and hospitality.
\normalsize

\begin{appendix}

\section{Appendix: Auxiliary results}   \label{sec:appendix}

\begin{Lemma}   \label{Lem:mean}
Let $W$ be a random variable with characteristic function given by \eqref{eq:stable ch f}.
Then, for $r < \alpha$,
\begin{equation*}
\E |W|^r ~=~ \frac{2\Gamma(r+1)}{\pi r} \sin{(r\pi/2)} \Gamma(1-r/\alpha)|\Gamma(1-\alpha)|^{r/\alpha} \cos{(\pi r/2-\pi r/\alpha)}.
\end{equation*}
In particular,
\begin{equation*}
\E |W|  ~=~ 2\pi^{-1}\Gamma(1-1/\alpha)|\Gamma(1-\alpha)|^{1/\alpha} \sin{(\pi/\alpha)}.
\end{equation*}
\end{Lemma}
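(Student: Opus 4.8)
The plan is to reduce the computation to a single complex integral and read off the answer by taking a real part. The starting point is the classical Fourier representation of the power function: for $0<r<2$,
\[
|x|^r ~=~ c_r \int_0^\infty \frac{1-\cos(xt)}{t^{1+r}}\dt, \qquad x\in\R,
\]
where $c_r=\frac{2\Gamma(1+r)\sin(\pi r/2)}{\pi}$; the value of $c_r$ comes from the standard evaluation $\int_0^\infty(1-\cos s)s^{-1-r}\ds=\frac{\pi}{2\Gamma(1+r)\sin(\pi r/2)}$ together with the reflection formula $\Gamma(1-r)\Gamma(r)=\pi/\sin(\pi r)$. Since $W$ is $\alpha$-stable and $0<r<\alpha$ (the range of interest, with $r=1$ admissible because $\alpha>1$), one has $\E|W|^r<\infty$, and Fubini's theorem gives
\[
\E|W|^r ~=~ c_r\int_0^\infty \frac{1-\mathrm{Re}\,\varphi(t)}{t^{1+r}}\dt,
\]
where $\varphi$ denotes the characteristic function \eqref{eq:stable ch f}.

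The second step is to rewrite $\varphi$ in complex-exponential form. For $t>0$ one has $\varphi(t)=\exp\{-z\,t^\alpha\}$ with
\[
z ~:=~ \Gamma(1-\alpha)\,e^{\i\pi\alpha/2} ~=~ \Gamma(1-\alpha)\big(\cos(\pi\alpha/2)+\i\sin(\pi\alpha/2)\big).
\]
I would first check that $\mathrm{Re}\,z=\Gamma(1-\alpha)\cos(\pi\alpha/2)>0$ (both factors are negative for $1<\alpha<2$), which both guarantees integrability and allows one to write $\mathrm{Re}\,\varphi(t)=\mathrm{Re}\,e^{-zt^\alpha}$. Thus the problem is reduced to evaluating
\[
\E|W|^r ~=~ c_r\,\mathrm{Re}\,I(z), \qquad I(z):=\int_0^\infty \frac{1-e^{-zt^\alpha}}{t^{1+r}}\dt .
\]

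To evaluate $I(z)$ I would substitute $u=t^\alpha$, reducing it to $I(z)=\frac{1}{\alpha}\int_0^\infty(1-e^{-zu})u^{-1-r/\alpha}\du$, and then apply the standard formula $\int_0^\infty(1-e^{-zu})u^{-1-\rho}\du=\frac{\Gamma(1-\rho)}{\rho}\,z^\rho$ (principal branch), valid for $\mathrm{Re}\,z>0$ and $0<\rho<1$, with $\rho=r/\alpha\in(0,1)$. This yields $I(z)=\frac{\Gamma(1-r/\alpha)}{r}\,z^{r/\alpha}$. The main obstacle, and the only genuinely delicate point, is the correct determination of the branch of $z^{r/\alpha}$: because $\Gamma(1-\alpha)<0$ for $1<\alpha<2$, the argument of $z$ is \emph{not} $\pi\alpha/2$. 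Writing $\Gamma(1-\alpha)=-|\Gamma(1-\alpha)|$ one finds $z=|\Gamma(1-\alpha)|\,e^{\i(\pi\alpha/2-\pi)}$ with $\arg z=\pi\alpha/2-\pi\in(-\pi/2,0)$, consistent with $\mathrm{Re}\,z>0$; hence $z^{r/\alpha}=|\Gamma(1-\alpha)|^{r/\alpha}e^{\i(\pi r/2-\pi r/\alpha)}$ and
\[
\mathrm{Re}\,I(z) ~=~ \frac{\Gamma(1-r/\alpha)}{r}\,|\Gamma(1-\alpha)|^{r/\alpha}\cos(\pi r/2-\pi r/\alpha).
\]

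Finally, multiplying by $c_r$ and collecting constants produces exactly the asserted formula for $\E|W|^r$. The special case follows by setting $r=1$ and using $\Gamma(2)=1$, $\sin(\pi/2)=1$, and $\cos(\pi/2-\pi/\alpha)=\sin(\pi/\alpha)$, which gives $\E|W|=\frac{2}{\pi}\Gamma(1-1/\alpha)|\Gamma(1-\alpha)|^{1/\alpha}\sin(\pi/\alpha)$.
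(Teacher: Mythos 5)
Your proof is correct, and it shares its skeleton with the paper's proof: both start from the same cosine-transform representation of $\E|W|^r$ (the paper cites it as Lemma 2 of von Bahr--Esseen rather than deriving it from $\int_0^\infty (1-\cos s)s^{-1-r}\ds$ as you do, but it is the same identity), and both substitute $u=t^\alpha$. Where you genuinely diverge is in the evaluation of the resulting integral. The paper stays entirely within real analysis: it writes $\mathrm{Re}\,\varphi(t)=e^{-Bt^\alpha}\cos(Ct^\alpha)$ with $B=\Gamma(1-\alpha)\cos(\pi\alpha/2)$, $C=\Gamma(1-\alpha)\sin(\pi\alpha/2)$, splits the integrand as $\bigl(1-e^{-Bu}\bigr)+\bigl(e^{-Bu}-e^{-Bu}\cos(Cu)\bigr)$, evaluates the first piece by an elementary Gamma-integral and the second by the tabulated formula 3.945(2) in Gradshteyn--Ryzhik, and then recombines the two (the $B^{r/\alpha}$ terms cancel). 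You instead keep a single complex integral, using $\int_0^\infty(1-e^{-zu})u^{-1-\rho}\du=\Gamma(1-\rho)\rho^{-1}z^{\rho}$ for $\mathrm{Re}\,z>0$ (analytic continuation of the real case) and taking the real part at the end. Your route avoids the table lookup and the cancellation bookkeeping, and concentrates all the delicacy in one place: the branch of $z^{r/\alpha}$, which you resolve correctly by observing that $\Gamma(1-\alpha)<0$ forces $\arg z=\pi\alpha/2-\pi\in(-\pi/2,0)$, so $z^{r/\alpha}=|\Gamma(1-\alpha)|^{r/\alpha}e^{\i(\pi r/2-\pi r/\alpha)}$; getting this wrong (using $\arg z = \pi\alpha/2$) would produce an incorrect sign pattern, so it is good that you flagged it explicitly. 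The paper's approach buys freedom from complex branch issues at the cost of an external table formula; yours buys a one-line evaluation at the cost of a branch argument. Both arguments, yours and the paper's, implicitly use $0<r<\alpha$ (positivity of $r$ for the integral representation, $r/\alpha<1$ for the Gamma-integral), which is the intended reading of the lemma.
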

\begin{proof}
We use the integral representation for the $r$th absolute moment
(see Lemma 2 in \cite{Bahr+Esseen:1965})
\begin{equation}    \label{eq:integral representation}
m_r ~:=~    \E|W|^r ~=~ \frac{\Gamma(r+1)}{\pi} \sin \Big(\frac{r \pi}{2}\Big) \int_{\R} \frac{1-{\rm Re}\,\E e^{{\rm i} t W}}{|t|^{r+1}} \, \dt.
\end{equation}
Set $A:=\pi^{-1}\Gamma(r\!+\!1)\sin{(r\pi/2)}$, $B:=\Gamma(1\!-\!\alpha) \cos{(\pi\alpha/2)}$ and
$C:=\Gamma(1\!-\!\alpha) \sin{(\pi\alpha/2)}$.
Using Euler's identity $e^{\i x} =\cos x +{\rm i}\sin x$ in \eqref{eq:stable ch f}, we obtain
\begin{equation*}
{\rm Re}\,\E e^{{\rm i}tW}
~=~ \exp{(-B|t|^{\alpha})}\cos{(-C|t|^{\alpha}{\rm sgn}(t))}.
\end{equation*}
Substituting this into formula \eqref{eq:integral representation} yields
\begin{equation*}
m_r
~=~
2A\int_0^{\infty} \frac{1-\exp{(-Bt^{\alpha})}\cos{(Ct^{\alpha})}}{t^{r+1}} \, \dt.
\end{equation*}
A change of variables ($u := t^{\alpha}$) gives
\begin{eqnarray}
m_r
&=&
\frac{2A}{\alpha}\int_0^{\infty}\big(1-\exp{(-Bu)}\cos{(Cu)}\big) u^{-1-r/\alpha} \, \du    \notag  \\
&=&
\frac{2A}{\alpha} \int_0^{\infty}\big(1-\exp{(-Bu)}\big) u^{-1-r/\alpha} \, \du \notag  \\
& &
+ \frac{2A}{\alpha} \int_0^{\infty}\big(\exp{(-Bu)}-\exp{(-Bu)}\cos{(Cu)}\big) u^{-1-r/\alpha} \, \du   \notag  \\
&=:&
I_1+I_2.    \label{eq:I_1+I_2}
\end{eqnarray}
Using \cite[Formula (3.945(2))]{Gradstein+Ryzhik:2000} and integration by parts, respectively, we obtain    
\begin{eqnarray*}
I_2 & = & \frac{2A}{\alpha} \, \Gamma(-r/\alpha)\Big(B^{r/\alpha}-|\Gamma(1-\alpha)|^{r/\alpha} \cos{(\pi r/2-\pi r/\alpha)}\Big);  \\
I_1
& = &
\frac{2A}{\alpha}\int_0^{\infty}\big(1-\exp{(-Bu)}\big) u^{-1-r/\alpha} \, \du  \\
& = &
\frac{2AB}{r}\int_0^{\infty}u^{-r/\alpha}\exp{(-Bu)} \du    \\
& = &
\frac{2AB^{r/\alpha}}{r} \Gamma(1-r/\alpha)
~=~ -\frac{2AB^{r/\alpha}}{\alpha} \Gamma(-r/\alpha).
\end{eqnarray*}
Now plugging in the values of $I_1$ and $I_2$ in \eqref{eq:I_1+I_2} gives
\begin{eqnarray*}
m_r
& = &
-\frac{2A}{\alpha}\Gamma(-r/\alpha)|\Gamma(1-\alpha)|^{r/\alpha} \cos{(\pi r/2-\pi r/\alpha)}   \\
& = &
\frac{2A}{r}\Gamma(1-r/\alpha)|\Gamma(1-\alpha)|^{r/\alpha} \cos{(\pi r/2-\pi r/\alpha)}.
\end{eqnarray*}
\end{proof}

\begin{Lemma}   \label{Lem:int X_t dnu_t->int X dnu}
Let $0 \leq a < b < \infty$.
Assume that $X_t(\cdot) \Rightarrow X(\cdot)$ as $t\to\infty$ in $D[a, b]$ in the $M_1$ topology.
Further, assume that $\nu_t$, $t \geq 0$ are finite measures such that $\nu_t \to \nu$ weakly as $t \to \infty$
where $\nu$ is a finite measure on $[a,b]$, which is continuous w.r.t.\ Lebesgue measure.
Then
\begin{equation*}
\int_{[a,\,b]} X_t(y) \, \nu_t(\dy)
~\stackrel{\mathrm{d}}{\to}~
\int_{[a,\,b]} X(y) \, \nu(\dy) \quad   \text{as } t \to \infty,
\end{equation*}
and if $X$ is a.s. continuous at some $c \in [a,b]$, then
\begin{equation*}
X_t(c) + \int_{[a,\,b]} X_t(y) \, \nu_t(\dy)
~\stackrel{\mathrm{d}}{\to}~    X(c) + \int_{[a,\,b]} X(y) \, \nu(\dy)  \quad   \text{as } t \to \infty.
\end{equation*}
\end{Lemma}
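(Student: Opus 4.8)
The plan is to reduce the distributional statement to almost-sure convergence by the Skorokhod representation theorem, and then to establish a purely deterministic convergence result for integrals in which \emph{both} the integrand and the integrating measure vary. Since it suffices to prove convergence in distribution, and since $(D[a,b],M_1)$ is a Polish space and $x \mapsto \int_{[a,b]} x(y)\,\nu_t(\dy)$ is a measurable functional on it (see \cite{Whitt:2002}), I would first invoke the Skorokhod representation theorem (as in \cite{Billingsley:1999}) to replace $(X_t)$ and $X$ by distributional copies, still denoted $X_t$ and $X$, on a common probability space with $X_t \to X$ almost surely in the $M_1$ topology. The measures $\nu_t,\nu$ are deterministic and are untouched by this step.

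Next I would prove the deterministic assertion: if $x_t \to x$ in $(D[a,b],M_1)$ and $\nu_t \to \nu$ weakly with $\nu$ atomless, then $\int_{[a,b]} x_t(y)\,\nu_t(\dy) \to \int_{[a,b]} x(y)\,\nu(\dy)$. Two properties of $M_1$ convergence, both available in \cite{Whitt:2002}, are the key inputs: \textbf{(i)} the completed graphs of the $x_t$ converge, so that $\sup_t \sup_{y\in[a,b]} |x_t(y)| < \infty$; and \textbf{(ii)} the local property that at every continuity point $y$ of $x$ one has $x_t(y_t) \to x(y)$ for \emph{every} sequence $y_t \to y$. Since $x$ has at most countably many discontinuities and $\nu$ is atomless, the set $E := \mathrm{Disc}(x)$ is $\nu$-null. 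To pass to the limit I would apply a \emph{second} Skorokhod representation, now to the measures: the case $\nu([a,b])=0$ is immediate from (i) and $\nu_t([a,b]) \to 0$, while if $\nu([a,b])>0$ I pick random variables $\eta_t \sim \nu_t/\nu_t([a,b])$ and $\eta \sim \nu/\nu([a,b])$ with $\eta_t \to \eta$ a.s. Then $\eta \notin E$ a.s., so (ii) gives $x_t(\eta_t) \to x(\eta)$ a.s.; bounded convergence (via (i)) yields $\E\, x_t(\eta_t) \to \E\, x(\eta)$, and multiplication by $\nu_t([a,b]) \to \nu([a,b])$ gives the claim.

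Applying this deterministic result pathwise on the probability-one event produced in the first step yields $\int_{[a,b]} X_t(y)\,\nu_t(\dy) \to \int_{[a,b]} X(y)\,\nu(\dy)$ almost surely, hence in distribution; because the Skorokhod copies carry the correct laws and the functional is measurable, this proves the first assertion. For the second assertion, if $X$ is a.s.\ continuous at $c$ then in the coupling $X_t(c) \to X(c)$ a.s.\ by property (ii), so $X_t(c) + \int_{[a,b]} X_t(y)\,\nu_t(\dy) \to X(c) + \int_{[a,b]} X(y)\,\nu(\dy)$ a.s., and the measurable functional $x \mapsto x(c) + \int_{[a,b]} x(y)\,\nu_t(\dy)$ transfers this to convergence in distribution. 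The main obstacle is the genuine use of the $M_1$ topology, concentrated in verifying the local uniform convergence property (ii) at continuity points together with the uniform bound (i); this is the only place where $M_1$ (rather than mere pointwise or finite-dimensional) convergence is needed, and once (i)--(ii) are in hand the double Skorokhod-representation argument is routine.
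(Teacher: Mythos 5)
Your proof is correct, and its skeleton coincides with the paper's: the paper's entire proof of Lemma \ref{Lem:int X_t dnu_t->int X dnu} reads ``Use Lemma 6.5 in \cite{Iksanov:2012} and Skorokhod's representation theorem'', which is exactly your first step (Skorokhod representation in the Polish space $(D[a,b],M_1)$) followed by an appeal to the deterministic statement that $x_t \to x$ in $M_1$ and $\nu_t \to \nu$ weakly with $\nu$ atomless imply $\int_{[a,b]} x_t \, \mathrm{d}\nu_t \to \int_{[a,b]} x \, \mathrm{d}\nu$. The difference is that the paper imports this deterministic ingredient as a citation, whereas you prove it from scratch. Your proof of it is sound: the two $M_1$ properties you isolate (uniform boundedness via convergence of completed graphs, and local uniform convergence at continuity points of the limit, so that $x_t(y_t)\to x(y)$ whenever $y_t \to y$ and $y \notin \mathrm{Disc}(x)$) are both standard facts from \cite{Whitt:2002}; the randomization trick of representing the normalized measures $\nu_t/\nu_t([a,b])$ by almost surely convergent random variables $\eta_t \to \eta$, observing that $\eta$ a.s.\ avoids the countable (hence $\nu$-null, since $\nu$ is atomless) discontinuity set of $x$, and concluding by bounded convergence handles the passage to the limit correctly, and you treat the degenerate case $\nu([a,b])=0$ and the second assertion (via the local property at the a.s.\ continuity point $c$) properly. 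What the paper's route buys is brevity; what yours buys is a self-contained argument that does not rely on the companion paper \cite{Iksanov:2012}.
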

\begin{proof}
Use Lemma 6.5 in \cite{Iksanov:2012} and Skorokhod's representation theorem.
\end{proof}

\begin{Lemma}   \label{Lem:Fubini non product measure}
Let $(X_1,\A_1)$ and $(X_2,\A_2)$ be measurable spaces.
Let $\mu_1(\cdot)$ be a finite measure on $(X_1,\A_1)$.
Assume that  $\mu_2(x,\cdot)$ is a measure on $(X_2,\A_2)$ for every $x \in X_1$,
and that for every $B \in \A_2$ the function $X_1 \ni x \mapsto \mu_2(x,B)$ is measurable with respect to $\A_1$.
Then $\nu(B) := \int_{X_1} \mu_2(x,B) \mathrm{d}\mu_1(x)$ is a measure on $(X_2,\A_2)$.
Furthermore, for every non-negative measurable function $f$ on $(X_2,\A_2)$,
the function $g(x) := \int_{X_2} f(y) \mu_2(x,\mathrm{d}y)$ is measurable with respect to $\A_1$ and
\begin{equation}\label{eq:conditional Fubini}
\int_{X_2} f \, \mathrm{d}\nu ~=~ \int_{X_1} g \, \mathrm{d}\mu_1.
\end{equation}
\end{Lemma}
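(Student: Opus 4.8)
The plan is to establish both assertions by the standard measure-theoretic machine: verify the identity first for indicator functions, extend it to nonnegative simple functions by linearity, and finally pass to arbitrary nonnegative measurable $f$ by monotone approximation. First I would check that $\nu$ is indeed a measure on $(X_2,\A_2)$. Clearly $\nu(\emptyset) = \int_{X_1} \mu_2(x,\emptyset)\,\mathrm{d}\mu_1(x) = 0$. For countable additivity, let $(B_n)_{n \in \N}$ be pairwise disjoint members of $\A_2$. Since each $\mu_2(x,\cdot)$ is a measure, the partial sums $\sum_{n \leq m} \mu_2(x,B_n)$ increase to $\mu_2(x,\bigcup_n B_n)$, and the monotone convergence theorem (applied to $\mu_1$) gives
\[
\nu\Big(\bigcup_n B_n\Big) ~=~ \int_{X_1} \sum_{n} \mu_2(x,B_n)\,\mathrm{d}\mu_1(x) ~=~ \sum_{n} \int_{X_1} \mu_2(x,B_n)\,\mathrm{d}\mu_1(x) ~=~ \sum_{n} \nu(B_n).
\]

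Turning to the transfer formula \eqref{eq:conditional Fubini}, I would proceed through the three steps of the standard machine. For $f = \1_B$ with $B \in \A_2$ one has $g(x) = \int_{X_2} \1_B\,\mu_2(x,\mathrm{d}y) = \mu_2(x,B)$, which is $\A_1$-measurable by hypothesis, and the claimed identity reduces to the very definition of $\nu$, namely $\int_{X_2} \1_B\,\mathrm{d}\nu = \nu(B) = \int_{X_1} \mu_2(x,B)\,\mathrm{d}\mu_1(x)$. By linearity of the inner integral and of integration against $\nu$ and against $\mu_1$, both the measurability of $g$ and the identity extend to nonnegative simple functions. For a general nonnegative measurable $f$, I would choose simple functions with $0 \leq f_m \uparrow f$ pointwise and set $g_m(x) := \int_{X_2} f_m\,\mu_2(x,\mathrm{d}y)$. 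For each fixed $x$, monotone convergence applied to the measure $\mu_2(x,\cdot)$ yields $g_m(x) \uparrow g(x)$, so $g$ is $\A_1$-measurable as a pointwise limit of the measurable $g_m$. A final double application of monotone convergence, once to $\nu$ and once to $\mu_1$, gives
\[
\int_{X_2} f\,\mathrm{d}\nu ~=~ \lim_{m \to \infty} \int_{X_2} f_m\,\mathrm{d}\nu ~=~ \lim_{m \to \infty} \int_{X_1} g_m\,\mathrm{d}\mu_1 ~=~ \int_{X_1} g\,\mathrm{d}\mu_1,
\]
which is precisely \eqref{eq:conditional Fubini}.

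The only point demanding genuine care — and hence the natural main obstacle — is ensuring that the measurability of $g$ is not lost in the passage from simple to general $f$: a priori $g$ is defined by an integral over a non-product family of measures $\mu_2(x,\cdot)$, so one cannot simply invoke the classical Tonelli theorem. This is resolved exactly because $g$ is realized as the increasing pointwise limit of the functions $g_m$, each of which inherits $\A_1$-measurability from the hypothesis that $x \mapsto \mu_2(x,B)$ is measurable. Thus no Dynkin-system or monotone-class argument beyond the three-step machine is required, and the finiteness of $\mu_1$ is not even needed for the displayed identities, although it guarantees that integration against $\mu_1$ is well behaved throughout.
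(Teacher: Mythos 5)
Your proof is correct and follows essentially the same route as the paper, which likewise invokes the standard approximation argument: verify \eqref{eq:conditional Fubini} for indicators of sets in $\A_2$, extend to nonnegative simple functions by linearity, and pass to arbitrary nonnegative measurable $f$ by monotone convergence. Your write-up merely fills in the details the paper leaves implicit (countable additivity of $\nu$ and the preservation of $\A_1$-measurability of $g$ in the limit), so there is nothing to add.
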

The proof of this lemma is a standard approximation argument:
check \eqref{eq:conditional Fubini} for indicators of sets in $\A_2$,
then for the finite linear combinations of such indicators with positive coefficients and finally for arbitrary non-negative measurable functions.

\begin{Lemma}   \label{Lem:multi integral}
Let $(X(\omega,u))_{u \in \R}$ be an arbitrary increasing random process
defined on probability space $(\Omega,\A,\Prob)$.
For fixed $k \in \N$ let $h:\R^{k} \to \R_+$ be a positive Borel function. Then
\begin{equation*}
\E \int_{\R^{k}} \!\! h(s_1,\ldots,s_k)\mathrm{d}X(s_1)\ldots\mathrm{d}X(s_k)
= \int_{\R^{k}} \!\! h(s_1,\ldots,s_k)\E \big(\mathrm{d}X(s_1)\ldots\mathrm{d}X(s_k)\big).
\end{equation*}
\end{Lemma}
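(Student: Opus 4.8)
The plan is to recognise the two sides of the claimed identity as two ways of integrating a single random measure, and then to invoke the disintegration formula of Lemma~\ref{Lem:Fubini non product measure}. For each fixed $\omega \in \Omega$, the increasing path $u \mapsto X(\omega,u)$ (passing to its right-continuous modification if necessary) induces a Lebesgue--Stieltjes measure $\mu_\omega$ on $(\R,\B)$ characterised by $\mu_\omega((a,b]) = X(\omega,b)-X(\omega,a)$; since $X(\omega,\cdot)$ is real-valued, $\mu_\omega$ is locally finite, hence $\sigma$-finite, and the $k$-fold product measure $\mu_\omega^{\otimes k}$ on $(\R^k,\B^{\otimes k})$ is well-defined. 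By definition the pathwise integral $\int_{\R^k} h \, \mathrm{d}X(s_1)\cdots\mathrm{d}X(s_k)$ is $\int_{\R^k} h \, \mathrm{d}\mu_\omega^{\otimes k}$, and $\E\big(\mathrm{d}X(s_1)\cdots\mathrm{d}X(s_k)\big)$ denotes the intensity measure $B \mapsto \E\,\mu_\omega^{\otimes k}(B)$. With this reading the assertion becomes $\E\int_{\R^k} h \, \mathrm{d}\mu_\omega^{\otimes k} = \int_{\R^k} h \, \mathrm{d}\Lambda$, where $\Lambda(B) := \E\,\mu_\omega^{\otimes k}(B)$.

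To apply Lemma~\ref{Lem:Fubini non product measure}, I would take $(X_1,\A_1,\mu_1) = (\Omega,\A,\Prob)$ and $(X_2,\A_2) = (\R^k,\B^{\otimes k})$, and set $\mu_2(\omega,\cdot) := \mu_\omega^{\otimes k}$. The single hypothesis that then has to be checked is that $\omega \mapsto \mu_\omega^{\otimes k}(B)$ is $\A$-measurable for every Borel $B \subseteq \R^k$. Once this is established, the lemma guarantees that $\nu(B) = \int_\Omega \mu_\omega^{\otimes k}(B) \, \mathrm{d}\Prob(\omega) = \Lambda(B)$ is a measure and that, for every non-negative Borel $h$, the map $\omega \mapsto g(\omega) = \int_{\R^k} h \, \mathrm{d}\mu_\omega^{\otimes k}$ is measurable and satisfies $\int_{\R^k} h \, \mathrm{d}\Lambda = \int_\Omega g \, \mathrm{d}\Prob$; this is exactly the identity to be proved.

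The measurability hypothesis is the one genuine step of work and is handled by the standard monotone-class machinery. First, for a half-open interval $(a,b]$ one has $\mu_\omega((a,b]) = X(\omega,b)-X(\omega,a)$, which is $\A$-measurable because $X(\cdot,u)$ is a random variable for each fixed $u$. Fixing a bounded window $(-n,n]$, on which every $\mu_\omega$ is finite, the collection of Borel sets $A \subseteq (-n,n]$ with $\omega \mapsto \mu_\omega(A)$ measurable is a Dynkin system (here finiteness is what lets one pass to proper differences and complements) containing the $\pi$-system of half-open subintervals, so Dynkin's $\pi$--$\lambda$ theorem makes it all of $\B((-n,n])$; letting $n\to\infty$ and using $\mu_\omega(A) = \lim_n \mu_\omega(A\cap(-n,n])$ upgrades this to every Borel $A\subseteq\R$. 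For the product measure, rectangles satisfy $\mu_\omega^{\otimes k}(A_1\times\cdots\times A_k) = \prod_{i=1}^k \mu_\omega(A_i)$, a finite product of measurable non-negative functions, hence measurable; since the measurable rectangles form a $\pi$-system generating $\B^{\otimes k}$, the same Dynkin argument (again localised to boxes $(-n,n]^k$ and then extended by monotone limits) yields measurability of $\omega \mapsto \mu_\omega^{\otimes k}(B)$ for all Borel $B$.

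Finally, one can note that the conclusion $\int_{\R^k} h \, \mathrm{d}\Lambda = \E\,g$ could alternatively be obtained by hand from the very definition of $\Lambda$ via the usual three-step approximation (indicators, non-negative simple functions, then general non-negative $h$ by monotone convergence), but routing everything through Lemma~\ref{Lem:Fubini non product measure} makes this automatic. The main obstacle is therefore not the integration identity itself but the joint-measurability verification above, together with the minor technical care needed in choosing a right-continuous modification of $X(\omega,\cdot)$ and in localising to bounded windows so that the $\pi$--$\lambda$ arguments apply to finite measures.
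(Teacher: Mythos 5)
Your proof is correct and follows essentially the same route as the paper's: both apply Lemma~\ref{Lem:Fubini non product measure} with $(X_1,\A_1,\mu_1)=(\Omega,\A,\Prob)$, $(X_2,\A_2)=(\R^k,\B)$, and the kernel $\mu_2(\omega,\cdot)$ given by the $k$-fold product Lebesgue--Stieltjes measure of the path $X(\omega,\cdot)$. The only difference is that you spell out, via the $\pi$--$\lambda$ theorem, the measurability of $\omega\mapsto\mu_2(\omega,B)$, which the paper simply declares to be clear --- a worthwhile addition, but not a different argument.
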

\begin{proof}
Lemma \ref{Lem:Fubini non product measure} can be applied as follows.
Define $(X_1,\A_1) := (\Omega,\A)$ and $(X_2,\A_2):=(\mathbb{R}^k,\B)$
where $\B$ is the standard Borel $\sigma$-algebra of subsets of $\mathbb{R}^k$.
Set also $\mu_1:=\Prob$. For every $\omega\in\Omega$ put
\begin{equation*}
\mu_2(\omega,(a_1,\,b_1] \times \ldots \times (a_k,\,b_k])
:=  (X(\omega,b_1)-X(\omega,a_1)) \ldots (X(\omega,b_k)-X(\omega,a_k))
\end{equation*}
and extend this to a measure on $(X_2,\A_2)$.
This is possible since $X(\omega,u)$ is increasing for almost all $\omega$.
It is clear that $\omega \mapsto\mu_2(\omega,A)$ is measurable for every $A\in\B$. By Lemma \ref{Lem:Fubini non product measure}, $\nu(A)=\E \mu_2(\omega,A)$ is a measure and for any given positive Borel function $h:\R^{k}\to \R_+$,
the function $g: \Omega \to \R_+$ defined by
$g(\omega) := \int_{\R^k} h(s_1,\ldots,s_k) \mu_2(\omega,\ds_1, \ldots, \ds_k)$ is measurable and
\begin{align*}
\E \int_{\R^k} & h(s_1,\ldots,s_k) \, \mathrm{d}X(s_1)\ldots\mathrm{d}X(s_k)    \\
&=~ \E \int_{\R^k}h(s_1,\ldots,s_k) \, \mu_2(\omega,\ds_1\times\ldots\times \ds_k)  \\
&=~ \int_{\Omega} g(\omega) \, \Prob(\mathrm{d}\omega)
~\overset{\eqref{eq:conditional Fubini}}{=}~    \int_{\R^k}h(s_1,\ldots,s_k) \, \nu(\ds_1\times\ldots\times \ds_k)  \\
&=~ \int_{\R^{k}}h(s_1,\ldots,s_k) \, \E \big(\mathrm{d}X(s_1)\ldots\mathrm{d}X(s_k)\big).
\end{align*}
\end{proof}

\begin{Lemma}   \label{Lem:int<->sum}
Let $f:\R_+ \to \R_+$ be a decreasing function with $\lim_{t \to \infty} f(t) \geq 0$.
Then, for every $\theta>0$,
\begin{equation*}
\int_0^n f(\theta y) \dy ~=~ \sum_{k=0}^n f(\theta k) + \delta_n(\theta),   \quad   n \in \N,
\end{equation*}
where $\delta_n(\theta)$ converges as $n \to \infty$ to some $\delta(\theta) \leq 0$.
\end{Lemma}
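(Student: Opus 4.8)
The plan is to work directly with the defect
$\delta_n(\theta) := \int_0^n f(\theta y) \dy - \sum_{k=0}^n f(\theta k)$
and to exploit the monotonicity of $f$ through a telescoping estimate. First I would decompose the integral over the unit grid, writing $\int_0^n f(\theta y) \dy = \sum_{k=0}^{n-1} \int_k^{k+1} f(\theta y) \dy$. Since $f$ is decreasing and $\theta > 0$, on each interval $[k,k+1]$ one has the two-sided bound $f(\theta(k+1)) \leq \int_k^{k+1} f(\theta y) \dy \leq f(\theta k)$. Setting $a_k := \int_k^{k+1} f(\theta y) \dy - f(\theta k)$, the right inequality gives $a_k \leq 0$, while the left inequality gives $a_k \geq f(\theta(k+1)) - f(\theta k)$.

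Next I would rewrite the defect in terms of the $a_k$. A direct computation yields $\delta_n(\theta) = \sum_{k=0}^{n-1} a_k - f(\theta n)$. Because every $a_k \leq 0$, the partial sums $\sum_{k=0}^{n-1} a_k$ form a nonincreasing sequence in $n$; and summing the lower bound $a_k \geq f(\theta(k+1)) - f(\theta k)$ telescopes to $\sum_{k=0}^{n-1} a_k \geq f(\theta n) - f(0) \geq -f(0)$, using $f \geq 0$. Hence the sequence of partial sums is nonincreasing and bounded below, so it converges to a finite limit $A \in [-f(0),\,0]$.

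Finally, since $f$ is decreasing and nonnegative, the boundary term $f(\theta n)$ converges as $n \to \infty$ to $f(\infty) := \lim_{t \to \infty} f(t) \geq 0$, a finite limit that exists by monotonicity. Therefore $\delta_n(\theta) \to \delta(\theta) := A - f(\infty)$, and since $A \leq 0$ and $f(\infty) \geq 0$ we conclude $\delta(\theta) \leq 0$, as claimed. I do not expect a genuine obstacle here: the only points requiring care are verifying that the nonincreasing sequence of partial sums $\sum_{k=0}^{n-1} a_k$ is bounded below, which the telescoping lower bound supplies, and checking that the boundary term $f(\theta n)$ does not destroy convergence, which again follows from the monotonicity and nonnegativity of $f$.
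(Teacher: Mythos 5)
Your proof is correct and follows essentially the same route as the paper's: the paper works with the negated defect $\sum_{k=0}^n f(k)-\int_0^n f(y)\dy=\sum_{k=0}^{n-1}\bigl(f(k)-\int_k^{k+1}f(y)\dy\bigr)+f(n)$, showing the sum of nonnegative terms is increasing and bounded above by $f(0)$ via the same telescoping estimate, and then invokes the existence of $\lim_n f(n)$, which is your argument up to a sign flip (and the paper's harmless reduction to $\theta=1$). No gap; nothing further needed.
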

\begin{proof}
We assume w.l.o.g.\ that $\theta = 1$.
For each $n \geq 1$,
\begin{eqnarray*}
\sum_{k=0}^n f(k) - \int_0^n f(y) \dy
& = &
\sum_{k=0}^{n-1}\left(f(k)-\int_{k}^{k+1} f(y) \dy \right) + f(n).
\end{eqnarray*}
Since $f$ is decreasing, each summand in the sum is
non-negative. Hence, the sum is increasing in $n$. On the other
hand, it can be bounded from above by
\begin{equation*}
\sum_{k=0}^{n-1} \left(f(k)-\int_k^{k+1} f(y) \dy \right)
~\leq~  \sum_{k=0}^{n-1} (f(k)-f(k+1))
~\leq~  f(0)    ~<~ \infty.
\end{equation*}
Consequently,
the series $\sum_{k\geq 0} \big(f(k)-\int_{k}^{k+1} f(y) \dy \big)$ converges.
Recalling that $\lim_{n \to \infty} f(n)$ exists completes the proof.
\end{proof}

\begin{Lemma}   \label{Lem:reduction of integrals}
Let $f_1, f_2: \R_+\to\R_+$ be bounded decreasing functions such that $f_1(t) \geq f_2(t)$ for all $t \in \R_+$
and such that $\int_0^{t_0} (f_1(y)-f_2(y)) \dy > 0$ for some $t_0 > 0$.
Then
\begin{equation}    \label{eq:reduction of integrals}
\sup_{t \geq t_0}\frac{\E \int_{[0,\,t]}(f_1(t-y)-f_2(t-y)) \, \mathrm{d}N(y)}{\int_{0}^{t}(f_1(y)-f_2(y)) \dy}
~<~\infty.
\end{equation}
\end{Lemma}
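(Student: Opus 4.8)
Write $g := f_1 - f_2$. By hypothesis $g \geq 0$, $g$ is bounded and Borel measurable, and $G(t) := \int_0^t g(y)\,\dy$ is nondecreasing with $G(t) \geq G(t_0) = \int_0^{t_0} g(y)\,\dy =: c_0 > 0$ for every $t \geq t_0$. Since $g \geq 0$, the numerator equals $\E \int_{[0,t]} g(t-y)\,N(\dy) = \int_{[0,t]} g(t-y)\,U(\dy)$ by the definition of the intensity measure $U = \E N$. Thus the whole task reduces to producing constants $C_1, C_2 < \infty$, independent of $t$, such that
\[
\int_{[0,t]} g(t-y)\,U(\dy) ~\leq~ C_1 G(t) + C_2, \qquad t \geq t_0;
\]
dividing by $G(t) \geq c_0$ then yields the claimed uniform bound $C_1 + C_2/c_0 < \infty$.

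The first ingredient is a uniform control of the mass $U$ assigns to unit intervals. From the distributional subadditivity of $N$ already used in the paper, namely $N([a,a+s]) \leq_{\mathrm{d}} N(s)$ for an independent copy (obtained by restarting the walk at the first renewal $\geq a$), taking expectations gives the subadditivity $U(a+s) \leq U(a) + U(s)$ for all $a, s \geq 0$. In particular $U((a,a+1]) = U(a+1) - U(a) \leq U(1)$ for every $a \geq 0$, and $U([0,s]) \leq U(1)$ for $s \leq 1$; since $U(1) = \E N(1) < \infty$, every interval $J$ of length at most $1$ satisfies $U(J) \leq U(1) =: C$.

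The second ingredient handles the non-monotonicity of $g$. I partition $[0,t]$ through the substitution $s = t-y$: setting $J_k := \{y \in [0,t] : t - y \in [k-1,k)\}$ for $k = 1, \ldots, K$ with $K = \lceil t \rceil$, each $J_k$ is an interval of length at most $1$, so $U(J_k) \leq C$. On $J_k$ we have $s = t-y \in [k-1,k)$, and the monotonicity of the two constituents gives the one-sided bound $g(s) = f_1(s) - f_2(s) \leq f_1(k-1) - f_2(k)$, while on the integral side $\int_{k-1}^k g(s)\,\ds \geq f_1(k) - f_2(k-1)$. Consequently
\[
\int_{[0,t]} g(t-y)\,U(\dy) ~\leq~ C \sum_{k=1}^K (f_1(k-1) - f_2(k)).
\]
The crucial point is that the discrepancy between the summand $f_1(k-1) - f_2(k)$ and the integral lower bound $f_1(k) - f_2(k-1)$ is exactly $(f_1(k-1) - f_1(k)) + (f_2(k-1) - f_2(k))$, a sum of jumps of decreasing functions that telescopes to $(f_1(0) - f_1(K)) + (f_2(0) - f_2(K)) \leq f_1(0) + f_2(0) \leq 2 f_1(0)$, using $f_1(K), f_2(K) \geq 0$ and $f_2 \leq f_1$.

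Summing the integral lower bounds gives $\sum_{k=1}^K (f_1(k) - f_2(k-1)) \leq \int_0^K g = G(K) \leq G(t) + f_1(0)$, the last step because $K - t \leq 1$ and $g \leq f_1 \leq f_1(0)$ on $[t,K]$. Hence $\sum_{k=1}^K (f_1(k-1) - f_2(k)) \leq G(t) + 3 f_1(0)$, so the numerator is at most $C(G(t) + 3 f_1(0))$, which is the desired inequality with $C_1 = C$ and $C_2 = 3 C f_1(0)$. The main obstacle is precisely that $g$ need not be monotone, so one cannot directly compare the Riemann-type sum arising after discretizing $U$ with the integral $G(t)$; the resolution is to bound $g$ above and below by endpoint values of the individual monotone functions $f_1$ and $f_2$ and to observe that the resulting error is controlled by the total variation of $f_1$ and $f_2$, which is finite because both are bounded and monotone.
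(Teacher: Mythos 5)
Your proof is correct and takes essentially the same route as the paper's: both arguments rest on distributional subadditivity of $N$ (every interval of length at most $1$ carries expected renewal mass at most $\E N(1)$), a partition of $[0,t]$ into unit blocks on which the non-monotone difference $f_1-f_2$ is bounded above by endpoint values of the two monotone functions taken separately, and a comparison of the resulting sum with $\int_0^t (f_1(y)-f_2(y))\,\dy$ up to an $O(1)$ telescoping error. The only differences are bookkeeping --- the paper partitions in the $y$-variable and treats the leftover piece $(\lfloor t\rfloor,t]$ separately, whereas you partition in $s=t-y$; just note that when $t$ is an integer your blocks $\{y\in[0,t]: t-y\in[k-1,k)\}$, $k\le\lceil t\rceil$, miss the single point $y=0$, whose contribution $g(t)\,U(\{0\})\le f_1(0)$ is harmlessly absorbed into the additive constant.
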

\begin{proof}
Decompose the integral in the numerator of the left-hand side of \eqref{eq:reduction of integrals} as follows:
\begin{equation*}
\int_{[0,\lfloor t \rfloor]} \!\! (f_1(t-y)-f_2(t-y)) \mathrm{d}N(y) + \int_{(\lfloor t \rfloor,t]} \!\! (f_1(t-y)-f_2(t-y)) \mathrm{d}N(y) =: I_1(t)+I_2(t).
\end{equation*}
By the distributional subadditivity of $N$, we get for $I_2(t)$:
\begin{eqnarray*}
I_2(t)
& \leq &
\int_{(\lfloor t \rfloor,\,t]}f_1(t-y) \, \mathrm{d}N(y)
~\leq~ f_1(0) (N(t)-N(\lfloor t \rfloor))   \\
& \leq &
f_1(0) (N(t)-N(t-1))
~\stackrel{\mathrm{d}}{\leq}~ f_1(0)N(1),
\end{eqnarray*}
hence $\E I_2(t) < f_1(0) \E N(1) < \infty$ for all $t \geq 0$.
It remains to consider $I_1(t)$:
\begin{eqnarray*}
\E I_1(t)
& = &
f_1(t)-f_2(t) + \E \sum_{j=0}^{\lfloor t \rfloor-1}\int_{(j,\,j+1]}(f_1(t\!-\!y)-f_2(t\!-\!y)) \, \mathrm{d}N(y)    \\
& \leq &
f_1(0) + \sum_{j=0}^{\lfloor t \rfloor-1}(f_1(t\!-\!j\!-\!1)-f_2(t\!-\!j)) \E (N(j\!+\!1)-N(j)) \\
& \leq &
f_1(0) + \sum_{j=0}^{\lfloor t \rfloor-1} (f_1(t\!-\!j\!-\!1)-f_2(t\!-\!j)) \E N(1) \\
& = &
\E N(1) \bigg(\int_0^{\lfloor t \rfloor} (f_1(y)-f_2(y) ) \dy + O(1)\bigg).
\end{eqnarray*}
\end{proof}
\end{appendix}


\begin{thebibliography}{99}
\footnotesize

\bibitem{Alsmeyer+Iksanov+Meiners:2012}{\sc Alsmeyer, G., Iksanov, A. and Meiners, M.} (2013+).
Power and exponential moments of the number of visits and related
quantities for perturbed random walks. To appear in {\em J. Theoret. Probab.} DOI 10.1007/s10959-012-0475-7.

\bibitem{Bahr+Esseen:1965} {\sc von Bahr, B. and Esseen, C.-G.} (1965).
Inequalities for the $r$th absolute moment of a sum of random variables, $1\leq r\leq 2$.
{\em Ann. Math. Statist.} {\bf 36}, 299--303.

\bibitem{Billingsley:1999} {\sc Billingsley, P.} (1999). {\em Convergence of probability measures}, 2nd Ed., Wiley Series in Probability and Statistics: New York.

\bibitem{Bingham:1971}{\sc Bingham, N.~H.} (1971). Limit theorems for occupation times of Markov processes.
{\em Z. Wahrsch. verw. Gebiete.} {\bf 17}, 1--22.

\bibitem{Bingham+Goldie+Teugels:1989}{\sc Bingham N.~H., Goldie C.~M., and Teugels, J.~L.} (1989).
{\em Regular variation}. Cambridge University Press: Cambridge.


\bibitem{Chow+Teicher:1997} {\sc Chow, Y.~S. and Teicher, H.} (1997). {\em Probability theory: independence, interchangeability,
martingales}. 3rd Ed., Springer-Verlag: New York.

\bibitem{Durrett:2010}{\sc Durrett, R.} (2010). {\em Probability: theory and examples}. 4th Ed., Cambridge University Press: Cambridge.


\bibitem{Feller:1949} {\sc Feller, W.} (1949). Fluctuation theory of
recurrent events. {\em Trans. Amer. Math. Soc.} {\bf 67}, 98--119.

\bibitem{Feller:1971} {\sc Feller, W.} (1971). {\em An introduction to probability theory and its applications}.
Vol. II. John Wiley \& Sons: New York.

\bibitem{Gnedin+Iksanov:2012}{\sc Gnedin, A. and Iksanov, A.} (2012). Regenerative compositions in the case of slow variation:
A renewal theory approach. {\em Electron. J. Probab.} {\bf 17}, no. 77, 1--19.

\bibitem{Gnedin+Iksanov+Marynych:2010} {\sc Gnedin, A., Iksanov, A.\ and Marynych, A.} (2010).
The Bernoulli sieve: an overview.
{\em Discr. Math. Theoret. Comput. Sci.} Proceedings Series, {\bf AM}, 329--342.

\bibitem{Gradstein+Ryzhik:2000} {\sc Gradstein, I.~ S. and Ryzhik, I.~M.} (2000).
{\em Tables of integrals, series and products}. Translated from the Russian.
Sixth edition, Academic Press Inc.: San Diego.

\bibitem{Gut:2009}{\sc Gut, A.} (2009).
{\em Stopped random walks: limit theorems and applications}, 2nd edition. Springer: New York.

\bibitem{Hawkes:1971}{\sc Hawkes, J.} (1971). A lower Lipschitz condition for the stable subordinator. {\em Z. Wahrschein. Verw. Gebiete},
{\bf 17}, 23--32.

\bibitem{Heinrich+Schmidt:1985}{\sc Heinrich, L. and Schmidt, V.} (1985).
Normal convergence of multidimensional shot noise and rates of this convergence.
{\em Adv. Appl. Probab.} {\bf 17}, 709--730.

\bibitem{Ibragimov+Linnik:1971} {\sc Ibragimov, I.~A. and Linnik, Yu.~ V.} (1971).
{\em Independent and stationary sequences of random variables}. Wolters-Noordhoff Publishing: Groningen.

\bibitem{Iglehart:1973} {\sc Iglehart, D.~L.} (1973). Weak convergence of compound stochastic process I.
{\em Stoch. Proc. Appl.} {\bf 1}, 11--31.

\bibitem{Iksanov:2012} {\sc Iksanov, A.} (2013). Functional limit theorems for renewal shot noise processes with increasing response functions.
{\em Stoch. Proc. Appl.} {\bf 123}, 1987--2010.

\bibitem{Iksanov:2012BSI} {\sc Iksanov, A.} (2013). On the number of empty boxes in the Bernoulli sieve I.
{\em Stochastics.} {\bf 85}, 946--959.

\bibitem{Iksanov+Marynych+Vatutin:2013}{\sc Iksanov, A., Marynych, A. and Vatutin, V.} (2013+).
Weak convergence of finite-dimensional distributions of the number
of empty boxes in the Bernoulli sieve, submitted. Preprint
available at {\tt http://arxiv.org/abs/1304.4469}

\bibitem{Kaplan:1975} {\sc Kaplan, N.} (1975). Limit theorems for a GI/G/$\infty$ queue. {\em Ann. Probab.} {\bf 3}, 780--789.

\bibitem{Kersting:2012} {\sc Kersting, G.} (2012). The asymptotic distribution of the length of beta-coalescent trees.
{\em Ann. Appl. Probab.} {\bf 22}, 2086--2107.

\bibitem{Klueppelberg+Mikosch:1995a}{\sc Kl\"uppelberg, C. and Mikosch, T.} (1995).
Explosive Poisson shot noise processes with applications to risk reserves. {\em Bernoulli} {\bf 1}, 125--147.

\bibitem{Klueppelberg+Mikosch:1995b}{\sc Kl\"uppelberg, C. and Mikosch, T.} (1995).
Delay in claim settlement and ruin probability approximations.
{\em Scand. Actuar. J.} 1995(2), 154--168.

\bibitem{Klueppelberg+Mikosch+Schaerf:2003}{\sc Kl\"{u}ppelberg, C., Mikosch, T. and Sch\"{a}rf, A.} (2003).
Regular variation in the mean and stable limits for Poisson shot noise.
{\em Bernoulli} {\bf 9}, 467--496.

\bibitem{Lane:1984}{\sc Lane J.~A.} (1984).
The central limit theorem for the Poisson shot-noise process.
{\em J. Appl. Probab.} {\bf 21}, 287--301.

\bibitem{Lawrance+ Kottegoda:1977}{\sc Lawrance, A.~J. and Kottegoda, N.~T.}(1977).
Stochastic modelling of riverflow time series. {\em J. R. Statist. Soc. A} {\bf 140}, 1--47.

\bibitem{Lewis:1964}{\sc Lewis, P.~A.~W.} (1964).
A branching Poisson process model for the analysis of computer failure patterns.
{\em J. R. Statist. Soc. B} {\bf 26}, 398--456.

\bibitem{Lindvall:1992}{\sc Lindvall, T.} (1992). {\em Lectures on the coupling method},
Wiley Series in Probability and Mathematical Statistics: New York.

\bibitem{Marcus:1975}{\sc Marcus, A.~H.} (1975).
Some exact distributions in traffic noise theory.
{\em Adv. Appl. Probab.} {\bf 7}, 593--606.

\bibitem{Meerschaert+Scheffler:2004} {\sc Meerschaert, M.~M. and Scheffler, H.~P.} (2004). Limit theorems for continuous time random walks with
infinite mean waiting times. {\em  J. Appl. Probab.} {\bf 41}, 623--638.

\bibitem{Miller:1974}{\sc Miller. D.~R.} (1974). Limit theorems for path-functionals of regenerative processes,
{\em Stoch. Proc. Appl.} {\bf 2}, 141--161.

\bibitem{Moerters+Peres:2010} {\sc M\"{o}rters, P. and Peres, Yu.} (2010). {\em Brownian motion}.
Cambridge University Press: Cambridge.

\bibitem{Mohan:1976}{\sc Mohan, N.~R.} (1976). Teugels' renewal theorem and stable laws.
{\em Ann. Probab.} {\bf 4}, 863--868.

\bibitem{Rice:1977}{\sc Rice, J.} (1977). On generalized shot noise.
{\em Adv. Appl. Probab.} {\bf 9}, 553--565.

\bibitem{Rodriguez-Iturbe+Cox+Isham:1987}{\sc Rodriguez-Iturbe I., Cox D.~R., Isham, V.} (1987).
Some models on rainfall based on stochastic processes.
{\em Proc. R. Soc. Lond. A} {\bf 410}, 269--288.

\bibitem{Rosinski:2001}{\sc Rosi\'{n}ski, J.} (2001).
Series representations of L\'evy processes from the perspective of point processes.
Barndorff-Nielsen, Ole E. (ed.) et al., L\'evy processes. Theory and applications. Birkh\"auser: Boston, 401--415.

\bibitem{Samorodnitsky:1996} {\sc Samorodnitsky, G.} (1996).
A class of shot noise models for financial applications.
Athens conference on applied probability and time series analysis, Athens, Greece, March 22Ð26, 1995.
Vol. I: Applied probability. In honor of J. M. Gani. Berlin: Springer. Lect. Notes Stat., Springer-Verlag. 114, 332--353.

\bibitem{Samorodnitsky+Taqqu:1994}
{\sc Samorodnitsky, G., and Taqqu, M.~S.} (1994).
\newblock {\em Stable non-{G}aussian random processes}.
\newblock Stochastic Modeling. Chapman \& Hall: New York.
\newblock Stochastic Models with Infinite Variance.

\bibitem{Schottky:18}{\sc W.~Schottky} (1918). Spontaneous current fluctuations in electron streams.
{\em Ann. Phys.}\ \textbf{57}, 541--567.

\bibitem{Sgibnev:1981}{\sc Sgibnev, M.~S.} (1981). Renewal theorem in the case of an infinite variance.
{\em Sibirskii Matematicheskii Zhurnal} {\bf 22}, no.~5, 178--189.

\bibitem{Thorisson:2000}{\sc Thorisson, H.} (2000). {\em Coupling, stationarity, and regeneration}. Springer: New York.

\bibitem{Vere-Jones:1970}{\sc Vere-Jones, D.} (1970). Stochastic models for earthquake occurrence.
{\em J. R. Statist. Soc. B} {\bf 32}, 1--62.

\bibitem{Waymire+Gupta:1981}{\sc Waymire, E., Gupta, V.~K.} (1981).
The mathematical structure of rainfall representations: 1. A review of the stochastic rainfall models.
{\em Water Resour. Res.} {\bf 17}, 1261--1272.

\bibitem{Weiss:1977}{\sc Weiss, G.} (1977). Shot noise models for the generation of synthetic streamflow data.
{\em Water Resour. Res.} {\bf 13}, 101--108.

\bibitem{Whitt:2002}{\sc Whitt, W.} (2002).
{\em Stochastic-process limits:  an introduction to stochastic-process limits and their application to queues}.
Springer-Verlag: New York.

\end{thebibliography}
\end{document}